\pgfplotsset{compat=1.16}
\newcommand{\tnorm}{\@ifstar\@tnorms\@tnorm}
\newcommand{\@tnorms}[1]{%
  \left|\mkern-1.5mu\left|\mkern-1.5mu\left|
   #1
  \right|\mkern-1.5mu\right|\mkern-1.5mu\right|
}
\newcommand{\@tnorm}[2][]{%
  \mathopen{#1|\mkern-1.5mu#1|\mkern-1.5mu#1|}
  #2
  \mathclose{#1|\mkern-1.5mu#1|\mkern-1.5mu#1|}
}
\newcommand{\jump}[1]{\llbracket #1 \rrbracket}
\newcommand{\ejump}[1]{\langle\!\langle #1 \rangle\!\rangle}
\newtheorem{theorem}{Theorem}[section]
\newtheorem{lemma}[theorem]{Lemma}
\newtheorem{remark}{Remark}
\newtheorem{definition}{Definition}
\numberwithin{equation}{section}
\title{A posteriori error analysis of a space-time hybridizable
discontinuous Galerkin method for the advection-diffusion
problem}
\author{Y. Wang\thanks{Department of Applied Mathematics, University of
    Waterloo, ON, Canada (\url{yuan.wang@uwaterloo.ca}),
    \url{http://orcid.org/0009-0006-8092-4378}
 } \and
  S. Rhebergen\thanks{Department of Applied Mathematics, University of
    Waterloo, ON, Canada (\url{srheberg@uwaterloo.ca}),
    \url{http://orcid.org/0000-0001-6036-0356}}}
\begin{document}
\maketitle
\begin{abstract}
  We present and analyze an a posteriori error estimator for a
  space-time hybridizable discontinuous Galerkin discretization
  of the time-dependent advection-diffusion problem. The
  residual-based error estimator is proven to be reliable and
  locally efficient. In the reliability analysis we combine a
  P\'eclet-robust coercivity type result and a saturation
  assumption, while local efficiency analysis is based on using
  bubble functions. The analysis considers both local space and
  time adaptivity and is verified by numerical simulations on
  problems which include boundary and interior layers.
\end{abstract}

\section{Introduction}
\label{s:introduction}

In this paper we present an a posteriori error analysis for a
space-time hybridizable discontinuous Galerkin (HDG) method for the
time-dependent advection-diffusion problem. When advection dominates,
the solution to such problems may admit sharp boundary and/or interior
layer(s) and a uniform refinement strategy may be inefficient to
reduce the numerical error when these layers are present. Special
layer-adapted meshes have been devised for layer problems
\cite{Roos:book}. However, when the location of the layer is unknown,
which is typically the case for time-dependent problems, adaptive mesh
refinement (AMR) may be a more desirable approach.

A posteriori error analysis has been studied for the stationary
advection-diffusion problem, in the advection-dominated limit, for
various finite element methods. Examples include a posteriori error
analysis for conforming finite element methods
\cite{Kunert:2003,Sangalli:2008,Verfurth:1998,Verfurth:2005},
discontinuous Galerkin methods (DG)
\cite{Ern:2008,Ern:2010,Schotzau:2014,Schotzau:2009,Schotzau:2011a},
and HDG methods \cite{Araya:2019,Chen:2016,Natasha:2021}. The focus
of these studies has been the robustness of the error estimator, i.e.,
that the ratio between the estimated error and the true error is
independent of the P\'eclet number.

Nonrobustness of the error estimator with respect to the standard
energy norm was first observed in \cite{Verfurth:1998}. To attain
robustness, a dual norm as a measurement of the error in the advective
gradient is devised \cite{Verfurth:2005b}. This approach was also used in
\cite{Ern:2010,Schotzau:2009,Verfurth:2005}. Specifically, a
P\'eclet-robust continuous inf-sup stability condition involving the
dual norm helps establish a reliability bound with a constant
independent of the P\'eclet number. However, the dual norm cannot be
evaluated locally and so is not suitable for practical
computations. To address this, \cite{Verfurth:2005b} computes a bound
for the dual norm, but at the expense of needing to solve an
auxiliary stationary reaction-diffusion problem. An alternative dual
norm, more suitable for advection-dominated problems, was presented in
\cite{Sangalli:2008}. Their residual-based estimator was shown to be
almost robust in one spatial dimension. An a posteriori error
analysis, not involving dual norms, was presented in
\cite{Chen:2016}. Using a weighted test function, \cite{Ayuso:2009}
proved a P\'eclet-robust discrete inf-sup condition for a DG
discretization of the advection-diffusion problem in the
advection-dominated regime (see \cite[Theorem 4.4]{Ayuso:2009}). The a
posteriori error analysis of \cite{Araya:2019,Chen:2016} is based on a
P\'eclet-robust coercivity type result (see \cite[Lemma
4.1]{Chen:2016}) similar to \cite[Lemma 4.1]{Ayuso:2009} (an
intermediate result used to obtain the P\'eclet-robust inf-sup
condition).

For the time-dependent advection-diffusion problem, various a
posteriori error estimators have been introduced
\cite{Araya:2014,Berrone:2004,Cangiani:2019,Cangiani:2014,Ern:2005,Verfurth:2005b}.
For example, a space-time version of the error estimator for the
stationary problem in \cite{Verfurth:1998} is derived and analyzed in
\cite{Araya:2014}, inheriting the nonrobustness. In contrast, based on
\cite{Verfurth:2005}, the dual norm technique is applied to the
time-dependent problem in \cite{Verfurth:2005b} in order to attain
P\'eclet robust space-time adaptivity. Furthermore, elliptic
reconstruction \cite{Georgoulis:2011,Lakkis:2006,Makridakis:2003}
provides a framework to extend error estimators for the stationary
problem to the time-dependent problem. This is used, for example, in
\cite{Cangiani:2019,Cangiani:2014} to extend the error estimator in
\cite{Schotzau:2009} for stationary problems to time-dependent
problems.

In this paper we consider a posteriori error analysis of a second
order accurate in time and arbitrary order accurate in space
space-time HDG discretization of the time-dependent
advection-diffusion problem \cite{Rhebergen:2013}. An a priori error
analysis of this discretization was presented in \cite{Kirk:2019} and
later extended to the advection-dominated regime in
\cite{Wang:2023}. In \cite[Theorem 4.1]{Wang:2023} we proved a
P\'eclet-robust discrete inf-sup condition. Analogous to the approach
taken in \cite{Chen:2016} for the stationary problem, the basis for
the a posteriori error analysis in this paper is the intermediate
P\'eclet-robust coercivity result \cite[Lemma
4.2]{Wang:2023}. Additionally, since our discretization is second
order accurate in time, we use a saturation assumption (inspired by
\cite{Burman:2009}) to bound the error in the time-derivative. We
remark that despite a nonrobust a posteriori error bound, as shown in
\cref{thm:reliability} and \cref{thm:efficiency}, the norm we use is locally
computable. Furthermore, the error estimator in this paper is fully
local hence it is an estimator for local space and time adaptivity in
the AMR procedure.

The paper is organized as follows. In \cref{s:advecdiffuprob}, we
introduce the space-time formulation of the time-dependent
advection-diffusion problem. In \cref{s:spacetimehdg} we describe the
finite element spaces and the space-time HDG discretization of the
advection-diffusion problem. The main results, the reliability and the
local efficiency of the error estimator, are presented and proved in
\cref{s:apos}. In \cref{s:numericalEx}, we present numerical examples,
particularly ones with boundary and interior layers, and compare the
results with the error analysis in \cref{s:apos}. Finally, conclusions
are drawn in \cref{s:conclusions}.

\section{The advection-diffusion problem}
\label{s:advecdiffuprob}

Let $I := (0,T)$ be a time interval of interest with $T \ge 1$, let
$\Omega \subset \mathbb{R}^d$ be a polygonal ($d=2$) or a polyhedral
($d=3$) domain, and let the boundary of $\Omega$ be partitioned as
$\partial\Omega = \Gamma_D \cup \Gamma_N$ where
$\Gamma_D \cap \Gamma_N = \emptyset$. Furthermore, let
$\mathcal{E} := I \times \Omega$ be a $(d+1)$-dimensional domain that
is Lipschitz and let $\Omega_0 = \cbr[0]{0} \times \Omega$ and
$\Omega_T = \cbr[0]{T} \times \Omega$. We partition the boundary of
$\mathcal{E}$ as
$\partial\mathcal{E}=\partial\mathcal{E}_D\cup\partial\mathcal{E}_N$
where $\partial\mathcal{E}_D \cap \partial \mathcal{E}_N = \emptyset$
and $\partial \mathcal{E}_D := \Gamma_D \times I$. We denote by
${n}:=\del{{n}_t,\overline{{n}}} \in \mathbb{R}^{(d+1)}$ the outward
space-time normal vector to $\partial\mathcal{E}$, where
${n}_t \in \mathbb{R}$ and $\overline{{n}} \in \mathbb{R}^d$ denote
the temporal and spatial components of $n$, respectively.

We consider the space-time formulation of the time-dependent
advection-diffusion equation:
\begin{subequations}
  \label{eq:advdif}
  \begin{align}
    \label{eq:st_adr}
    \nabla\cdot\del{{\beta}u}
    -\varepsilon\overline{\nabla}^2u
    &=f
    &&
       \text{in }
       \mathcal{E},
    \\
    \label{eq:st_adr_bcN}
    -\zeta^- u \beta \cdot{n}
    +\varepsilon\overline{\nabla}u\cdot\bar{n}
    &=g
    &&
       \text{on }
       \partial\mathcal{E}_N,
    \\
    \label{eq:st_adr_bcD}
    u&=0
    &&
       \text{on }
       \partial\mathcal{E}_D,
  \end{align}
\end{subequations}
where
$\overline{\nabla}=\del{\partial_{x_1},\partial_{x_2},\dots,\partial_{x_d}}$,
$\nabla:=\del[0]{\partial_t,\overline{\nabla}}$, $\varepsilon>0$ is
the constant diffusion coefficient, $\zeta^-=1$ on
$\partial\mathcal{E}_N^-:=\cbr[0]{(t,x) \in \mathcal{E}_N\,:\,
  \beta\cdot n < 0}$, $\zeta^-=0$ on
$\partial\mathcal{E}_N\backslash\partial\mathcal{E}_N^-$,
$f:=f(t,x) \in L^2(\mathcal{E})$ is a forcing term, and
$g:=g(t,x) \in L^2(\partial\mathcal{E}_N)$ is a given boundary
condition. Furthermore,
$\beta:=\del[0]{1,\overline{\beta}} \in \mathbb{R}^{d+1}$ with
$\overline{\beta} \in \sbr[0]{W^{1,\infty}(\mathcal{E})}^{d}$ a given
divergence-free function. We will assume that
$\norm[0]{\bar{\beta}}_{L^{\infty}(\mathcal{E})}\le 1$ and, following
\cite{Ayuso:2009},
$\norm[0]{\bar{\beta}}_{W^{1,\infty}(\mathcal{E})}\leq
c\norm[0]{\bar{\beta}}_{L^{\infty}(\mathcal{E})}\le c$, with $c>0$ a
constant. Furthermore, following \cite{Cangiani:2019,Schotzau:2009},
we assume that the size of $\Omega$ is order 1 so that
$\varepsilon^{-1}$ is the P\'eclet number of \cref{eq:advdif}.

\begin{remark}
  Note that by definition of $\nabla$, $\beta$, and $\mathcal{E}$,
  \cref{eq:st_adr} can be written as:
  \begin{equation*}
    \partial_t u
    +
    \overline{\nabla}\cdot\del[0]{\overline{\beta}u}
    -
    \varepsilon\overline{\nabla}^2u
    =f,
    \quad
    t \in I,\ x \in \Omega.
  \end{equation*}
  The space-time formulation \cref{eq:advdif}, however, is more
  convenient for the analysis in this paper.
\end{remark}

\section{The space-time hybridizable discontinuous Galerkin method}
\label{s:spacetimehdg}

\subsection{The space-time mesh}
\label{ss:description-stslabsfaceselements}

The space-time domain $\mathcal{E}$ is initially partitioned into $N$
space-time slabs $\mathcal{E}=\cup_{n=0}^{N-1}{\mathcal{E}^n}$. Each
space-time slab is defined as $\mathcal{E}^n := I_n \times \Omega$
where $I_n=\del[0]{t_n,t_{n+1}}$ denotes the $n$th time interval. The
space-time mesh results from dividing each space-time slab into
space-time elements, $\mathcal{E}^n:=\cup_j\mathcal{K}_j^n$. We denote
by $\mathcal{T}_h:=\cup_{n=0}^{N-1}\del[1]{\cup_j\mathcal{K}_j^n}$ the
set of all space-time elements.

Each space-time element $\mathcal{K}$ can be described with a fixed
reference element $\widehat{\mathcal{K}}=\del[0]{-1,1}^{d+1}$ and a
mapping
$\Phi_{\mathcal{K}} : \widehat{\mathcal{K}} \rightarrow \mathcal{K}$,
which is a composition of two mappings.  The first, denoted by
$G_{\mathcal{K}}:\widehat{\mathcal{K}}\rightarrow\widetilde{\mathcal{K}}$,
maps $\widehat{\mathcal{K}}$ to its affine domain
$\widetilde{\mathcal{K}}:=(0,\delta t_{\mathcal{K}}) \times
(0,h_K)^d$. Here, $h_K$ and $\delta t_{\mathcal{K}}$ denote the
spatial mesh size and the time-step, respectively. The affine mapping
is followed by the diffeomorphism $\phi_{\mathcal{K}}$ such that
$\phi_{\mathcal{K}}(\widetilde{\mathcal{K}}) = \mathcal{K}$. The
construction process can be also found in
\cite{Georgoulis:thesis,Sudirham:2006}. For notational purposes, we
will sometimes write $(t,x) = (x_0, x_1, ... x_d)$, with $t=x_0$. On
vectors/matrices, the $0$ index is associated with time.

The shape-regularity of each space-time element $\mathcal{K}$ is
described by the Jacobian of $\phi_{\mathcal{K}}$, denoted by
$J_{\phi_{\mathcal{K}}}$. Following
\cite{Georgoulis:thesis,Sudirham:2006}, we assume for all
$\mathcal{K}\in\mathcal{T}_h$:
\begin{equation}
  \label{eq:diffeom_jac}
  c^{-1}
  \leq
  \envert[0]{\det J_{\phi_{\mathcal{K}}}}
  \leq
  c
  ,
  \quad
  c^{-1}
  \leq
  \envert[0]{\det J^{-1}_{\phi_\mathcal{K}}}
  \leq
  c
  ,
  \quad
  \norm[0]{
    \del[0]{J_{\phi_{\mathcal{K}}}}_{ij}
  }_{L^\infty(\widetilde{\mathcal{K}})}
  \leq
  c,
  \quad
  1\leq i,j\leq d,
\end{equation}
where $c$ is a generic constant independent of $h_K$,
$\delta t_{\mathcal{K}}$, $\varepsilon$, and $T$. Since the spatial
domain $\Omega$ does not depend on time and since the mapping from
$(\tilde{t},\tilde{x})$ to $t$ is a linear translation, and only with
respect to $\tilde{t}$, we have
\begin{equation}
  \label{eq:diffeom_regular_special}
  \del{J_{\phi_{\mathcal{K}}}}_{00}=1,
  \quad
  \del{J_{\phi_{\mathcal{K}}}}_{0k}=0,
  \quad
  \del{J_{\phi_{\mathcal{K}}}}_{k0}=0,
  \quad
  1\le k\le d,\quad\forall\mathcal{K}\in\mathcal{T}_h.
\end{equation}
Let $F^j$ be a facet of $\mathcal{K}$ where $\widetilde{x}_j$
($\tilde{t}$ when $j=0$) is constant in its affine domain. The
parametrization of $F^j$, obtained from the restriction of
$\phi_{\mathcal{K}}$ to $\widetilde{F}^j$, is denoted by
$\phi_{F}$. We furthermore denote by
$J_{\phi_\mathcal{K}}^{j} \in \mathbb{R}^{(d+1)\times d}$ the matrix
obtained from $J_{\phi_{\mathcal{K}}}$ after removing the
$j^{\text{th}}$ column vector. By \cite[Theorem 21.3 and Definition on
page 189]{Munkres:book} it holds that
\begin{equation}
  \label{eq:k_surface}
  \int_{F^j} f(x)\dif s
  =
  \int_{\widetilde{F}^j}
  f\del{\phi_{F}(\widetilde{x})}
  \del[1]{
    \det\del[1]{
      \del[0]{
        J_{\phi_\mathcal{K}}^{j}
      }^\intercal
      J_{\phi_\mathcal{K}}^{j}
    }
  }^{1/2}
  \dif \widetilde{s}.
\end{equation}
We will assume that
\begin{equation}
  \label{eq:diffeom_regular_d}
  c^{-1}
  \leq
  \del[1]{
    \det\del[0]{
      \del[0]{
        J_{\phi_\mathcal{K}}^{i}
      }^\intercal
      J_{\phi_\mathcal{K}}^{i}
    }
  }^{1/2}
  \leq
  c ,
  \quad
  \norm[0]{
    J^i_{\phi_{\mathcal{K}}}
  }_{L^\infty(\widetilde{\mathcal{K}})}
  \leq
  c,
  \quad
  0\leq i\leq d.
\end{equation}

For a space-time element $\mathcal{K}$ with $t\in(t_*,t^*)$, we define
$K_*$ to be the part of the boundary on which $t=t_*$ and $K^*$ to be
the part of the boundary on which $t=t^*$.  We partition the boundary
of $\mathcal{K}$ as
$\partial\mathcal{K}=\mathcal{Q}_{\mathcal{K}} \cup
\mathcal{R}_{\mathcal{K}}$ where
$\mathcal{R}_{\mathcal{K}} := K_* \cup K^*$ and
$\mathcal{Q}_{\mathcal{K}}:=\partial\mathcal{K}\setminus\mathcal{R}_{\mathcal{K}}$.
We assume that boundary facets of $\mathcal{K}$ are flat. The analysis
in this paper also requires the $(d-1)$-dimensional edges of ${K}^*$
and ${K}_*$. We denote such an edge by $E_{\mathcal{K}}$.

We consider facets in the mesh that belong to any of the following
three cases: (1) boundary facets; (2) interior facets shared by two
elements at the same refinement level; (3) interior facets shared
between more than two elements. We denote the set of all facets by
$\mathcal{F}_h$. Within this set, the sets of all interior facets,
boundary facets, $\mathcal{Q}$-facets (facets on which ${n}_t = 0$),
and $\mathcal{R}$-facets (facets on which $\bar{n}=0$) are denoted by
$\mathcal{F}_h^i$, $\mathcal{F}_h^b$, $\mathcal{F}_{\mathcal{Q},h}$,
and $\mathcal{F}_{\mathcal{R},h}$, respectively. The union of all
facets in $\mathcal{F}_h$ is denoted by $\Gamma$. Furthermore, we
denote by $\partial\mathcal{T}_h$ the set of element boundaries, by
$\mathcal{Q}_h$ the set that consists of parts of an element boundary
on which $n_t=0$, by $\mathcal{R}_h$ the set that consists of parts of
an element boundary on which $\bar{n}=0$, and by
$\partial\mathcal{T}_h^i$ the set of element boundaries excluding the
part of the element boundary that lies on $\partial\mathcal{E}$.

We denote by $\omega_{\mathcal{K}}$ the union of elements
$\mathcal{K}'$ such that
$\partial\mathcal{K}\cap\partial\mathcal{K}'=F$, with $F$ a facet, and
denote by $\sigma_{\mathcal{K}}$ the union of elements that share at
least one vertex with $\mathcal{K}$. Consider now a facet $F$. Any
elements containing facets $F'$ such that $F \cap F'$ is itself a
facet belong to the set $\omega_F$. See \cref{fig:sets} for a
depiction of $\omega_{\mathcal{K}}$, $\sigma_{\mathcal{K}}$, and
$\omega_F$.

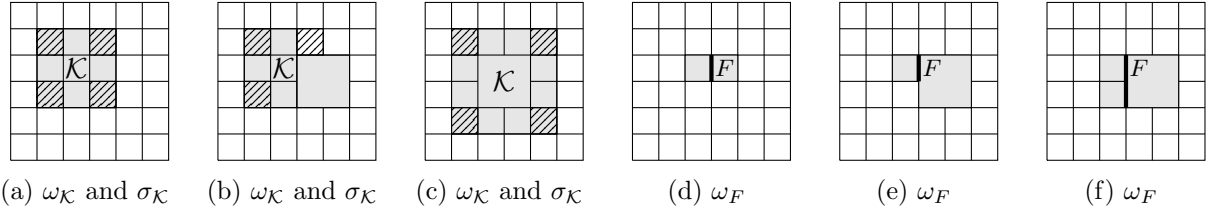
\begin{figure}[tbp]
  \centering
  \subfloat[$\omega_{\mathcal{K}}$ and $\sigma_{\mathcal{K}}$]{
    \begin{tikzpicture}[scale=0.35]
      \draw [color=black,fill=gray!20!white] (1,2) rectangle (4,5);
      \draw (1,4)--(2,4)--(2,5)--(1,5)--(1,4);
      \fill [pattern=north east lines] (1,4)--(2,4)--(2,5)--(1,5)--(1,4);
      \draw (3,4)--(4,4)--(4,5)--(3,5)--(3,4);
      \fill [pattern=north east lines] (3,4)--(4,4)--(4,5)--(3,5)--(3,4);
      \draw (1,2)--(2,2)--(2,3)--(1,3)--(1,2);
      \fill [pattern=north east lines] (1,2)--(2,2)--(2,3)--(1,3)--(1,2);
      \draw (3,2)--(4,2)--(4,3)--(3,3)--(3,2);
      \fill [pattern=north east lines] (3,2)--(4,2)--(4,3)--(3,3)--(3,2);
      \node at (2.5,3.5) {{$\mathcal{K}$}};
      \draw [color=black] (0,0)--(0,6)--(6,6)--(6,0)--(0,0);
      \draw [color=black] (1,0)--(1,6);
      \draw [color=black] (2,0)--(2,6);
      \draw [color=black] (3,0)--(3,6);
      \draw [color=black] (4,0)--(4,6);
      \draw [color=black] (5,0)--(5,6);
      \draw [color=black] (0,1)--(6,1);
      \draw [color=black] (0,2)--(6,2);
      \draw [color=black] (0,3)--(6,3);
      \draw [color=black] (0,4)--(6,4);
      \draw [color=black] (0,5)--(6,5);
    \end{tikzpicture}
  }
  \quad
  \subfloat[$\omega_{\mathcal{K}}$ and $\sigma_{\mathcal{K}}$]{
    \begin{tikzpicture}[scale=0.35]
      \draw [color=black,fill=gray!20!white] (1,2) rectangle (3,5);
      \draw [color=black,fill=gray!20!white] (3,2) rectangle (5,4);
      \draw (1,4)--(2,4)--(2,5)--(1,5)--(1,4);
      \fill [pattern=north east lines] (1,4)--(2,4)--(2,5)--(1,5)--(1,4);
      \draw (3,4)--(4,4)--(4,5)--(3,5)--(3,4);
      \fill [pattern=north east lines] (3,4)--(4,4)--(4,5)--(3,5)--(3,4);
      \draw (1,2)--(2,2)--(2,3)--(1,3)--(1,2);
      \fill [pattern=north east lines] (1,2)--(2,2)--(2,3)--(1,3)--(1,2);
      \node at (2.5,3.5) {{$\mathcal{K}$}};
      \draw [color=black] (0,0)--(0,6)--(6,6)--(6,0)--(0,0);
      \draw [color=black] (1,0)--(1,6);
      \draw [color=black] (2,0)--(2,6);
      \draw [color=black] (3,0)--(3,6);
      \draw [color=black] (4,0)--(4,2);
      \draw [color=black] (4,4)--(4,6);
      \draw [color=black] (5,0)--(5,6);
      \draw [color=black] (0,1)--(6,1);
      \draw [color=black] (0,2)--(6,2);
      \draw [color=black] (0,3)--(3,3);
      \draw [color=black] (5,3)--(6,3);
      \draw [color=black] (0,4)--(6,4);
      \draw [color=black] (0,5)--(6,5);
    \end{tikzpicture}
  }
  \quad
  \subfloat[$\omega_{\mathcal{K}}$ and $\sigma_{\mathcal{K}}$]{
    \begin{tikzpicture}[scale=0.35]
      \draw [color=black,fill=gray!20!white] (1,1) rectangle (5,5);
      \draw (1,1)--(2,1)--(2,2)--(1,2)--(1,1);
      \fill [pattern=north east lines] (1,1)--(2,1)--(2,2)--(1,2)--(1,1);
      \draw (4,1)--(5,1)--(5,2)--(4,2)--(4,1);
      \fill [pattern=north east lines] (4,1)--(5,1)--(5,2)--(4,2)--(4,1);
      \draw (1,4)--(2,4)--(2,5)--(1,5)--(1,4);
      \fill [pattern=north east lines] (1,4)--(2,4)--(2,5)--(1,5)--(1,4);
      \draw (4,4)--(5,4)--(5,5)--(4,5)--(4,4);
      \fill [pattern=north east lines] (4,4)--(5,4)--(5,5)--(4,5)--(4,4);
      \node at (3,3) {{$\mathcal{K}$}};
      \draw [color=black] (0,0)--(0,6)--(6,6)--(6,0)--(0,0);
      \draw [color=black] (1,0)--(1,6);
      \draw [color=black] (2,0)--(2,6);
      \draw [color=black] (3,0)--(3,2);
      \draw [color=black] (3,4)--(3,6);
      \draw [color=black] (4,0)--(4,6);
      \draw [color=black] (5,0)--(5,6);
      \draw [color=black] (0,1)--(6,1);
      \draw [color=black] (0,2)--(6,2);
      \draw [color=black] (0,3)--(2,3);
      \draw [color=black] (4,3)--(6,3);
      \draw [color=black] (0,4)--(6,4);
      \draw [color=black] (0,5)--(6,5);
    \end{tikzpicture}
  }
  \quad
  \subfloat[$\omega_F$]{
    \begin{tikzpicture}[scale=0.35]
      \draw [color=black] (0,0)--(0,6)--(6,6)--(6,0)--(0,0);
      \draw [color=black] (1,0)--(1,6);
      \draw [color=black] (2,0)--(2,6);
      \draw [color=black] (3,0)--(3,6);
      \draw [color=black] (4,0)--(4,6);
      \draw [color=black] (5,0)--(5,6);
      \draw [color=black] (0,1)--(6,1);
      \draw [color=black] (0,2)--(6,2);
      \draw [color=black] (0,3)--(6,3);
      \draw [color=black] (0,4)--(6,4);
      \draw [color=black] (0,5)--(6,5);
      \draw [color=black,fill=gray!20!white] (2,3) rectangle (3,4);
      \draw [color=black,fill=gray!20!white] (3,3) rectangle (4,4);
      \draw [black, ultra thick] (3,3)--(3,4);
      \node at (3.5,3.5) {\footnotesize{$F$}};
    \end{tikzpicture}
  }
  \quad
  \subfloat[$\omega_F$]{
    \begin{tikzpicture}[scale=0.35]
      \draw [color=black] (0,0)--(0,6)--(6,6)--(6,0)--(0,0);
      \draw [color=black] (1,0)--(1,6);
      \draw [color=black] (2,0)--(2,6);
      \draw [color=black] (3,0)--(3,6);
      \draw [color=black] (4,0)--(4,2);
      \draw [color=black] (4,4)--(4,6);
      \draw [color=black] (5,0)--(5,6);
      \draw [color=black] (0,1)--(6,1);
      \draw [color=black] (0,2)--(6,2);
      \draw [color=black] (0,3)--(3,3);
      \draw [color=black] (5,3)--(6,3);
      \draw [color=black] (0,4)--(6,4);
      \draw [color=black] (0,5)--(6,5);
      \draw [color=black,fill=gray!20!white] (2,3) rectangle (3,4);
      \draw [color=black,fill=gray!20!white] (3,2) rectangle (5,4);
      \draw [black, ultra thick] (3,3)--(3,4);
      \node at (3.5,3.5) {\footnotesize{$F$}};
    \end{tikzpicture}
  }
  \quad
  \subfloat[$\omega_F$]{
    \begin{tikzpicture}[scale=0.35]
      \draw [color=black] (0,0)--(0,6)--(6,6)--(6,0)--(0,0);
      \draw [color=black] (1,0)--(1,6);
      \draw [color=black] (2,0)--(2,6);
      \draw [color=black] (3,0)--(3,6);
      \draw [color=black] (4,0)--(4,2);
      \draw [color=black] (4,4)--(4,6);
      \draw [color=black] (5,0)--(5,6);
      \draw [color=black] (0,1)--(6,1);
      \draw [color=black] (0,2)--(6,2);
      \draw [color=black] (0,3)--(3,3);
      \draw [color=black] (5,3)--(6,3);
      \draw [color=black] (0,4)--(6,4);
      \draw [color=black] (0,5)--(6,5);
      \draw [color=black,fill=gray!20!white] (2,2) rectangle (3,3);
      \draw [color=black,fill=gray!20!white] (2,3) rectangle (3,4);
      \draw [color=black,fill=gray!20!white] (3,2) rectangle (5,4);
      \draw [black, ultra thick] (3,2)--(3,4);
      \node at (3.5,3.5) {\footnotesize{$F$}};
    \end{tikzpicture}
  }
  \caption{Depiction of sets $\omega_{\mathcal{K}}$,
	 $\sigma_{\mathcal{K}}$, and $\omega_F$ on conforming
	 and 1-irregularly refined meshs. Figures (A), (B), (C):
	 elements in the set $\omega_{\mathcal{K}}$ are the grey
	 colored elements excluding the hatched elements; elements in
	 the set $\sigma_{\mathcal{K}}$ are colored grey and include the
	hatched elements. Figures (D), (E), (F): elements in the set
	$\omega_F$ are colored grey. }
 \label{fig:sets}
\end{figure}

We will allow at most 1-irregularly refined space-time elements.
To account for local time-stepping, consider a space-time element
$\mathcal{K}$ in space-time slab $\mathcal{E}^n$. Then we
introduce, in addition to the local time-step $\delta
t_{\mathcal{K}}$ set by ${\Phi}_{\mathcal{K}}$, the slab
time-step $\Delta t_{\mathcal{K}} := t_{n+1}-t_n$, i.e., the
length of $I_n$. Note that $\delta t_{\mathcal{K}} \le \Delta
t_{\mathcal{K}}$ with $\delta t_{\mathcal{K}} < \Delta
t_{\mathcal{K}}$ when using local time-stepping. We will assume
that $\Delta t_{\mathcal{K}} / \delta t_{\mathcal{K}} \leq c$ for
all $\mathcal{K}\in\mathcal{T}_h$.

\subsection{The discretization}
\label{ss:approxspacesdisc}

We consider approximations to the advection-diffusion problem
\cref{eq:advdif} in the finite element space
$\boldsymbol{V}_h := V_h \times M_h$, where
\begin{align*}
  V_h
  &:=
    \cbr[0]{
    v_h
    \in
    L^2\del[0]{\mathcal{E}}:
    v_h|_{\mathcal{K}}
    \circ
    {\Phi}_{\mathcal{K}}
    \in
    Q^{\del[0]{p_t,p_s}}\del[0]{\widehat{\mathcal{K}}}
    \quad
    \forall\mathcal{K}\in\mathcal{T}_h
    },
  \\
  M_h
  &:=
    \cbr[0]{
    \mu_h
    \in
    L^2\del[0]{\Gamma}:
    \mu_h|_F
    \circ
    {\Phi}_{\mathcal{K}}
    \in
    Q^{\del{p_t,p_s}}\del[0]{\widehat{F}}
    \quad
    \forall F\in\mathcal{F}_h,\
    \mu_h=0 \text{ on } \partial\mathcal{E}_D}.
\end{align*}
Here $Q^{\del{p_t,p_s}}(U)$ is the set of all tensor product
polynomials of degree $p_t$ in the temporal direction and $p_s$ in
each spatial direction on a domain $U$. We will assume that
$p_s \ge 1$ and $p_t=1$, i.e., we consider a second order accurate
time stepping scheme. For simplicity of notation, we introduce
$\boldsymbol{v}_h:=(v_h, \mu_h) \in \boldsymbol{V}_h$ and
$\boldsymbol{u}_h := (u_h, \lambda_h) \in \boldsymbol{V}_h$. On an
element boundary we denote the HDG jump by
$\sbr{\boldsymbol{v}_h}:= \del{v_h-\mu_h}$ and on a facet
$F\in\mathcal{F}^i_h$, where
$F\subset\partial\mathcal{K}_1\cap\partial\mathcal{K}_2$, we denote
the usual DG jump by $\jump{v_h} := \del{v_{h1}{n}_1+v_{h2}{n}_2}$.
Next, consider two elements $\mathcal{K}_1$ and $\mathcal{K}_2$ such
that $K_1^* = K_{2,*}$. Denote the restriction of $\mu_h$ to
$\mathcal{Q}_{\mathcal{K}_1}$ and $\mathcal{Q}_{\mathcal{K}_2}$ by
$\mu_{h1}$ and $\mu_{h2}$, respectively. The jump of $\mu_h$ across
edges of $K_1^*$ is defined by $\ejump{\mu_h} := \mu_{h1} - \mu_{h2}$.
Note that for pairs of $K_1$ and $K_2$ such that
$K_1^*\subsetneq K_{2,*}$ or $K_{2,*}\subsetneq K_1^*$, we do not
define any edge jump.

Let $(u, v)_U$ and $\langle u, v \rangle_U$ be the $L^2$-inner products
on $U$ if $U \subset \mathbb{R}^{d+1}$ and $U \subset \mathbb{R}^d$,
respectively. We define
$(u, v)_{\mathcal{T}_h} := \sum_{\mathcal{K} \in \mathcal{T}_h}(u,
v)_{\mathcal{K}}$,
$\langle u, v \rangle_{\partial \mathcal{T}_h} := \sum_{\mathcal{K}
  \in \mathcal{T}_h} \langle u, v \rangle_{\partial \mathcal{K}}$,
$\langle u, v \rangle_{\mathcal{Q}_h} := \sum_{\mathcal{K} \in
  \mathcal{T}_h} \langle u, v \rangle_{\mathcal{Q}_{\mathcal{K}}}$,
and
$\langle u, v \rangle_{\partial\mathcal{E}_N} := \sum_{F \in
  \mathcal{F}_h^b \cap \partial\mathcal{E}_N} \langle u, v
\rangle_{F}$. The space-time IP-HDG method for \cref{eq:advdif} is
given by \cite{Kirk:2019,Wang:2023}: Find
$\boldsymbol{u}_h\in \boldsymbol{V}_h$ such that
\begin{equation}
  \label{eq:st_hdg_adr_compact}
  a_h\del[0]{
    \boldsymbol{u}_h
    ,
    \boldsymbol{v}_h
  }
  =
  \del[0]{f,v_h}_{\mathcal{T}_h}
  +
  \langle
  g,
  \mu_h
  \rangle_{\partial\mathcal{E}_N}
  \quad
  \forall\boldsymbol{v}_h\in \boldsymbol{V}_h,
\end{equation}
with
$a_h(\boldsymbol{u}_h, \boldsymbol{v}_h) := a_{h,d}(\boldsymbol{u}_h,
\boldsymbol{v}_h) + a_{h,c}(\boldsymbol{u}_h, \boldsymbol{v}_h)$ and
where
\begin{align*}
  a_{h,d}\del[0]{
  \boldsymbol{u} , \boldsymbol{v}
  }
  &
    :=
    \del[0]{\varepsilon\overline{\nabla}u,\overline{\nabla}v}_{\mathcal{T}_h}
    +
    \langle
    \varepsilon\alpha h_{K}^{-1}
    \sbr[0]{\boldsymbol{u}}
    ,
    \sbr[0]{\boldsymbol{v}}
    \rangle_{\mathcal{Q}_h}
    -
    \langle
    \varepsilon\sbr[0]{\boldsymbol{u}}
    ,
    \overline{\nabla}_{\bar{{n}}}v
    \rangle_{\mathcal{Q}_h}
    -
    \langle
    \varepsilon\overline{\nabla}_{\bar{{n}}}u,
    \sbr[0]{\boldsymbol{v}}
    \rangle_{\mathcal{Q}_h},
  \\
  a_{h,c}\del[0]{
  \boldsymbol{u} , \boldsymbol{v}
  }
  &
    :=
    -
    \del[0]{{\beta}u,\nabla v}_{\mathcal{T}_h}
    +
    \langle
    \zeta^+ \beta \cdot n \lambda, \mu
    \rangle_{\partial\mathcal{E}_N}
    +
    \langle
    \del[0]{{\beta}\cdot{n}}
    \lambda
    +
    \beta_s\sbr{\boldsymbol{u}}
    ,
    \sbr{\boldsymbol{v}}
    \rangle_{\partial\mathcal{T}_h}.
\end{align*}
Here $\alpha>0$ is a penalty parameter, $\zeta^+$ denotes the outflow
boundary indicator on a facet, and
$\beta_s := \sup_{(x,t)\in F}|\beta\cdot n|$, for
$F \subset \partial \mathcal{K}$. Note that
\begin{equation}
  \label{eq:betasinfmax}
  \inf_{(x,t) \in F}
  (\beta_s  - \tfrac{1}{2}\beta\cdot n)
  \ge
  \tfrac{1}{2}
  \max_{(x,t)\in F}|\beta\cdot n|
  \qquad
  \forall F \subset \partial\mathcal{K},
  \ \forall \mathcal{K} \in \mathcal{T}_h.
\end{equation}
Let $H^s({U})$ be the usual Sobolev space for $s \ge 0$, where we
remark that $H^0(U) = L^2(U)$. Furthermore, let
$V := \cbr[0]{v \in H^1(\mathcal{E})\,|\, v|_{\partial\mathcal{E}_D} =
  0} \cap H^2(\mathcal{E})$ and $M$ the trace space of $V$. Let
$V(h) := V_h+V$, $M(h) := M_h+M$, and
$\boldsymbol{V}(h) := V(h) \times M(h)$. We define the following norms
on $\boldsymbol{V}(h)$:
\begin{align*}
  \tnorm{\boldsymbol{v}}_{s,h}^2
  :=
  &
    \sum_{\mathcal{K}\in\mathcal{T}_h}
    \norm[0]{v}_\mathcal{K}^2
    +
    \sum_{\mathcal{K}\in\mathcal{T}_h}
    \norm[0]{ \envert[0]{ \beta_s - \tfrac{1}{2} \beta\cdot{n} }^{1/2} \sbr{\boldsymbol{v}}
    }_{\partial\mathcal{K}}^2
    +
    \sum_{F\in\partial\mathcal{E}_N}
    \norm[0]{
    \envert[0]{
    \tfrac{1}{2}
    \beta\cdot{n}
    }^{1/2}
    \mu
    }_F^2
  \\
  &
    +
    \sum_{\mathcal{K}\in\mathcal{T}_h}
    \varepsilon\norm[0]{\overline{\nabla}v}_\mathcal{K}^2
    +
    \sum_{\mathcal{K}\in\mathcal{T}_h}
    \varepsilon h_K^{-1}
    \norm[0]{\sbr[0]{\boldsymbol{v}}}_{\mathcal{Q}_\mathcal{K}}^2
    +
    \sum_{\mathcal{K}\in\mathcal{T}_h}
    \tau_\varepsilon
    \norm[0]{\partial_t v}^2_\mathcal{K},
    \nonumber
  \\
  \tnorm{\boldsymbol{v}}_{sT,h}^2
  :=
  &
    \sum_{\mathcal{K}\in\mathcal{T}_h}
    \norm[0]{v}_\mathcal{K}^2
    +
    \sum_{\mathcal{K}\in\mathcal{T}_h}
    \norm[0]{ \envert[0]{ \beta_s - \tfrac{1}{2} \beta\cdot{n} }^{1/2} \sbr{\boldsymbol{v}}
    }_{\partial\mathcal{K}}^2
    +
    T\sum_{F\in\partial\mathcal{E}_N}
    \norm[0]{
    \envert[0]{
    \tfrac{1}{2}
    \beta\cdot{n}
    }^{1/2}
    \mu
    }_F^2
  \\
  &
    +
    T\sum_{\mathcal{K}\in\mathcal{T}_h}
    \varepsilon\norm[0]{\overline{\nabla}v}_\mathcal{K}^2
    +
    \sum_{\mathcal{K}\in\mathcal{T}_h}
    \varepsilon h_K^{-1}
    \norm[0]{\sbr[0]{\boldsymbol{v}}}_{\mathcal{Q}_\mathcal{K}}^2
    +
    \sum_{\mathcal{K}\in\mathcal{T}_h}
    \tau_\varepsilon
    \norm[0]{\partial_t v}^2_\mathcal{K},
    \nonumber
\end{align*}
where, following \cite{Wang:2023}, the parameter
$\tau_{\varepsilon}$ depends on the size of the space-time
element compared to the diffusion parameter $\varepsilon$:
\begin{equation*}
  \tau_{\varepsilon} :=
  \Delta t_{\mathcal{K}}\tilde{\varepsilon}
  :=
  \begin{cases}
    \Delta t_{\mathcal{K}}
    &\text{if } \mathcal{K} \in \mathcal{T}_h^d
    :=
    \cbr[1]{
      \mathcal{K}\in\mathcal{T}_h
      |
      {\delta t_{\mathcal{K}}}\leq h_K\leq\varepsilon
    },
    \\
    \Delta t_{\mathcal{K}}\varepsilon^{1/2}
    &\text{if } \mathcal{K} \in \mathcal{T}_h^x
    :=
    \cbr[1]{
      \mathcal{K}\in\mathcal{T}_h
      |
      {\delta t_{\mathcal{K}}}\leq \varepsilon<h_K
    },
    \\
    \Delta t_{\mathcal{K}}\varepsilon
    &\text{if } \mathcal{K} \in \mathcal{T}_h^c
    :=
    \cbr[1]{
      \mathcal{K}\in\mathcal{T}_h
      |
      \varepsilon<{\delta t_{\mathcal{K}}}\leq h_K
    }.
  \end{cases}
\end{equation*}
The following inf-sup condition was proven in \cite[Section
4.2]{Wang:2023}. There exists a constant $c_T>0$ independent of $h_K$,
$\delta t_{\mathcal{K}}$, and $\varepsilon$, but linear in $T$, such
that
\begin{equation}
  \label{eq:inf_sup_s_norm}
  \tnorm{\boldsymbol{w}_h}_{s,h}
  \le c_T
  \sup_{\boldsymbol{v}_h\in \boldsymbol{V}_h}
  \frac{
    a_h(\boldsymbol{w}_h,\boldsymbol{v}_h)
  }{
	  \tnorm{\boldsymbol{v}_h}_{s,h}
  }
  \qquad \forall \boldsymbol{w}_h\in \boldsymbol{V}_h.
\end{equation}

\section{A posteriori error analysis}
\label{s:apos}

The residual on a space-time element $\mathcal{K}\in\mathcal{T}_h$ is
defined as
$R_h^{\mathcal{K}} := f + \varepsilon\overline{\nabla}^2u_h -
\nabla\cdot\del{\beta u_h}$, while the residual on a facet
$F \in \partial\mathcal{E}_N$ is defined as
$R_h^N := g - \varepsilon \overline{\nabla} u_h \cdot{\overline{{n}}}
+ \zeta^- u_h {\beta\cdot{n}}$. Furthermore, we define
$\eta_R^\mathcal{K} :=
\lambda_\mathcal{K}\norm[0]{R_h^{\mathcal{K}}}_\mathcal{K}$ where
$\lambda_{\mathcal{K}}:=\min\cbr[0]{1,h_K\varepsilon^{-1/2}}$,
${\eta_{J,1}^\mathcal{K}} := h_K^{1/2}\varepsilon^{1/2}
\norm[1]{\jump{\overline{\nabla}_{\overline{{n}}}u_h}}_{\mathcal{Q}_\mathcal{K}\setminus\partial\mathcal{E}}$,
$\eta_{J,2}^\mathcal{K} := \del[1]{
  \del[0]{\eta_{J,2,1}^\mathcal{K}}^2 +
  \del[0]{\eta_{J,2,2}^\mathcal{K}}^2 }^{1/2}$ where
$\eta_{J,2,1}^\mathcal{K} := h_K^{-1/2}{\varepsilon}^{1/2}
\norm{\sbr{\boldsymbol{u}_h}}_{\mathcal{Q}_{\mathcal{K}}}$ and
$\eta_{J,2,2}^\mathcal{K} := h_K^{1/4} {\varepsilon}^{-1/2}
\norm{\sbr{\boldsymbol{u}_h}}_{\mathcal{Q}_{\mathcal{K}}}$,
$\eta_{J,3}^\mathcal{K} := \del[1]{
  \del[0]{\eta_{J,3,\mathcal{Q}}^\mathcal{K}}^2 +
  \del[0]{\eta_{J,3,\mathcal{R}}^\mathcal{K}}^2 }^{1/2}$ where
$\eta_{J,3,\mathcal{Q}}^\mathcal{K} :=
\norm[0]{\envert[0]{\beta_s-\tfrac{1}{2}\beta\cdot
    n}^{1/2}\sbr{\boldsymbol{u}_h}}_{\mathcal{Q}_{\mathcal{K}}}$ and
$\eta_{J,3,\mathcal{R}}^\mathcal{K} :=
\norm[0]{\envert[0]{\beta_s-\tfrac{1}{2}\beta\cdot
    n}^{1/2}\sbr{\boldsymbol{u}_h}}_{\mathcal{R}_{\mathcal{K}}}$,
${\eta_{BC,1}^\mathcal{K}} := h_K^{1/2}\varepsilon^{-1/2}
\norm[0]{R_h^N}_{\mathcal{Q}_\mathcal{K}\cap\partial\mathcal{E}_N}$,
and
$\eta_{BC,2}^\mathcal{K} :=
\norm[0]{R_h^N}_{\partial\mathcal{K}\cap\Omega_0}
=\norm[0]{g-u_h}_{\partial\mathcal{K}\cap\Omega_0}$. We then introduce
the following a posteriori error estimator for the solution
$\boldsymbol{u}_h\in\boldsymbol{V}_h$ to \cref{eq:st_hdg_adr_compact}:
\begin{equation}
  \label{eq:apos_st_hdg_ests_total}
  \eta^2
  :=
  \sum_{\mathcal{K}\in\mathcal{T}_h}
  \del[0]{\eta^{\mathcal{K}}}^2,
\end{equation}
where
$\del[0]{\eta^{\mathcal{K}}}^2 := \del[0]{\eta_R^\mathcal{K}}^2 +
\sum_{i=1}^3 \del[0]{\eta_{J,i}^\mathcal{K}}^2 + \sum_{j=1}^2
\del[0]{\eta_{BC,j}^\mathcal{K}}^2$. The following theorems establish
reliability and efficiency of the a posteriori error estimator
\cref{eq:apos_st_hdg_ests_total}. Their proofs are given in
\cref{ss:rel,ss:eff}, respectively.

\begin{theorem}[Reliability]
  \label{thm:reliability}
  Let $u$ solve \cref{eq:advdif}, $\boldsymbol{u} = (u, u|_{\Gamma})$,
  and let $\boldsymbol{u}_h$ solve
  \cref{eq:st_hdg_adr_compact}. Assuming that
  $\delta t_{\mathcal{K}} = \mathcal{O}(h_K^2)$, we have the following
  reliability estimate
   \begin{equation}
     \label{eq:reliability}
     \tnorm{
       \boldsymbol{u}-\boldsymbol{u}_h
     }_{sT,h}
     \le cT
     \varepsilon^{-1/2}
     \eta.
   \end{equation}
\end{theorem}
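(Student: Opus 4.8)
The plan is to reduce the reliability estimate \cref{eq:reliability} to bounding the residual of $\boldsymbol{u}_h$ tested against discrete functions and then to match the outcome term by term with the local indicators in $\eta^{\mathcal{K}}$; a saturation assumption is the device that makes the discrete stability estimate \cref{eq:inf_sup_s_norm} (with the P\'eclet-robust coercivity result \cite[Lemma 4.2]{Wang:2023} that underlies it) applicable to the continuous error. Since $u \in V$ solves \cref{eq:advdif}, the scheme is consistent: $\boldsymbol{u} = (u, u|_\Gamma)$ satisfies $a_h(\boldsymbol{u}, \boldsymbol{v}_h) = (f, v_h)_{\mathcal{T}_h} + \langle g, \mu_h \rangle_{\partial\mathcal{E}_N}$ for all $\boldsymbol{v}_h \in \boldsymbol{V}_h$, hence $a_h(\boldsymbol{u} - \boldsymbol{u}_h, \boldsymbol{v}_h) = 0$. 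Let $\mathcal{T}_{h/2}$ be the uniformly refined space--time mesh obtained by halving every $h_K$; by $\delta t_{\mathcal{K}} = \mathcal{O}(h_K^2)$ the refined mesh remains $1$-irregular and in the same element classification, so that the estimator computed on it, $\eta_{h/2}$, satisfies $\eta_{h/2} \simeq \eta$. With $\boldsymbol{u}_{h/2} \in \boldsymbol{V}_{h/2}$ the discrete solution on $\mathcal{T}_{h/2}$ and $\boldsymbol{u}_h$ embedded in $\boldsymbol{V}_{h/2}$ (extending $\lambda_h$ to the new facets by the single-valued trace of $u_h$, so that $\sbr{\boldsymbol{u}_h}$ and $\jump{\overline{\nabla}_{\overline{n}} u_h}$ vanish on every facet interior to a coarse element), the saturation assumption $\tnorm{\boldsymbol{u} - \boldsymbol{u}_{h/2}}_{sT,h} \le \gamma\,\tnorm{\boldsymbol{u} - \boldsymbol{u}_h}_{sT,h}$ with $\gamma \in (0,1)$ gives, via the triangle inequality, $\tnorm{\boldsymbol{u} - \boldsymbol{u}_h}_{sT,h} \le (1-\gamma)^{-1}\tnorm{\boldsymbol{u}_{h/2} - \boldsymbol{u}_h}_{sT,h}$, so it remains to bound the fully discrete error $\boldsymbol{\phi}_h := \boldsymbol{u}_{h/2} - \boldsymbol{u}_h$.

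Next I would apply the stability estimate \cref{eq:inf_sup_s_norm} on $\boldsymbol{V}_{h/2}$ (the argument of \cite{Wang:2023} yields it on any admissible mesh, with the constant still linear in $T$; passing from $\tnorm{\cdot}_{s,h}$ to $\tnorm{\cdot}_{sT,h}$ costs at most a bounded power of $T$), reducing the task to bounding $a_h(\boldsymbol{\phi}_h, \boldsymbol{v}_{h/2})$ for arbitrary $\boldsymbol{v}_{h/2} \in \boldsymbol{V}_{h/2}$. Galerkin orthogonality on $\mathcal{T}_{h/2}$ gives $a_h(\boldsymbol{\phi}_h, \boldsymbol{v}_{h/2}) = (f, v_{h/2})_{\mathcal{T}_{h/2}} + \langle g, \mu_{h/2}\rangle_{\partial\mathcal{E}_N} - a_h(\boldsymbol{u}_h, \boldsymbol{v}_{h/2})$. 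Integrating $a_h(\boldsymbol{u}_h, \boldsymbol{v}_{h/2})$ by parts element-by-element on $\mathcal{T}_{h/2}$ (legitimate since $u_h$ is polynomial on each coarse element) and using that the facets interior to a coarse element drop out, the right-hand side collapses to volume terms $(R_h^{\mathcal{K}}, v_{h/2})_{\mathcal{K}}$, facet terms built from the diffusive normal-flux jumps $\jump{\overline{\nabla}_{\overline{n}} u_h}$ and the HDG jumps $\sbr{\boldsymbol{u}_h}$ on the $\mathcal{Q}$- and $\mathcal{R}$-facets, and Neumann/initial-time boundary terms in $R_h^N$ and $g - u_h$ on $\Omega_0$ --- exactly the quantities appearing in $\eta^{\mathcal{K}}$.

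Each term is then estimated by Cauchy--Schwarz together with discrete trace and inverse inequalities on $\mathcal{T}_{h/2}$, the scalings of $\lambda_{\mathcal{K}}$, $\tau_\varepsilon$ and the weights $\beta_s - \tfrac{1}{2}\beta\cdot n$, and a Poincar\'e-type inequality to trade $\norm{v_{h/2}}_{\mathcal{K}}$ against $\varepsilon^{1/2}\norm{\overline{\nabla}v_{h/2}}_{\mathcal{K}}$ where needed: in each term one factor is bounded by the matching $\eta_\bullet^{\mathcal{K}}$ and the other factor (a piece of $\norm{v_{h/2}}_{\mathcal{K}}$, of $\varepsilon^{1/2}\norm{\overline{\nabla}v_{h/2}}_{\mathcal{K}}$, or of a weighted jump seminorm) is absorbed into $\tnorm{\boldsymbol{v}_{h/2}}_{s,h}$ from the denominator of \cref{eq:inf_sup_s_norm}. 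The mismatch between the $\varepsilon$- and $h_K$-powers carried by $\eta$ and those carried by the norm produces the global loss $\varepsilon^{-1/2}$; collecting the factors of $T$ coming from the stability constant and the norm comparison produces the factor $T$. Summing over $\mathcal{K}$ and using $\eta_{h/2} \simeq \eta$ then gives \cref{eq:reliability}.

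The main obstacle is the temporal-derivative contribution $\sum_{\mathcal{K}} \tau_\varepsilon \norm{\partial_t(u - u_h)}_{\mathcal{K}}^2$ to $\tnorm{\boldsymbol{u} - \boldsymbol{u}_h}_{sT,h}$: because $p_t = 1$, $\partial_t u_h$ is piecewise constant in time and the consistency residual carries no direct information about the temporal derivative of the error, so a purely residual-based bound of this term is impossible --- this is precisely why the saturation assumption is needed, as it replaces $\partial_t(u - u_h)$ by $\partial_t(u_{h/2} - u_h)$ up to the contraction $\gamma$, which a temporal inverse estimate then folds back into the residual bound, following the idea of \cite{Burman:2009}. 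A secondary technical point is the bookkeeping that the refined-mesh residual functional is supported only on coarse-mesh data and that $\eta_{h/2} \simeq \eta$, which uses $1$-irregularity and $\delta t_{\mathcal{K}} = \mathcal{O}(h_K^2)$.
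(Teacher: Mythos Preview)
Your route is genuinely different from the paper's. The paper does \emph{not} reduce the full reliability bound to the discrete inf-sup condition via a saturation assumption. Instead it splits $u_h=\mathcal{I}_h^c u_h+u_h^r$ with the averaging operator $\mathcal{I}_h^c$ and works with the conforming error $e_u^c:=u-\mathcal{I}_h^c u_h$. A weighted test function $\varphi=eT\exp(-t/T)+\chi$ yields a coercivity-type lower bound (\cref{lem:err_stronger_stab_full}) for $\chi\sum\varepsilon\|\overline{\nabla}e_u^c\|_{\mathcal{K}}^2+\tfrac12\sum\|e_u^c\|_{\mathcal{K}}^2+\tfrac12\chi\sum\|\,|\beta\!\cdot\! n|^{1/2}e_u^c\|_{\partial\mathcal{E}_N}^2$, and the right-hand side residual terms $T_1,\dots,T_6$ are bounded one by one (\cref{lem:upperbndwitheuc}) using the quasi-interpolation estimates of \cref{lem:local_quasi_tnorm_st}, \cref{lem:otherusefulbnds} and \cref{lem:oswald_local_st}. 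The saturation assumption is invoked \emph{only} for the time-derivative contribution $\sum\tau_\varepsilon\|\partial_t(u-u_h)\|_{\mathcal{K}}^2$, and even there the subgrid $\mathcal{T}_{\mathfrak{h}}$ is obtained by halving the \emph{time step}, not $h_K$; this is \cref{thm:st_time_apos}, whose proof hinges on the special projection $i_h$ of \cref{def:subgrid_proj} and the estimates of \cref{lem:subgrid_proj_est,lem:subgridhelperbnds}. So the paper's saturation hypothesis \cref{eq:SA} is strictly weaker than yours (time-derivative only, not full norm), and the main machinery is weighted coercivity rather than inf-sup.

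Your outline also has a concrete gap. After writing $a_h(\boldsymbol{\phi}_h,\boldsymbol{v}_{h/2})=(f,v_{h/2})+\langle g,\mu_{h/2}\rangle-a_h(\boldsymbol{u}_h,\boldsymbol{v}_{h/2})$ (this is just the fine-mesh scheme, not orthogonality) and integrating by parts, you arrive at terms like $(R_h^{\mathcal{K}},v_{h/2})_{\mathcal{K}}$ with the \emph{full} fine test function. To recover the weight $\lambda_{\mathcal{K}}$ in $\eta_R^{\mathcal{K}}=\lambda_{\mathcal{K}}\|R_h^{\mathcal{K}}\|_{\mathcal{K}}$ you must exploit the \emph{coarse} Galerkin orthogonality $a_h(\boldsymbol{\phi}_h,\gamma(\boldsymbol{v}_h))=0$ to replace $\boldsymbol{v}_{h/2}$ by $(I-i_h)\boldsymbol{v}_{h/2}$ for a suitable coarse projection $i_h$, exactly as the paper does in \cref{eq:amathfrakhM1toM6}. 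Without this substitution your ``Poincar\'e-type inequality to trade $\|v_{h/2}\|_{\mathcal{K}}$ against $\varepsilon^{1/2}\|\overline{\nabla}v_{h/2}\|_{\mathcal{K}}$'' is not available (there is no mean-zero structure), and on elements with $h_K<\varepsilon^{1/2}$ you are left with $\lambda_{\mathcal{K}}^{-1}=\varepsilon^{1/2}h_K^{-1}$, which is not uniformly bounded by $\varepsilon^{-1/2}$. The analogous projection estimates for a spatially refined subgrid (jumps across newly created $\mathcal{Q}$-facets rather than $\mathcal{R}$-facets) would have to be derived from scratch; the paper's \cref{lem:subgrid_proj_est} is tailored to temporal halving. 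Finally, the claim $\eta_{h/2}\simeq\eta$ and the embedding $\boldsymbol{V}_h\hookrightarrow\boldsymbol{V}_{h/2}$ compatible with $a_h$ (penalty scaling, facet spaces) are nontrivial for the HDG setting and would need the kind of restriction operator and Galerkin-orthogonality verification the paper carries out in \cref{lem:glk_ort}.
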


\begin{theorem}[Efficiency]
  \label{thm:efficiency}
  Let $\boldsymbol{u}$ and $\boldsymbol{u}_h$ be as in
  \cref{thm:reliability} and assume that
  $\delta t_{\mathcal{K}} = \mathcal{O}(h_K^2)$. Furthermore, let
  $\mathrm{osc}_h^{\mathcal{K}} := \lambda_{\mathcal{K}}
  \norm[0]{(I-\Pi_h)R_h^{\mathcal{K}}}_{\mathcal{K}}$ and
  $\mathrm{osc}_h^{N} := h_K^{1/2} \varepsilon^{-1/2}
  \norm[0]{(I-\Pi_h^{\mathcal{F}})R_h^{N}}_{F}$, where
  $\Pi_h^{\mathcal{F}}$ denotes the $L^2$-projection onto $M_h$.
  Then, for all $\mathcal{K} \in \mathcal{T}_h$,
  \begin{equation}
    \label{eq:efficiency}
    \eta^\mathcal{K}
    \le c
	 \sum_{\mathcal{K}\subset\omega_{\mathcal{K}}}
    \varepsilon^{-1/2}
    \tilde{\varepsilon}^{-1/2}
    \tnorm{\boldsymbol{u}-\boldsymbol{u}_h}_{sT,h,\mathcal{K}}
    +c\,{\mathrm{osc}_h^{\mathcal{K}}}
    +c\,{\mathrm{osc}_h^{N}},
  \end{equation}
  where
  \begin{equation*}
    \begin{split}
      \tnorm{\boldsymbol{v}}_{sT,h,\mathcal{K}}^2
      :=
      &
      \norm[0]{v}_\mathcal{K}^2
      +
      \norm[0]{ \envert[0]{ \beta_s - \tfrac{1}{2} \beta\cdot{n} }^{1/2} \sbr{\boldsymbol{v}}
      }_{\partial\mathcal{K}}^2
      +
      T\sum_{F\in\partial\mathcal{E}_N\cap\partial\mathcal{K}}
      \norm[0]{
        \envert[0]{
          \tfrac{1}{2}
          \beta\cdot{n}
        }^{1/2}
        \mu
      }_F^2
      \\
      &
      +
      T\varepsilon\norm[0]{\overline{\nabla}v}_\mathcal{K}^2
      +
      \varepsilon h_K^{-1}
      \norm[0]{\sbr[0]{\boldsymbol{v}}}_{\mathcal{Q}_\mathcal{K}}^2
      +
      \tau_\varepsilon
      \norm[0]{\partial_t v}^2_\mathcal{K}.
    \end{split}
  \end{equation*}
\end{theorem}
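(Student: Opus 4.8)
The plan is to bound each local indicator that makes up $\eta^{\mathcal{K}}$ and then sum over $\mathcal{K}$. A preliminary remark disposes of several of them at once: since $\boldsymbol{u}=(u,u|_{\Gamma})$ with $u\in H^1(\mathcal{E})$, the HDG jump of $\boldsymbol{u}$ vanishes on $\partial\mathcal{T}_h$, so any indicator built from $\sbr{\boldsymbol{u}_h}$ equals the same expression in $\sbr{\boldsymbol{u}_h-\boldsymbol{u}}$. Consequently $\eta_{J,2,1}^{\mathcal{K}}$ and $\eta_{J,3}^{\mathcal{K}}$ are bounded term by term by the corresponding pieces $\varepsilon h_K^{-1}\|\sbr{\cdot}\|_{\mathcal{Q}_{\mathcal{K}}}^2$ and $\|\envert{\beta_s-\tfrac12\beta\cdot n}^{1/2}\sbr{\cdot}\|_{\partial\mathcal{K}}^2$ of $\tnorm{\boldsymbol{u}-\boldsymbol{u}_h}_{sT,h,\mathcal{K}}^2$, while $\eta_{J,2,2}^{\mathcal{K}}=h_K^{1/4}\varepsilon^{-1/2}\|\sbr{\boldsymbol{u}_h-\boldsymbol{u}}\|_{\mathcal{Q}_{\mathcal{K}}}$ is controlled by that same $\varepsilon h_K^{-1}$ term up to the factor $h_K^{3/4}\varepsilon^{-1}$, which is $\le c\,\varepsilon^{-1/2}\tilde{\varepsilon}^{-1/2}$ in each regime $\mathcal{T}_h^d,\mathcal{T}_h^x,\mathcal{T}_h^c$ once $\delta t_{\mathcal{K}}=\mathcal{O}(h_K^2)$ and $h_K\lesssim 1$ are used (in $\mathcal{T}_h^x$ this forces $\varepsilon<h_K\lesssim\varepsilon^{1/2}$, in $\mathcal{T}_h^c$ it forces $h_K>\varepsilon^{1/2}$). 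Finally $\eta_{BC,2}^{\mathcal{K}}=\|g-u_h\|_{K_*}$, which is nonzero only on the first slab where $g=u|_{\Gamma}$ and $\beta\cdot n\equiv-1$, splits as $\|u|_{\Gamma}-\lambda_h\|_{K_*}+\|\sbr{\boldsymbol{u}_h-\boldsymbol{u}}\|_{K_*}$, the first bounded by the Neumann facet term $T\|\envert{\tfrac12\beta\cdot n}^{1/2}\mu\|_F^2$ and the second by the $\partial\mathcal{K}$-jump term, both weights being bounded below on $K_*$; hence $\eta_{BC,2}^{\mathcal{K}}\le c\,\tnorm{\boldsymbol{u}-\boldsymbol{u}_h}_{sT,h,\mathcal{K}}$.

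For the element residual, I would use the element bubble $b_{\mathcal{K}}$, observe that $u\in H^2(\mathcal{E})$ solves \cref{eq:advdif} pointwise so that $R_h^{\mathcal{K}}=\partial_t(u-u_h)+\overline{\beta}\cdot\overline{\nabla}(u-u_h)-\varepsilon\overline{\nabla}^2(u-u_h)$ (using $\overline{\nabla}\cdot\overline{\beta}=0$), and test this against $v:=b_{\mathcal{K}}\Pi_h R_h^{\mathcal{K}}$. Integrating by parts only on the diffusion term — so that no factor $\delta t_{\mathcal{K}}^{-1}$ is ever transferred to the bubble — and using $\|\Pi_h R_h^{\mathcal{K}}\|_{\mathcal{K}}^2\lesssim (\Pi_h R_h^{\mathcal{K}},v)_{\mathcal{K}}$, $\|v\|_{\mathcal{K}}\lesssim\|\Pi_h R_h^{\mathcal{K}}\|_{\mathcal{K}}$ and the spatial inverse estimate $\|\overline{\nabla}v\|_{\mathcal{K}}\lesssim h_K^{-1}\|v\|_{\mathcal{K}}$, one obtains $\eta_R^{\mathcal{K}}\le c\lambda_{\mathcal{K}}\big(\|\partial_t(u-u_h)\|_{\mathcal{K}}+(1+\varepsilon h_K^{-1})\|\overline{\nabla}(u-u_h)\|_{\mathcal{K}}\big)+\mathrm{osc}_h^{\mathcal{K}}$. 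Using $\|\partial_t(u-u_h)\|_{\mathcal{K}}\le\tau_\varepsilon^{-1/2}\tnorm{\boldsymbol{u}-\boldsymbol{u}_h}_{sT,h,\mathcal{K}}$ and $\|\overline{\nabla}(u-u_h)\|_{\mathcal{K}}\le(T\varepsilon)^{-1/2}\tnorm{\boldsymbol{u}-\boldsymbol{u}_h}_{sT,h,\mathcal{K}}$, the claim for $\eta_R^{\mathcal{K}}$ follows once one checks $\lambda_{\mathcal{K}}\tau_\varepsilon^{-1/2}\le c\,\varepsilon^{-1/2}\tilde{\varepsilon}^{-1/2}$ and $\lambda_{\mathcal{K}}(1+\varepsilon h_K^{-1})\varepsilon^{-1/2}\le c\,\varepsilon^{-1/2}\tilde{\varepsilon}^{-1/2}$; inserting $\lambda_{\mathcal{K}}=\min\{1,h_K\varepsilon^{-1/2}\}$, $\tau_\varepsilon=\Delta t_{\mathcal{K}}\tilde{\varepsilon}$ and $\Delta t_{\mathcal{K}}\simeq\delta t_{\mathcal{K}}\simeq h_K^2$, both reduce to elementary case checks over the three regimes.

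For $\eta_{J,1}^{\mathcal{K}}$ I would, on each interior $\mathcal{Q}$-facet $F$, write $\jump{\overline{\nabla}_{\overline{n}}u_h}=-\jump{\overline{\nabla}_{\overline{n}}(u-u_h)}$ (as $u\in H^2$) and test it against $\varphi:=b_F E(\jump{\overline{\nabla}_{\overline{n}}u_h})$, with $b_F$ the facet bubble, $E$ the extension by constancy in the normal direction, and $\varphi$ supported in $\omega_F\subset\omega_{\mathcal{K}}$. Green's identity on the two adjacent elements produces $\int(\overline{\nabla}\varphi\cdot\overline{\nabla}(u-u_h)+\varphi\,\overline{\nabla}^2(u-u_h))$; substituting $\varepsilon\overline{\nabla}^2(u-u_h)=\nabla\cdot(\beta(u-u_h))-R_h^{\mathcal{K}}$ and then $\nabla\cdot(\beta(u-u_h))=\partial_t(u-u_h)+\overline{\beta}\cdot\overline{\nabla}(u-u_h)$ — again keeping every time derivative on the error side and leaving no surviving facet integral on $F$ — together with the anisotropic estimates $\|\varphi\|_{\mathcal{K}'}\lesssim h_K^{1/2}\|\jump{\overline{\nabla}_{\overline{n}}u_h}\|_F$ and $\|\overline{\nabla}\varphi\|_{\mathcal{K}'}\lesssim h_K^{-1/2}\|\jump{\overline{\nabla}_{\overline{n}}u_h}\|_F$, one arrives at $\eta_{J,1}^{\mathcal{K}}\le c\sum_{\mathcal{K}'\subset\omega_{\mathcal{K}}}\big(\varepsilon^{1/2}\|\overline{\nabla}(u-u_h)\|_{\mathcal{K}'}+h_K\varepsilon^{-1/2}\|\partial_t(u-u_h)\|_{\mathcal{K}'}+h_K\varepsilon^{-1/2}\|\overline{\nabla}(u-u_h)\|_{\mathcal{K}'}+h_K\varepsilon^{-1/2}\|R_h^{\mathcal{K}'}\|_{\mathcal{K}'}\big)$. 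Each of the first three summands is dominated by $\varepsilon^{-1/2}\tilde{\varepsilon}^{-1/2}\tnorm{\boldsymbol{u}-\boldsymbol{u}_h}_{sT,h,\mathcal{K}'}$ by the same three-regime check, and the last is absorbed through the already-proved bound on $\eta_R^{\mathcal{K}'}$ (picking up $\mathrm{osc}_h^{\mathcal{K}'}$). The Neumann indicator $\eta_{BC,1}^{\mathcal{K}}$ is treated identically with a boundary facet bubble, a single adjacent element, the exact Neumann identity $g-\varepsilon\overline{\nabla}u\cdot\overline{n}+\zeta^- u\,\beta\cdot n=0$ replacing the flux-jump cancellation, and the projection $\Pi_h^{\mathcal{F}}$, which yields the additional term $\mathrm{osc}_h^{N}$. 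Summing the local bounds over $\mathcal{K}$ gives \cref{eq:efficiency}.

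The main obstacle is the space-time anisotropy. Because the elements are thin in time, $\delta t_{\mathcal{K}}\simeq h_K^2\ll h_K$, the time-direction bubble inverse inequality carries the large factor $\delta t_{\mathcal{K}}^{-1}\simeq h_K^{-2}$; the entire argument must be arranged so that this factor never multiplies a test function, which is why the residuals are expanded as $\partial_t(u-u_h)+\dots$ and that component is left on the error side, to be absorbed by the norm term $\tau_\varepsilon\|\partial_t v\|_{\mathcal{K}}^2$ with $\tau_\varepsilon=\Delta t_{\mathcal{K}}\tilde{\varepsilon}\simeq h_K^2\tilde{\varepsilon}$. What then remains is bookkeeping: verifying in each of $\mathcal{T}_h^d,\mathcal{T}_h^x,\mathcal{T}_h^c$ that the accumulated powers of $h_K$, $\varepsilon$, $\lambda_{\mathcal{K}}$ and $\tilde{\varepsilon}$ never exceed $\varepsilon^{-1/2}\tilde{\varepsilon}^{-1/2}$. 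The hypothesis $\delta t_{\mathcal{K}}=\mathcal{O}(h_K^2)$ enters exactly here: it fixes the relative position of $h_K$, $\delta t_{\mathcal{K}}$ and $\varepsilon$ in each regime, and (used together with $\Delta t_{\mathcal{K}}\simeq\delta t_{\mathcal{K}}$) provides the two-sided control of $\tau_\varepsilon$ needed to estimate $\tau_\varepsilon^{-1/2}$.
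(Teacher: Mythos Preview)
Your overall strategy is correct and matches the paper's: bound each local indicator by element/facet bubble arguments, then combine. The treatment of $\eta_{J,2}^{\mathcal{K}}$, $\eta_{J,3}^{\mathcal{K}}$, $\eta_{BC,2}^{\mathcal{K}}$ and of $\eta_R^{\mathcal{K}}$ is essentially the same as in the paper, and your remark about keeping $\partial_t(u-u_h)$ on the error side (never transferring a $\delta t_{\mathcal{K}}^{-1}$ to the test function) is exactly the right observation.

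There is, however, a genuine gap in your bound for $\eta_{J,1}^{\mathcal{K}}$ (and, by the same mechanism, for $\eta_{BC,1}^{\mathcal{K}}$). With the \emph{standard} facet bubble you use, the estimates $\|\varphi\|_{\mathcal{K}'}\lesssim h_K^{1/2}\|\jump{\cdot}\|_F$ and $\|\overline{\nabla}\varphi\|_{\mathcal{K}'}\lesssim h_K^{-1/2}\|\jump{\cdot}\|_F$ lead, as you write, to a residual contribution $h_K\varepsilon^{-1/2}\|R_h^{\mathcal{K}'}\|_{\mathcal{K}'}$. When $h_K\le\varepsilon^{1/2}$ this is $\le\eta_R^{\mathcal{K}'}$ and can be absorbed. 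But in the regime $\mathcal{T}_h^c$ (where $\varepsilon<\delta t_{\mathcal{K}}\le h_K$ and hence $h_K>\varepsilon^{1/2}$ once $\delta t_{\mathcal{K}}\simeq h_K^2$), one has $\lambda_{\mathcal{K}}=1$, so $h_K\varepsilon^{-1/2}\|R_h^{\mathcal{K}'}\|_{\mathcal{K}'}=h_K\varepsilon^{-1/2}\,\eta_R^{\mathcal{K}'}$ with $h_K\varepsilon^{-1/2}$ unbounded. Propagating the already-proved bound $\eta_R^{\mathcal{K}'}\le c\,\varepsilon^{-1/2}\tilde{\varepsilon}^{-1/2}\tnorm{\cdot}+c\,\mathrm{osc}_h^{\mathcal{K}'}$ therefore yields an oscillation term $h_K\varepsilon^{-1/2}\,\mathrm{osc}_h^{\mathcal{K}'}$, which is \emph{not} bounded by $c\,\mathrm{osc}_h^{\mathcal{K}'}$ with $c$ independent of $\varepsilon$, contradicting \cref{eq:efficiency}.

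The paper avoids this by using Verf\"urth's \emph{squeezed} facet bubble with squeezing parameter $\kappa=\tilde{\varepsilon}^{1/2}\varepsilon^{1/2}$, giving the balanced estimates $\|\psi_F\mu\|_{\omega_F}\le c\,h_K^{1/2}\tilde{\varepsilon}^{1/4}\varepsilon^{1/4}\|\mu\|_F$ and $\|\overline{\nabla}\psi_F\mu\|_{\omega_F}\le c\,h_K^{-1/2}\tilde{\varepsilon}^{-1/4}\varepsilon^{-1/4}\|\mu\|_F$; see \cref{eq:bubbleestface}. The extra factor $\tilde{\varepsilon}^{1/4}\varepsilon^{1/4}$ in the $L^2$ estimate turns the residual contribution into $h_K\varepsilon^{-1/4}\tilde{\varepsilon}^{1/4}\lambda_{\mathcal{K}}^{-1}\,\eta_R^{\mathcal{K}'}=\varepsilon^{-1/4}\tilde{\varepsilon}^{1/4}\max\{h_K,\varepsilon^{1/2}\}\,\eta_R^{\mathcal{K}'}$, and a case check shows $\varepsilon^{-1/4}\tilde{\varepsilon}^{1/4}\max\{h_K,\varepsilon^{1/2}\}\le 1$ in all three regimes (in $\mathcal{T}_h^c$ it is just $\max\{h_K,\varepsilon^{1/2}\}\le 1$). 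Thus the squeezed bubble is not optional here; it is precisely what keeps the oscillation constant uniform in $\varepsilon$.
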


\begin{remark}
  \label{rem:efficiencyrefined-new}
  From \cref{thm:efficiency}, and by definition of
  $\tilde{\varepsilon}$, we have that on sufficiently refined elements
  $\mathcal{K}\in\mathcal{T}_h^d$ the following estimate holds:
  \begin{equation*}
    \eta^\mathcal{K}
    \le c
	 \sum_{\mathcal{K}\subset\omega_{\mathcal{K}}}
    \varepsilon^{-1/2}
    \tnorm{\boldsymbol{u}-\boldsymbol{u}_h}_{sT,h,\mathcal{K}}
    +c\,{\mathrm{osc}_h^{\mathcal{K}}}
    +c\,{\mathrm{osc}_h^{N}}.
  \end{equation*}
\end{remark}

\subsection{Useful inequalities, approximations and projections}
\label{ss:ineqapproxbounds}

We have the following anisotropic inverse and trace inequalities:
\begin{subequations}
  \begin{align}
    \label{eq:eg_inv_1}
    \norm[0]{\partial_tv_h}_\mathcal{K}
    &\leq
      c
      \delta t_{\mathcal{K}}^{-1}
      \norm[0]{v_h}_\mathcal{K} && \forall v_h \in V_h,
    \\
    \label{eq:eg_inv_2}
    \norm[0]{\overline{\nabla}v_h}_{\mathcal{K}}
    &\leq
      c
      h_K^{-1}
      \norm[0]{v_h}_\mathcal{K} && \forall v_h \in V_h,
    \\
    \label{eq:eg_inv_3}
    \norm[0]{v_h}_{\mathcal{Q}_\mathcal{K}}
    &\leq
      c
      h_K^{-1/2}
      \norm[0]{v_h}_\mathcal{K} && \forall v_h \in V_h,
    \\
    \label{eq:eg_inv_4}
    \norm[0]{v_h}_{\mathcal{R}_\mathcal{K}}
    &\leq
      c
      \delta t_{\mathcal{K}}^{-1/2}
      \norm[0]{v_h}_\mathcal{K} && \forall v_h \in V_h,
    \\
    \label{eq:eg_inv_low_d_1_F}
    \norm[0]{\partial_t\mu_h}_{\mathcal{Q}_{\mathcal{K}}}
    &\leq
      c
      \delta t_{\mathcal{K}}^{-1}
      \norm[0]{\mu_h}_{\mathcal{Q}_{\mathcal{K}}} && \forall \mu_h \in M_h.
  \end{align}
\end{subequations}
Here \cref{eq:eg_inv_1,eq:eg_inv_2,eq:eg_inv_3,eq:eg_inv_4} have been
adapted from \cite[Corollaries 3.49, 3.54]{Georgoulis:thesis} to the
space-time context specifically taking into account the spatial mesh
size $h_K$ and time step $\delta t_{\mathcal{K}}$ of a space-time
element $\mathcal{K} \in \mathcal{T}_h$. \Cref{eq:eg_inv_low_d_1_F}
was proven in \cite[Lemma 3.1]{Wang:2023}. The following lemma
introduces additional inequalities.

\begin{lemma}
  \label{lem:eg_inv}
  Let $\mathcal{K} \in \mathcal{T}_h$ be a space-time element and
  $\mu_h\in M_h$. For any
  $F_{\mathcal{R}}\subset\mathcal{R}_{\mathcal{K}}$ and
  $F_{\mathcal{Q}}\subset\mathcal{Q}_{\mathcal{K}}$, we have
  \begin{subequations}
    \begin{align}
      \label{eq:eg_inv_low_d_3}
      \norm[0]{\overline{\nabla}\mu_h}_{F_{\mathcal{R}}}
      &\le c
        h_K^{-1}
        \norm[0]{\mu_h}_{F_{\mathcal{R}}},
      \\
      \label{eq:eg_inv_low_d_4}
      \norm[0]{\mu_h}_{{E}_{\mathcal{K}}}
      &\le c
        \delta t_{\mathcal{K}}^{-{1}/{2}}
        \norm[0]{\mu_h}_{F_{\mathcal{Q}}}
        \quad
        \forall E_{\mathcal{K}}\subset F_{\mathcal{Q}},
      \\
      \label{eq:eg_inv_low_d_5}
      \norm[0]{\mu_h}_{{E}_{\mathcal{K}}}
      &\le c
        h_K^{-{1}/{2}}
        \norm[0]{\mu_h}_{F_{\mathcal{R}}}
        \quad
        \forall E_{\mathcal{K}}\subset F_{\mathcal{R}}.
    \end{align}
  \end{subequations}
\end{lemma}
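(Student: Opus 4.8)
The plan is to reduce each inequality to a scaling argument on the reference facet, exactly as in the proofs of \cref{eq:eg_inv_1,eq:eg_inv_2,eq:eg_inv_3,eq:eg_inv_4} but now applied to $\mu_h$ living on a facet rather than to $v_h$ living on $\mathcal{K}$. The key point is that a facet $F_{\mathcal{Q}}\subset\mathcal{Q}_{\mathcal{K}}$ is itself a space-time object of the form (time interval)$\times$(spatial facet), so its affine model has diameters $\delta t_{\mathcal{K}}$ in time and $h_K$ in each remaining spatial direction, whereas a facet $F_{\mathcal{R}}\subset\mathcal{R}_{\mathcal{K}}$ (a slice $t=\text{const}$) is purely spatial with diameter $h_K$ in each direction. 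For \cref{eq:eg_inv_low_d_3}, I would first pull $F_{\mathcal{R}}$ back through the restriction $\phi_F$ of $\phi_{\mathcal{K}}$ to $\widetilde{F}_{\mathcal{R}}$, then through the affine map $G_{\mathcal{K}}$ to $\widehat{F}_{\mathcal{R}}$. On $\widehat{F}_{\mathcal{R}}$ the map $\mu_h\circ\Phi_{\mathcal{K}}$ is a polynomial in $Q^{(p_t,p_s)}(\widehat{F})$, so the standard finite-dimensional inverse inequality on a fixed reference element gives $\|\overline{\nabla}(\mu_h\circ\Phi_{\mathcal{K}})\|_{\widehat{F}_{\mathcal{R}}}\le c\|\mu_h\circ\Phi_{\mathcal{K}}\|_{\widehat{F}_{\mathcal{R}}}$; transporting back, the chain rule contributes a factor $\|J^{-1}\|$ which, because the spatial diameter of $\widetilde{F}_{\mathcal{R}}$ is $h_K$ and by the regularity assumptions \cref{eq:diffeom_jac}, \cref{eq:diffeom_regular_d} (bounding Jacobians and surface Jacobian factors uniformly), yields the $h_K^{-1}$ scaling; the determinant factors from the change of variables in the two $L^2$-norms cancel up to the uniform constants in \cref{eq:diffeom_regular_d}.

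For \cref{eq:eg_inv_low_d_4} and \cref{eq:eg_inv_low_d_5}, the object on the left is the $(d-1)$-dimensional edge $E_{\mathcal{K}}$, obtained from either a $\mathcal{Q}$-facet or an $\mathcal{R}$-facet by further restriction. I would apply the same recipe once more: map $E_{\mathcal{K}}$ and the containing facet back to the reference configuration, invoke the trace inequality $\|w\|_{\widehat{E}}\le c\|w\|_{\widehat{F}}$ valid for polynomials on a fixed reference facet, and transport back. The only difference between \cref{eq:eg_inv_low_d_4} and \cref{eq:eg_inv_low_d_5} is which coordinate is being ``lost'': going from a $\mathcal{Q}$-facet to $E_{\mathcal{K}}$ collapses the time direction, whose affine diameter is $\delta t_{\mathcal{K}}$, so the surface-measure Jacobian ratio produces the factor $\delta t_{\mathcal{K}}^{-1/2}$; going from an $\mathcal{R}$-facet to $E_{\mathcal{K}}$ collapses a spatial direction of diameter $h_K$, producing $h_K^{-1/2}$. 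Here one uses \cref{eq:k_surface} (and its analogue one dimension lower) to express the edge and facet integrals through the appropriate $\det((J^i)^\intercal J^i)^{1/2}$ factors, which are bounded above and below by the constants in \cref{eq:diffeom_regular_d}, together with \cref{eq:diffeom_regular_special} to guarantee that the time direction truly scales independently of the spatial ones (so that collapsing time costs exactly $\delta t_{\mathcal{K}}^{-1/2}$ and nothing is contaminated by the geometry of $\phi_{\mathcal{K}}$).

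The main obstacle, and the only part requiring real care, is the bookkeeping of the anisotropic scaling through the \emph{composite} map $\Phi_{\mathcal{K}}=\phi_{\mathcal{K}}\circ G_{\mathcal{K}}$ when a facet is itself lower-dimensional: one must check that the surface-Jacobian factors from \cref{eq:k_surface} applied at the facet level and at the edge level combine to give precisely $h_K^{-1}$ (resp.\ $\delta t_{\mathcal{K}}^{-1/2}$, $h_K^{-1/2}$) and not some mixed power, and that the diffeomorphism part $\phi_{\mathcal{K}}$ contributes only $\varepsilon$-, $h_K$-, $\delta t_{\mathcal{K}}$-independent constants. This is exactly where \cref{eq:diffeom_regular_special} is essential, since it decouples the temporal column of the Jacobian from the spatial ones; without it the $\mathcal{Q}$-facet estimate \cref{eq:eg_inv_low_d_4} would not cleanly separate into a $\delta t_{\mathcal{K}}^{-1/2}$ factor. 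Once this scaling is pinned down, the inequalities follow by the same reference-element compactness argument used for \cref{eq:eg_inv_1,eq:eg_inv_2,eq:eg_inv_3,eq:eg_inv_4}, and I would simply cite \cite[Corollaries~3.49,~3.54]{Georgoulis:thesis} for the reference-element statements and \cite[Lemma~3.1]{Wang:2023} for the template of transporting them to the space-time facet setting.
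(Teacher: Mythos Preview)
Your proposal is correct and follows essentially the same route as the paper: reduce to a reference-domain inverse/trace inequality and transport back via the scaling relations \cref{eq:lb7sudir}, \cref{eq:scalingequiv}, \cref{eq:scalingphiextra}. The paper's proof is simply more terse---it invokes \cref{eq:eg_inv_2} directly on the purely spatial facet $F_{\mathcal{R}}$ for \cref{eq:eg_inv_low_d_3}, cites a standard isotropic trace inequality (\cite[Lemma~12.8]{Ern:book}) for \cref{eq:eg_inv_low_d_5} since $F_{\mathcal{R}}$ has all directions of size $h_K$, and only spells out the affine-domain scaling for \cref{eq:eg_inv_low_d_4} where the anisotropy (time versus space) actually matters---whereas you carry out the full scaling bookkeeping uniformly for all three.
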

\begin{proof}
  \Cref{eq:eg_inv_low_d_3} is a result of applying \cref{eq:eg_inv_2}
  on $F_{\mathcal{R}}$ while \cref{eq:eg_inv_low_d_5} is a direct
  application of a standard isotropic trace inequality on
  $E_{\mathcal{K}}$ (see, for example, \cite[Lemma
  12.8]{Ern:book}). As for \cref{eq:eg_inv_low_d_4}, consider
  $\phi_{F_{\mathcal{Q}}}(\widetilde{E}_{\mathcal{K}})=E_{\mathcal{K}}$.
  Applying \cref{eq:eg_inv_4} on the affine domain gives us
  $\norm[0]{\widetilde{\mu}_h }_{\widetilde{E}_{\mathcal{K}}} \leq c
  \delta t_{\mathcal{K}}^{-1/2}
  \norm[0]{\widetilde{\mu}_h}_{\widetilde{F}_{\mathcal{Q}}}$. We
  conclude \cref{eq:eg_inv_low_d_4} via scaling arguments
  \cref{eq:scalingphiextra-1,eq:scalingequiv_2}.
\end{proof}

The following lemma presents local quasi-interpolation estimates that
will be useful in showing reliability of the error estimator
\cref{eq:apos_st_hdg_ests_total}.
\begin{lemma}
  [Local quasi-interpolation estimates]
  \label{lem:local_quasi_tnorm_st}
  Let $v\in H^1(\mathcal{K})$ and let $\Pi_h$ be the $L^2$-projection
  onto $V_h$. For any $\mathcal{K}\in\mathcal{T}_h$ and any
  $F_\mathcal{Q}\subset\mathcal{Q}_\mathcal{K}$,
  $F_\mathcal{R}\subset\mathcal{R}_\mathcal{K}$, assuming that
  $\delta t_{\mathcal{K}} = \mathcal{O}(h_K^2)$, the following
  quasi-interpolation estimates can be shown
  \begin{subequations}
    \label{eq:local_quasi_tnorm_st_step_ratio}
    \begin{align}
      \label{eq:local_quasi_tnorm_st_step_ratio_1}
      \norm[0]{v-\Pi_hv}_\mathcal{K}
      &
        \le c
        \lambda_\mathcal{K}
        \del[1]{
        h_{K}\varepsilon^{1/2}
        \norm[0]{\partial_t v}_\mathcal{K}
        +
        \varepsilon^{1/2}
        \norm[0]{\overline{\nabla} v}_\mathcal{K}
        +
        \norm[0]{v}_\mathcal{K}
        },
      \\
      \label{eq:local_quasi_tnorm_st_step_ratio_3}
      \norm[0]{v-\Pi_hv}_{F_\mathcal{Q}}
      &
        \le c
        h_K^{1/2}\varepsilon^{-1/2}
        \del[1]{
        h_{K}\varepsilon^{1/2}
        \norm[0]{\partial_t v}_\mathcal{K}
        +
        \varepsilon^{1/2}
        \norm[0]{\overline{\nabla} v}_\mathcal{K}
        },
      \\
      \label{eq:local_quasi_tnorm_st_step_ratio_4}
      \norm[0]{v-\Pi_hv}_{F_\mathcal{R}}
      &
        \le c
        \varepsilon^{-1/2}
        \del[1]{
        h_{K}\varepsilon^{1/2}
        \norm[0]{\partial_t v}_\mathcal{K}
        +
        \varepsilon^{1/2}
        \norm[0]{\overline{\nabla} v}_\mathcal{K}
        }.
    \end{align}
  \end{subequations}
\end{lemma}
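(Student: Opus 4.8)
The plan is to establish the three estimates by a standard scaling-to-reference-element argument combined with the approximation properties of the $L^2$-projection $\Pi_h$, carefully tracking the anisotropy between the spatial mesh size $h_K$ and the time step $\delta t_{\mathcal{K}}$. First I would recall the classical Bramble–Hilbert / approximation estimate for $\Pi_h$ on the affine box $\widetilde{\mathcal{K}} = (0,\delta t_{\mathcal{K}}) \times (0,h_K)^d$: for $v \in H^1$ one has $\|v - \Pi_h v\|_{\widetilde{\mathcal{K}}} \le c\,(h_K \|\overline{\nabla} v\|_{\widetilde{\mathcal{K}}} + \delta t_{\mathcal{K}} \|\partial_t v\|_{\widetilde{\mathcal{K}}})$, and the analogous trace estimates $\|v - \Pi_h v\|_{\widetilde{F}_{\mathcal{Q}}} \le c\, h_K^{-1/2}(h_K \|\overline{\nabla} v\|_{\widetilde{\mathcal{K}}} + \delta t_{\mathcal{K}} \|\partial_t v\|_{\widetilde{\mathcal{K}}})$ on a $\mathcal{Q}$-facet and $\|v - \Pi_h v\|_{\widetilde{F}_{\mathcal{R}}} \le c\,\delta t_{\mathcal{K}}^{-1/2}(h_K \|\overline{\nabla} v\|_{\widetilde{\mathcal{K}}} + \delta t_{\mathcal{K}} \|\partial_t v\|_{\widetilde{\mathcal{K}}})$; these follow from the anisotropic inverse/trace inequalities \cref{eq:eg_inv_1,eq:eg_inv_2,eq:eg_inv_3,eq:eg_inv_4} together with the fact that $\Pi_h$ preserves polynomials of the stated tensor-product degree. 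The diffeomorphism assumptions \cref{eq:diffeom_jac,eq:diffeom_regular_special,eq:diffeom_regular_d} then let me transfer these to the physical element $\mathcal{K}$ with constants independent of $h_K$, $\delta t_{\mathcal{K}}$, $\varepsilon$, $T$, using the same scaling identities \cref{eq:scalingphiextra-1,eq:scalingequiv_2} referenced in the proof of \cref{lem:eg_inv}.

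Having the raw estimates, the next step is to absorb the time-step contribution into the spatial one using the hypothesis $\delta t_{\mathcal{K}} = \mathcal{O}(h_K^2)$. Concretely, $\delta t_{\mathcal{K}} \|\partial_t v\|_{\mathcal{K}} \le c\, h_K^2 \|\partial_t v\|_{\mathcal{K}} = c\, h_K \cdot (h_K \|\partial_t v\|_{\mathcal{K}})$, which produces exactly the $h_K \varepsilon^{1/2}\|\partial_t v\|_{\mathcal{K}}$ term once one factors out the appropriate power of $\varepsilon$. The remaining work is bookkeeping with the weight $\lambda_{\mathcal{K}} = \min\{1, h_K \varepsilon^{-1/2}\}$: for \cref{eq:local_quasi_tnorm_st_step_ratio_1} one writes the volume bound as $c\,(h_K \|\overline{\nabla}v\|_{\mathcal{K}} + h_K^2 \|\partial_t v\|_{\mathcal{K}})$ and then, multiplying and dividing by $\varepsilon^{1/2}$, recognizes $h_K \le c\,\lambda_{\mathcal{K}}\max\{1,\varepsilon^{1/2}\}$ in the two regimes $h_K \le \varepsilon^{1/2}$ and $h_K > \varepsilon^{1/2}$ — one also needs the crude bound $\|v-\Pi_h v\|_{\mathcal{K}} \le \|v\|_{\mathcal{K}}$ (stability of $L^2$-projection), which supplies the $\lambda_{\mathcal{K}}\|v\|_{\mathcal{K}}$ term and is what makes the estimate valid even when $h_K$ is large relative to $\varepsilon^{1/2}$. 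For \cref{eq:local_quasi_tnorm_st_step_ratio_3} the extra $h_K^{-1/2}$ from the $\mathcal{Q}$-trace combines with a factor $h_K \varepsilon^{-1/2}$ coming from $h_K = h_K^{1/2} \cdot h_K^{1/2}$ and scaling out $\varepsilon^{1/2}$; for \cref{eq:local_quasi_tnorm_st_step_ratio_4} the $\mathcal{R}$-trace contributes $\delta t_{\mathcal{K}}^{-1/2} \le c\, h_K^{-1}$ (again by $\delta t_{\mathcal{K}} = \mathcal{O}(h_K^2)$), which cancels the $h_K$ in front of the gradient term and leaves the clean $\varepsilon^{-1/2}(\cdots)$ form.

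The main obstacle — really the only nonroutine point — is getting the $\varepsilon$-powers and the $\lambda_{\mathcal{K}}$ weight to land exactly as stated while keeping all constants $\varepsilon$-independent; this forces a case split on whether $\mathcal{K}$ lies in the diffusion-dominated regime $h_K \le \varepsilon^{1/2}$ (where $\lambda_{\mathcal{K}} = h_K \varepsilon^{-1/2}$) or not (where $\lambda_{\mathcal{K}} = 1$), and in the latter case the $L^2$-stability bound $\|v - \Pi_h v\|_{\mathcal{K}} \le \|v\|_{\mathcal{K}}$ is essential since the naive first-order estimate would be too weak. Everything else is the reference-element-to-physical-element scaling machinery already used for \cref{lem:eg_inv}, so I would state those scaling lemmas once and apply them uniformly to the three bounds.
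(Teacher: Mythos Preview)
Your proposal is correct and follows essentially the same route as the paper: obtain the raw anisotropic approximation estimate $\|v-\Pi_h v\|_{\mathcal{K}}\le c(\delta t_{\mathcal{K}}\|\partial_t v\|_{\mathcal{K}}+h_K\|\overline{\nabla}v\|_{\mathcal{K}})$, pair it with the $L^2$-stability bound $\|v-\Pi_h v\|_{\mathcal{K}}\le\|v\|_{\mathcal{K}}$ to produce the $\lambda_{\mathcal{K}}$ factor via the case split you describe, and then use $\delta t_{\mathcal{K}}=\mathcal{O}(h_K^2)$ to convert $\delta t_{\mathcal{K}}$-powers into $h_K$-powers. One small correction: for the facet bounds you cite the discrete trace inequalities \cref{eq:eg_inv_3,eq:eg_inv_4}, but those hold only for $v_h\in V_h$, whereas $v-\Pi_h v$ is merely $H^1$; the paper instead uses the continuous local trace inequalities $\|w\|_{F_{\mathcal{Q}}}^2\le c(h_K^{-1}\|w\|_{\mathcal{K}}^2+\|w\|_{\mathcal{K}}\|\overline{\nabla}w\|_{\mathcal{K}})$ and $\|w\|_{F_{\mathcal{R}}}^2\le c(\delta t_{\mathcal{K}}^{-1}\|w\|_{\mathcal{K}}^2+\|w\|_{\mathcal{K}}\|\partial_t w\|_{\mathcal{K}})$ applied to $w=v-\Pi_h v$, together with the derivative stability bounds $\|\overline{\nabla}(v-\Pi_h v)\|_{\mathcal{K}}\le c\|\overline{\nabla}v\|_{\mathcal{K}}$ and $\|\partial_t(v-\Pi_h v)\|_{\mathcal{K}}\le c\|\partial_t v\|_{\mathcal{K}}$---this is the ingredient you need to make the facet estimates go through.
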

\begin{proof}
  See \cref{s:localquasitnormst}.
\end{proof}

We also have the following projection-related bounds.
\begin{lemma}
  \label{lem:otherusefulbnds}
  Let $v\in H^1(\mathcal{K})$ and consider $\Pi_h$ and
  $\Pi_h^{\mathcal{F}}$, the latter being the $L^2$-projection onto
  $M_h$. For any $\mathcal{K}\in\mathcal{T}_h$ and any
  $F_\mathcal{Q}\subset\mathcal{Q}_\mathcal{K}$,
  $F_\mathcal{R}\subset\mathcal{R}_\mathcal{K}$, we have
  \begin{subequations}
    \label{eq:otherusefulbnds}
    \begin{align}
      \norm[0]{\del[0]{\Pi_h-\Pi_{h}^{\mathcal{F}}}v}_{F_{\mathcal{Q}}}
      &\le
        ch_K^{1/2}\norm[0]{\overline{\nabla}v}_{\mathcal{K}},
        \label{eq:otherusefulbnds_2}
      \\
      \norm[0]{\del[0]{\Pi_h-\Pi_{h}^{\mathcal{F}}}v}_{F_{\mathcal{R}}}
      &\le
        c\delta t_{\mathcal{K}}^{1/2}
        \norm[0]{\partial_t v}_{\mathcal{K}}.
        \label{eq:otherusefulbnds_3}
    \end{align}
  \end{subequations}
\end{lemma}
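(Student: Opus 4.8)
The plan is to estimate each of the two quantities $\norm{(\Pi_h-\Pi_h^{\mathcal F})v}_{F_{\mathcal Q}}$ and $\norm{(\Pi_h-\Pi_h^{\mathcal F})v}_{F_{\mathcal R}}$ by inserting a suitable polynomial that is preserved by both projections, so that the difference of projections annihilates it, and then use the local stability and approximation properties of the two $L^2$-projections together with the anisotropic trace inequalities already listed. Concretely, for a facet $F\subset\partial\mathcal K$ write $w:=(\Pi_h-\Pi_h^{\mathcal F})v$, which is a polynomial in $M_h|_F$ (and in $V_h|_{\mathcal K}$ restricted to $F$); for any constant — or, more generally, any polynomial $q$ lying in the common low-dimensional polynomial space — we have $(\Pi_h-\Pi_h^{\mathcal F})q=0$, hence $w=(\Pi_h-\Pi_h^{\mathcal F})(v-q)$. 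Then bound $\norm{w}_F\le\norm{\Pi_h(v-q)}_F+\norm{\Pi_h^{\mathcal F}(v-q)}_F$.

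For \eqref{eq:otherusefulbnds_2}: pick $q$ to be the $L^2(\mathcal K)$-average (a constant, or a spatial-direction polynomial) of $v$. Apply the trace inequality \eqref{eq:eg_inv_3}, $\norm{\Pi_h(v-q)}_{F_{\mathcal Q}}\le c h_K^{-1/2}\norm{\Pi_h(v-q)}_{\mathcal K}\le c h_K^{-1/2}\norm{v-q}_{\mathcal K}$ by $L^2$-stability of $\Pi_h$; a standard Poincaré/approximation estimate gives $\norm{v-q}_{\mathcal K}\le c h_K\norm{\overline\nabla v}_{\mathcal K}$ when $q$ removes the spatial mean, so this term is bounded by $c h_K^{1/2}\norm{\overline\nabla v}_{\mathcal K}$. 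For the $\Pi_h^{\mathcal F}$ term, use $L^2$-stability of $\Pi_h^{\mathcal F}$ on $F_{\mathcal Q}$ plus a trace inequality from $F_{\mathcal Q}$ into $\mathcal K$ for $v-q$ (a continuous trace estimate on $v\in H^1(\mathcal K)$), i.e. $\norm{v-q}_{F_{\mathcal Q}}\le c(h_K^{-1/2}\norm{v-q}_{\mathcal K}+h_K^{1/2}\norm{\overline\nabla v}_{\mathcal K})\le c h_K^{1/2}\norm{\overline\nabla v}_{\mathcal K}$. Combining yields \eqref{eq:otherusefulbnds_2}. For \eqref{eq:otherusefulbnds_3}: the facet $F_{\mathcal R}$ has $\bar n=0$, so it is a "time" facet and the relevant length scale is $\delta t_{\mathcal K}$; choose $q$ to be the average of $v$ over $\mathcal K$ in the time direction (so $v-q$ vanishes in time-mean), use the trace inequality \eqref{eq:eg_inv_4} $\norm{\Pi_h(v-q)}_{F_{\mathcal R}}\le c\,\delta t_{\mathcal K}^{-1/2}\norm{v-q}_{\mathcal K}$, the $L^2$-stability of both projections, a continuous trace estimate from $F_{\mathcal R}$, and the Poincaré-type bound $\norm{v-q}_{\mathcal K}\le c\,\delta t_{\mathcal K}\norm{\partial_t v}_{\mathcal K}$, to arrive at $c\,\delta t_{\mathcal K}^{1/2}\norm{\partial_t v}_{\mathcal K}$.

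The main obstacle is the bookkeeping on the reference/affine element: one must verify that the chosen polynomial $q$ genuinely lies in the intersection of the image spaces of $\Pi_h$ and $\Pi_h^{\mathcal F}$ (so that the difference annihilates it) and that the trace and Poincaré constants transfer correctly under $\Phi_{\mathcal K}$ with the anisotropic scalings $h_K$ (spatial) versus $\delta t_{\mathcal K}$ (temporal) — in particular using \eqref{eq:diffeom_jac}, \eqref{eq:diffeom_regular_special}, and \eqref{eq:diffeom_regular_d} to keep these length scales separated. Everything else is a routine combination of $L^2$-stability of $L^2$-projections, the anisotropic trace inequalities \eqref{eq:eg_inv_3}–\eqref{eq:eg_inv_4}, continuous trace inequalities on $H^1(\mathcal K)$, and direction-wise Poincaré inequalities, followed by scaling back from $\widehat{\mathcal K}$ to $\mathcal K$.
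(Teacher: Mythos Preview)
There is a genuine gap in the choice of $q$. For \eqref{eq:otherusefulbnds_2} you want a polynomial $q$ (``a constant, or a spatial-direction polynomial'') that satisfies the directional Poincar\'e bound $\norm{v-q}_{\mathcal K}\le c\,h_K\norm{\overline\nabla v}_{\mathcal K}$. No polynomial can do this for all $v\in H^1(\mathcal K)$: take $v=v(t)$ a non-polynomial function of $t$ alone; then $\overline\nabla v=0$, so the bound forces $q=v$, a contradiction. The $q$ you actually need is the spatial mean $\bar v(t)$, a function of $t$ only but in general \emph{not} a polynomial, so your stated reason for $(\Pi_h-\Pi_h^{\mathcal F})q=0$ --- that $q$ lies in a common polynomial subspace fixed by both projections --- does not apply as written. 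This is precisely the ``bookkeeping'' you flagged as the main obstacle, and it is more than bookkeeping.

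The fix is short but must be made explicit: on the affine element the spaces $V_h|_{\mathcal K}$ and $M_h|_{F_{\mathcal Q}}$ have a tensor-product structure in $(\tilde t,\tilde x)$, and by \eqref{eq:diffeom_regular_special} time and space decouple under $\phi_{\mathcal K}$. Applied to any function of $\tilde t$ alone, both $\Pi_h$ and $\Pi_h^{\mathcal F}$ therefore reduce to the same one-dimensional temporal $L^2$-projection, hence $(\Pi_h-\Pi_h^{\mathcal F})\bar v=0$ on $F_{\mathcal Q}$ holds after all, even though $\bar v$ is not a polynomial. With $q=\bar v$ the remainder of your argument goes through: $\norm{v-\bar v}_{\mathcal K}\le c\,h_K\norm{\overline\nabla v}_{\mathcal K}$ by Poincar\'e in the spatial variables, and the continuous trace inequality applied to $v-\bar v$ (using $\overline\nabla(v-\bar v)=\overline\nabla v$) handles the $\Pi_h^{\mathcal F}$ term. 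The same correction applies to \eqref{eq:otherusefulbnds_3} with the temporal mean $\bar v(x)$ in place of the spatial mean. This tensor-product observation is essentially the content of the argument the paper cites from the a~priori analysis.
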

\begin{proof}
  \Cref{eq:otherusefulbnds_2} has been shown in
  \cite[Eq.(4.7c)]{Wang:2023}, while \cref{eq:otherusefulbnds_3} can
  be shown with similar steps.
\end{proof}

We define an averaging operator
$\mathcal{I}^c_h:V_h\rightarrow V_h\cap C^0(\mathcal{E})$. For any
$v_h\in V_h$, we first construct the coarsest conforming refinement
$\mathcal{T}_h^c$ of $\mathcal{T}_h$; the operator
$\mathcal{I}^c_hv_h$ is prescribed at vertices of $\mathcal{T}_h^c$ by
the average of the values of $v_h$ at the vertex (see \cite[section
22.2]{Ern:book}). For the Dirichlet boundary nodes, i.e. nodes on
$\partial\mathcal{E}_D$, $\mathcal{I}^c_hv_h$ is prescribed by
zero. Furthermore, given a space-time element
$\mathcal{K} \in \mathcal{T}_h$, we introduce
$\check{\mathcal{Q}}_{\mathcal{K}}^i$ to denote the union of
$\mathcal{Q}$-facets in $\mathcal{F}_{h}^i$ that have a non-empty
intersection with $\partial\mathcal{K}$.  Similarly we introduce
$\check{\mathcal{R}}_{\mathcal{K}}^i$.

\begin{lemma}
  \label{lem:oswald_local_st}
  For a space-time element $\mathcal{K}\in\mathcal{T}_h$, the
  averaging operator
  $\mathcal{I}_h^{c}:V_h\rightarrow {V_h\cap C^0(\mathcal{E})}$
  satisfies the following
  \begin{equation}
    \label{eq:oswald_local_st}
    \norm{v_h-\mathcal{I}^{c}_hv_h}_{\mathcal{K}}
    \le c
    \del[2]{
      \sum_{F\in\check{\mathcal{Q}}^i_\mathcal{K}}
      h_K^{{1}/{2}}
      \norm{\jump{v_h}}_F
      +
      \sum_{F\in\check{\mathcal{R}}^i_\mathcal{K}}
      \delta t_{\mathcal{K}}^{{1}/{2}}
      \norm{\jump{v_h}}_F
    }.
  \end{equation}
\end{lemma}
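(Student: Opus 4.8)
The plan is to prove the Oswald-type interpolation estimate \cref{eq:oswald_local_st} by the standard scaling/counting argument adapted to the space-time anisotropy. First I would work on the reference element: fix a space-time element $\mathcal{K}$ and note that $v_h - \mathcal{I}_h^c v_h$ is a polynomial on $\mathcal{K}$ that vanishes at every vertex of the conforming refinement $\mathcal{T}_h^c$ at which $v_h$ is single-valued across all incident elements. Consequently, on the affine box $\widetilde{\mathcal{K}} = (0,\delta t_{\mathcal{K}}) \times (0,h_K)^d$, the quantity $\|v_h - \mathcal{I}_h^c v_h\|_{\widetilde{\mathcal{K}}}$ is controlled, by norm equivalence on the finite-dimensional polynomial space and a scaling back to the reference cube $\widehat{\mathcal{K}} = (-1,1)^{d+1}$, by a weighted sum of the nodal value differences $|v_h - \mathcal{I}_h^c v_h|$ over the vertices of $\mathcal{T}_h^c$ lying in $\overline{\mathcal{K}}$. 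Each such nodal difference, by the definition of the averaging operator as a vertex average, is bounded by a finite sum (the number of elements sharing that vertex is bounded by shape-regularity and the $1$-irregularity assumption) of jumps $|\jump{v_h}|$ of $v_h$ across the facets of $\mathcal{T}_h^c$ meeting at that vertex.

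The second step is to convert these pointwise nodal jumps into the $L^2$-facet-norm jumps appearing on the right-hand side. For a $\mathcal{Q}$-facet $F$ (on which $n_t = 0$), the facet is $d$-dimensional with $d$ spatial directions of length $\sim h_K$ and one temporal direction of length $\sim \delta t_{\mathcal{K}}$; inverting the relation between a pointwise value of a polynomial and its $L^2$ norm on $F$ via the anisotropic scaling gives a pointwise value bounded by $h_K^{-d/2}\delta t_{\mathcal{K}}^{-1/2}\|\jump{v_h}\|_F$ up to constants, but what we actually need is only the factor matching the volume scaling of $\widetilde{\mathcal{K}}$, namely $h_K^{1/2}$ after multiplying by $(\det \widetilde{\mathcal{K}})^{1/2} \sim (h_K^d \delta t_{\mathcal{K}})^{1/2}$ — so the net power of $h_K$ multiplying $\|\jump{v_h}\|_F$ is $h_K^{1/2}$, and the $\delta t_{\mathcal{K}}$ powers cancel. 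For an $\mathcal{R}$-facet $F$ (on which $\bar n = 0$), the facet is spatial of measure $\sim h_K^d$ and the analogous bookkeeping leaves the net factor $\delta t_{\mathcal{K}}^{1/2}\|\jump{v_h}\|_F$. The diffeomorphism regularity assumptions \cref{eq:diffeom_jac}, \cref{eq:diffeom_regular_special}, \cref{eq:diffeom_regular_d} and the local-time-stepping ratio bound $\Delta t_{\mathcal{K}}/\delta t_{\mathcal{K}} \le c$ are what guarantee that passing between the physical element and its affine image costs only generic constants, and that facets of $\mathcal{T}_h^c$ that subdivide a physical facet have comparable size to that facet so the sums over $\check{\mathcal{Q}}_{\mathcal{K}}^i$ and $\check{\mathcal{R}}_{\mathcal{K}}^i$ absorb the refinement.

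The third step is simply to sum: each vertex of $\overline{\mathcal{K}}$ contributes jumps across facets in $\check{\mathcal{Q}}_{\mathcal{K}}^i \cup \check{\mathcal{R}}_{\mathcal{K}}^i$, the number of vertices per element and the number of facets per vertex are bounded, so collecting terms yields exactly \cref{eq:oswald_local_st} with a constant depending only on $p_s$, $p_t$, $d$, and the shape-regularity constant $c$. I would cite \cite[section 22.2]{Ern:book} for the vertex-average construction and the classical argument in the isotropic case, and \cite{Wang:2023} or the scaling lemmas \cref{eq:scalingphiextra-1,eq:scalingequiv_2} already invoked in the proof of \cref{lem:eg_inv} for the anisotropic scaling between $\widetilde{\mathcal{K}}$, its facets, and the reference cube.

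I expect the main obstacle to be the bookkeeping of the anisotropic scalings on $\mathcal{Q}$- versus $\mathcal{R}$-facets — in particular checking that the $\delta t_{\mathcal{K}}$ powers cancel on $\mathcal{Q}$-facets (so that no $\delta t_{\mathcal{K}}^{-1/2}$ survives, which would ruin the estimate under $\delta t_{\mathcal{K}} = \mathcal{O}(h_K^2)$) and that the hanging-node/$1$-irregular refinement does not inflate the number of facets meeting at a vertex beyond a fixed constant. The conforming-refinement facets must be shown to have size comparable to the coarse facets they partition, which again is where the shape-regularity hypotheses do the real work; everything else is the textbook Oswald/Karakashian–Pascal argument transported to the space-time tensor-product setting.
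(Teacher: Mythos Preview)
Your approach is correct and is essentially the same idea as the paper's, but the paper organizes it differently in two places that are worth noting. First, rather than carrying out the vertex-counting and anisotropic scaling simultaneously, the paper maps the entire patch $\sigma_{\mathcal{K}}$ to the reference domain (where the problem is isotropic), invokes \cite[Lemma~22.3]{Ern:book} there as a black box to get the reference-domain estimate, and only then applies the anisotropic scaling identities \cref{eq:lb7sudir,eq:scalingequiv} to pull the $h_K^{1/2}$ and $\delta t_{\mathcal{K}}^{1/2}$ factors out. This sidesteps the bookkeeping you flag as the main obstacle (and in fact your intermediate count for the $\mathcal{Q}$-facet is off---such a facet has $d-1$ spatial directions, not $d$, giving a pointwise-to-$L^2$ factor $h_K^{-(d-1)/2}\delta t_{\mathcal{K}}^{-1/2}$; your final $h_K^{1/2}$ is nonetheless right). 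Second, for the $1$-irregular case the paper does not simply bound the number of facets per vertex; it uses an explicit auxiliary-mesh argument: since $\mathcal{I}_h^c$ is defined via vertices of the coarsest conforming refinement $\mathcal{T}_h^c$, one distinguishes whether $\mathcal{K}$ itself is refined in $\mathcal{T}_h^c$, applies the conforming estimate on each sub-element, observes that jumps across the newly created internal facets vanish, and then regroups the remaining facets back into $\check{\mathcal{Q}}^i_{\mathcal{K}}$ and $\check{\mathcal{R}}^i_{\mathcal{K}}$. Your sketch handles this step rather lightly; the paper's case split makes transparent why the sums over facets of $\mathcal{T}_h^c$ collapse to the stated sums over $\check{\mathcal{Q}}^i_{\mathcal{K}}$ and $\check{\mathcal{R}}^i_{\mathcal{K}}$.
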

\begin{proof}
  See \cref{s:oswaldlocalst}.
\end{proof}

Let $\mathcal{T}_\mathfrak{h}$ be the subgrid obtained by halving the
time-step of each element in $\mathcal{T}_h$. For each
$\mathcal{K}\in\mathcal{T}_h$, we introduce $\mathring{\mathcal{K}}^*$
and $\mathring{\mathcal{K}}_*$ to denote the two resulting space-time
elements in $\mathcal{T}_{\mathfrak{h}}$, i.e.,
$\mathcal{K}=\mathring{\mathcal{K}}^*\cup\mathring{\mathcal{K}}_*$,
and write
$\mathcal{T}_{\mathcal{K}} := \cbr[0]{\mathring{\mathcal{K}}^*,
  \mathring{\mathcal{K}}_*}$. Furthermore, every $\mathcal{Q}$-facet
$F_{\mathcal{Q}}\subset\mathcal{Q}_{\mathcal{K}}$ is divided into
$F_{\mathcal{Q}}^*$ and $F_{\mathcal{Q},*}$. We define
$F_{\mathring{\mathcal{R}}} := \partial \mathring{\mathcal{K}}^* \cap
\partial \mathring{\mathcal{K}}_*$ and introduce
$E_{\mathring{\mathcal{K}}}$ to denote any edge of
$F_{\mathring{\mathcal{R}}}$. Finally, for any
$v_{\mathfrak{h}}\in V_{\mathfrak{h}}$, when considering a
$\mathcal{K} \in \mathcal{T}_h$ with
$\mathcal{K}=\mathring{\mathcal{K}}^*\cup\mathring{\mathcal{K}}_*$, we
let $v_{\mathfrak{h}}^*$ and $v_{\mathfrak{h},*}$ denote
$v_{\mathfrak{h}}|_{\mathring{\mathcal{K}}^*}$ and
$v_{\mathfrak{h}}|_{\mathring{\mathcal{K}}_*}$, respectively.  See
\cref{fig:st_subgrid} for illustrations in $(1+1)$ and
$(2+1)$-dimensional space-time domains.

\begin{figure}[tbp]
  \centering
  \begin{subfigure}{.5\textwidth}
    \tikzmath{
      \x0 = 0; \y0 = 0;
      \x1 = 0; \y1 = 1;
      \x2 = 0; \y2 = 2;
      \x3 = 0; \y3 = 3;
      \x4 = 0; \y4 = 4;
      \x5 = 4; \y5 = 4;
      \x6 = 4; \y6 = 3;
      \x7 = 4; \y7 = 2;
      \x8 = 4; \y8 = 1;
      \x9 = 4; \y9 = 0;
      \w0 = (\x1 + \x8)/2; \v0 = (\y1 + \y8)/2;
      \w1 = (\x2 + \x7)/2; \v1 = (\y2 + \y7)/2;
      \w2 = (\x3 + \x6)/2; \v2 = (\y3 + \y6)/2;
      \s0 = (\x1 + \x2)/2; \t0 = (\y1 + \y2)/2;
      \s1 = ((\x7 + \x8)/2 +\s0)/2; \t1 = ((\y7 + \y8)/2+\t0)/2;
      \s2 = (\w0 + \x1)/2; \t2 = (\v0 + \y1)/2;
      \s3 = (\w1 + \x2)/2; \t3 = (\v1 + \y2)/2;
      \s4 = (\w2 + \x6)/2; \t4 = (\v2 + \y6)/2;
      \s5 = (\w1 + \x7)/2; \t5 = (\v1 + \y7)/2;
      \s6 = (\x3 + \w2)/2; \t6 = (\y3 + \v2)/2;
      \a0 = (\w1 + \w2)/2; \b0 = (\v1 + \v2)/2;
      \a1 = (\s4 + \s5)/2; \b1 = (\t4 + \t5)/2;
      \d0 = 8;
      \X0 = \x0 +\d0; \Y0 = 0;
      \X1 = \x1 +\d0; \Y1 = 1;
      \X2 = \x2 +\d0; \Y2 = 2;
      \X3 = \x3 +\d0; \Y3 = 3;
      \X4 = \x4 +\d0; \Y4 = 4;
      \X5 = \x5 +\d0; \Y5 = 4;
      \X6 = \x6 +\d0; \Y6 = 3;
      \X7 = \x7 +\d0; \Y7 = 2;
      \X8 = \x8 +\d0; \Y8 = 1;
      \X9 = \x9 +\d0; \Y9 = 0;
      \W0 = (\X1 + \X8)/2; \V0 = (\Y1 + \Y8)/2;
      \W1 = (\X2 + \X7)/2; \V1 = (\Y2 + \Y7)/2;
      \W2 = (\X3 + \X6)/2; \V2 = (\Y3 + \Y6)/2;
      \S0 = (\X1 + \X2)/2; \T0 = (\Y1 + \Y2)/2;
      \S1 = ((\X7 + \X8)/2 +\S0)/2; \T1 = ((\Y7 + \Y8)/2+\T0)/2;
      \S2 = (\W0 + \X1)/2; \T2 = (\V0 + \Y1)/2;
      \S3 = (\W1 + \X2)/2; \T3 = (\V1 + \Y2)/2;
      \S4 = (\W2 + \X6)/2; \T4 = (\V2 + \Y6)/2;
      \S5 = (\W1 + \X7)/2; \T5 = (\V1 + \Y7)/2;
      \S6 = (\X3 + \W2)/2; \T6 = (\Y3 + \V2)/2;
      \S7 = (\X7 + \X8)/2; \T7 = (\Y7 + \Y8)/2;
      \S8 = (\X2 + \X3)/2; \T8 = (\Y2 + \Y3)/2;
      \S9 = (\X6 + \X7)/2; \T9 = (\Y6 + \Y7)/2;
      \A0 = (\W1 + \W2)/2; \B0 = (\V1 + \V2)/2;
      \A1 = (\S4 + \S5)/2; \B1 = (\T4 + \T5)/2;
      \A2 = (3*\W1 + \W2)/4; \B2 = (3*\V1 + \V2)/4;
      \A3 = (\W1 + 3*\W2)/4; \B3 = (\V1 + 3*\V2)/4;
      \A4 = (3*\S5 + \S4)/4; \B4 = (3*\T5 + \T4)/4;
      \A5 = (\S5 + 3*\S4)/4; \B5 = (\T5 + 3*\T4)/4;
      \A6 = (3*\X1 + \X2)/4; \B6 = (3*\Y1 + \Y2)/4;
      \A7 = (\X1 + 3*\X2)/4; \B7 = (\Y1 + 3*\Y2)/4;
      \A8 = (3*\W0 + \W1)/4; \B8 = (3*\V0 + \V1)/4;
      \A9 = (\W0 + 3*\W1)/4; \B9 = (\V0 + 3*\V1)/4;
      \e0 = \x9 + 1.5; \f0 = (\y0+\y5)/2;
      \E0 = \X0 - 0.5; \F0 = (\Y0+\Y4)/2;
    }
    \begin{tikzpicture}[scale=0.6]
      \draw[thick,->] (\x0,\y0) -- (\x0,\y4+0.5) node[above] {$t$};
      \draw[thick] (\x0,\y0) -- node[below] {$\Omega_0$} (\x9,\y0);
      \draw[thick,->] (\x9,\y0) -- (\x9+0.5,\y0) node[right] {$x$};
      \draw[thick] (\x0,\y0)
      -- (\x1,\y1)
      -- (\x2,\y2)
      -- (\x3,\y3)
      -- (\x4,\y4);
      \draw[thick] (\x4,\y4) -- node[above] {$\Omega_T$} (\x5,\y5);
      \draw[thick] (\x5,\y5)
      -- (\x6,\y6)
      -- (\x7,\y7)
      -- (\x8,\y8)
      -- (\x9,\y9)
      -- (\x0,\y0);
      \draw[thick] (\x1,\y1) -- (\x8,\y8);
      \draw[thick] (\x2,\y2) -- (\x7,\y7);
      \draw[thick] (\x3,\y3) -- (\x6,\y6);
      \draw[thick] (\w0,\v0) -- (\w1,\v1) -- (\w2,\v2);
      \draw[thick] (\s0,\t0) -- (\s1,\t1);
      \draw[thick] (\s2,\t2) -- (\s3,\t3);
      \draw[thick] (\s3,\t3) -- (\s6,\t6);
      \draw[thick] (\s4,\t4) -- (\s5,\t5);
      \draw[thick] (\a0,\b0) -- (\a1,\b1);
      \draw[thick,->] (\e0-1/2,\f0) -- node[above] {subgrid}
			(\E0-1/2,\F0);
      \draw[thick,->] (\X0,\Y0) -- (\X0,\Y4+0.5) node[above] {$t$};
      \draw[thick] (\X0,\Y0) -- node[below] {$\Omega_0$} (\X9,\Y0);
      \draw[thick,->] (\X9,\Y0) -- (\X9+0.5,\Y0) node[right] {$x$};
      \draw[thick] (\X0,\Y0)
      -- (\X1,\Y1)
      -- (\X2,\Y2)
      -- (\X3,\Y3)
      -- (\X4,\Y4);
      \draw[thick] (\X4,\Y4) -- node[above] {$\Omega_T$} (\X5,\Y5);
      \draw[thick] (\X5,\Y5)
      -- (\X6,\Y6)
      -- (\X7,\Y7)
      -- (\X8,\Y8)
      -- (\X9,\Y9)
      -- (\X0,\Y0);
      \draw[thick] (\X1,\Y1) -- (\X8,\Y8);
      \draw[thick] (\X2,\Y2) -- (\X7,\Y7);
      \draw[thick] (\X3,\Y3) -- (\X6,\Y6);
      \draw[thick] (\W0,\V0) -- (\W1,\V1) -- (\W2,\V2);
      \draw[thick] (\S0,\T0) -- (\S1,\T1);
      \draw[thick] (\S2,\T2) -- (\S3,\T3);
      \draw[thick] (\S3,\T3) -- (\S6,\T6);
      \draw[thick] (\S4,\T4) -- (\S5,\T5);
      \draw[thick] (\A0,\B0) -- (\A1,\B1);
      \draw[thick,dashed] (\S1,\T1) -- (\S7,\T7);
      \draw[thick,dashed] (\S8,\T8) -- (\A0,\B0);
      \draw[thick,dashed] (\S9,\T9) -- (\A1,\B1);
      \draw[thick,dashed] (\A2,\B2) -- (\A4,\B4);
      \draw[thick,dashed] (\A3,\B3) -- (\A5,\B5);
      \draw[thick,dashed] (\A6,\B6) -- (\A8,\B8);
      \draw[thick,dashed] (\A7,\B7) -- (\A9,\B9);
    \end{tikzpicture}
  \end{subfigure}
  %
  %
  \begin{subfigure}{.35\textwidth}
    \tikzmath{
      \x0 = -0.5; \y0 = -1.5;
      \x1 = \x0 + 1; \y1 =\y0;
      \x2 = \x0 + 0.5; \y2 =\y0 + 0.5;
      \x3 = \x0; \y3 =\y0 + 1;
      \h0 = 4.5; 	 \h1 = 2;
      \a0 = 2; 	 \b0 = -4;
      \a1 = \a0 + \h0; \b1 = \b0 - 0.2;
      \a2 = \a1 + \h1; \b2 = \b1 + \h1 + 0.2;
      \a3 = \a0 + \h1; \b3 = \b0 + \h1;
      %
      %
      \v0 = -0.5;		\v1 = 2.5;
      \h2 = 2;			\h3 = 0.8;
      \a4 = \a0 + \v0;	\b4 = \b0 + \v1;
      \a5 = \a4 + \h0;	\b5 = \b4 - 0.2;
      \a6 = \a5 + \h1;	\b6 = \b5 + \h1 + 0.2;
      \a7 = \a4 + \h1;	\b7 = \b4 + \h1;
      %
      %
      \m0 = (\a0+\a4)/2; \n0 = (\b0+\b4)/2;
      \m1 = (\a1+\a5)/2; \n1 = (\b1+\b5)/2;
      \m2 = (\a2+\a6)/2; \n2 = (\b2+\b6)/2;
      \m3 = (\a3+\a7)/2; \n3 = (\b3+\b7)/2;
      %
      %
      \p0 = (6*\m0+1*\m1)/7; \q0 = (6*\n0+1*\n1)/7;
      \p1 = (5*\m0+2*\m1)/7; \q1 = (5*\n0+2*\n1)/7;
      \p2 = (4*\m0+3*\m1)/7; \q2 = (4*\n0+3*\n1)/7;
      \p3 = (3*\m0+4*\m1)/7; \q3 = (3*\n0+4*\n1)/7;
      \p4 = (2*\m0+5*\m1)/7; \q4 = (2*\n0+5*\n1)/7;
      \p5 = (1*\m0+6*\m1)/7; \q5 = (1*\n0+6*\n1)/7;
      \e0 = (6*\m1+1*\m2)/7; \f0 = (6*\n1+1*\n2)/7;
      \e1 = (5*\m1+2*\m2)/7; \f1 = (5*\n1+2*\n2)/7;
      \e2 = (4*\m1+3*\m2)/7; \f2 = (4*\n1+3*\n2)/7;
      \e3 = (3*\m1+4*\m2)/7; \f3 = (3*\n1+4*\n2)/7;
      \e4 = (2*\m1+5*\m2)/7; \f4 = (2*\n1+5*\n2)/7;
      \e5 = (1*\m1+6*\m2)/7; \f5 = (1*\n1+6*\n2)/7;
      \k0 = (6*\m3+1*\m2)/7; \r0 = (6*\n3+1*\n2)/7;
      \k1 = (5*\m3+2*\m2)/7; \r1 = (5*\n3+2*\n2)/7;
      \k2 = (4*\m3+3*\m2)/7; \r2 = (4*\n3+3*\n2)/7;
      \k3 = (3*\m3+4*\m2)/7; \r3 = (3*\n3+4*\n2)/7;
      \k4 = (2*\m3+5*\m2)/7; \r4 = (2*\n3+5*\n2)/7;
      \k5 = (1*\m3+6*\m2)/7; \r5 = (1*\n3+6*\n2)/7;
      \s0 = (6*\m0+1*\m3)/7; \t0 = (6*\n0+1*\n3)/7;
      \s1 = (5*\m0+2*\m3)/7; \t1 = (5*\n0+2*\n3)/7;
      \s2 = (4*\m0+3*\m3)/7; \t2 = (4*\n0+3*\n3)/7;
      \s3 = (3*\m0+4*\m3)/7; \t3 = (3*\n0+4*\n3)/7;
      \s4 = (2*\m0+5*\m3)/7; \t4 = (2*\n0+5*\n3)/7;
      \s5 = (1*\m0+6*\m3)/7; \t5 = (1*\n0+6*\n3)/7;
      %
      %
      \e6 = (\e3+\k3)/2; \f6 = (\f3+\r3)/2;
      \e7 = (\m1+\m2)/2; \f7 = (\n1+\n2)/2;
    }
    \begin{tikzpicture}[scale=0.5]
      \draw[thick,->] (\x0,\y0-2) -- (\x1,\y1-2) node[right] {$x_1$};
      \draw[thick,->] (\x0,\y0-2) -- (\x2,\y2-2) node[right] {$x_2$};
      \draw[thick,->] (\x0,\y0-2) -- (\x3,\y3-2) node[above] {$t$};
      \draw[thick] (\a0,\b0) -- (\a1,\b1);
      \draw[thick] (\a1,\b1) -- (\a2,\b2);
      \draw[thick,dashed] (\a2,\b2) -- (\a3,\b3);
      \draw[thick,dashed] (\a3,\b3) -- (\a0,\b0);
      \draw[thick] (\a4,\b4) -- (\a5,\b5);
      \draw[thick] (\a5,\b5) -- (\a6,\b6);
      \draw[thick] (\a6,\b6) -- (\a7,\b7);
      \draw[thick] (\a7,\b7) -- (\a4,\b4);
      \draw[thick] (\a0,\b0) -- (\a4,\b4);
      \draw[thick] (\a1,\b1) -- (\a5,\b5);
      \draw[thick] (\a2,\b2) -- (\a6,\b6);
      \draw[thick,dashed] (\a3,\b3) -- (\a7,\b7);
      \draw[very thick] (\m0,\n0) -- (\m1,\n1);
      \draw[very thick] (\m1,\n1) -- (\m2,\n2);
      \draw[very thick] (\m2,\n2) -- (\m3,\n3);
      \draw[very thick] (\m3,\n3) -- (\m0,\n0);
      %
      %
      \draw[thick,dotted] (\p0,\q0) -- (\s0,\t0);
      \draw[thick,dotted] (\p1,\q1) -- (\s1,\t1);
      \draw[thick,dotted] (\p2,\q2) -- (\s2,\t2);
      \draw[thick,dotted] (\p3,\q3) -- (\s3,\t3);
      \draw[thick,dotted] (\p4,\q4) -- (\s4,\t4);
      \draw[thick,dotted] (\p5,\q5) -- (\s5,\t5);
      \draw[thick,dotted] (\m1,\n1) -- (\m3,\n3);
      \draw[thick,dotted] (\e0,\f0) -- (\k0,\r0);
      \draw[thick,dotted] (\e1,\f1) -- (\k1,\r1);
      \draw[thick,dotted] (\e2,\f2) -- (\k2,\r2);
      \draw[thick,dotted] (\e3,\f3) -- (\k3,\r3);
      \draw[thick,dotted] (\e4,\f4) -- (\k4,\r4);
      \draw[thick,dotted] (\e5,\f5) -- (\k5,\r5);
      \draw[>=triangle 45,->] (\m2, \n2+2) node[right]
      {\large$F_{\mathring{\mathcal{R}}}$} to [bend right=30]
      (\e6-1, \f6);
      \draw[>=triangle 45,->] (\a1+2, \b1+0.5) node[right]
      {\large${E}_{\mathring{\mathcal{K}}}$} to  (\e7, \f7);
    \end{tikzpicture}
  \end{subfigure}
  \caption{ In the left-hand side of the figure we show a
    $(1+1)$-dimensional example of constructing the subgrid
    while the right-hand side of the figure gives a
    $(2+1)$-dimensional illustration of the new facets and edges
    resulting from the subgrid refinement.}
  \label{fig:st_subgrid}
\end{figure}
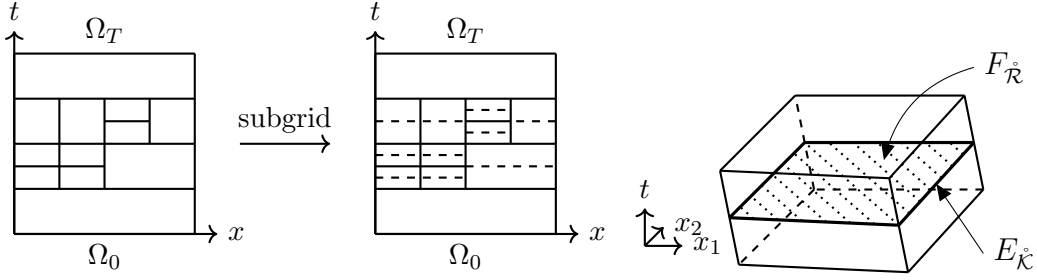

The following trace inequalities are obtained by applying
\cref{eq:eg_inv_low_d_4} and \cref{eq:eg_inv_low_d_5}.
\begin{lemma}
  \label{lem:trace_ineq_subgrid_edge}
  On the subgrid $\mathcal{T}_{\mathfrak{h}}$, the following trace
  inequalities hold (where, for each inequality, it is implicitly
  assumed that $E_{\mathring{\mathcal{K}}}$ is an edge of the
  facet on
  the right-hand side):
  \begin{equation}
    \label{eq:trace_ineq_subgrid_edge}
    \begin{aligned}
      \norm[0]{v_\mathfrak{h}^*}_{E_{\mathring{\mathcal{K}}}}
      &\le c
      {\delta t_{\mathcal{K}}^{-{1}/{2}}}
      \norm[0]{v_\mathfrak{h}^*}_{F_\mathcal{Q}^*},
      &
      \norm[0]{v_{\mathfrak{h},*}}_{E_{\mathring{\mathcal{K}}}}
      &\le c
      \delta t_{\mathcal{K}}^{-{1}/{2}}
      \norm[0]{v_{\mathfrak{h},*}}_{F_{\mathcal{Q},*}},
      \\
      \norm[0]{v_\mathfrak{h}^*}_{E_{\mathring{\mathcal{K}}}}
      &\le c
      h_K^{-{1}/{2}}
      \norm[0]{v_{\mathfrak{h}}^*}_{F_{\mathring{\mathcal{R}}}},
      &
      \norm[0]{v_{\mathfrak{h},*}}_{E_{\mathring{\mathcal{K}}}}
      &\le c
      h_K^{-{1}/{2}}
      \norm[0]{v_{\mathfrak{h},*}}_{F_{\mathring{\mathcal{R}}}}.
    \end{aligned}
  \end{equation}
\end{lemma}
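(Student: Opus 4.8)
The plan is to derive each of the four bounds in \cref{eq:trace_ineq_subgrid_edge} from a single application of \cref{eq:eg_inv_low_d_4} or \cref{eq:eg_inv_low_d_5}, applied not to $\mathcal{K}$ itself but to one of the two subgrid elements $\mathring{\mathcal{K}}^*$, $\mathring{\mathcal{K}}_*$. The key point is that each subgrid element has spatial mesh size $h_K$ and time step $\delta t_{\mathcal{K}}/2$, and that, being the image under $\Phi_{\mathcal{K}}$ of the upper (resp.\ lower) half of the reference cube $\widehat{\mathcal{K}}$, it satisfies \cref{eq:diffeom_jac,eq:diffeom_regular_special,eq:diffeom_regular_d} with these parameters in place of $h_K$ and $\delta t_{\mathcal{K}}$. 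Hence the conclusions of \cref{lem:eg_inv} hold verbatim on $\mathring{\mathcal{K}}^*$ and $\mathring{\mathcal{K}}_*$, with $\delta t_{\mathcal{K}}$ replaced by $\delta t_{\mathcal{K}}/2$; the resulting factor $2^{1/2}$ is absorbed into the generic constant $c$.

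Next I would verify the geometry. Since the interface $F_{\mathring{\mathcal{R}}} = \partial\mathring{\mathcal{K}}^*\cap\partial\mathring{\mathcal{K}}_*$ lies on the plane $t=(t_*+t^*)/2$, it is an $\mathcal{R}$-facet of both $\mathring{\mathcal{K}}^*$ and $\mathring{\mathcal{K}}_*$ with full spatial extent $h_K$, and $E_{\mathring{\mathcal{K}}}$ is one of its $(d-1)$-dimensional edges. At the same time, $F_{\mathcal{Q}}^*$ (resp.\ $F_{\mathcal{Q},*}$) is a $\mathcal{Q}$-facet of $\mathring{\mathcal{K}}^*$ (resp.\ $\mathring{\mathcal{K}}_*$), of temporal extent $\delta t_{\mathcal{K}}/2$, and its portion lying on the plane $t=(t_*+t^*)/2$ is exactly $E_{\mathring{\mathcal{K}}}$, so $E_{\mathring{\mathcal{K}}}$ is an edge of $F_{\mathcal{Q}}^*$ (resp.\ $F_{\mathcal{Q},*}$) as well. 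Finally, the restrictions of $v_{\mathfrak{h}}^*$ and $v_{\mathfrak{h},*}$ to these facets are tensor-product polynomials of degree $(p_t,p_s)$, i.e.\ of exactly the type to which \cref{lem:eg_inv} applies, so the lemma may be invoked on them.

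With this in hand the proof is immediate: the first bound follows by applying \cref{eq:eg_inv_low_d_4} on $\mathring{\mathcal{K}}^*$ with the pair $E_{\mathring{\mathcal{K}}}\subset F_{\mathcal{Q}}^*$ to $v_{\mathfrak{h}}^*$; the second by the same inequality on $\mathring{\mathcal{K}}_*$ with $E_{\mathring{\mathcal{K}}}\subset F_{\mathcal{Q},*}$ applied to $v_{\mathfrak{h},*}$; the third by applying \cref{eq:eg_inv_low_d_5} on $\mathring{\mathcal{K}}^*$ with $E_{\mathring{\mathcal{K}}}\subset F_{\mathring{\mathcal{R}}}$ to $v_{\mathfrak{h}}^*$; and the fourth likewise on $\mathring{\mathcal{K}}_*$. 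I do not expect a genuine obstacle here — the only step that needs to be stated with care is that the subgrid elements meet the same mesh hypotheses as the elements of $\mathcal{T}_h$ (with the halved time step), so that \cref{lem:eg_inv} transfers to them; once that is recorded, the four estimates are a direct substitution with constants absorbed.
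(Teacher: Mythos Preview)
Your proposal is correct and follows exactly the approach the paper takes: the paper simply states that the inequalities are obtained by applying \cref{eq:eg_inv_low_d_4} and \cref{eq:eg_inv_low_d_5}, and your argument supplies the details of why these inequalities transfer to the subgrid elements $\mathring{\mathcal{K}}^*$ and $\mathring{\mathcal{K}}_*$ with the halved time step absorbed into the constant.
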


\begin{definition}
  \label{def:subgrid_proj}
  We define the following restriction operator:
  \begin{equation}
    \label{eq:res_opt}
    \gamma_{\mathfrak{h}}:\boldsymbol{V}_h\rightarrow
    \boldsymbol{V}_{\mathfrak{h}}
    \ : \
    \del{{v}_h,\mu_h}
    \mapsto
    \del{
      v_h
      ,
      \gamma_{\mathcal{F},{\mathfrak{h}}}\del{\boldsymbol{v}_h}
    },
    \qquad
    \gamma_{\mathcal{F},{\mathfrak{h}}}\del{\boldsymbol{v}_h}
    :=
    \begin{cases}
      \mu_h,
      &\forall F\in
      \mathcal{F}_{\mathcal{Q},\mathfrak{h}}
      \cup\mathcal{F}_{\mathcal{R},h},
      \\
      v_h,
      &\forall
      F\in\mathcal{F}_{\mathcal{R},{\mathfrak{h}}}\setminus
      \mathcal{F}_{\mathcal{R},{h}}.
    \end{cases}
  \end{equation}
  Furthermore, let $i_h^{\mathcal{K}}(v_{\mathfrak{h}})$ denote the
  $L^2$-projection of $v_{\mathfrak{h}}$ onto ${V}_h$, and let
  $i^{\mathcal{F}}_h(\mu_{\mathfrak{h}})$ be defined as follows. For
  any facet $F \in \mathcal{F}_h$, if $F\in\mathcal{F}_{\mathfrak{h}}$,
  $\del[0]{i_h^{\mathcal{F}}(\mu_{\mathfrak{h}})}|_F:=\del[0]{\mu_{\mathfrak{h}}}|_F$;
  else, $\del[0]{i_h^{\mathcal{F}}(\mu_{\mathfrak{h}})}|_F$ is the
  $L^2$-projection of $\mu_{\mathfrak{h}}$ onto $M_{{h}}$. If
  $F \in\mathcal{F}_{\mathcal{R},\mathfrak{h}}
  \setminus\mathcal{F}_{\mathcal{R},h}$ we define
  $(i_h^{\mathcal{F}}(\mu_{\mathfrak{h}}))|_F :=
  (\mu_{\mathfrak{h}})|_F$. See \cref{fig:subgridprojection_facet}
  for an illustration of how $i_h^{\mathcal{F}}$ projects onto
  interior $\mathcal{Q}$-facets in $\mathcal{F}_{\mathcal{Q},{h}}$. We
  then define the projection operator:
  \begin{equation*}
    {i}_h:
    \boldsymbol{V}_{\mathfrak{h}} \rightarrow
    \boldsymbol{V}_{\mathfrak{h}} \,:\,
    \del[0]{{v}_\mathfrak{h},{\mu}_\mathfrak{h}} \mapsto
    \gamma_{\mathfrak{h}}\del[1]{ {i}_h^\mathcal{K}\del[0]{{v}_\mathfrak{h}} ,
      {i}_h^\mathcal{F}\del[0]{{\mu}_\mathfrak{h}} }.
  \end{equation*}
\end{definition}

\def\q{1cm}
\def\s{1.2cm}
\def\t{0.1cm}
\def\p{0.03cm}
\begin{figure}[tbp]
  \centering
  \subfloat\
  { \tikzmath{
      coordinate \x{01}, \x{02}, \x{03}, \x{04}, \x{05}, \x{06};
      \x{01} = (0cm,0cm);
      \x{02} = ([xshift=\q] \x{01});
      \x{03} = ([yshift=0.5\q] \x{02});
      \x{04} = ([yshift=0.5\q] \x{03});
      \x{05} = ([xshift=-\q] \x{04});
      \x{06} = ([yshift=-0.5\q] \x{05});
      coordinate \y{01}, \y{02}, \y{03}, \y{04}, \y{05}, \y{06};
      \y{01} = (0cm,-1.8cm);
      \y{02} = ([xshift=\q] \y{01});
      \y{03} = ([yshift=0.5\q] \y{02});
      \y{04} = ([yshift=0.5\q] \y{03});
      \y{05} = ([xshift=-\q] \y{04});
      \y{06} = ([yshift=-0.5\q] \y{05});
    }
    \begin{tikzpicture}[scale=1.8]
      \draw[] (\x{01})
      -- (\x{02})
      -- (\x{04})
      -- (\x{05})
      -- (\x{01});
      \draw[dashed] (\x{03}) -- (\x{06});
      \draw[] ([xshift=\s] \x{01})
      -- ([xshift=\s] \x{02})
      -- ([xshift=\s] \x{04})
      -- ([xshift=\s] \x{05})
      -- ([xshift=\s] \x{01});
      \draw[dashed] ([xshift=\s] \x{03})
      -- ([xshift=\s] \x{06});
      \draw[color=red,thick] ([xshift=\t,yshift=\p] \x{02})
      -- ([xshift=\t,yshift=-\p] \x{03});
      \draw[color=red,thick] ([xshift=\t,yshift=\p] \x{03})
      -- ([xshift=\t,yshift=-\p] \x{04});
      \draw ([xshift=\t] \x{02}) node[color=red,below]
      {$\mu_{\mathfrak{h}}$};
      \draw ([xshift=\t] \x{04}) node[color=red,above]
      {$\mu_{\mathfrak{h}}$};
      \draw[] (\y{01})
      -- (\y{02})
      -- (\y{04})
      -- (\y{05})
      -- (\y{01});
      \draw[dashed] (\y{03}) -- (\y{06});
      \draw[] ([xshift=\s] \y{01})
      -- ([xshift=\s] \y{02})
      -- ([xshift=\s] \y{04})
      -- ([xshift=\s] \y{05})
      -- ([xshift=\s] \y{01});
      \draw[dashed] ([xshift=\s] \y{03})
      -- ([xshift=\s] \y{06});
      \draw[color=red,thick] ([xshift=\t,yshift=\p] \y{02})
      -- ([xshift=\t,yshift=-\p] \y{04});
      \draw ([xshift=\t] \y{02})
      node[color=red,below]
      {$i_h^{\mathcal{F}}\mu_{\mathfrak{h}}$};
      \draw[->] ([xshift=\t,yshift=-8*\p] \x{02}) --
      node[right] {$i_h^{\mathcal{F}}$}
      ([xshift=\t,yshift=2*\p] \y{04});
    \end{tikzpicture}
  }
  \qquad
  \subfloat\
  { \tikzmath{
      coordinate \x{01}, \x{02}, \x{03}, \x{04}, \x{05}, \x{06};
      \x{01} = (0cm,0cm);
      \x{02} = ([xshift=\q] \x{01});
      \x{03} = ([yshift=0.5\q] \x{02});
      \x{04} = ([yshift=0.5\q] \x{03});
      \x{05} = ([xshift=-\q] \x{04});
      \x{06} = ([yshift=-0.5\q] \x{05});
      coordinate \x{07}, \x{08}, \x{09}, \x{10}, \x{11}, \x{12};
      \x{07} = ([xshift=0.5\q] \x{01});
      \x{08} = ([yshift=0.25\q] \x{02});
      \x{09} = ([yshift=0.25\q] \x{03});
      \x{10} = ([xshift=0.5\q] \x{05});
      \x{11} = ([yshift=0.25\q] \x{06});
      \x{12} = ([yshift=0.25\q] \x{01});
      coordinate \y{01}, \y{02}, \y{03}, \y{04}, \y{05}, \y{06};
      \y{01} = (0cm,-1.8cm);
      \y{02} = ([xshift=\q] \y{01});
      \y{03} = ([yshift=0.5\q] \y{02});
      \y{04} = ([yshift=0.5\q] \y{03});
      \y{05} = ([xshift=-\q] \y{04});
      \y{06} = ([yshift=-0.5\q] \y{05});
      coordinate \y{07}, \y{08}, \y{09}, \y{10}, \y{11}, \y{12};
      \y{07} = ([xshift=0.5\q] \y{01});
      \y{08} = ([yshift=0.25\q] \y{02});
      \y{09} = ([yshift=0.25\q] \y{03});
      \y{10} = ([xshift=0.5\q] \y{05});
      \y{11} = ([yshift=0.25\q] \y{06});
      \y{12} = ([yshift=0.25\q] \y{01});
    }
    \begin{tikzpicture}[scale=1.8]
      \draw[] (\x{01})
      -- (\x{02})
      -- (\x{04})
      -- (\x{05})
      -- (\x{01});
      \draw[dashed] (\x{03}) -- (\x{06});
      \draw[] ([xshift=\s] \x{01})
      -- ([xshift=\s] \x{02})
      -- ([xshift=\s] \x{04})
      -- ([xshift=\s] \x{05})
      -- ([xshift=\s] \x{01});
      \draw[] ([xshift=\s] \x{03}) -- ([xshift=\s] \x{06});
      \draw[] ([xshift=\s] \x{07}) -- ([xshift=\s] \x{10});
      \draw[dashed] ([xshift=\s] \x{08}) -- ([xshift=\s] \x{12});
      \draw[dashed] ([xshift=\s] \x{09}) -- ([xshift=\s] \x{11});
      \draw[color=red,thick] ([xshift=\t,yshift=\p] \x{02})
      -- ([xshift=\t,yshift=-\p] \x{03});
      \draw[color=red,thick] ([xshift=\t,yshift=\p] \x{03})
      -- ([xshift=\t,yshift=-\p] \x{04});
      \draw ([xshift=\t] \x{02}) node[color=red,below]
      {$\mu_{\mathfrak{h}}$};
      \draw ([xshift=\t] \x{04}) node[color=red,above]
      {$\mu_{\mathfrak{h}}$};
      \draw[] (\y{01})
      -- (\y{02})
      -- (\y{04})
      -- (\y{05})
      -- (\y{01});
      \draw[dashed] (\y{03}) -- (\y{06});
      \draw[] ([xshift=\s] \y{01})
      -- ([xshift=\s] \y{02})
      -- ([xshift=\s] \y{04})
      -- ([xshift=\s] \y{05})
      -- ([xshift=\s] \y{01});
      \draw[] ([xshift=\s] \y{03}) -- ([xshift=\s] \y{06});
      \draw[] ([xshift=\s] \y{07}) -- ([xshift=\s] \y{10});
      \draw[dashed] ([xshift=\s] \y{08}) -- ([xshift=\s] \y{12});
      \draw[dashed] ([xshift=\s] \y{09}) -- ([xshift=\s] \y{11});
      \draw[color=red,thick] ([xshift=\t,yshift=\p] \y{02})
      -- ([xshift=\t,yshift=-\p] \y{04});
      \draw ([xshift=\t] \y{02})
      node[color=red,below]
      {$i_h^{\mathcal{F}}\mu_{\mathfrak{h}}$};
      \draw[->] ([xshift=\t,yshift=-8*\p] \x{02}) --
      node[right] {$i_h^{\mathcal{F}}$}
      ([xshift=\t,yshift=2*\p] \y{04});
    \end{tikzpicture}
  }
  \caption{Illustration of subgrid projection $i_h^{\mathcal{F}}$ onto
    an interior $\mathcal{Q}$-facet in
    $\mathcal{F}_{\mathcal{Q},{h}}$. The neighboring elements of the
    $\mathcal{Q}$-facet are on the same refinement level in the left
    column and are on different refinement levels in the right
    column.}
  \label{fig:subgridprojection_facet}
\end{figure}
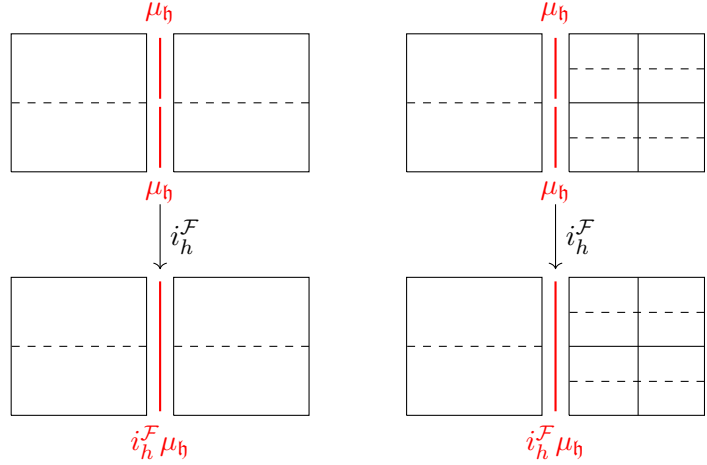

We have the following projection estimates.

\begin{lemma}
  \label{lem:subgrid_proj_est}
  Let ${\boldsymbol{v}_\mathfrak{h}}\in \boldsymbol{V}_\mathfrak{h}$,
  let the projection operator $i_h$ be defined as in
  \cref{def:subgrid_proj}. Then,
  \begin{subequations}
    \label{eq:subgrid_proj_est}
    \begin{align}
      \label{eq:subgrid_proj_est_1}
      \norm[0]{\del[0]{I-i_h^\mathcal{K}}v_\mathfrak{h}}_\mathcal{K}
      &\le c
        \del[1]{
        \delta t_{\mathcal{K}}^{{1}/{2}}\norm{\jump{v_\mathfrak{h}}}_{F_{\mathring{\mathcal{R}}}}
        +
        \delta t_{\mathcal{K}}^{{3}/{2}}\norm{\jump{\partial_tv_\mathfrak{h}}}_{F_{\mathring{\mathcal{R}}}}
        } && \text{for } \mathcal{K} \in \mathcal{T}_h,
      \\
      \label{eq:subgrid_proj_est_2}
      \norm[0]{\del[0]{I-i_h^\mathcal{F}}\mu_\mathfrak{h}}_{F_{\mathcal{Q}}}
      &\le c
        \del[1]{
        \delta t_{\mathcal{K}}^{{1}/{2}}
        \norm{\ejump{\mu_\mathfrak{h}}}_{E_{\mathring{\mathcal{K}}}}
        +
        \delta t_{\mathcal{K}}^{{3}/{2}}
        \norm{\ejump{\partial_t\mu_\mathfrak{h}}}_{E_{\mathring{\mathcal{K}}}}}
          && \text{for } F_{\mathcal{Q}}\in\mathcal{F}_{\mathcal{Q},h},
    \end{align}
  \end{subequations}
  where $\mathcal{K}$ on the right-hand side of
  \cref{eq:subgrid_proj_est_2} is chosen such that
  $F_{\mathcal{Q}}\subset\mathcal{Q}_{\mathcal{K}}$.
\end{lemma}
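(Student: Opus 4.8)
Both estimates follow from one template, so I will carry out \cref{eq:subgrid_proj_est_1} in detail and then indicate the modifications for \cref{eq:subgrid_proj_est_2}. The starting observation is that, on $\mathcal{K}$, the operator $i_h^{\mathcal{K}}$ is simply the $L^2(\mathcal{K})$-orthogonal projection onto the local polynomial space $V_h|_{\mathcal{K}}$ (which is affine in $t$ since $p_t=1$), so that $\norm{(I-i_h^{\mathcal{K}})v_{\mathfrak{h}}}_{\mathcal{K}}\le\norm{v_{\mathfrak{h}}-w}_{\mathcal{K}}$ for every $w\in V_h|_{\mathcal{K}}$. Writing $t\in(t_*,t^*)$ for the time interval of $\mathcal{K}$, $\delta t_{\mathcal{K}}=t^*-t_*$, and $t_m:=\tfrac12(t_*+t^*)$ for the common time level of $F_{\mathring{\mathcal{R}}}=\partial\mathring{\mathcal{K}}^*\cap\partial\mathring{\mathcal{K}}_*$, I would take $w$ to be the unique polynomial extension of $v_{\mathfrak{h}}|_{\mathring{\mathcal{K}}_*}$ from $\mathring{\mathcal{K}}_*$ to all of $\mathcal{K}$; this is a legitimate member of $V_h|_{\mathcal{K}}$ because $\Phi_{\mathring{\mathcal{K}}_*}$ differs from $\Phi_{\mathcal{K}}$ only by an affine map of the reference time coordinate. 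With this choice $v_{\mathfrak{h}}-w$ vanishes on $\mathring{\mathcal{K}}_*$, so $\norm{v_{\mathfrak{h}}-w}_{\mathcal{K}}=\norm{v_{\mathfrak{h}}^*-w}_{\mathring{\mathcal{K}}^*}$.

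On $\mathring{\mathcal{K}}^*$ the polynomial $v_{\mathfrak{h}}^*-w$ is affine in $t$ — this is where both $p_t=1$ and \cref{eq:diffeom_regular_special} (which keeps the physical time coordinate undistorted) enter — so there it equals $g_0+(t-t_m)g_1$ for spatial polynomials $g_0,g_1$. By construction $g_0$ is the trace on $F_{\mathring{\mathcal{R}}}$ of $v_{\mathfrak{h}}^*-v_{\mathfrak{h},*}$ and $g_1$ the trace of $\partial_tv_{\mathfrak{h}}^*-\partial_tv_{\mathfrak{h},*}$; since the two outward normals on the $\mathcal{R}$-facet $F_{\mathring{\mathcal{R}}}$ are purely temporal and opposite, $\norm{g_0}_{F_{\mathring{\mathcal{R}}}}=\norm{\jump{v_{\mathfrak{h}}}}_{F_{\mathring{\mathcal{R}}}}$ and $\norm{g_1}_{F_{\mathring{\mathcal{R}}}}=\norm{\jump{\partial_tv_{\mathfrak{h}}}}_{F_{\mathring{\mathcal{R}}}}$. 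I would then evaluate $\int_{\mathring{\mathcal{K}}^*}\envert{g_0+(t-t_m)g_1}^2$ by passing, via \cref{eq:diffeom_jac,eq:diffeom_regular_d}, to the product of $F_{\mathring{\mathcal{R}}}$ with the time interval $(t_m,t^*)$ of length $\delta t_{\mathcal{K}}/2$; the elementary one-dimensional integration produces $\delta t_{\mathcal{K}}\norm{g_0}_{F_{\mathring{\mathcal{R}}}}^2$, $\delta t_{\mathcal{K}}^3\norm{g_1}_{F_{\mathring{\mathcal{R}}}}^2$, and a cross term of order $\delta t_{\mathcal{K}}^2\norm{g_0}_{F_{\mathring{\mathcal{R}}}}\norm{g_1}_{F_{\mathring{\mathcal{R}}}}$, which a Young inequality with weight of order $\delta t_{\mathcal{K}}^{-1}$ absorbs into the first two. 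Taking square roots gives \cref{eq:subgrid_proj_est_1}.

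For \cref{eq:subgrid_proj_est_2} I would run the analogous argument one codimension lower. A coarse $\mathcal{Q}$-facet $F_{\mathcal{Q}}\in\mathcal{F}_{\mathcal{Q},h}$ is split by the subgrid, hence does not lie in $\mathcal{F}_{\mathfrak{h}}$, so \cref{def:subgrid_proj} shows that $i_h^{\mathcal{F}}\mu_{\mathfrak{h}}$ restricted to $F_{\mathcal{Q}}$ is the $L^2(F_{\mathcal{Q}})$-projection onto $M_h|_{F_{\mathcal{Q}}}$; with $w$ the polynomial extension of $\mu_{\mathfrak{h}}|_{F_{\mathcal{Q},*}}$ over $F_{\mathcal{Q}}$, the defect survives only on $F_{\mathcal{Q}}^*$, is again affine in $t$ there, and has coefficients whose traces on the mid-time edge $E_{\mathring{\mathcal{K}}}$ are $\ejump{\mu_{\mathfrak{h}}}$ and $\ejump{\partial_t\mu_{\mathfrak{h}}}$. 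Since $F_{\mathcal{Q}}^*$ is, up to the bounded surface Jacobian of \cref{eq:k_surface,eq:diffeom_regular_d}, the product of $E_{\mathring{\mathcal{K}}}$ with a time interval of length $\delta t_{\mathcal{K}}/2$, integrating $\envert{\ejump{\mu_{\mathfrak{h}}}+(t-t_m)\ejump{\partial_t\mu_{\mathfrak{h}}}}^2$ and repeating the weighted-Young step yields \cref{eq:subgrid_proj_est_2}; when $F_{\mathcal{Q}}$ meets elements at different refinement levels the same estimate is applied on each subgrid $\mathcal{Q}$-facet composing it and the bounds are added.

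The one-dimensional $t$-integrations and the metric equivalences coming from \cref{eq:diffeom_jac,eq:diffeom_regular_special,eq:diffeom_regular_d,eq:k_surface} are routine. The point that must be handled carefully — and the place where the hypothesis $p_t=1$ is genuinely needed — is the identification of the projection defect on the upper subcell as an affine-in-$t$ function whose value and $t$-derivative on $F_{\mathring{\mathcal{R}}}$ (respectively $E_{\mathring{\mathcal{K}}}$) are exactly the facet/edge jumps of $v_{\mathfrak{h}}$ and $\partial_tv_{\mathfrak{h}}$ (respectively $\mu_{\mathfrak{h}}$ and $\partial_t\mu_{\mathfrak{h}}$); for larger $p_t$ the defect would couple to higher-order traces on $F_{\mathring{\mathcal{R}}}$ and the asserted bound would fail.
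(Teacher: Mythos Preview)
Your proof is correct and rests on the same core idea as the paper's: use the optimality of the $L^2$-projection and compare $v_{\mathfrak{h}}$ with a handpicked competitor in $V_h|_{\mathcal{K}}$ (resp.\ $M_h|_{F_{\mathcal{Q}}}$) whose defect is expressible in terms of the facet (resp.\ edge) jumps. The execution differs in two minor respects. First, the paper works on the reference element and takes the competitor $\widehat{w}_h^\circ$ to be the coefficient-wise \emph{average} of the two subgrid pieces, so the defect is $\pm\tfrac12$ times the jump on each half; you instead take the one-sided polynomial extension of $v_{\mathfrak{h}}|_{\mathring{\mathcal{K}}_*}$, which kills the defect on one half entirely and localises the full jump on $\mathring{\mathcal{K}}^*$. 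Second, the paper transfers the estimate to the physical element via the scaling identities \cref{eq:lb7sudir,eq:scalingequiv,eq:scalingphiextra}, whereas you integrate directly on the physical element and invoke the Jacobian bounds \cref{eq:diffeom_jac,eq:diffeom_regular_special,eq:diffeom_regular_d} to pass to a time-fibred product. Both choices give the stated bound with equivalent constants; your route is arguably more transparent, while the paper's reference-domain calculation packages the scaling once and for all.
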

\begin{proof}
  See \cref{s:subgridprojestproof}.
\end{proof}

\begin{lemma}
  \label{lem:subgridhelperbnds}
  Let
  ${\boldsymbol{v}_{\mathfrak{h}}} \in \boldsymbol{V}_{\mathfrak{h}}$,
  there holds:
  \begin{subequations}
    \label{eq:subgridhelperbnds}
    \begin{align}
      \label{eq:subgridhelperbnds_1}
      \norm[0]{\jump{v_{\mathfrak{h}}}}_{F_{\mathring{\mathcal{R}}}}
      & \le c
        \sum_{\mathring{\mathcal{K}}\in \mathcal{T}_\mathcal{K}}
        \norm[0]{
        \envert[0]{\beta_s-\tfrac{1}{2}\beta\cdot n}^{1/2}
        \sbr[0]{\boldsymbol{v}_{\mathfrak{h}}}
        }_{ \partial\mathring{\mathcal{K}}
        \cap
        F_{\mathring{\mathcal{R}}}
        }
        \le c
        \tnorm{\boldsymbol{v}_{\mathfrak{h}}}_{s,\mathfrak{h}},
      \\
      \label{eq:subgridhelperbnds_2}
      \norm[0]{\jump{\partial_tv_{\mathfrak{h}}}}_{F_{\mathring{\mathcal{R}}}}
      & \le c
        \sum_{\mathring{\mathcal{K}}\in \mathcal{T}_\mathcal{K}}
        \norm[0]{
        \partial_tv_{\mathfrak{h}}
        }_{ \partial\mathring{\mathcal{K}}
        \cap
        F_{\mathring{\mathcal{R}}}
        }
        \le c
        \sum_{\mathring{\mathcal{K}}\in \mathcal{T}_\mathcal{K}}
        {\delta{t}_{\mathcal{K}}^{-1/2}}
        \norm[0]{
        \partial_tv_{\mathfrak{h}}
        }_{\mathring{\mathcal{K}}},
      \\
      \label{eq:subgridhelperbnds_4}
      \norm[0]{\ejump{\mu_{\mathfrak{h}}}}_{E_{\mathring{\mathcal{K}}}}
      &\le c
        h_K^{-1/2}
        \sum_{\mathring{\mathcal{K}}\in \mathcal{T}_\mathcal{K}}
        \norm[0]{
        \envert[0]{\beta_s-\tfrac{1}{2}\beta\cdot n}^{1/2}
        \sbr[0]{\boldsymbol{v}_{\mathfrak{h}}}
        }_{ \partial\mathring{\mathcal{K}}
        \cap
        F_{\mathring{\mathcal{R}}}
        }
        +
        c \delta{t}_{\mathcal{K}}^{-1/2}
		  \del[2]{
        \norm[0]{\sbr[0]{\boldsymbol{v}_{\mathfrak{h}}^*}}_{F_{\mathcal{Q}}^*}
        +
	  \norm[0]{\sbr[0]{\boldsymbol{v}_{\mathfrak{h},*}}}_{F_{\mathcal{Q},*}}},
      \\
      \label{eq:subgridhelperbnds_5}
      \norm[0]{\ejump{\partial_t\mu_{\mathfrak{h}}}}_{E_{\mathring{\mathcal{K}}}}
      &\le c
        h_K^{-1/2}\delta{t}_{\mathcal{K}}^{-1/2}
        \sum_{\mathring{\mathcal{K}}\in \mathcal{T}_\mathcal{K}}
        \norm[0]{
        \partial_tv_{\mathfrak{h}}
        }_{\mathring{\mathcal{K}}}
        + c
		  \delta{t}_{\mathcal{K}}^{-3/2}
		  \del[2]{
			  \norm[0]{\sbr[0]{\boldsymbol{v}_{\mathfrak{h}}^*}}_{F_{\mathcal{Q}}^*}
			  + 
			  \norm[0]{\sbr[0]{\boldsymbol{v}_{\mathfrak{h},*}}}_{F_{\mathcal{Q},*}}
		  }.
    \end{align}
  \end{subequations}
\end{lemma}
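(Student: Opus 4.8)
The plan is to prove the two facet estimates \eqref{eq:subgridhelperbnds_1} and \eqref{eq:subgridhelperbnds_2} on $F_{\mathring{\mathcal{R}}}$ first, and then to bootstrap to the two edge estimates \eqref{eq:subgridhelperbnds_4} and \eqref{eq:subgridhelperbnds_5} on $E_{\mathring{\mathcal{K}}}$ by inserting the two element traces of $v_{\mathfrak{h}}$ and invoking the edge trace inequalities of \cref{lem:eg_inv} and \cref{lem:trace_ineq_subgrid_edge}. The single geometric fact that drives everything is that $F_{\mathring{\mathcal{R}}}\in\mathcal{F}_{\mathcal{R},\mathfrak{h}}\setminus\mathcal{F}_{\mathcal{R},h}$ is an $\mathcal{R}$-facet: on it $\bar{n}=0$ and $n_t=\pm 1$, so $\beta\cdot n=\pm 1$ and $\beta_s=\sup_{F_{\mathring{\mathcal{R}}}}|\beta\cdot n|=1$. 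Hence $|\beta_s-\tfrac12\beta\cdot n|$ equals $\tfrac32$ on $\partial\mathring{\mathcal{K}}^*\cap F_{\mathring{\mathcal{R}}}$ and $\tfrac12$ on $\partial\mathring{\mathcal{K}}_*\cap F_{\mathring{\mathcal{R}}}$; in particular it is bounded above and below by positive constants there. I also use repeatedly that $\delta t_{\mathring{\mathcal{K}}}=\tfrac12\delta t_{\mathcal{K}}$, so the $\mathcal{K}$- and $\mathring{\mathcal{K}}$-scales are equivalent, and that $\partial_t v_{\mathfrak{h}}\in V_{\mathfrak{h}}$ since $p_t=1$.

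For \eqref{eq:subgridhelperbnds_1} I use that $\mu_{\mathfrak{h}}$ is single-valued on $F_{\mathring{\mathcal{R}}}$: writing $n^*$, $n_*$ for the outward normals of $\mathring{\mathcal{K}}^*$, $\mathring{\mathcal{K}}_*$ on $F_{\mathring{\mathcal{R}}}$ (which satisfy $n^*+n_*=0$), we have $\jump{v_{\mathfrak{h}}}=\sbr{\boldsymbol{v}_{\mathfrak{h}}^*}n^*+\sbr{\boldsymbol{v}_{\mathfrak{h},*}}n_*$ on $F_{\mathring{\mathcal{R}}}$; a triangle inequality together with the two-sided bound on the weight gives the first inequality, and the second follows by bounding the $\partial\mathring{\mathcal{K}}\cap F_{\mathring{\mathcal{R}}}$ norms by the full $\partial\mathring{\mathcal{K}}$ norms (converting the two-term sum to the norm costs a fixed constant), which are among the terms of $\tnorm{\boldsymbol{v}_{\mathfrak{h}}}_{s,\mathfrak{h}}^2$. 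For \eqref{eq:subgridhelperbnds_2} I write $\jump{\partial_t v_{\mathfrak{h}}}=(\partial_t v_{\mathfrak{h}}^*)n^*+(\partial_t v_{\mathfrak{h},*})n_*$ on $F_{\mathring{\mathcal{R}}}$, use the triangle inequality, and apply the anisotropic trace inequality \eqref{eq:eg_inv_4} to $\partial_t v_{\mathfrak{h}}\in V_{\mathfrak{h}}$ on each $\mathring{\mathcal{K}}\in\mathcal{T}_{\mathcal{K}}$, using $F_{\mathring{\mathcal{R}}}\subset\mathcal{R}_{\mathring{\mathcal{K}}}$ and $\delta t_{\mathring{\mathcal{K}}}=\tfrac12\delta t_{\mathcal{K}}$.

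For \eqref{eq:subgridhelperbnds_4} I insert the two element traces of $v_{\mathfrak{h}}$ on the edge $E_{\mathring{\mathcal{K}}}$, which is simultaneously an edge of $F_{\mathring{\mathcal{R}}}$ and the common edge of the two half-$\mathcal{Q}$-facets $F_{\mathcal{Q}}^*\subset\mathcal{Q}_{\mathring{\mathcal{K}}^*}$ and $F_{\mathcal{Q},*}\subset\mathcal{Q}_{\mathring{\mathcal{K}}_*}$ adjacent to it. This yields, on $E_{\mathring{\mathcal{K}}}$ and up to an overall sign, $\ejump{\mu_{\mathfrak{h}}}=\sbr{\boldsymbol{v}_{\mathfrak{h}}^*}|_{F_{\mathcal{Q}}^*}-\sbr{\boldsymbol{v}_{\mathfrak{h},*}}|_{F_{\mathcal{Q},*}}+(v_{\mathfrak{h}}^*-v_{\mathfrak{h},*})|_{F_{\mathring{\mathcal{R}}}}$. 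The first two terms are bounded by $c\delta t_{\mathcal{K}}^{-1/2}$ times their $L^2(F_{\mathcal{Q}}^*)$- and $L^2(F_{\mathcal{Q},*})$-norms via \eqref{eq:eg_inv_low_d_4} (cf.\ \cref{lem:trace_ineq_subgrid_edge}); the last term is bounded by $ch_K^{-1/2}\norm{v_{\mathfrak{h}}^*-v_{\mathfrak{h},*}}_{F_{\mathring{\mathcal{R}}}}=ch_K^{-1/2}\norm{\jump{v_{\mathfrak{h}}}}_{F_{\mathring{\mathcal{R}}}}$ via the spatial edge trace inequality \eqref{eq:eg_inv_low_d_5}, and then by \eqref{eq:subgridhelperbnds_1}. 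Estimate \eqref{eq:subgridhelperbnds_5} is obtained in exactly the same way with $\partial_t$ inserted throughout: the two $\mathcal{Q}$-facet contributions pick up an extra factor $\delta t_{\mathcal{K}}^{-1}$ from the temporal inverse inequality \eqref{eq:eg_inv_low_d_1_F} applied to $\partial_t\sbr{\boldsymbol{v}_{\mathfrak{h}}^*}$ on $F_{\mathcal{Q}}^*$ (resp.\ $\partial_t\sbr{\boldsymbol{v}_{\mathfrak{h},*}}$ on $F_{\mathcal{Q},*}$), while the remaining term is handled with \eqref{eq:eg_inv_low_d_5} followed by \eqref{eq:subgridhelperbnds_2}.

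I expect the main obstacle to be bookkeeping rather than anything substantial: one must keep track of the subgrid geometry (which facet produced by halving in time is a $\mathcal{Q}$- versus an $\mathcal{R}$-facet, and the incidence relations between $E_{\mathring{\mathcal{K}}}$, $F_{\mathring{\mathcal{R}}}$, $F_{\mathcal{Q}}^*$ and $F_{\mathcal{Q},*}$, including the case of $1$-irregularly refined neighbours), and one must check that the quantities to which the inverse and trace inequalities are applied --- namely $v_{\mathfrak{h}}^*-\mu_{\mathfrak{h}}$ and $\partial_t(v_{\mathfrak{h}}^*-\mu_{\mathfrak{h}})$ on a $\mathcal{Q}$-facet, $v_{\mathfrak{h}}^*-v_{\mathfrak{h},*}$ and $\partial_t(v_{\mathfrak{h}}^*-v_{\mathfrak{h},*})$ on $F_{\mathring{\mathcal{R}}}$, and $\partial_t v_{\mathfrak{h}}$ on $\mathring{\mathcal{K}}$ --- are polynomials of bounded degree on the relevant face or cell, so that the constants in \eqref{eq:eg_inv_4}, \eqref{eq:eg_inv_low_d_1_F}, \eqref{eq:eg_inv_low_d_4} and \eqref{eq:eg_inv_low_d_5} remain independent of $h_K$, $\delta t_{\mathcal{K}}$, $\varepsilon$ and $T$.
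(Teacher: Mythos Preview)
Your proposal is correct and follows essentially the same approach as the paper's proof: the paper also exploits that on the $\mathcal{R}$-facet $F_{\mathring{\mathcal{R}}}$ the weight $|\beta_s-\tfrac12\beta\cdot n|$ takes the constant values $3/2$ and $1/2$ on the two sides, writes the DG jump via the HDG jumps for \eqref{eq:subgridhelperbnds_1}, uses the triangle inequality and \eqref{eq:eg_inv_4} for \eqref{eq:subgridhelperbnds_2}, and for \eqref{eq:subgridhelperbnds_4}--\eqref{eq:subgridhelperbnds_5} introduces exactly your splitting $\ejump{\mu_{\mathfrak{h}}}=-\sbr{\boldsymbol{v}_{\mathfrak{h}}^*}+\sbr{\boldsymbol{v}_{\mathfrak{h},*}}+\ejump{v_{\mathfrak{h}}}$ on $E_{\mathring{\mathcal{K}}}$, then applies the edge trace inequalities of \cref{lem:trace_ineq_subgrid_edge} together with \eqref{eq:eg_inv_low_d_1_F} (and \eqref{eq:eg_inv_4}) in the $\partial_t$ case.
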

\begin{proof}
  See \cref{s:subgridhelperbndsproof}.
\end{proof}

\begin{lemma}
  \label{lem:subgridprojdiff}
  Let
  ${\boldsymbol{v}_{\mathfrak{h}}}\in \boldsymbol{V}_{\mathfrak{h}}$
  and let the projection operator $i_h$ be defined as in
  \cref{def:subgrid_proj}. Consider an element
  $\mathcal{K} \in \mathcal{T}_h$ such that it has a
  $\mathcal{Q}$-facet
  $F_{\mathcal{Q}} \subset \mathcal{Q}_{\mathcal{K}}$ such that
  $F_{\mathcal{Q}} \in \mathcal{F}_{\mathcal{Q},h}$. There holds that
  $\del[0]{{i_h^{\mathcal{K}}}v_{\mathfrak{h}}}|_{F_{\mathcal{Q}}}
  \equiv {i_h^{\mathcal{F}}} \del[0]{
    {v}_{\mathfrak{h}}|_{F_{\mathcal{Q}}} }$ on
  $\mathcal{F}_{\mathcal{Q},h}$.
\end{lemma}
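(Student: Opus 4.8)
The plan is to make the two operators in the statement explicit and then to reduce the claimed identity to a purely temporal statement, exploiting the tensor-product structure of $V_h$ and $M_h$. Since $F_{\mathcal{Q}}\in\mathcal{F}_{\mathcal{Q},h}$ is a coarse $\mathcal{Q}$-facet it is split by the subgrid $\mathcal{T}_{\mathfrak{h}}$ into $F_{\mathcal{Q}}^{*}$ and $F_{\mathcal{Q},*}$, hence $F_{\mathcal{Q}}\notin\mathcal{F}_{\mathfrak{h}}$; by \cref{def:subgrid_proj} the operator $i_h^{\mathcal{F}}\del{v_{\mathfrak{h}}|_{F_{\mathcal{Q}}}}$ is therefore the $L^2(F_{\mathcal{Q}})$-orthogonal projection of the trace of $v_{\mathfrak{h}}$ from $\mathcal{K}$ onto $M_h|_{F_{\mathcal{Q}}}$, while $i_h^{\mathcal{K}}(v_{\mathfrak{h}})$ is the $L^2(\mathcal{K})$-orthogonal projection of $v_{\mathfrak{h}}|_{\mathcal{K}}$ onto $V_h|_{\mathcal{K}}$. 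It thus suffices to show that the trace onto $F_{\mathcal{Q}}$ of the volume projection equals the facet projection of the trace.

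I would carry this out on the affine box $\widetilde{\mathcal{K}}=(0,\delta t_{\mathcal{K}})\times(0,h_K)^d$, pulling back through $\phi_{\mathcal{K}}$. By \cref{eq:diffeom_regular_special} the Jacobian $J_{\phi_{\mathcal{K}}}$ decouples time from space with $(J_{\phi_{\mathcal{K}}})_{00}=1$, so $\envert{\det J_{\phi_{\mathcal{K}}}}$ equals the modulus of the determinant of the spatial block and is a function of $\tilde{x}$ only; the pulled-back $L^2(\mathcal{K})$-inner product is therefore a tensor product $d\tilde{t}\otimes\del{w(\tilde{x})\,d\tilde{x}}$. The facet $F_{\mathcal{Q}}$ corresponds to a coordinate face $\cbr{\tilde{x}_j=\text{const}}$, and removing the $j$-th column of $J_{\phi_{\mathcal{K}}}$ leaves the time column orthogonal to the remaining spatial columns, so by \cref{eq:k_surface} the pulled-back surface measure on $F_{\mathcal{Q}}$ is again a tensor product $d\tilde{t}\otimes\del{w_j(\tilde{x})\,d\tilde{s}}$ with $w_j$ depending on $\tilde{x}$ only. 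In these coordinates $V_h|_{\mathcal{K}}$ is $Q^{(p_t,p_s)}$, $M_h|_{F_{\mathcal{Q}}}$ is $Q^{(p_t,p_s)}$ in the remaining coordinates, the trace onto $\cbr{\tilde{x}_j=\text{const}}$ maps the former onto the latter, and — because the subgrid is obtained by halving the time step, i.e.\ by a linear reparametrization of $\tilde{t}$ only — the pull-back $\tilde{v}$ of $v_{\mathfrak{h}}|_{\mathcal{K}}$ is, for each fixed $\tilde{t}$ away from the midplane, a polynomial of degree $p_s$ in $\tilde{x}$.

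With this in place the identity becomes elementary. Because the pulled-back volume inner product is a tensor product with Lebesgue measure in time, the $L^2(\mathcal{K})$-projection onto $Q^{(p_t,p_s)}$ factors as $\Pi_t\otimes\Pi_x^{w}$, where $\Pi_t$ is the $L^2(d\tilde{t})$-projection onto $P^{p_t}$ and $\Pi_x^{w}$ the $L^2(w\,d\tilde{x})$-projection onto $Q^{p_s}$. Since $\tilde{v}(\tilde{t},\cdot)$ already lies in $Q^{p_s}$ for a.e.\ $\tilde{t}$, the operator $\Pi_x^{w}$ fixes it, so the volume projection acts on $\tilde{v}$ as $\Pi_t$ alone; as $\Pi_t$ acts only in time it commutes with restriction to $\cbr{\tilde{x}_j=\text{const}}$, whence the trace onto $F_{\mathcal{Q}}$ of the volume projection is $\Pi_t$ applied to $\tilde{v}|_{\tilde{x}_j=\text{const}}$. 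An identical argument on $F_{\mathcal{Q}}$, using the tensor structure of the surface measure and that $\tilde{v}|_{\tilde{x}_j=\text{const}}$ is already of degree $p_s$ in the remaining spatial variables, gives that the facet projection of the trace is also $\Pi_t$ applied to $\tilde{v}|_{\tilde{x}_j=\text{const}}$. The two coincide, and mapping back to $\mathcal{K}$ proves the claim on all of $\mathcal{F}_{\mathcal{Q},h}$.

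The only delicate point is the last assertion of the second paragraph: that pulling $v_{\mathfrak{h}}|_{\mathcal{K}}$ back through the coarse map $\phi_{\mathcal{K}}$, rather than through the two subgrid maps, still yields a function that is polynomial of degree $p_s$ in $\tilde{x}$ for each fixed $\tilde{t}$. This rests on the subgrid refinement subdividing only in the temporal direction and being compatible with the block structure \cref{eq:diffeom_regular_special}, so that the spatial component of $\phi_{\mathcal{K}}$ is shared by its two subgrid descendants; granting this, the rest is routine tensor-product bookkeeping, and no $\varepsilon$- or mesh-dependent constants enter since the statement is an exact equality.
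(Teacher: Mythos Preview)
Your argument is correct and takes a genuinely different route from the paper's. The paper works directly on the reference cube $\widehat{\mathcal{K}}$, expands $\widehat{v}_{\mathfrak{h}}$ and $\widehat{i}_h^{\mathcal{K}}\widehat{v}_{\mathfrak{h}}$ in monomials, and uses Fubini to isolate the integral in the coordinate $\widehat{x}_d$ normal to the facet; this yields, for each choice of remaining test exponents, a linear system in the coefficients whose matrix is block-diagonal with Hankel blocks, and the paper invokes total positivity of those blocks to deduce the facet-orthogonality conditions coefficient by coefficient. You instead exploit the block structure \cref{eq:diffeom_regular_special} to factor the pulled-back volume and surface measures as tensor products of a temporal and a purely spatial part, so that both $L^2$-projections split as $\Pi_t\otimes\Pi_x$; since the subgrid refinement is purely temporal, $v_{\mathfrak{h}}$ is already of degree $p_s$ in $\tilde{x}$, both projections collapse to the same one-dimensional $\Pi_t$, and the identity is immediate. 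Your route is shorter and makes the role of the fixed-in-time mesh assumption transparent, while the paper's explicit computation is more self-contained (working with Lebesgue measure on $\widehat{\mathcal{K}}$, it sidesteps the weighted-inner-product discussion altogether). The ``delicate point'' you flag --- that the spatial component of $\phi_{\mathcal{K}}$ is inherited by both subgrid children --- is implicit in the paper's construction of $\mathcal{T}_{\mathfrak{h}}$ by halving only the time step.
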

\begin{proof}
  See \cref{s:subgridprojdiffproof}.
\end{proof}

\subsection{Saturation assumption and time derivative error
  estimation}
\label{ss:saturationassumption}

We pose problem \cref{eq:st_hdg_adr_compact} on the subgrid mesh
$\mathcal{T}_{\mathfrak{h}}$, i.e., find
$\boldsymbol{u}_{\mathfrak{h}}\in \boldsymbol{V}_{\mathfrak{h}}$ such
that
\begin{equation}
  \label{eq:st_hdg_adr_subgrid}
  a_{\mathfrak{h}}\del{
    \boldsymbol{u}_{\mathfrak{h}}
    ,
    \boldsymbol{v}_{\mathfrak{h}}
  }
  =
  \del{f,v_{\mathfrak{h}}}_{\mathcal{T}_{\mathfrak{h}}}
  +
  \langle{g,\mu_{\mathfrak{h}}}\rangle_{\partial\mathcal{E}_N}
  \quad
  \forall\boldsymbol{v}_{\mathfrak{h}}\in\boldsymbol{V}_{\mathfrak{h}}.
\end{equation}

\begin{lemma}[Galerkin orthogonality]
  \label{lem:glk_ort}
  Let $\boldsymbol{u}_h$ and $\boldsymbol{u}_\mathfrak{h}$ be the
  solutions of \cref{eq:st_hdg_adr_compact} and
  \cref{eq:st_hdg_adr_subgrid}, respectively. With the restriction
  operator defined in \cref{eq:res_opt}, we have the following
  Galerkin orthogonality result:
  \begin{equation}
    \label{eq:glk_ort}
    a_{\mathfrak{h}}
    \del{
      \boldsymbol{u}_{\mathfrak{h}}
      -
      \gamma_{\mathfrak{h}}\del{\boldsymbol{u}_h}
      ,
      \gamma_{\mathfrak{h}}\del{\boldsymbol{v}_h}
    }=0
    \quad
    \forall \boldsymbol{v}_h\in \boldsymbol{V}_h.
  \end{equation}
\end{lemma}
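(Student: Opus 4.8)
The plan is to start from the fact that the subgrid problem \cref{eq:st_hdg_adr_subgrid} holds for every test function in $\boldsymbol{V}_{\mathfrak{h}}$, and in particular for test functions of the form $\gamma_{\mathfrak{h}}(\boldsymbol{v}_h)$ with $\boldsymbol{v}_h \in \boldsymbol{V}_h$, since the restriction operator maps $\boldsymbol{V}_h$ into $\boldsymbol{V}_{\mathfrak{h}}$. This gives
\begin{equation*}
  a_{\mathfrak{h}}\del{\boldsymbol{u}_{\mathfrak{h}}, \gamma_{\mathfrak{h}}(\boldsymbol{v}_h)}
  = \del{f, v_h}_{\mathcal{T}_{\mathfrak{h}}} + \langle g, \mu_h\rangle_{\partial\mathcal{E}_N}
  \qquad \forall \boldsymbol{v}_h \in \boldsymbol{V}_h,
\end{equation*}
where I have used that the first (volume) component of $\gamma_{\mathfrak{h}}(\boldsymbol{v}_h)$ is just $v_h$, and that on the Neumann boundary the facet component of $\gamma_{\mathfrak{h}}(\boldsymbol{v}_h)$ coincides with $\mu_h$ (the Neumann facets lie in $\mathcal{F}_{\mathcal{R},h}$, on which $\gamma_{\mathcal{F},\mathfrak{h}}$ returns $\mu_h$). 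Hence the right-hand sides of the two formulations agree, and subtracting we obtain the reduced identity
\begin{equation*}
  a_{\mathfrak{h}}\del{\boldsymbol{u}_{\mathfrak{h}}, \gamma_{\mathfrak{h}}(\boldsymbol{v}_h)}
  = \del{f, v_h}_{\mathcal{T}_{h}} + \langle g, \mu_h\rangle_{\partial\mathcal{E}_N}
  \qquad \forall \boldsymbol{v}_h \in \boldsymbol{V}_h,
\end{equation*}
using $\del{f,v_h}_{\mathcal{T}_{\mathfrak{h}}} = \del{f,v_h}_{\mathcal{T}_h}$ since the $\mathcal{T}_{\mathfrak{h}}$ partition refines $\mathcal{T}_h$ only in time and integrals over unions agree.

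The next step is to show the consistency identity $a_{\mathfrak{h}}\del{\gamma_{\mathfrak{h}}(\boldsymbol{u}_h), \gamma_{\mathfrak{h}}(\boldsymbol{v}_h)} = a_h\del{\boldsymbol{u}_h, \boldsymbol{v}_h}$ for all $\boldsymbol{v}_h \in \boldsymbol{V}_h$. This is the crux of the argument and I expect it to be the main obstacle: one must verify term by term that the bilinear form $a_{\mathfrak{h}}$ evaluated on restricted functions reproduces $a_h$. The volume terms $(\varepsilon\overline{\nabla}u_h, \overline{\nabla}v_h)$, $({\beta}u_h, \nabla v_h)$ are unaffected because $\mathcal{T}_{\mathfrak{h}}$ refines $\mathcal{T}_h$ only by bisecting in time, so the sums over elements of $\mathcal{T}_{\mathfrak{h}}$ collapse to the sums over $\mathcal{T}_h$. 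For the facet terms the key observations are: (i) on the newly created $\mathcal{R}$-facets $F \in \mathcal{F}_{\mathcal{R},\mathfrak{h}}\setminus\mathcal{F}_{\mathcal{R},h}$ the restriction operator sets the facet value equal to $v_h$, so the HDG jump $\sbr{\boldsymbol{v}_h}$ vanishes there and these facets contribute nothing; (ii) on facets already present in $\mathcal{F}_h$ (the $\mathcal{Q}$-facets, old $\mathcal{R}$-facets, and boundary facets) the restricted trace equals $\mu_h$ and $v_h$ is continuous in time within each old element, so the jumps, normal fluxes, and penalty terms match those in $a_h$ exactly — here one uses $\beta_s$ and $\beta\cdot n$ depend only on the geometric facet, not the refinement level; (iii) for the advective interface term $\langle(\beta\cdot n)\lambda + \beta_s\sbr{\boldsymbol{u}}, \sbr{\boldsymbol{v}}\rangle_{\partial\mathcal{T}_h}$ the interior-in-time portion again vanishes by the choice in \cref{eq:res_opt}, leaving exactly the original contribution. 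Putting these together yields the claimed consistency identity.

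Finally, combining the two displays: using the discrete equation for $\boldsymbol{u}_h$ on $\boldsymbol{V}_h$, namely $a_h(\boldsymbol{u}_h, \boldsymbol{v}_h) = (f,v_h)_{\mathcal{T}_h} + \langle g, \mu_h\rangle_{\partial\mathcal{E}_N}$, together with the consistency identity gives $a_{\mathfrak{h}}(\gamma_{\mathfrak{h}}(\boldsymbol{u}_h), \gamma_{\mathfrak{h}}(\boldsymbol{v}_h)) = (f,v_h)_{\mathcal{T}_h} + \langle g, \mu_h\rangle_{\partial\mathcal{E}_N}$, which equals $a_{\mathfrak{h}}(\boldsymbol{u}_{\mathfrak{h}}, \gamma_{\mathfrak{h}}(\boldsymbol{v}_h))$ by the reduced subgrid identity. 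Subtracting and using bilinearity of $a_{\mathfrak{h}}$ yields $a_{\mathfrak{h}}(\boldsymbol{u}_{\mathfrak{h}} - \gamma_{\mathfrak{h}}(\boldsymbol{u}_h), \gamma_{\mathfrak{h}}(\boldsymbol{v}_h)) = 0$ for all $\boldsymbol{v}_h \in \boldsymbol{V}_h$, which is \cref{eq:glk_ort}. The only subtle point to be careful about throughout is that $\gamma_{\mathfrak{h}}$ is not the identity on facets — it genuinely changes the data on the new $\mathcal{R}$-facets — so one must track precisely which facets contribute, and that is exactly what makes step (ii)–(iii) above the heart of the proof.
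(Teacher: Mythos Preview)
Your approach is essentially the same as the paper's: test the subgrid equation with $\gamma_{\mathfrak{h}}(\boldsymbol{v}_h)$, then verify term by term that $a_{\mathfrak{h}}(\gamma_{\mathfrak{h}}(\boldsymbol{u}_h),\gamma_{\mathfrak{h}}(\boldsymbol{v}_h))=a_h(\boldsymbol{u}_h,\boldsymbol{v}_h)$, and subtract. One small correction: your claim that ``the Neumann facets lie in $\mathcal{F}_{\mathcal{R},h}$'' is not right, since $\partial\mathcal{E}_N$ also contains the lateral $\mathcal{Q}$-facets $\Gamma_N\times I$; the conclusion $\gamma_{\mathcal{F},\mathfrak{h}}(\boldsymbol{v}_h)=\mu_h$ on $\partial\mathcal{E}_N$ is still valid, but because all boundary facets are either $\mathcal{Q}$-facets in $\mathcal{F}_{\mathcal{Q},\mathfrak{h}}$ or old $\mathcal{R}$-facets in $\mathcal{F}_{\mathcal{R},h}$, and in both cases the restriction returns $\mu_h$. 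Also, when matching the penalty term the paper explicitly uses that the spatial mesh size $h_K$ is unchanged under the time-only bisection; you should make that observation explicit.
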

\begin{proof}
  For any $\boldsymbol{v}_h\in\boldsymbol{V}_h$,
  \begin{equation*}
    a_{\mathfrak{h}}
    \del{
      \boldsymbol{u}_\mathfrak{h}
      ,
      \gamma_\mathfrak{h}\del{\boldsymbol{v}_h}
    }
    =
    \del{f,v_h}_{\mathcal{T}_\mathfrak{h}}
    +
    \langle{g,\gamma_{\mathcal{F},\mathfrak{h}}\del{\boldsymbol{v}_h}}\rangle_{\partial\mathcal{E}_N}
    =
    \del{f,v_h}_{\mathcal{T}_\mathfrak{h}}
    +
    \langle{ g,\mu_h }\rangle_{\partial\mathcal{E}_N}
    =
    a_{h}
      \del{
      \boldsymbol{u}_h
      ,
      \boldsymbol{v}_h
      }
  \end{equation*}
  To show
  \cref{eq:glk_ort}, it remains to show that
  $a_{h} \del{ \boldsymbol{u}_h , \boldsymbol{v}_h }=a_{\mathfrak{h}}
  \del{ \gamma_\mathfrak{h}\del{\boldsymbol{u}_h},
    \gamma_\mathfrak{h}\del{\boldsymbol{v}_h}}$. First, note for the
  element integrals we have,
  \begin{equation*}
    \del[0]{\varepsilon\overline{\nabla}u_h,\overline{\nabla}v_h}_{\mathcal{T}_h}
    -\del[0]{\beta u_h,\nabla v_h}_{\mathcal{T}_h}
    =
    \del[0]{\varepsilon\overline{\nabla}u_h,\overline{\nabla}v_h}_{\mathcal{T}_\mathfrak{h}}
    -\del[0]{\beta u_h,\nabla v_h}_{\mathcal{T}_\mathfrak{h}},
  \end{equation*}
  and for the diffusion facet terms,
  \begin{equation*}
    \langle{\varepsilon\overline{\nabla}_{\overline{{n}}}u_h,\sbr{\boldsymbol{v}_h}}\rangle_{\mathcal{Q}_h}
    +
    \langle{\varepsilon\sbr{\boldsymbol{u}_h},\overline{\nabla}_{\overline{{n}}}v_h}\rangle_{\mathcal{Q}_h}
    =
    \langle{\varepsilon\overline{\nabla}_{\overline{{n}}}u_h,\sbr{\gamma_\mathfrak{h}\del{\boldsymbol{v}_h}}}\rangle_{\mathcal{Q}_\mathfrak{h}}
    +
    \langle{\varepsilon\sbr{\gamma_\mathfrak{h}\del{\boldsymbol{u}_h}},\overline{\nabla}_{\overline{{n}}}v_h}\rangle_{\mathcal{Q}_\mathfrak{h}}.
  \end{equation*}
  Next, since $\sbr{\gamma_\mathfrak{h}\del{\boldsymbol{v}_h}}=0$ on
  $\mathcal{R}_\mathfrak{h}\setminus\mathcal{R}_h$, we have
  \begin{equation*}
    \langle
    \del{\beta\cdot{n}}\gamma_{\mathcal{F},\mathfrak{h}}\del{\boldsymbol{u}_h}
    +
    \beta_s\sbr{\gamma_\mathfrak{h}\del{\boldsymbol{u}_h}}
    ,
    \sbr{\gamma_\mathfrak{h}\del{\boldsymbol{v}_h}}
    \rangle_{\partial\mathcal{T}_\mathfrak{h}}
    =
    \langle
    \del{\beta\cdot{n}}\lambda_h
    +
    \beta_s\sbr{\boldsymbol{u}_h}
    ,
    \sbr{\boldsymbol{v}_h}
    \rangle_{\partial\mathcal{T}_h},
  \end{equation*}
  and similarly, on the Neumann boundary, we have for the advective
  facet terms
  \begin{equation*}
    \langle
    \zeta^+\beta\cdot{n}
    \gamma_{\mathcal{F},\mathfrak{h}}\del{\boldsymbol{u}_h},
    \gamma_{\mathcal{F},\mathfrak{h}}\del{\boldsymbol{v}_h}
    \rangle_{\partial\mathcal{E}_N}
    =
    \langle
    \zeta^+\beta\cdot{n}
    \lambda_h,\mu_h
    \rangle_{\partial\mathcal{E}_N}.
  \end{equation*}
  Finally, for the penalty term,
  \begin{equation*}
   \langle{{\varepsilon\alpha}{h_K^{-1}}\sbr[0]{\gamma_\mathfrak{h}\del{\boldsymbol{u}_h}},
     \sbr[0]{\gamma_\mathfrak{h}\del{\boldsymbol{v}_h}}}\rangle_{\mathcal{Q}_\mathfrak{h}}
   =
   \langle{{\varepsilon\alpha}{h_K^{-1}}\sbr{\boldsymbol{u}_h},
     \sbr{\boldsymbol{v}_h}}\rangle_{\mathcal{Q}_h},
  \end{equation*}
  because the spatial element size parameter $h_K$ does not change
  from $\mathcal{K}$ to $\mathring{\mathcal{K}}$. Therefore,
  $a_h(\boldsymbol{u}_h,\boldsymbol{v}_h)=a_{\mathfrak{h}} \del{
    \gamma_\mathfrak{h}\del{\boldsymbol{u}_h},
    \gamma_\mathfrak{h}\del{\boldsymbol{v}_h}}$ for any
  $\boldsymbol{v}_h\in \boldsymbol{V}_h$ and hence \cref{eq:glk_ort}.
\end{proof}

Following \cite[Section 4, Remark 2]{Burman:2009}, we assume that the
following saturation assumption holds uniformly on the family of
meshes $\cbr{\mathcal{T}_h}_h$: There exists $\rho < 1$, independent
of $h_K$, $\delta t_{\mathcal{K}}$, and $\varepsilon$, such that:
\begin{equation}
  \label{eq:SA}
  \sum_{\mathcal{K}\in\mathcal{T}_h}
  \tau_\varepsilon
  \norm{\partial_t (u-u_\mathfrak{h})}_{\mathcal{K}}^2
  \leq
  \rho^2
  \sum_{\mathcal{K}\in\mathcal{T}_h}
  \tau_\varepsilon
  \norm{\partial_t (u-u_h)}_{\mathcal{K}}^2.
\end{equation}
With this saturation assumption we prove the following useful theorem.

\begin{theorem}[Time derivative estimation]
  \label{thm:st_time_apos}
  Let $u$ be the solution to \cref{eq:advdif} and let
  $\boldsymbol{u}_h=(u_h,\lambda_h)$ be the solution to
  \cref{eq:st_hdg_adr_compact}. If the saturation assumption
  \cref{eq:SA} holds, and if
  $\delta t_{\mathcal{K}} = \mathcal{O}(h_K^2)$, then
  \begin{equation}
    \label{eq:timederivativeest}
    \sum_{\mathcal{K}\in\mathcal{T}_h}
    \tau_\varepsilon
    \norm{\partial_t\del{u-u_h}}_{\mathcal{K}}^2
    \leq
    cT^2\varepsilon^{-1}
    \eta^2.
  \end{equation}
\end{theorem}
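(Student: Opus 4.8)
The plan is to follow the subgrid-comparison strategy of \cite{Burman:2009}: estimate $\partial_t(u-u_h)$ by inserting the subgrid solution $u_{\mathfrak{h}}$, use the saturation assumption \cref{eq:SA} to absorb the genuine error $\partial_t(u-u_{\mathfrak{h}})$ back into $\partial_t(u-u_h)$, and then control the \emph{fully discrete} difference $\partial_t(u_{\mathfrak{h}}-u_h)$ through the inf-sup condition on the subgrid together with \cref{lem:glk_ort}. First I would note that, because $\gamma_{\mathfrak{h}}(\boldsymbol{u}_h)$ has $u_h$ as its $V_h$-component and $V_h\subset V_{\mathfrak{h}}$, the quantity $\partial_t(u_{\mathfrak{h}}-u_h)$ is a well-defined piecewise polynomial on $\mathcal{T}_{\mathfrak{h}}$, and $\partial_t u_h$ is literally the same function on $\mathcal{T}_h$ and $\mathcal{T}_{\mathfrak{h}}$. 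For any $\delta>0$, Young's inequality gives $\sum_{\mathcal{K}}\tau_\varepsilon\norm{\partial_t(u-u_h)}_{\mathcal{K}}^2 \le (1+\delta^{-1})\sum_{\mathcal{K}}\tau_\varepsilon\norm{\partial_t(u-u_{\mathfrak{h}})}_{\mathcal{K}}^2 + (1+\delta)\sum_{\mathcal{K}}\tau_\varepsilon\norm{\partial_t(u_{\mathfrak{h}}-u_h)}_{\mathcal{K}}^2$, and by \cref{eq:SA} the first sum on the right is at most $\rho^2\sum_{\mathcal{K}}\tau_\varepsilon\norm{\partial_t(u-u_h)}_{\mathcal{K}}^2$. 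Choosing $\delta$ with $(1+\delta^{-1})\rho^2<1$ and absorbing this term on the left reduces the proof to showing $\sum_{\mathcal{K}}\tau_\varepsilon\norm{\partial_t(u_{\mathfrak{h}}-u_h)}_{\mathcal{K}}^2\le cT^2\varepsilon^{-1}\eta^2$.

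For the latter I would first observe that $\tau_\varepsilon\le c\,\tau_{\varepsilon,\mathfrak{h}}$, where $\tau_{\varepsilon,\mathfrak{h}}$ is the weight of the corresponding subgrid element: halving $\delta t_{\mathcal{K}}$ can only move an element into a regime with a larger $\tilde\varepsilon$ (since $\varepsilon\le\varepsilon^{1/2}\le 1$, ordered by decreasing element size), while the slab time-step changes by at most a fixed factor because $\Delta t_{\mathcal{K}}/\delta t_{\mathcal{K}}\le c$. Since $\tau_{\varepsilon,\mathfrak{h}}\norm{\partial_t\,\boldsymbol{w}}_{\mathring{\mathcal{K}}}^2$ is one of the terms of $\tnorm{\boldsymbol{w}}_{s,\mathfrak{h}}^2$, summing over the two halves of each $\mathcal{K}$ gives $\sum_{\mathcal{K}}\tau_\varepsilon\norm{\partial_t(u_{\mathfrak{h}}-u_h)}_{\mathcal{K}}^2\le c\,\tnorm{\boldsymbol{u}_{\mathfrak{h}}-\gamma_{\mathfrak{h}}(\boldsymbol{u}_h)}_{s,\mathfrak{h}}^2$. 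I would then invoke the inf-sup bound \cref{eq:inf_sup_s_norm}, which holds verbatim on the subgrid mesh $\mathcal{T}_{\mathfrak{h}}$ with a constant linear in $T$ (the argument of \cite[Section 4.2]{Wang:2023} uses only the mesh hypotheses of \cref{ss:description-stslabsfaceselements}, which survive one level of time refinement), to get $\tnorm{\boldsymbol{u}_{\mathfrak{h}}-\gamma_{\mathfrak{h}}(\boldsymbol{u}_h)}_{s,\mathfrak{h}}\le c_T\sup_{\boldsymbol{v}_{\mathfrak{h}}\in\boldsymbol{V}_{\mathfrak{h}}} a_{\mathfrak{h}}(\boldsymbol{u}_{\mathfrak{h}}-\gamma_{\mathfrak{h}}(\boldsymbol{u}_h),\boldsymbol{v}_{\mathfrak{h}})/\tnorm{\boldsymbol{v}_{\mathfrak{h}}}_{s,\mathfrak{h}}$. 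Note that, both solutions being discrete, this step does not use the saturation assumption, so there is no circularity with the reliability proof.

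It remains to bound the numerator. Fix $\boldsymbol{v}_{\mathfrak{h}}\in\boldsymbol{V}_{\mathfrak{h}}$. As set up in \cref{def:subgrid_proj}, $i_h\boldsymbol{v}_{\mathfrak{h}}=\gamma_{\mathfrak{h}}\boldsymbol{w}_h$ for $\boldsymbol{w}_h:=(i_h^{\mathcal{K}}v_{\mathfrak{h}},i_h^{\mathcal{F}}\mu_{\mathfrak{h}})\in\boldsymbol{V}_h$ (the $\mathcal{F}_{\mathcal{R},\mathfrak{h}}\setminus\mathcal{F}_{\mathcal{R},h}$-part of $i_h^{\mathcal{F}}$ being overwritten by the $v$-component under $\gamma_{\mathfrak{h}}$), so \cref{lem:glk_ort} gives $a_{\mathfrak{h}}(\boldsymbol{u}_{\mathfrak{h}}-\gamma_{\mathfrak{h}}(\boldsymbol{u}_h),\boldsymbol{v}_{\mathfrak{h}})=a_{\mathfrak{h}}(\boldsymbol{u}_{\mathfrak{h}}-\gamma_{\mathfrak{h}}(\boldsymbol{u}_h),\boldsymbol{v}_{\mathfrak{h}}-i_h\boldsymbol{v}_{\mathfrak{h}})$. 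Using the subgrid problem \cref{eq:st_hdg_adr_subgrid} for $a_{\mathfrak{h}}(\boldsymbol{u}_{\mathfrak{h}},\cdot)$ and elementwise integration by parts of $a_{\mathfrak{h}}(\gamma_{\mathfrak{h}}(\boldsymbol{u}_h),\cdot)$ on each $\mathring{\mathcal{K}}\in\mathcal{T}_{\mathfrak{h}}$, the numerator becomes a sum of pairings of the element residual $R_h^{\mathcal{K}}$, the flux jump $\jump{\overline{\nabla}_{\overline{n}}u_h}$, the HDG jumps $\sbr{\boldsymbol{u}_h}$ on the $\mathcal{Q}$- and $\mathcal{R}$-facets (including the new subgrid facets $F_{\mathring{\mathcal{R}}}$ and edges $E_{\mathring{\mathcal{K}}}$), and the boundary residual $R_h^{N}$, tested against $\boldsymbol{v}_{\mathfrak{h}}-i_h\boldsymbol{v}_{\mathfrak{h}}$ --- the same residual identity that underlies the reliability proof in \cref{ss:rel}. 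Applying Cauchy--Schwarz termwise and estimating $\norm{(I-i_h^{\mathcal{K}})v_{\mathfrak{h}}}_{\mathcal{K}}$, $\norm{(I-i_h^{\mathcal{F}})\mu_{\mathfrak{h}}}_{F_{\mathcal{Q}}}$, the face and edge traces, and the Oswald-type term by means of \cref{lem:eg_inv,lem:local_quasi_tnorm_st,lem:otherusefulbnds,lem:oswald_local_st,lem:trace_ineq_subgrid_edge,lem:subgrid_proj_est,lem:subgridhelperbnds,lem:subgridprojdiff} (the hypothesis $\delta t_{\mathcal{K}}=\mathcal{O}(h_K^2)$ being used, exactly as in \cref{lem:local_quasi_tnorm_st}, to balance time-step and mesh-size weights), each pairing is bounded by $c\,\varepsilon^{-1/2}\eta^{\mathcal{K}}$ times a local contribution to $\tnorm{\boldsymbol{v}_{\mathfrak{h}}}_{s,\mathfrak{h}}$; a discrete Cauchy--Schwarz over $\mathcal{K}$, using finite overlap of the patches $\omega_{\mathcal{K}}$, then yields $a_{\mathfrak{h}}(\boldsymbol{u}_{\mathfrak{h}}-\gamma_{\mathfrak{h}}(\boldsymbol{u}_h),\boldsymbol{v}_{\mathfrak{h}})\le c\,\varepsilon^{-1/2}\eta\,\tnorm{\boldsymbol{v}_{\mathfrak{h}}}_{s,\mathfrak{h}}$, so that $\tnorm{\boldsymbol{u}_{\mathfrak{h}}-\gamma_{\mathfrak{h}}(\boldsymbol{u}_h)}_{s,\mathfrak{h}}\le c_T\varepsilon^{-1/2}\eta\le cT\varepsilon^{-1/2}\eta$. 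Squaring and combining with the first paragraph gives \cref{eq:timederivativeest}.

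The main obstacle is this last estimate: reproducing the individual estimator terms $\eta^{\mathcal{K}}$ with only an $\varepsilon^{-1/2}$ loss requires carefully tracking how the composite projection $i_h$ --- an $L^2$-projection onto $V_h$ in element interiors, an $L^2$-projection onto $M_h$ on interior $\mathcal{Q}$-facets, and the identity on the new $\mathcal{R}$-facets --- interacts with the HDG jump terms living on the subgrid facets $F_{\mathring{\mathcal{R}}}$ and edges $E_{\mathring{\mathcal{K}}}$, which is precisely what \cref{lem:subgrid_proj_est,lem:subgridhelperbnds,lem:subgridprojdiff} are designed to handle. By contrast, the saturation/absorption step is routine, and the comparison $\tau_\varepsilon\simeq\tau_{\varepsilon,\mathfrak{h}}$ needs only the mesh assumptions already in force.
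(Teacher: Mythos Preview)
Your proposal is correct and follows essentially the same approach as the paper: saturation assumption to reduce to the discrete difference $\partial_t(u_{\mathfrak{h}}-u_h)$, subgrid inf-sup \cref{eq:inf_sup_s_norm}, Galerkin orthogonality \cref{lem:glk_ort} to insert $(I-i_h)$, elementwise integration by parts into residual pairings (the paper labels these $M_1,\dots,M_6$), and termwise estimation via \cref{lem:subgrid_proj_est,lem:subgridhelperbnds,lem:subgridprojdiff} with the $\varepsilon^{-1/2}$ loss coming from the $\eta_{J,3}$ terms. Your references to \cref{lem:local_quasi_tnorm_st,lem:otherusefulbnds,lem:oswald_local_st} and to an ``Oswald-type term'' are extraneous here --- those tools enter only in the reliability proof in \cref{ss:rel}, where the test function is continuous rather than discrete --- but this does not affect the validity of your outline.
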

\begin{proof}
  By the triangle inequality and \cref{eq:SA} we find
  \begin{equation}
    \label{eq:tauepsdtuuh}
    \del[2]{
      \sum_{\mathcal{K}\in\mathcal{T}_h}
      \tau_{\varepsilon}
      \norm[0]{\partial_t\del[0]{u-u_h}}_{\mathcal{K}}^2
    }^{{1}/{2}}
    \le
    \frac{1}{1-\rho}
    \del[2]{
      \sum_{\mathcal{K}\in\mathcal{T}_h}
      \tau_{\varepsilon}
      \norm[0]{\partial_t\del[0]{u_{\mathfrak{h}}-u_h}}_{\mathcal{K}}^2
    }^{1/2}.
  \end{equation}
  By the inf-sup condition \cref{eq:inf_sup_s_norm}, we have
  \begin{equation}
    \label{eq:pretimederivativeest}
    \del[2]{
      \sum_{\mathcal{K}\in\mathcal{T}_h}
      \tau_{\varepsilon}
      \norm[0]{\partial_t\del[0]{u_{\mathfrak{h}}-u_h}}_{\mathcal{K}}^2
    }^{{1}/{2}}
    \le c
    \tnorm{\boldsymbol{u}_{\mathfrak{h}}-\gamma_{\mathfrak{h}}(\boldsymbol{u}_h)}_{s,\mathfrak{h}}
    \le c_T
    \sup_{\boldsymbol{v}_{\mathfrak{h}}\in \boldsymbol{V}_{\mathfrak{h}}}
    \frac{
      a_{\mathfrak{h}}\del{\boldsymbol{u}_{\mathfrak{h}}-\gamma_{\mathfrak{h}}(\boldsymbol{u}_h), \boldsymbol{v}_{\mathfrak{h}}}
    }{\tnorm{\boldsymbol{v}_{\mathfrak{h}}}_{s,\mathfrak{h}}}.
  \end{equation}
  Using Galerkin orthogonality \cref{eq:glk_ort}, that $\Omega_T$
  consists of $\mathcal{R}$-facets only and that
  $\del[0]{\mathrm{I}-i^\mathcal{F}_h}{\mu_\mathfrak{h}}$ vanishes on
  $\mathcal{R}$-facets, we have
  \begin{equation*}
    \begin{split}
      a_\mathfrak{h}
      (\boldsymbol{u}_\mathfrak{h} - \gamma_{\mathfrak{h}}(\boldsymbol{u}_h),
      \boldsymbol{v}_\mathfrak{h})
      &=
      a_\mathfrak{h}
      (\boldsymbol{u}_\mathfrak{h} -
      \gamma_{\mathfrak{h}}(\boldsymbol{u}_h) ,
      \del{\mathrm{I}-i_h}\boldsymbol{v}_\mathfrak{h})
      \\
      &=
      \del[0]{ f ,
        \del[0]{\mathrm{I}-i^\mathcal{K}_h}{v_\mathfrak{h}}
      }_{\mathcal{T}_\mathfrak{h}}
      +
      \langle{ g ,
        \del[0]{\mathrm{I}-i^\mathcal{F}_h}{\mu_\mathfrak{h}}
      }\rangle_{\partial\mathcal{E}_N\cap\mathcal{Q}_\mathfrak{h}}
      -
      a_\mathfrak{h} ( \gamma_{\mathfrak{h}}(\boldsymbol{u}_h) ,
      \del[0]{\mathrm{I}-i_h}\boldsymbol{v}_\mathfrak{h}).
    \end{split}
  \end{equation*}
  Using integration by parts on
  $\del[0]{\varepsilon\overline{\nabla}u_{\mathfrak{h}},
    \overline{\nabla}v_{\mathfrak{h}}}_{\mathcal{T}_{\mathfrak{h}}}$
  and
  $ \del{{\beta}u_{\mathfrak{h}}, \nabla
    v_{\mathfrak{h}}}_{\mathcal{T}_{\mathfrak{h}}} $, using the
  definition of the residual $R_{\mathfrak{h}}^{\mathcal{K}}$, and
  applying the Dirichlet and the Neumann boundary conditions, we have
  \begin{align*}
      a_{\mathfrak{h}}
      (\boldsymbol{u}_{\mathfrak{h}},\boldsymbol{v}_{\mathfrak{h}})
    &=
      \del[1]{f-R_{\mathfrak{h}}^{\mathcal{K}},v_{\mathfrak{h}}}_{\mathcal{T}_{\mathfrak{h}}}
      +
      \langle{\varepsilon\overline{\nabla}_{\overline{{n}}}u_{\mathfrak{h}},\mu_{\mathfrak{h}}}\rangle_{\mathcal{Q}_{\mathfrak{h}}\setminus\partial\mathcal{E}}
      -
      \langle{
      \beta\cdot nu_{\mathfrak{h}}
      ,
      \mu_{\mathfrak{h}}
      }\rangle_{\partial\mathcal{T}_{\mathfrak{h}}\setminus\partial\mathcal{E}}
    \\
    &\quad
      +
      \langle{
      \varepsilon\overline{\nabla}_{\overline{{n}}}u_{\mathfrak{h}}-\zeta^-\beta\cdot nu_\mathfrak{h},
      \mu_{\mathfrak{h}}
      }\rangle_{\partial\mathcal{E}_N}
      -
      \langle{
      \zeta^+
      \beta\cdot n\sbr[0]{\boldsymbol{u}_{\mathfrak{h}}}
      ,
      \mu_{\mathfrak{h}}
      }\rangle_{\partial\mathcal{E}_N}
    \\
    &\quad
      -
      \langle{\varepsilon\sbr[0]{\boldsymbol{u}_{\mathfrak{h}}},\overline{\nabla}_{\overline{{n}}}v_{\mathfrak{h}}
      }\rangle_{\mathcal{Q}_{\mathfrak{h}}}
      +
      \langle{{\varepsilon\alpha}{h_K^{-1}}\sbr[0]{\boldsymbol{u}_{\mathfrak{h}}},\sbr[0]{\boldsymbol{v}_{\mathfrak{h}}}}\rangle_{\mathcal{Q}_{\mathfrak{h}}}
      +
      \langle{
      \del[0]{\beta_s-\beta\cdot n}
      \sbr[0]{\boldsymbol{u}_{\mathfrak{h}}}
      ,
      \sbr[0]{\boldsymbol{v}_{\mathfrak{h}}}
      }
      \rangle_{\partial\mathcal{T}_{\mathfrak{h}}}.
  \end{align*}
  Replacing
  $(\boldsymbol{u}_{\mathfrak{h}},\boldsymbol{v}_{\mathfrak{h}})$ in
  the above by
  $(\gamma_{\mathfrak{h}}(\boldsymbol{u}_h),
  (I-i_h)\boldsymbol{v}_{\mathfrak{h}})$, we find
  \begin{equation}
    \label{eq:amathfrakhM1toM6}
    \begin{split}
      a_\mathfrak{h}
      (\boldsymbol{u}_\mathfrak{h} - \gamma_{\mathfrak{h}}(\boldsymbol{u}_h),
      \boldsymbol{v}_\mathfrak{h})
      &=
      \del[1]{R_{\mathfrak{h}}^{\mathcal{K}},(I-i_h^{\mathcal{K}})v_{\mathfrak{h}}}_{\mathcal{T}_{\mathfrak{h}}}
      \\ &\quad
      +\sbr[1]{
        -\langle{\varepsilon\overline{\nabla}_{\overline{{n}}}u_h, (I-i_h^{\mathcal{F}})\mu_{\mathfrak{h}}}\rangle_{\mathcal{Q}_{\mathfrak{h}}\setminus\partial\mathcal{E}}
        +\langle{\beta\cdot n u_h,
          (I - i_h^{\mathcal{F}})\mu_{\mathfrak{h}}
        }\rangle_{\partial\mathcal{T}_{\mathfrak{h}}\setminus\partial\mathcal{E}}}
      \\ &\quad
      + \langle{\varepsilon\sbr[0]{\gamma_{\mathfrak{h}}(\boldsymbol{u}_h)},\overline{\nabla}_{\overline{{n}}} ((I-i_h^{\mathcal{K}})v_{\mathfrak{h}})
      }\rangle_{\mathcal{Q}_{\mathfrak{h}}}
      \\ &\quad
      - \langle{{\varepsilon\alpha}{h_K^{-1}}\sbr[0]{\gamma_{\mathfrak{h}}(\boldsymbol{u}_h)}, \sbr[0]{(I-i_h)\boldsymbol{v}_{\mathfrak{h}}}  }\rangle_{\mathcal{Q}_{\mathfrak{h}}}
      \\ &\quad
      +\sbr[1]{
        - \langle{
          \del[0]{\beta_s-\beta\cdot n}
          \sbr[0]{\gamma_{\mathfrak{h}}(\boldsymbol{u}_h)}
          ,
          \sbr[0]{(I-i_h)\boldsymbol{v}_{\mathfrak{h}}}
        }
        \rangle_{\partial\mathcal{T}_{\mathfrak{h}}}
        + \langle{
          \zeta^+
          \beta\cdot n\sbr[0]{\gamma_{\mathfrak{h}}(\boldsymbol{u}_h)}
          ,
          (I-i_h^{\mathcal{F}})\mu_{\mathfrak{h}}
        }\rangle_{\partial\mathcal{E}_N}}
      \\ &\quad
      + \langle{
        g-\varepsilon\overline{\nabla}_{\overline{n}}u_h + \zeta^-(\beta\cdot n) u_h, (I-i_h^{\mathcal{F}})\mu_{\mathfrak{h}}
      }\rangle_{\mathcal{Q}_{\mathfrak{h}}\cap\partial\mathcal{E}_N}
      \\
      &=: M_1 + M_2 + M_3 + M_4 + M_5 + M_6.
    \end{split}
  \end{equation}
  We will bound the $M_i$'s separately.
  \\
  \textbf{Bound for $M_1$.} $M_1$ is bounded using the
  Cauchy--Schwarz inequality, \cref{eq:subgrid_proj_est_1},
  \cref{eq:subgridhelperbnds_1,eq:subgridhelperbnds_2}, and that
  $\delta t_{\mathcal{K}}=\mathcal{O}(h_K^2)$:
  \begin{equation*}
    \begin{split}
      &\del[0]{R_h^{\mathcal{K}},\del[0]{I-i_h^\mathcal{K}}v_\mathfrak{h}}_{\mathcal{T}_\mathfrak{h}}
      =\del[0]{R_h^{\mathcal{K}},\del[0]{I-i_h^\mathcal{K}}v_\mathfrak{h}}_{\mathcal{T}_h}
      \\
      \le& c
      \sum_{\mathcal{K}\in\mathcal{T}_h}
      \lambda_\mathcal{K}\norm[0]{R_h^{\mathcal{K}}}_\mathcal{K}
      \max\cbr[0]{h_K^{-1}\varepsilon^{{1}/{2}},1}
      \norm[0]{\del[0]{I-i_h^\mathcal{K}}v_\mathfrak{h}}_\mathcal{K}
      \\
      \le&c
      \sum_{\mathcal{K}\in\mathcal{T}_h}
      \lambda_\mathcal{K}\norm[0]{R_h^{\mathcal{K}}}_\mathcal{K}
      \max\cbr[0]{\varepsilon^{{1}/{2}},h_K}
      \del[2]{
        \sum_{\mathring{\mathcal{K}}\in\mathcal{T}_{\mathcal{K}}}
        \norm[0]{
          \envert[0]{\beta_s-\tfrac{1}{2}\beta\cdot n}^{1/2}
          \sbr[0]{\boldsymbol{v}_{\mathfrak{h}}}
        }_{ \partial\mathring{\mathcal{K}}
          \cap
          F_{\mathring{\mathcal{R}}}
        }
        +
        \sum_{\mathring{\mathcal{K}}\in\mathcal{T}_{\mathcal{K}}}
        {\delta t_{\mathcal{K}}}h_K^{-1}
        \norm{\partial_tv_\mathfrak{h}}_{\mathring{\mathcal{K}}}
      }.
    \end{split}
  \end{equation*}
  On elements where
  $\max\cbr[0]{\varepsilon^{1/2},h_K} = \varepsilon^{1/2}$, using that
  $\delta t_{\mathcal{K}} = \mathcal{O}(h_K^2)$, we find
  $\varepsilon^{1/2}{\delta
    t_{\mathcal{K}}}h_K^{-1}\norm[0]{\partial_tv_\mathfrak{h}}_{\mathring{\mathcal{K}}}
  \le c
  \tau_{\varepsilon}^{1/2}\norm[0]{\partial_tv_\mathfrak{h}}_{\mathring{\mathcal{K}}}$. On
  elements where $\max\cbr[0]{\varepsilon^{1/2},h_K} = h_K$ we have,
  by \cref{eq:eg_inv_1},
  $h_K{\delta
    t_{\mathcal{K}}}h_K^{-1}\norm[0]{\partial_tv_\mathfrak{h}}_{\mathring{\mathcal{K}}}\le
  c\norm[0]{v_\mathfrak{h}}_{\mathring{\mathcal{K}}}$. Therefore,
  \begin{equation*}
    \max\cbr[0]{\varepsilon^{1/2},h_K}
    \sum_{\mathring{\mathcal{K}}\in\mathcal{T}_{\mathcal{K}}}
    {\delta t_{\mathcal{K}}}h_K^{-1}
    \norm{\partial_tv_\mathfrak{h}}_{\mathring{\mathcal{K}}}
    \le \sum_{\mathring{\mathcal{K}}\in\mathcal{T}_{\mathcal{K}}}\tau_{\varepsilon}^{1/2}\norm[0]{\partial_tv_\mathfrak{h}}_{\mathring{\mathcal{K}}}
    + \sum_{\mathring{\mathcal{K}}\in\mathcal{T}_{\mathcal{K}}}\norm[0]{v_\mathfrak{h}}_{\mathring{\mathcal{K}}}.
  \end{equation*}
  Using furthermore that $\max\cbr[0]{\varepsilon^{1/2},h_K} \le 1$
  and H\"older's inequality for sums, we find
  \begin{equation*}
    \begin{split}
      M_1
      \le &
      c
      \del[1]{
        \sum_{\mathcal{K}\in\mathcal{T}_h}
        \del[0]{\eta_{R}^{\mathcal{K}}}^2
      }^{1/2}
      \sbr[3]{ \sum_{\mathcal{K} \in \mathcal{T}_h} \del[2]{
          \sum_{\mathring{\mathcal{K}}\in\mathcal{T}_{\mathcal{K}}}\del[1]{
          \norm[0]{
            \envert[0]{\beta_s-\tfrac{1}{2}\beta\cdot n}^{1/2}
            \sbr[0]{\boldsymbol{v}_{\mathfrak{h}}}
          }_{ \mathcal{R}_{\mathring{\mathcal{K}}}
          }
          +
          \tau_{\varepsilon}^{1/2}
          \norm[0]{\partial_tv_\mathfrak{h}}_{\mathring{\mathcal{K}}}
          +
          \norm[0]{v_\mathfrak{h}}_{\mathring{\mathcal{K}}}
          }
        }^2
      }^{1/2}
      \\
      \le& c
      \del[1]{
        \sum_{\mathcal{K}\in\mathcal{T}_h}
        \del[0]{\eta_{R}^{\mathcal{K}}}^2
      }^{1/2}
      \tnorm{\boldsymbol{v}_{\mathfrak{h}}}_{s,\mathfrak{h}}.
    \end{split}
  \end{equation*}
  \textbf{Bound for $M_2$.} We first write
  \begin{equation*}
    M_2 = \underbrace{-\langle{\varepsilon\overline{\nabla}_{\overline{{n}}}u_h,
        (I-i_h^{\mathcal{F}})\mu_{\mathfrak{h}}}\rangle_{\mathcal{Q}_{\mathfrak{h}}\setminus\partial\mathcal{E}}}_{M_{21}}
    +\underbrace{\langle{\beta\cdot n u_h,
        (I - i_h^{\mathcal{F}})\mu_{\mathfrak{h}}
      }\rangle_{\partial\mathcal{T}_{\mathfrak{h}}\setminus\partial\mathcal{E}}}_{M_{22}}.
  \end{equation*}
  For $M_{21}$, using that
  $\langle \cdot, \cdot \rangle_{\mathcal{Q}_{\mathfrak{h}}\setminus
    \partial \mathcal{E}} = \langle \cdot, \cdot
  \rangle_{\mathcal{Q}_h\setminus\partial\mathcal{E}}$, writing
  element-wise integrals as facet integrals on interior facets, using
  the Cauchy--Schwarz inequality and the projection estimate
  \cref{eq:subgrid_proj_est_2}, we find
  \begin{equation}
    \label{eq:some_eq_3}
    \begin{split}
      M_{21}
      &\leq
      c
      \sum_{F_{\mathcal{Q}}\in\mathcal{F}^i_{\mathcal{Q},{h}}}
      \envert[1]{
        \langle{
          \jump{\varepsilon\overline{\nabla}_{\overline{{n}}}u_h}
          ,
          \del[0]{I-i_h^\mathcal{F}}\mu_\mathfrak{h}
        }\rangle_{F_{\mathcal{Q}}}
      }
      \\
      & \leq c
      \sum_{F_{\mathcal{Q}}\in\mathcal{F}^i_{\mathcal{Q},{h}}}
      \delta t_{\mathcal{K}}^{{1}/{2}}
      \norm[0]{
        \jump{\varepsilon\overline{\nabla}_{\overline{{n}}}u_h}
      }_{F_{\mathcal{Q}}}
      \del[1]{
        \norm[0]{\ejump{\mu_\mathfrak{h}}}_{E_{\mathring{\mathcal{K}}}}
        +
        \delta t_{\mathcal{K}}
        \norm[0]{\ejump{\partial_t\mu_\mathfrak{h}}}_{E_{\mathring{\mathcal{K}}}}
      },
    \end{split}
  \end{equation}
  where $\mathcal{K}$ in the last step is chosen such that
  $F_{\mathcal{Q}}\subset\mathcal{Q}_{\mathcal{K}}$. Consider the two
  terms on the right-hand side of \cref{eq:some_eq_3}
  separately. First, using \cref{eq:subgridhelperbnds_4} and
  $\delta t_{\mathcal{K}} = \mathcal{O}(h_K^2)$, we have
  \begin{equation*}
    \begin{split}
      &
      \sum_{F_{\mathcal{Q}}\in\mathcal{F}^i_{\mathcal{Q},{h}}}
      \norm[0]{
        \jump{\varepsilon\overline{\nabla}_{\overline{{n}}}u_h}
      }_{F_{\mathcal{Q}}}
      \delta t_{\mathcal{K}}^{{1}/{2}}
      \norm[0]{\ejump{\mu_\mathfrak{h}}}_{E_{\mathring{\mathcal{K}}}}
      \\
      \le
      &
      c\sum_{F_{\mathcal{Q}}\in\mathcal{F}^i_{\mathcal{Q},{h}}}
      h_K^{1/2}\varepsilon^{-1/2}
      \norm[0]{
        \jump{\varepsilon\overline{\nabla}_{\overline{{n}}}u_h}
      }_{F_{\mathcal{Q}}}
      \\
      &\quad
      \cdot
      \del[2]{
        \delta t_{\mathcal{K}}^{{1}/{2}}
        h_K^{-1}
        \sum_{\mathring{\mathcal{K}}\in\mathcal{T}_{\mathcal{K}}}
        \norm[0]{
          \envert[0]{\beta_s-\tfrac{1}{2}\beta\cdot n}^{1/2}
          \sbr[0]{\boldsymbol{v}_{\mathfrak{h}}}
        }_{ \partial\mathring{\mathcal{K}}
          \cap
          F_{\mathring{\mathcal{R}}}
        }
        +
        \varepsilon^{1/2}h_K^{-1/2}\del[1]{
        \norm[0]{\sbr[0]{\boldsymbol{v}_{\mathfrak{h}}^*}}_{F_{\mathcal{Q}}^*}
        +
        \norm[0]{\sbr[0]{\boldsymbol{v}_{\mathfrak{h},*}}}_{F_{\mathcal{Q},*}}
        }
      }
      \\
      \le&c
      \del[1]{
        \sum_{\mathcal{K}\in\mathcal{T}_h}
        \del[0]{\eta_{J,1}^{\mathcal{K}}}^2
      }^{1/2}
      \tnorm{\boldsymbol{v}_{\mathfrak{h}}}_{s,\mathfrak{h}}.
    \end{split}
  \end{equation*}
  The second term on the right-hand side of \cref{eq:some_eq_3} can be
  bounded similarly by using \cref{eq:subgridhelperbnds_5} and
  $\delta t_{\mathcal{K}}h_K^{-1}\varepsilon^{1/2}\le
  c\tau_{\varepsilon}^{1/2}$: we have
  $\sum_{F_{\mathcal{Q}}\in\mathcal{F}^i_{\mathcal{Q},{h}}} \norm[0]{
    \jump{\varepsilon\overline{\nabla}_{\overline{{n}}}u_h}
  }_{F_{\mathcal{Q}}} \delta t_{\mathcal{K}}^{{3}/{2}}
  \norm[0]{\ejump{\partial_t\mu_\mathfrak{h}}}_{E_{\mathring{\mathcal{K}}}}
  \le c \del[1]{ \sum_{\mathcal{K}\in\mathcal{T}_h}
    \del[0]{\eta_{J,1}^{\mathcal{K}}}^2 }^{1/2}
  \tnorm{\boldsymbol{v}_{\mathfrak{h}}}_{s,\mathfrak{h}}$ and
  therefore
  $M_{21} \le c \del[1]{ \sum_{\mathcal{K}\in\mathcal{T}_h}
    \del[0]{\eta_{J,1}^{\mathcal{K}}}^2 }^{1/2}
  \tnorm{\boldsymbol{v}_{\mathfrak{h}}}_{s,\mathfrak{h}}$.

  For $M_{22}$, we first note that since
  $\del[0]{I-i_h^{\mathcal{F}}}\mu_{\mathfrak{h}}$ vanishes on
  $\mathcal{R}_{\mathfrak{h}}$ we have that
  $M_{22} = \langle{\beta\cdot n u_h, (I -
    i_h^{\mathcal{F}})\mu_{\mathfrak{h}}
  }\rangle_{\partial\mathcal{T}_{\mathfrak{h}}\setminus\partial\mathcal{E}}
  = \langle{\beta\cdot n u_h, (I -
    i_h^{\mathcal{F}})\mu_{\mathfrak{h}}
  }\rangle_{\mathcal{Q}_{\mathfrak{h}}\setminus\partial\mathcal{E}}$. Then,
  similar to \cref{eq:some_eq_3}, we have using
  \cref{eq:subgrid_proj_est_2,eq:subgridhelperbnds_4,eq:subgridhelperbnds_5},
  that $\delta t_{\mathcal{K}} = \mathcal{O}(h_K^2)$,
  $\varepsilon \le 1$, $h_K \le 1$, that
  $\norm[0]{\bar{\beta}}_{L^{\infty}(\mathcal{E})}\le 1$, and noting
  that
  $\langle{ \beta\cdot n\lambda_h,
    \del[0]{I-i_h^\mathcal{F}}\mu_\mathfrak{h}
  }\rangle_{\mathcal{Q}_{h}\setminus\partial\mathcal{E}} = 0$ by
  single-valuedness of $\lambda_h$, $\beta\cdot n$, and
  $\del[0]{I-i_h^\mathcal{F}}\mu_\mathfrak{h}$ on element boundaries,
  \begin{equation*}
    \begin{split}
      & M_{22}
      =
      \langle{
        \beta\cdot n \sbr[0]{\boldsymbol{u}_h},
        \del[0]{I-i_h^\mathcal{F}}\mu_\mathfrak{h}
      }\rangle_{\mathcal{Q}_{h}\setminus\partial\mathcal{E}}
      \\
      \leq&c
      \sum_{\mathcal{K} \in \mathcal{T}_h}
      \delta t_{\mathcal{K}}^{{1}/{2}}
      \norm[0]{
        \sbr[0]{\boldsymbol{u}_h}
      }_{\mathcal{Q}_{\mathcal{K}}}
      \del[1]{
        \norm[0]{\ejump{\mu_\mathfrak{h}}}_{E_{\mathring{\mathcal{K}}}}
        +
        \delta t_{\mathcal{K}}
        \norm[0]{\ejump{\partial_t\mu_\mathfrak{h}}}_{E_{\mathring{\mathcal{K}}}}
      }
      \\
      \leq&c
      \sum_{\mathcal{K} \in \mathcal{T}_h}
      h_K^{1/2}\varepsilon^{-1/2}
      \norm[0]{
        \sbr[0]{\boldsymbol{u}_h}
      }_{\mathcal{Q}_{\mathcal{K}}}
      \\
      &\quad
      \cdot
      \del[1]{
        \sum_{\mathring{\mathcal{K}}\in\mathcal{T}_{\mathcal{K}}}
        \norm[0]{
          \envert[0]{\beta_s-\tfrac{1}{2}\beta\cdot n}^{1/2}
          \sbr[0]{\boldsymbol{v}_{\mathfrak{h}}}
        }_{ \partial\mathring{\mathcal{K}}
          \cap
          F_{\mathring{\mathcal{R}}}
        }
        +
        \sum_{\mathring{\mathcal{K}}\in\mathcal{T}_{\mathcal{K}}}
        \tau_\varepsilon^{1/2}
        \norm[0]{\partial_tv_{\mathfrak{h}}}_{\mathring{\mathcal{K}}}
        +
        \varepsilon^{1/2}h_K^{-1/2} \del[1]{
        \norm[0]{\sbr[0]{\boldsymbol{v}_{\mathfrak{h}}^*}}_{F_{\mathcal{Q}}^*}
        +
        \norm[0]{\sbr[0]{\boldsymbol{v}_{\mathfrak{h},*}}}_{F_{\mathcal{Q},*}}
        }
      }
      \\
      \leq&c
      \del[1]{
        \sum_{\mathcal{K}\in\mathcal{T}_h}
        \del[0]{\eta_{J,2,2}^{\mathcal{K}}}^2
      }^{1/2}
      \tnorm{\boldsymbol{v}_{\mathfrak{h}}}_{s,\mathfrak{h}}.
    \end{split}
  \end{equation*}
  Combining the bounds for $M_{21}$ and $M_{22}$ we obtain:
  \begin{equation*}
    M_2 \le
    c \sbr[2]{\del[1]{
        \sum_{\mathcal{K}\in\mathcal{T}_h}
        \del[0]{\eta_{J,1}^{\mathcal{K}}}^2
      }^{1/2}
      +
      \del[1]{
        \sum_{\mathcal{K}\in\mathcal{T}_h}
        \del[0]{\eta_{J,2,2}^{\mathcal{K}}}^2
      }^{1/2}
    }
    \tnorm{\boldsymbol{v}_{\mathfrak{h}}}_{s,\mathfrak{h}}.
  \end{equation*}
  \textbf{Bound for $M_3$.} For $M_3$, using the Cauchy--Schwarz
  inequality, the trace inequality \cref{eq:eg_inv_3}, the inverse
  inequality \cref{eq:eg_inv_2}, the subgrid projection estimate
  \cref{eq:subgrid_proj_est_1},
  \cref{eq:subgridhelperbnds_1,eq:subgridhelperbnds_2}, that
  $\delta t_{\mathcal{K}} = \mathcal{O}(h_K^2)$, and H\"older's
  inequality for sums,
  \begin{equation*}
    \begin{split}
      M_3
      \le&
      c\sum_{\mathcal{K} \in \mathcal{T}_h}
      \varepsilon
      {h_K^{-{1}/{2}}}\norm[0]{\sbr[0]{\boldsymbol{u}_h}}_{\mathcal{Q}_{\mathcal{K}}}
      h_K^{{1}/{2}}
      \norm[0]{
        \overline{\nabla}
        \del[0]{
          \del[0]{I-i_h^\mathcal{K}}v_\mathfrak{h}
        }
      }_{\mathcal{Q}_{\mathcal{K}}}
      \\
      \le & c
      \sum_{\mathcal{K} \in \mathcal{T}_h}
      \varepsilon^{1/2}
      {h_K^{-{1}/{2}}}
      \norm[0]{\sbr[0]{\boldsymbol{u}_h}}_{\mathcal{Q}_{\mathcal{K}}}
      h_K^{-1}
      \delta t_{\mathcal{K}}^{1/2}
      \del[2]{
        \sum_{\mathring{\mathcal{K}}\in\mathcal{T}_{\mathcal{K}}}
        \norm[0]{
          \envert[0]{\beta_s-\tfrac{1}{2}\beta\cdot n}^{1/2}
          \sbr[0]{\boldsymbol{v}_{\mathfrak{h}}}
        }_{ \partial\mathring{\mathcal{K}}
          \cap
          F_{\mathring{\mathcal{R}}}
        }
        +
        \sum_{\mathring{\mathcal{K}}\in\mathcal{T}_{\mathcal{K}}}
        \tau_{\varepsilon}^{1/2}
        \norm{
          \partial_tv_\mathfrak{h}
        }_{\mathring{\mathcal{K}}}
      }
      \\
      \le & c
      \del[1]{
        \sum_{\mathcal{K}\in\mathcal{T}_h}
        \del[0]{\eta_{J,2,1}^{\mathcal{K}}}^2
      }^{1/2}
      \tnorm{\boldsymbol{v}_{\mathfrak{h}}}_{s,\mathfrak{h}}.
    \end{split}
  \end{equation*}
  \textbf{Bound for $M_4$.} Let
  $\mathring{\mathcal{K}}\in\mathcal{T}_{\mathcal{K}}$ and
  $F_{\mathcal{Q}}\subset\mathcal{Q}_{\mathring{\mathcal{K}}}$. We
  write $M_4 := M_{41} + M_{42}$ where $M_{41}$ is the sum of
  integrals over
  $F_{\mathcal{Q}} \in \mathcal{F}_{\mathcal{Q},\mathfrak{h}}$ and
  $M_{42}$ the sum of integrals over
  $F_{\mathcal{Q}} \notin \mathcal{F}_{\mathcal{Q},\mathfrak{h}}$. The
  latter case occurs when the neighboring element of
  $\mathring{\mathcal{K}}$ over $F_{\mathcal{Q}}$ is coarser than
  $\mathring{\mathcal{K}}$. To bound $M_{41}$, we first note that for
  $F_{\mathcal{Q}}\in\mathcal{F}_{\mathcal{Q},\mathfrak{h}}$, we have
  \begin{equation}
    \label{eq:qfacediffprojrewrite}
      \sbr[0]{
        \del[0]{I-i_h}
        \boldsymbol{v}_\mathfrak{h}
      }
      =
      \del[0]{I-i_h^\mathcal{K}}v_\mathfrak{h}
      -
      \del[0]{I-i_h^\mathcal{F}}\mu_\mathfrak{h}
      =
      \del[0]{I-i_h^\mathcal{F}}
      \del[0]{
        v_\mathfrak{h}
        -
        \mu_\mathfrak{h}
      },
  \end{equation}
  where the last step is by \cref{lem:subgridprojdiff}. Then,
  note that by the Cauchy--Schwarz inequality and boundedness of the
  projection $i_h^{\mathcal{F}}$, we have
  \begin{equation}
    \label{eq:mterms5}
    \begin{split}
      \langle{
        {\varepsilon\alpha}{h_K^{-1}}
        \sbr[0]{\gamma_{\mathfrak{h}}(\boldsymbol{u}_h)}
        ,
        \sbr[0]{
          \del[0]{I-i_h}
          \boldsymbol{v}_\mathfrak{h}
        }
      }\rangle_{F_{\mathcal{Q}}}
      \le
      c\del[1]{{\varepsilon^{1/2}}{h_K^{-1/2}}\norm[0]{\sbr[0]{\boldsymbol{u}_h}}_{\mathcal{Q}_{\mathcal{K}}}}
      \del[1]{
        {\varepsilon^{1/2}}{h_K^{-1/2}}\norm[0]{\sbr[0]{\boldsymbol{v}_\mathfrak{h}}}_{\mathcal{Q}_{\mathcal{K}}}
      },
    \end{split}
  \end{equation}
  so that
  $M_{41} \le c \del[2]{\sum_{\mathcal{K} \in \mathcal{T}_h}
    \del[1]{\eta_{J,2,1}^{\mathcal{K}}}^2}^{1/2}\tnorm{\boldsymbol{v}_{\mathfrak{h}}}_{s,\mathfrak{h}}$.

  We now consider $M_{42}$. Consider an
  $F_{\mathcal{Q}}\notin
  \mathcal{F}_{\mathcal{Q},\mathfrak{h}}$. Denote the coarser
  neighboring element of $\mathring{\mathcal{K}}$ over
  $F_{\mathcal{Q}}$ by $\mathring{\mathcal{K}}_{nb}$ and denote the
  restriction of $v_{\mathfrak{h}}$ to $\mathring{\mathcal{K}}_{nb}$
  by $v_{nb,\mathfrak{h}}$. We have
  \begin{equation}
    \label{eq:qfacediffprojrewrite-2}
    \begin{split}
      \sbr[0]{
        \del[0]{I-i_h}
        \boldsymbol{v}_\mathfrak{h}
      }
      &=
      \del[0]{I-i_h^\mathcal{K}}v_\mathfrak{h}
      + v_{nb,\mathfrak{h}}-\mu_{\mathfrak{h}}
      + i_h^\mathcal{F}\mu_{\mathfrak{h}}
      - v_{nb,\mathfrak{h}}
      \\
      &=
      \del[0]{I-i_h^\mathcal{K}}v_\mathfrak{h}
      +
      \del[0]{I-i_h^\mathcal{F}}\del[0]{v_{nb,\mathfrak{h}}-\mu_{\mathfrak{h}}}
      -
      \del[0]{I-i_h^\mathcal{K}}v_{nb,\mathfrak{h}}
      ,
    \end{split}
  \end{equation}
  where the last step is by \cref{lem:subgridprojdiff}. We have:
  \begin{multline}
    \label{eq:M42splitting3terms}
      \langle{
        {\varepsilon\alpha}{h_K^{-1}}
        \sbr[0]{\gamma_{\mathfrak{h}}(\boldsymbol{u}_h)}
        ,
        \sbr[0]{
          \del[0]{I-i_h}
          \boldsymbol{v}_\mathfrak{h}
        }
      }\rangle_{F_{\mathcal{Q}}}
      =
      \langle{
        {\varepsilon\alpha}{h_K^{-1}}
        \sbr[0]{\boldsymbol{u}_h}
        ,
        \del[0]{I-i_h^{\mathcal{K}}}
        v_{\mathfrak{h}}
      }\rangle_{F_{\mathcal{Q}}}
      +
      \langle{
        {\varepsilon\alpha}{h_K^{-1}}
        \sbr[0]{\boldsymbol{u}_h}
        ,
        \del[0]{I-i_h^{\mathcal{F}}}
        \sbr[0]{\boldsymbol{v}_{nb,\mathfrak{h}}}
      }\rangle_{F_{\mathcal{Q}}}
      \\
      -
      \langle{
        {\varepsilon\alpha}{h_K^{-1}}
        \sbr[0]{\boldsymbol{u}_h}
        ,
        \del[0]{I-i_h^{\mathcal{K}}}
        v_{nb,\mathfrak{h}}
      }\rangle_{F_{\mathcal{Q}}}.
  \end{multline}
  The second term on the right-hand side of
  \cref{eq:M42splitting3terms} is bounded in the same way as
  \cref{eq:mterms5}. For the first term on the right-hand side of
  \cref{eq:M42splitting3terms}, using the Cauchy--Schwarz inequality,
  the trace inequality \cref{eq:eg_inv_3}, the subgrid projection
  bound \cref{eq:subgrid_proj_est_1}, that
  $\delta t_{\mathcal{K}} = \mathcal{O}(h_K^2)$, and
  \cref{eq:subgridhelperbnds_1,eq:subgridhelperbnds_2}, we find
  \begin{multline*}
      \langle{
        {\varepsilon\alpha}{h_K^{-1}}
        \sbr[0]{\boldsymbol{u}_h},
        \del[0]{I-i_h^{\mathcal{K}}}{v}_\mathfrak{h}
      }\rangle_{F_{\mathcal{Q}}}
      \\
      \le
      c\del[1]{{\varepsilon^{1/2}}{h_K^{-1/2}}
        \norm[0]{\sbr[0]{\boldsymbol{u}_h}}_{\mathcal{Q}_{\mathcal{K}}}}
      \del[2]{
        \sum_{\mathring{\mathcal{K}}\in\mathcal{T}_{\mathcal{K}}}
        \del[1]{
          \norm[0]{
            \envert[0]{\beta_s-\tfrac{1}{2}\beta\cdot n}^{1/2}
            \sbr[0]{\boldsymbol{v}_{\mathfrak{h}}}
          }_{ \partial\mathring{\mathcal{K}}
            \cap
            F_{\mathring{\mathcal{R}}}
          }
          +
          \tau_{\varepsilon}^{1/2}
          \norm{\partial_tv_\mathfrak{h}}_{\mathring{\mathcal{K}}}
        }
      }.
  \end{multline*}
  The third term on the right-hand side of
  \cref{eq:M42splitting3terms} is bound in the same way. For $M_{42}$
  we therefore find that
  $M_{42} \le c \del[2]{\sum_{\mathcal{K} \in
      \mathcal{T}_h}\del[1]{\eta_{J,2,1}^K}^2}^{1/2}\tnorm{\boldsymbol{v}_{\mathfrak{h}}}_{s,\mathfrak{h}}$.

  Combining the bounds for $M_{41}$ and $M_{42}$,
  \begin{equation*}
    M_4
    \le c
    \del[1]{
      \sum_{\mathcal{K}\in\mathcal{T}_h}
      \del[0]{\eta_{J,2,1}^{\mathcal{K}}}^2
    }^{1/2}
    \tnorm{\boldsymbol{v}_{\mathfrak{h}}}_{s,\mathfrak{h}}.
  \end{equation*}
  \textbf{Bound for $M_5$.} For $M_5$ we first write
  \begin{equation*}
    M_5 = - \underbrace{\langle{
      \del[0]{\beta_s-\beta\cdot n}
      \sbr[0]{\gamma_{\mathfrak{h}}(\boldsymbol{u}_h)}
      ,
      \sbr[0]{(I-i_h)\boldsymbol{v}_{\mathfrak{h}}}
    }
    \rangle_{\partial\mathcal{T}_{\mathfrak{h}}}
    }_{M_{51}}
    + \underbrace{
      \langle{
      \zeta^+
      \beta\cdot n\sbr[0]{\gamma_{\mathfrak{h}}(\boldsymbol{u}_h)}
      ,
      (I-i_h^{\mathcal{F}})\mu_{\mathfrak{h}}
    }\rangle_{\partial\mathcal{E}_N}
    }_{M_{52}}.
  \end{equation*}
  To bound $M_{51}$ we consider the $\mathcal{Q}$-facets and
  $\mathcal{R}$-facets separately. For the $\mathcal{Q}$-facets we
  follow the same steps as used in bounding $M_4$. Let
  $\mathring{\mathcal{K}}\in\mathcal{T}_{\mathcal{K}}$ and
  $F_{\mathcal{Q}}\subset\mathcal{Q}_{\mathring{\mathcal{K}}}$. If
  $F_{\mathcal{Q}}\in\mathcal{F}_{\mathcal{Q},\mathfrak{h}}$, we use
  \cref{eq:qfacediffprojrewrite}, the Cauchy--Schwarz inequality,
  boundedness of the projection $i_h^{\mathcal{F}}$, and
  \cref{eq:betasinfmax}:
  \begin{equation}
    \label{eq:mterms6}
      \langle{
        \del[0]{
          \beta_s-\beta\cdot n
        }
        \sbr[0]{\gamma_{\mathfrak{h}}(\boldsymbol{u}_h)}
        ,
        \sbr[0]{
          \del[0]{I-i_h}
          \boldsymbol{v}_\mathfrak{h}
        }
      }\rangle_{F_{\mathcal{Q}}}
      \le c
      \norm[0]{
        \envert[0]{\beta_{s}-\tfrac{1}{2}\beta\cdot n}^{1/2}
        \sbr[0]{\boldsymbol{u}_{h}}
      }_{\mathcal{Q}_{\mathcal{K}}}
      \norm[0]{
        \envert[0]{\beta_{s}-\tfrac{1}{2}\beta\cdot n}^{1/2}
        \sbr[0]{\boldsymbol{v}_\mathfrak{h}}
      }_{\mathcal{Q}_{\mathcal{K}}}.
  \end{equation}
  If $F_{\mathcal{Q}}\notin \mathcal{F}_{\mathcal{Q},\mathfrak{h}}$,
  we have, using \cref{eq:qfacediffprojrewrite-2},
  \begin{equation}
    \label{eq:M51splittingterm2}
    \begin{split}
      \langle{
        \del[0]{
          \beta_s-\beta\cdot n
        }
        \sbr[0]{\gamma_{\mathfrak{h}}(\boldsymbol{u}_h)}
        ,
        \sbr[0]{
          \del[0]{I-i_h}
          \boldsymbol{v}_\mathfrak{h}
        }
      }\rangle_{F_{\mathcal{Q}}}
      =
      &
      \langle{
        \del[0]{
          \beta_s-\beta\cdot n
        }
        \sbr[0]{\gamma_{\mathfrak{h}}(\boldsymbol{u}_h)}
        ,
        \del[0]{I-i_h^{\mathcal{K}}}
        v_\mathfrak{h}
      }\rangle_{F_{\mathcal{Q}}}
      \\
      &+
      \langle{
        \del[0]{
          \beta_s-\beta\cdot n
        }
        \sbr[0]{\gamma_{\mathfrak{h}}(\boldsymbol{u}_h)}
        ,
        \del[0]{I-i_h^{\mathcal{F}}}
          \sbr[0]{\boldsymbol{v}_{nb,\mathfrak{h}}}
      }\rangle_{F_{\mathcal{Q}}}
      \\
      &+
      \langle{
        \del[0]{
          \beta_s-\beta\cdot n
        }
        \sbr[0]{\gamma_{\mathfrak{h}}(\boldsymbol{u}_h)}
        ,
        \del[0]{I-i_h^{\mathcal{K}}}
        v_{nb,\mathfrak{h}}
      }\rangle_{F_{\mathcal{Q}}}.
    \end{split}
  \end{equation}
  The second term on the right-hand side of
  \cref{eq:M51splittingterm2} is bounded in the same way as
  \cref{eq:mterms6}. For the first term on the right-hand side of
  \cref{eq:M51splittingterm2} we use the Cauchy--Schwarz inequality,
  the trace inequality \cref{eq:eg_inv_3}, the subgrid projection
  estimate \cref{eq:subgrid_proj_est_1}, the estimates
  \cref{eq:subgridhelperbnds_1,eq:subgridhelperbnds_2}, and
  \cref{eq:betasinfmax} to find:
  \begin{equation}
    \label{eq:mterms6-2}
    \begin{split}
      &
      \langle{
        \del[0]{
          \beta_s-\beta\cdot n
        }\sbr[0]{\boldsymbol{u}_h}
        ,
        \del[0]{I-i_h^{\mathcal{K}}}
        {v}_\mathfrak{h}
      }\rangle_{F_{\mathcal{Q}}}
      \\
      &\quad
      \le c \varepsilon^{-1/2}
      \norm[0]{\envert[0]{\beta_{s}-\tfrac{1}{2}\beta\cdot n}^{1/2}\sbr[0]{\boldsymbol{u}_h}}_{\mathcal{Q}_{\mathcal{K}}}
      \del[2]{
        \sum_{\mathring{\mathcal{K}} \in \mathcal{T}_{\mathcal{K}}}
        \del[1]{
          \norm[0]{|\beta_s - \tfrac{1}{2}\beta\cdot n|^{1/2}\sbr[0]{\boldsymbol{v}_{\mathfrak{h}}}}_{\partial\mathring{\mathcal{K}}\cap F_{\mathring{\mathcal{R}}}}
          +
          \tau_{\varepsilon}^{1/2}
          \norm[0]{\partial_t v_{\mathfrak{h}}}_{\mathring{\mathcal{K}}}
        }
      }.
    \end{split}
  \end{equation}
  The third term on the right-hand side of \cref{eq:M51splittingterm2}
  is bounded in the same way. Combining
  \cref{eq:mterms6,eq:mterms6-2}, we bound the contributions from the
  $\mathcal{Q}$-facets to $M_{51}$ as follows:
  \begin{equation}
    \label{eq:mterms5-4}
    \langle{
      \del[0]{\beta_s-\beta\cdot n}
      \sbr[0]{\gamma_{\mathfrak{h}}(\boldsymbol{u}_h)},
      \sbr[0]{\del[0]{I-i_h}\boldsymbol{v}_\mathfrak{h} }
    }\rangle_{\mathcal{Q}_\mathfrak{h}}
    \le c
    \varepsilon^{-1/2}
    \del[1]{
      \sum_{\mathcal{K}\in\mathcal{T}_h}
      \del[0]{\eta_{J,3,\mathcal{Q}}^{\mathcal{K}}}^2
    }^{1/2}
    \tnorm{\boldsymbol{v}_{\mathfrak{h}}}_{s,\mathfrak{h}}.
  \end{equation}
  Next we consider the contributions of the $\mathcal{R}$-facets
  to $M_{51}$. Using that
  $\del[0]{I-i_h^{\mathcal{F}}}\mu_{\mathfrak{h}}=0$ on $F \in
  \mathcal{F}_{\mathcal{R},h}$, and that
  $\sbr[0]{\gamma_{\mathfrak{h}}(\boldsymbol{u}_h)} = 0$ on $F
  \in \mathcal{F}_{\mathcal{R},\mathfrak{h}} \setminus
  \mathcal{F}_{\mathcal{R},h}$, using the Cauchy--Schwarz
  inequality, the trace inequality \cref{eq:eg_inv_4}, the
  subgrid projection estimate \cref{eq:subgrid_proj_est_1}, the
  estimates \cref{eq:subgridhelperbnds_1,eq:subgridhelperbnds_2}
  the inverse estimate \cref{eq:eg_inv_low_d_3}, we find
  \begin{equation}
    \label{eq:mterms7}
    \begin{split}
      &
      \langle{
        \del[0]{
          \beta_s-\beta\cdot n
        }
        \sbr[0]{\gamma_{\mathfrak{h}}(\boldsymbol{u}_h)}
        ,
        \sbr[0]{
          \del[0]{I-i_h}
          \boldsymbol{v}_\mathfrak{h}
        }
      }\rangle_{\mathcal{R}_\mathfrak{h}}
      =
      \langle{
        \del[0]{
          \beta_s-\beta\cdot n
        }
        \sbr[0]{\boldsymbol{u}_h},
        \del[0]{I-i_h^\mathcal{K}}
        {v_\mathfrak{h}}
      }\rangle_{\mathcal{R}_h}
      \\
      &\qquad
      \le c \sum_{\mathcal{K} \in \mathcal{T}_h}
      \varepsilon^{-1/2}
      \norm[0]{
        \envert[0]{\beta_{s}-\tfrac{1}{2}\beta\cdot n}^{1/2}
        \sbr[0]{\boldsymbol{u}_h}
      }_{\mathcal{R}_{\mathcal{K}}}
      \del[1]{
        \sum_{\mathring{\mathcal{K}}\in\mathcal{T}_{\mathcal{K}} }
        \norm[0]{
          \envert[0]{\beta_{s}-\tfrac{1}{2}\beta\cdot n}^{1/2}
          \sbr[0]{\boldsymbol{v}_\mathfrak{h}}
        }_{\partial\mathring{\mathcal{K}}}
        +
        \sum_{\mathring{\mathcal{K}}\in\mathcal{T}_{\mathcal{K}} }
        \tau_{\varepsilon}^{1/2}\norm{{\partial_tv_\mathfrak{h}}}_{\mathring{\mathcal{K}}}
      }
      \\
      &\qquad
      \le c
      \varepsilon^{-1/2}
      \del[1]{
        \sum_{\mathcal{K}\in\mathcal{T}_h}
        \del[0]{\eta_{J,3,\mathcal{R}}^{\mathcal{K}}}^2
      }^{1/2}
      \tnorm{\boldsymbol{v}_{\mathfrak{h}}}_{s,\mathfrak{h}}.
    \end{split}
  \end{equation}
  We can now bound $M_{51}$ by combining
  \cref{eq:mterms5-4,eq:mterms7}:
  \begin{equation*}
    M_{51} \le c \varepsilon^{-1/2}
    \sbr[2]{
      \del[1]{
        \sum_{\mathcal{K}\in\mathcal{T}_h}
        \del[0]{\eta_{J,3,\mathcal{Q}}^{\mathcal{K}}}^2
      }^{1/2}
      +
      \del[1]{
        \sum_{\mathcal{K}\in\mathcal{T}_h}
        \del[0]{\eta_{J,3,\mathcal{R}}^{\mathcal{K}}}^2
      }^{1/2}
    }
    \tnorm{\boldsymbol{v}_{\mathfrak{h}}}_{s,\mathfrak{h}}.
  \end{equation*}
  For $M_{52}$ we use that
  $\del[0]{I-i_h^{\mathcal{F}}}\mu_{\mathfrak{h}}=0$ on
  $F \in \mathcal{F}_{\mathcal{R},h}$, the Cauchy--Schwarz inequality,
  the boundedness of the projection $i_h^{\mathcal{F}}$, and
  \cref{eq:betasinfmax} to find that
  \begin{equation}
    \label{eq:mterms5-5}
      \langle{
        \zeta^+\beta\cdot n
        \sbr[0]{\gamma_{\mathfrak{h}}(\boldsymbol{u}_h)}
        ,
        \del[0]{I-i_h^{\mathcal{F}}}\mu_\mathfrak{h}
      }\rangle_{\partial\mathcal{E}_N}
      \le c
      \sum_{\mathcal{K} \in \mathcal{T}_h}
      \norm[0]{\envert[0]{\beta_s-\tfrac{1}{2}\beta\cdot{n}}^{1/2}\sbr[0]{\boldsymbol{u}_h}}_{\mathcal{Q}_{\mathcal{K}} \cap \partial\mathcal{E}_N}
      \beta_s^{1/2}\norm[0]{\mu_{\mathfrak{h}}}_{\mathcal{Q}_{\mathcal{K}} \cap \partial\mathcal{E}_N}.
  \end{equation}
  To bound
  $\beta_s^{1/2}\norm[0]{\mu_{\mathfrak{h}}}_{\mathcal{Q}_{\mathcal{K}}
    \cap \partial\mathcal{E}_N}$, consider a facet
  $F_{\mathcal{Q}} \subset \mathcal{Q}_{\mathcal{K}} \cap
  \partial\mathcal{E}_N$. By the mean value theorem for definite
  integrals (see, for example, \cite[Theorem 14.16]{Apostol:book}),
  there exists $(t_m,x_m)\in F_{\mathcal{Q}}$ such that
  \begin{equation}
    \label{eq:integralmvt}
    \norm[0]{\envert[0]{\beta\cdot n}^{1/2}\mu_{\mathfrak{h}}}_{F_{\mathcal{Q}}}^2
    =
    \int_{F_{\mathcal{Q}}}
    \envert[0]{\beta\cdot n}
    \mu_{\mathfrak{h}}^2\dif s
    =
    \envert[0]{\beta\cdot n}|_{(t_m,x_m)}
    \int_{F_{\mathcal{Q}}}
    \mu_{\mathfrak{h}}^2\dif s
    =
    \envert[0]{\beta\cdot n}|_{(t_m,x_m)}
    \norm[0]{\mu_{\mathfrak{h}}}_{F_{\mathcal{Q}}}^2.
  \end{equation}
  Let $(t_M,x_M)\in F_{\mathcal{Q}}$ be the point on which
  $\envert[0]{\beta\cdot n}$ attains its maximum $\beta_s$ on
  $F_{\mathcal{Q}}$. Since $\beta$ is Lipschitz continuous and $n$ is
  constant on $F_{\mathcal{Q}}$ (since $\mathcal{Q}$-facets are flat),
  we deduce that $\beta\cdot n$ is Lipschitz continuous on
  $F_{\mathcal{Q}}$. Thus, using that $\delta t_{\mathcal{K}}\le h_K$,
  we have
  \begin{equation}
    \label{eq:betadotnlipschitz}
    \envert[1]{\beta_s-\envert[0]{\beta\cdot n}|_{(t_m,x_m)}}
    \le c\envert[0]{
      (t_m,x_m) - (t_M,x_M)
    }\le ch_K.
  \end{equation}
  A consequence of \cref{eq:integralmvt}, \cref{eq:betadotnlipschitz},
  and \cref{eq:eg_inv_3} is the following bound:
  \begin{equation}
    \label{eq:m52helper}
    \begin{split}
      \beta_s
      \norm[0]{
        \mu_{\mathfrak{h}}
      }_{
        \mathcal{Q}_{\mathcal{K}}
        \cap
        \partial\mathcal{E}_N
      }^2
      & \le
      \envert[0]{\beta\cdot n}|_{(t_m,x_m)}
      \norm[0]{\mu_{\mathfrak{h}}}_{\mathcal{Q}_{\mathcal{K}} \cap
        \partial\mathcal{E}_N}^2
      +ch_K
      \norm[0]{\mu_{\mathfrak{h}}}_{\mathcal{Q}_{\mathcal{K}} \cap
        \partial\mathcal{E}_N}^2
      \\
      & \le
      c\varepsilon^{-1}\sbr[2]{
        \norm[0]{\envert[0]{\beta\cdot n}^{1/2}\mu_{\mathfrak{h}}}_{\mathcal{Q}_{\mathcal{K}} \cap
          \partial\mathcal{E}_N}^2
        + \norm[0]{v_{\mathfrak{h}}}_{\mathcal{K}}^2
        + \varepsilon h_K^{-1}
        \norm[0]{\sbr[0]{\boldsymbol{v}_{\mathfrak{h}}}}_{\mathcal{Q}_{\mathcal{K}} \cap
          \partial\mathcal{E}_N}^2}.
    \end{split}
  \end{equation}
  Combining \cref{eq:mterms5-5,eq:m52helper}, we find the following
  bound
  $M_{52} \le c\varepsilon^{-1/2} \del[1]{
    \sum_{\mathcal{K}\in\mathcal{T}_h}
    \del[0]{\eta_{J,3,\mathcal{Q}}^{\mathcal{K}}}^2 }^{1/2}
  \tnorm{\boldsymbol{v}_{\mathfrak{h}}}_{s,\mathfrak{h}}$.

  Combining the bounds for $M_{51}$ and $M_{52}$ we find that
  \begin{equation*}
    M_5 \le
    c \varepsilon^{-1/2}
    \sbr[2]{
      \del[1]{
        \sum_{\mathcal{K}\in\mathcal{T}_h}
        \del[0]{\eta_{J,3,\mathcal{Q}}^{\mathcal{K}}}^2
      }^{1/2}
      +
      \del[1]{
        \sum_{\mathcal{K}\in\mathcal{T}_h}
        \del[0]{\eta_{J,3,\mathcal{R}}^{\mathcal{K}}}^2
      }^{1/2}
    }
    \tnorm{\boldsymbol{v}_{\mathfrak{h}}}_{s,\mathfrak{h}}.
  \end{equation*}
  \textbf{Bound for $M_6$.} The derivation of a bound for $M_6$ is
  similar to that of the bound for $M_{22}$:
  \begin{equation*}
      M_6
      \le c
      \del[1]{
        \sum_{\mathcal{K}\in\mathcal{T}_h}
        \del[0]{\eta_{BC,1}^{\mathcal{K}}}^2
      }^{1/2}
      \tnorm{\boldsymbol{v}_{\mathfrak{h}}}_{s,\mathfrak{h}}.
  \end{equation*}
  Combining \cref{eq:pretimederivativeest,eq:amathfrakhM1toM6} with
  the bounds for $M_1$ to $M_6$ we find:
  \begin{multline*}
    \del[2]{
      \sum_{\mathcal{K}\in\mathcal{T}_h}
      \tau_{\varepsilon}
      \norm[0]{\partial_t\del[0]{u_{\mathfrak{h}}-u_h}}_{\mathcal{K}}^2
    }^{{1}/{2}}
    \le c_T
    \Big(
    \del[1]{
      \sum_{\mathcal{K}\in\mathcal{T}_h}
      \del[0]{
        \eta_R^{\mathcal{K}}
      }^2
    }^{1/2}
    +
    \del[1]{
      \sum_{\mathcal{K}\in\mathcal{T}_h}
      \del[0]{
        \eta_{J,1}^{\mathcal{K}}
      }^2
    }^{1/2}
    \\
    +
    \del[1]{
      \sum_{\mathcal{K}\in\mathcal{T}_h}
      \del[0]{
        \eta_{J,2,1}^{\mathcal{K}}
      }^2
    }^{1/2}
    +
    \del[1]{
      \sum_{\mathcal{K}\in\mathcal{T}_h}
      \del[0]{
        \eta_{J,2,2}^{\mathcal{K}}
      }^2
    }^{1/2}
    +
    \varepsilon^{-1/2}
    \del[1]{
      \sum_{\mathcal{K}\in\mathcal{T}_h}
      \del[0]{
        \eta_{J,3}^{\mathcal{K}}
      }^2
    }^{1/2}
    +
    \del[1]{
      \sum_{\mathcal{K}\in\mathcal{T}_h}
      \del[0]{
        \eta_{BC,1}^{\mathcal{K}}
      }^2
    }^{1/2}
    \Big).
  \end{multline*}
  \Cref{eq:timederivativeest} follows by using H\"older's inequality
  for sums and combining with \cref{eq:tauepsdtuuh}.
\end{proof}

\subsection{Reliability of the error estimator}
\label{ss:rel}

In this section we prove \cref{thm:reliability}. Let $e_u:=u-u_h$
denote the true error. To derive an upper bound for $e_u$ we follow
\cite{Schotzau:2014,Houston:2007,Schotzau:2009,Zhu:thesis} and
consider the following decomposition of
$u_h=\mathcal{I}^c_hu_h + u^r_h$. Here $\mathcal{I}_h^c$ is the
averaging operator defined in \cref{ss:ineqapproxbounds} and
$u^r_h:=u_h-\mathcal{I}^c_hu_h$.  We further introduce
$e^c_u:=u-\mathcal{I}^c_hu_h$.

In this section we will use the following weighting function:
\begin{equation}
  \label{eq:weight_func}
  \varphi := eT\exp(-t/T) + \chi,
\end{equation}
where the positive constant $\chi$ will be determined later. We
further introduce the forms
$k_h(\boldsymbol{u},\boldsymbol{v})
=-\langle\varepsilon\sbr[0]{\boldsymbol{u}},\overline{\nabla}_{\overline{n}}v\rangle_{\mathcal{Q}_h}
-\langle\varepsilon\overline{\nabla}_{\overline{n}}u,\sbr[0]{\boldsymbol{v}}\rangle_{\mathcal{Q}_h}$,
$b_h(\lambda,\mu) =\langle\zeta^+\beta\cdot
n\lambda,\mu\rangle_{\partial\mathcal{E}_N}$, and
$\widetilde{a}_h(\boldsymbol{u},\boldsymbol{v}) =
a_h(\boldsymbol{u},\boldsymbol{v})-k_h(\boldsymbol{u},\boldsymbol{v})-b_h(\lambda,
\mu)$ (see \cite{Schotzau:2014,Schotzau:2009,Schotzau:2011a}).

\begin{lemma}
  \label{lem:err_stronger_stab_full}
  Let $\varphi$ be as in \cref{eq:weight_func}. Then,
  \begin{equation}
    \label{eq:err_stronger_stab_full}
    \chi
    \sum_{\mathcal{K}\in\mathcal{T}_h}
    \varepsilon
    \norm[0]{\overline{\nabla} e_u^c}_{\mathcal{K}}^2
    +
    \tfrac{1}{2}
    \sum_{\mathcal{K}\in\mathcal{T}_h}
    \norm{e_u^c}_{\mathcal{K}}^2
    +
    \tfrac{1}{2}\chi
    \sum_{\mathcal{K}\in\mathcal{T}_h}
    \norm[0]{\envert[0]{\beta\cdot n}^{1/2}e_u^c}_{\partial\mathcal{E}_N}^2
    \le
    \sum_{i=1}^6
    T_i,
  \end{equation}
  where
  \begin{align*}
    T_1
    &=
      \del[0]{R_h^{\mathcal{K}},(I-\Pi_h)(\varphi
      e_u^c)}_{\mathcal{T}_h},
    \\
    T_2
    &=
      -
      \langle
      \varepsilon\overline{\nabla}_{\overline{n}}u_h,
      \del[0]{I-\Pi_h^{\mathcal{F}}}\del[0]{\varphi e_u^c}
      \rangle_{\mathcal{Q}_h^i}
      +
      \langle R_h^N,(I-\Pi_{h}^{\mathcal{F}})(\varphi
      e_u^c)\rangle_{\partial\mathcal{E}_N\setminus\Omega_T},
    \\
    T_3
    &=
      \langle{
      \varepsilon\alpha h_K^{-1}
      \sbr[0]{\boldsymbol{u}_h}
      ,
      \del[0]{\Pi_h-\Pi_h^{\mathcal{F}}}
      \del[0]{\varphi e_u^c}
      }\rangle_{\mathcal{Q}_h}
      -\langle
      \varepsilon\sbr[0]{\boldsymbol{u}_h},
      \overline{\nabla}_{\overline{n}}
      \del[0]{\Pi_h\del[0]{\varphi e_u^c}}
      \rangle_{\mathcal{Q}_h},
    \\
    T_4
    &=
      \langle
      \beta\cdot nu_h,\del[0]{I-\Pi_h^{\mathcal{F}}}\del[0]{\varphi e_u^c}
      \rangle_{\partial\mathcal{T}_h^i}
      +
      \langle
      \del[0]{\beta_s-\beta\cdot n}\sbr[0]{\boldsymbol{u}_h}
      ,
      \del[0]{\Pi_h-\Pi_h^{\mathcal{F}}}
      \del[0]{\varphi e_u^c}
      \rangle_{\partial\mathcal{T}_h^i},
    \\
    T_5 &=
          \del[0]{
          \varepsilon\overline{\nabla}
          \del[0]{I-\mathcal{I}_h^c}u_h,
          \overline{\nabla}\del[0]{\varphi e_u^c}
          }_{\mathcal{T}_h}
          -
          \del[0]{
          \beta\del[0]{I-\mathcal{I}_h^c}u_h
          ,\nabla\del[0]{\varphi e_u^c}
          }_{\mathcal{T}_h},
    \\
    T_6 &=
          \langle
          {\zeta^+}\beta\cdot n\del[0]{u_h-\mathcal{I}_h^cu_h},{\varphi e_u^c}
          \rangle_{\partial\mathcal{E}_N}
          -
          \langle \zeta^+\beta\cdot n\sbr[0]{\boldsymbol{u}_h},
          \Pi_h^{\mathcal{F}}\del[0]{\varphi e_u^c}
          \rangle_{\partial\mathcal{E}_N}
    \\
    &\qquad
      +
      \langle
      \del[0]{\beta_s-\beta\cdot n}\sbr[0]{\boldsymbol{u}_h}
      ,
      \del[0]{\Pi_h-\Pi_h^{\mathcal{F}}}
      \del[0]{\varphi e_u^c}
      \rangle_{\partial\mathcal{E}\setminus\Omega_T}.
  \end{align*}
\end{lemma}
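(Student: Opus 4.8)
The proof is a ``test with the weighted error'' argument following \cite{Schotzau:2009,Schotzau:2014}, whose engine is the P\'eclet-robust coercivity result \cite[Lemma 4.2]{Wang:2023}. Write $\boldsymbol e_u^c:=(e_u^c,e_u^c|_\Gamma)$; this is legitimate because $e_u^c=u-\mathcal I_h^cu_h$ has no interface jumps (both $u$ and $\mathcal I_h^cu_h$ being continuous in $\mathcal E$) and vanishes on $\partial\mathcal E_D$, so $\boldsymbol e_u^c\in\boldsymbol V(h)$. The plan is to bound the left-hand side of \cref{eq:err_stronger_stab_full} from above by $\widetilde a_h(\boldsymbol e_u^c,\varphi\boldsymbol e_u^c)$, and then to rewrite this quantity as $\sum_{i=1}^6T_i$.

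For the first bound I would exploit that $\varphi=eT\exp(-t/T)+\chi$ depends only on $t$. Hence $\overline\nabla\varphi=0$, so the weighted diffusion term is $\sum_{\mathcal K}\varepsilon(\varphi\,\overline\nabla e_u^c,\overline\nabla e_u^c)_{\mathcal K}\ge\chi\sum_{\mathcal K}\varepsilon\norm[0]{\overline\nabla e_u^c}_{\mathcal K}^2$ since $\varphi\ge\chi$; integrating the volume advection by parts puts $-\tfrac12\partial_t\varphi$ against $(e_u^c)^2$, and $T\partial_t\varphi+\varphi=\chi$ together with $\varphi-\chi=eT\exp(-t/T)\ge eT\exp(-1)=T\ge1$ gives $-\tfrac12\partial_t\varphi=\tfrac1{2T}(\varphi-\chi)\ge\tfrac12$, hence $\tfrac12\sum_{\mathcal K}\norm{e_u^c}_{\mathcal K}^2$; and the outflow boundary integral on $\partial\mathcal E_N$, again weighted by $\varphi\ge\chi$ and with $\Omega_T$ contributing with the right sign, gives $\tfrac12\chi\sum_{\mathcal K}\norm[0]{\envert[0]{\beta\cdot n}^{1/2}e_u^c}_{\partial\mathcal E_N}^2$. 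The remaining facet contributions of $\widetilde a_h(\boldsymbol e_u^c,\varphi\boldsymbol e_u^c)$ (the interior penalty term and the $\partial\mathcal T_h$-jump term) are nonnegative and are dropped. This is precisely \cite[Lemma 4.2]{Wang:2023}.

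For the identity $\widetilde a_h(\boldsymbol e_u^c,\varphi\boldsymbol e_u^c)=\sum_{i=1}^6T_i$ I would split $\boldsymbol e_u^c=\boldsymbol e_u+\boldsymbol u_h^r$ with $\boldsymbol u_h^r:=\boldsymbol u_h-\mathcal I_h^c\boldsymbol u_h$, so that $\widetilde a_h(\boldsymbol e_u^c,\varphi\boldsymbol e_u^c)=\widetilde a_h(\boldsymbol e_u,\varphi\boldsymbol e_u^c)+\widetilde a_h(\boldsymbol u_h^r,\varphi\boldsymbol e_u^c)$. Expanding $\widetilde a_h$ in the second summand, its volume diffusion and advection pieces are the two terms of $T_5$, and its Neumann-boundary and leftover interface pieces are collected into $T_6$. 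In the first summand I would further write $\varphi\boldsymbol e_u^c=\boldsymbol\Pi(\varphi\boldsymbol e_u^c)+(\boldsymbol I-\boldsymbol\Pi)(\varphi\boldsymbol e_u^c)$ with $\boldsymbol\Pi:=(\Pi_h,\Pi_h^{\mathcal F})$ and note $\boldsymbol\Pi(\varphi\boldsymbol e_u^c)\in\boldsymbol V_h$ (it vanishes on $\partial\mathcal E_D$); since $\widetilde a_h=a_h-k_h-b_h$ and $a_h(\boldsymbol e_u,\boldsymbol\Pi(\varphi\boldsymbol e_u^c))=0$ by consistency of the exact solution and the discrete equation, only $-k_h(\boldsymbol e_u,\boldsymbol\Pi(\varphi\boldsymbol e_u^c))-b_h(\cdot)$ survives from the projected part. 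Adding the non-projected part, integrating the volume integrals by parts element-wise and invoking the PDE satisfied by $u$ (so that the bulk becomes $R_h^{\mathcal K}$), and using $\sbr[0]{\boldsymbol u}=0$ and single-valuedness of $\overline\nabla_{\overline n}u$ on element boundaries for the smooth-solution interface contributions, the pieces reassemble into $T_1$ (volume residual, its $\Pi_h$-part removed by Galerkin orthogonality), $T_2$ (interior $\mathcal Q$-facet jumps of $\varepsilon\overline\nabla_{\overline n}u_h$ and the residual $R_h^N$ on $\partial\mathcal E_N\setminus\Omega_T$, the $\Omega_T$ part being already absorbed in the coercivity bound), $T_3$ (penalty and diffusion-consistency mismatch carried by $\Pi_h-\Pi_h^{\mathcal F}$), $T_4$ (advective facet terms), and the remaining part of $T_6$.

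The main obstacle I expect is the bookkeeping of the HDG facet integrals under the two $L^2$-projections: one has to track $\sbr[0]{(\boldsymbol I-\boldsymbol\Pi)(\varphi\boldsymbol e_u^c)}=(\Pi_h-\Pi_h^{\mathcal F})(\varphi e_u^c)$ on $\mathcal Q$-facets, use single-valuedness of $\lambda_h$, of $\beta\cdot n$, and of the projected trace across element interfaces to cancel every interior $\lambda_h$-contribution so that only $\sbr[0]{\boldsymbol u_h}$ remains, and carefully separate $\mathcal Q$- from $\mathcal R$-facets and $\Omega_0$ from $\Omega_T$ on $\partial\mathcal E$. Once the decomposition is organized the manipulations --- integration by parts, consistency, Galerkin orthogonality --- are routine, and no inequality beyond the coercivity bound of the first step is used in this lemma; the estimation of the individual $T_i$ is carried out in the remainder of \cref{ss:rel}.
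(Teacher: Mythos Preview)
Your strategy --- weighted coercivity, then a conforming/nonconforming splitting combined with Galerkin orthogonality --- is the right one and matches the paper's approach in spirit. But your first step has a genuine gap: the Neumann-boundary term on the left of \cref{eq:err_stronger_stab_full} does \emph{not} come from the coercivity of $\widetilde a_h(\boldsymbol e_u^c,\varphi\boldsymbol e_u^c)$. Because $\sbr[0]{\boldsymbol e_u^c}=0$, one computes
\[
\widetilde a_h(\boldsymbol e_u^c,\varphi\boldsymbol e_u^c)
=(\varepsilon\overline\nabla e_u^c,\overline\nabla(\varphi e_u^c))_{\mathcal T_h}
-\tfrac12((\partial_t\varphi)e_u^c,e_u^c)_{\mathcal T_h}
-\tfrac12\langle(\beta\cdot n)\,e_u^c,\varphi e_u^c\rangle_{\partial\mathcal E_N},
\]
and the last term has the \emph{wrong} sign on the outflow portion of $\partial\mathcal E_N$ (in particular on $\Omega_T$, where $\beta\cdot n=1$), so it cannot dominate $\tfrac12\chi\norm[0]{|\beta\cdot n|^{1/2}e_u^c}_{\partial\mathcal E_N}^2$. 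By definition $\widetilde a_h=a_h-k_h-b_h$ already strips out the outflow form $b_h$, so your claim ``$\Omega_T$ contributes with the right sign'' is precisely backwards.

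The paper does not claim the boundary term from coercivity either: its coercivity step \cref{eq:err_stronger_stab} yields only the diffusion and $L^2$ terms on the left. The Neumann contribution is produced later, by combining the collected boundary residue $-\mathfrak B_h$ with the $b_h$-term arising when the discrete equation \cref{eq:st_hdg_adr_compact} is inserted (see \cref{eq:upperboundbreakdown3,eq:upperboundbreakdown4,eq:upperboundbreakdown6}); only after this reshuffling does $\tfrac12\chi\norm[0]{|\beta\cdot n|^{1/2}e_u^c}_{\partial\mathcal E_N}^2$ migrate to the left-hand side. An equivalent fix within your framework is to run the coercivity argument for $(a_h-k_h)(\boldsymbol e_u^c,\varphi\boldsymbol e_u^c)=\widetilde a_h(\boldsymbol e_u^c,\varphi\boldsymbol e_u^c)+b_h(e_u^c,\varphi e_u^c)$: then $(\zeta^+-\tfrac12)\beta\cdot n=\tfrac12|\beta\cdot n|$ supplies the absolute value, and all three left-hand terms follow. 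You must then carry $b_h$ through the decomposition, and it is exactly what feeds the $\zeta^+$-terms in $T_6$. Apart from this, your bookkeeping plan for $T_1$--$T_5$ is correct.
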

\begin{proof}
  Using the definition of the weighting function
  \cref{eq:weight_func}, that $\varphi\ge\chi$, that
  $\beta\cdot\nabla\varphi =\partial_t\varphi = -e\exp(-t/T)$ and that
  $\overline{\nabla}\varphi = 0$, we have
  \begin{equation*}
      \del[0]{\varepsilon\overline{\nabla}e_u^c,\overline{\nabla}\del[0]{\varphi e_u^c}}_{\mathcal{T}_h}
      \ge
      \chi \varepsilon \del[0]{\overline{\nabla}e_u^c, \overline{\nabla} e_u^c }_{\mathcal{T}_h}
      \quad \text{and} \quad
      -\tfrac{1}{2}
      \del[0]{\del[0]{{\beta}\cdot\nabla\varphi}e_u^c,e_u^c}_{\mathcal{T}_h}
      \ge
      \tfrac{1}{2}
      \del[0]{ e_u^c,e_u^c}_{\mathcal{T}_h},
  \end{equation*}
  so that
  \begin{equation}
    \label{eq:err_stronger_stab}
    \chi
    \sum_{\mathcal{K}\in\mathcal{T}_h}
    \varepsilon
    \norm[0]{\overline{\nabla} e_u^c}_{\mathcal{K}}^2
    +
    \tfrac{1}{2}
    \sum_{\mathcal{K}\in\mathcal{T}_h}
    \norm{e_u^c}_{\mathcal{K}}^2
    \leq
    \del[0]{\varepsilon\overline{\nabla}e_u^c,\overline{\nabla}\del[0]{\varphi e_u^c}}_{\mathcal{T}_h}
    -\tfrac{1}{2}
    \del[0]{\del[0]{{\beta}\cdot\nabla\varphi}
      e_u^c,e_u^c}_{\mathcal{T}_h}.
  \end{equation}
  For the right-hand side of \cref{eq:err_stronger_stab}, using that
  $-\tfrac{1}{2}(\beta\cdot\nabla\varphi)(e_u^c)^2 = \varphi e_u^c
  \nabla \cdot(\beta e_u^c) -
  \tfrac{1}{2}\nabla\cdot(\beta\varphi(e_u^c)^2)$ because
  $\nabla \cdot \beta = 0$, integration by parts, that $\beta\cdot n$,
  $e_u^c$, and $\varphi$ are single-valued on element boundaries, that
  $e_u^c$ vanishes on $\partial\mathcal{E}_D$, the divergence
  theorem, and \cref{eq:st_adr}, we find:
  \begin{equation}
    \label{eq:upperboundbreakdown1}
    \begin{split}
      &\del[0]{\varepsilon\overline{\nabla}e_u^c,\overline{\nabla}\del[0]{\varphi e_u^c}}_{\mathcal{T}_h}
      -\tfrac{1}{2}
      \del[0]{\del[0]{{\beta}\cdot\nabla\varphi} e_u^c,e_u^c}_{\mathcal{T}_h}
      \\
      =
      &
      -\del[0]{\varepsilon\overline{\nabla}^2u,{\varphi e_u^c}}_{\mathcal{T}_h}
      +
      \langle\varepsilon\overline{\nabla}_{\overline{n}}u,\varphi
      e_u^c\rangle_{\mathcal{Q}_h\cap\partial\mathcal{E}_N}
      -
      \del[0]{\varepsilon\overline{\nabla}\mathcal{I}_h^cu_h,\overline{\nabla}\del[0]{\varphi e_u^c}}_{\mathcal{T}_h}
      \\
      &\quad
      +
      \del[0]{\nabla\cdot\del[0]{{\beta}u},\varphi e_u^c}_{\mathcal{T}_h}
      -
      \del[0]{\nabla\cdot\del[0]{{\beta}\mathcal{I}_h^cu_h},\varphi e_u^c}_{\mathcal{T}_h}
      -\tfrac{1}{2}
      \langle{
        {\beta}\cdot n e_u^c,\varphi e_u^c
      }\rangle_{\partial\mathcal{E}_N}
      \\
      =&
      \del[0]{f,{\varphi e_u^c}}_{\mathcal{T}_h}
      +
      \langle\varepsilon\overline{\nabla}_{\overline{n}}u,\varphi
      e_u^c\rangle_{\mathcal{Q}_h\cap\partial\mathcal{E}_N}
      -
      \del[0]{\varepsilon\overline{\nabla}\mathcal{I}_h^cu_h,\overline{\nabla}\del[0]{\varphi e_u^c}}_{\mathcal{T}_h}
      \\
      &\qquad
      +\del[0]{
        {\beta}\mathcal{I}_h^cu_h,
        \nabla\del[0]{\varphi e_u^c}
      }_{\mathcal{T}_h}
      -\langle
      \beta\cdot n\mathcal{I}_h^cu_h,
      \varphi e_u^c
      \rangle_{\partial\mathcal{E}}
      -\tfrac{1}{2}
      \langle{
        {\beta}\cdot n e_u^c,\varphi e_u^c
      }\rangle_{\partial\mathcal{E}_N}
      \\
      =&
      \del[0]{f,{\varphi e_u^c}}_{\mathcal{T}_h}
      -\widetilde{a}_h((\mathcal{I}_h^cu_h,\mathcal{I}_h^cu_h),(\varphi e_h^c,\varphi e_h^c))
      +
      \langle\varepsilon\overline{\nabla}_{\overline{n}}u,\varphi
      e_u^c\rangle_{\mathcal{Q}_h\cap\partial\mathcal{E}_N}
      \\
      &\qquad
      -\langle
      \beta\cdot n\mathcal{I}_h^cu_h,
      \varphi e_u^c
      \rangle_{\partial\mathcal{E}_N}
      -\tfrac{1}{2}
      \langle{
        {\beta}\cdot n e_u^c,\varphi e_u^c
      }\rangle_{\partial\mathcal{E}_N}.
    \end{split}
  \end{equation}
  Using
  $\zeta^-\beta\cdot n=\tfrac{1}{2}\del[0]{\beta\cdot
    n-\envert[0]{\beta\cdot n}}$, the last term above, excluding
  $\Omega_T\subset\partial\mathcal{E}_N$, is rewritten as follows
  \begin{multline*}
    -\tfrac{1}{2}
    \langle{
      {\beta}\cdot n e_u^c,\varphi e_u^c
    }\rangle_{\partial\mathcal{E}_N\setminus\Omega_T}
    \\
    =
    -
    \langle{
      \zeta^-\beta\cdot nu,\varphi e_u^c
    }\rangle_{\partial\mathcal{E}_N\setminus\Omega_T}
    -\tfrac{1}{2}
    \langle{
      \envert[0]{{\beta}\cdot n} u,\varphi e_u^c
    }\rangle_{\partial\mathcal{E}_N\setminus\Omega_T}
    +\tfrac{1}{2}
    \langle{
      {\beta}\cdot n \mathcal{I}_h^cu_h,\varphi e_u^c
    }\rangle_{\partial\mathcal{E}_N\setminus\Omega_T}.
  \end{multline*}
  Therefore, using the Neumann boundary condition
  \cref{eq:st_adr_bcN}, the right-hand side of
  \cref{eq:upperboundbreakdown1} becomes
  \begin{equation}
    \label{eq:upperboundbreakdown2}
    \begin{split}
      &
      \del[0]{f,{\varphi e_u^c}}_{\mathcal{T}_h}
      +
      \langle g,\varphi e_u^c
      \rangle_{\partial\mathcal{E}_N\setminus\Omega_T}
      -\widetilde{a}_h((\mathcal{I}_h^cu_h,\mathcal{I}_h^cu_h),(\varphi e_h^c,\varphi e_h^c))
      -\langle
      \beta\cdot n\mathcal{I}_h^cu_h,
      \varphi e_u^c
      \rangle_{\partial\mathcal{E}_N}
      \\
      &\qquad
      -\tfrac{1}{2}
      \langle{
        {\beta}\cdot n e_u^c,\varphi e_u^c
      }\rangle_{\Omega_T}
      -\tfrac{1}{2}
      \langle{
        \envert[0]{{\beta}\cdot n} u,\varphi e_u^c
      }\rangle_{\partial\mathcal{E}_N\setminus\Omega_T}
      +\tfrac{1}{2}
      \langle{
        {\beta}\cdot n \mathcal{I}_h^cu_h,\varphi e_u^c
      }\rangle_{\partial\mathcal{E}_N\setminus\Omega_T}
      \\
      =&
      \del[0]{f,{\varphi e_u^c}}_{\mathcal{T}_h}
      +
      \langle g,\varphi e_u^c
      \rangle_{\partial\mathcal{E}_N\setminus\Omega_T}
      -\widetilde{a}_h((\mathcal{I}_h^cu_h,\mathcal{I}_h^cu_h),(\varphi e_h^c,\varphi e_h^c))
      -\tfrac{1}{2}\langle
      \beta\cdot n\mathcal{I}_h^cu_h,
      \varphi e_u^c
      \rangle_{\partial\mathcal{E}_N\setminus\Omega_T}
      \\
      &\qquad-\langle
      \beta\cdot n\mathcal{I}_h^cu_h,
      \varphi e_u^c
      \rangle_{\Omega_T}
      -\tfrac{1}{2}
      \langle{
        {\beta}\cdot n e_u^c,\varphi e_u^c
      }\rangle_{\Omega_T}
      -\tfrac{1}{2}
      \langle{
        \envert[0]{{\beta}\cdot n} u,\varphi e_u^c
      }\rangle_{\partial\mathcal{E}_N\setminus\Omega_T}
      \\
      =&
      \del[0]{f,{\varphi e_u^c}}_{\mathcal{T}_h}
      +
      \langle g,\varphi e_u^c
      \rangle_{\partial\mathcal{E}_N\setminus\Omega_T}
      -\widetilde{a}_h((\mathcal{I}_h^cu_h,\mathcal{I}_h^cu_h),(\varphi e_h^c,\varphi e_h^c))
      -\mathfrak{B}_h,
    \end{split}
  \end{equation}
  where in the last step we collect remaining boundary terms in
  $\mathfrak{B}_h$:
  \begin{equation*}
    \mathfrak{B}_h
    =
    \tfrac{1}{2}\langle
    \beta\cdot n\mathcal{I}_h^cu_h,
    \varphi e_u^c
    \rangle_{\partial\mathcal{E}_N\setminus\Omega_T}
    +\tfrac{1}{2}\langle
    \beta\cdot n\mathcal{I}_h^cu_h,
    \varphi e_u^c
    \rangle_{\Omega_T}
    +\tfrac{1}{2}
    \langle{
      {\beta}\cdot n u,\varphi e_u^c
    }\rangle_{\Omega_T}
    +\tfrac{1}{2}
    \langle{
      \envert[0]{{\beta}\cdot n} u,\varphi e_u^c
    }\rangle_{\partial\mathcal{E}_N\setminus\Omega_T}.
  \end{equation*}
  Next, the HDG method \cref{eq:st_hdg_adr_compact} with test
  functions
  $\boldsymbol{\Pi}_h(\varphi e_u^c,\varphi e_u^c):=(\Pi_h(\varphi
  e_u^c),\Pi_h^{\mathcal{F}}(\varphi e_u^c))$, and noting that
  $\Pi_h^{\mathcal{F}}(\varphi e_u^c)=0$ on $\partial\mathcal{E}_D$,
  becomes:
  \begin{multline}
    \label{eq:upperboundbreakdown3}
    0
    =
    -\del[0]{f,\Pi_h(\varphi e_u^c)}_{\mathcal{T}_h}
    -
    \langle g,\Pi_{h}^{\mathcal{F}}(\varphi e_u^c)\rangle_{\partial\mathcal{E}_N\setminus\Omega_T}
    \\
    +
    \widetilde{a}_h(\boldsymbol{u}_h,\boldsymbol{\Pi}_h(\varphi e_u^c,\varphi e_u^c))
    +
    {k}_h(\boldsymbol{u}_h,\boldsymbol{\Pi}_h(\varphi e_u^c,\varphi e_u^c))
    +
    {b}_h(\lambda_h,\Pi_h^{\mathcal{F}}(\varphi e_u^c)).
  \end{multline}
  Adding \cref{eq:upperboundbreakdown3} to
  \cref{eq:upperboundbreakdown2}, the right-hand side of
  \cref{eq:err_stronger_stab} becomes
  \begin{equation}
    \label{eq:upperboundbreakdown4}
    \begin{split}
      \del[0]{\varepsilon\overline{\nabla}e_u^c,\overline{\nabla}\del[0]{\varphi e_u^c}}_{\mathcal{T}_h}
      -\tfrac{1}{2}
      \del[0]{\del[0]{{\beta}\cdot\nabla\varphi}
        e_u^c,e_u^c}_{\mathcal{T}_h}
      =
      &
      \del[0]{f,(I-\Pi_h)(\varphi e_u^c)}_{\mathcal{T}_h}
      +
      \langle g,(I-\Pi_{h}^{\mathcal{F}})(\varphi e_u^c)\rangle_{\partial\mathcal{E}_N\setminus\Omega_T}
      \\
      &
      -
      \widetilde{a}_h(\boldsymbol{u}_h,(\boldsymbol{I}-\boldsymbol{\Pi}_h)(\varphi e_u^c,\varphi e_u^c))
      -\mathfrak{R}_h
      \\
      &
      +
      {k}_h(\boldsymbol{u}_h,\boldsymbol{\Pi}_h(\varphi e_u^c,\varphi e_u^c))
      +
      {b}_h(\lambda_h,\Pi_h^{\mathcal{F}}(\varphi e_u^c))
      -\mathfrak{B}_h,
    \end{split}
  \end{equation}
  where
  $\mathfrak{R}_h:=\widetilde{a}_h((\mathcal{I}_h^cu_h,\mathcal{I}_h^cu_h),(\varphi
  e_h^c,\varphi e_h^c))-\widetilde{a}_h(\boldsymbol{u}_h,(\varphi
  e_u^c,\varphi e_u^c))$. By definition of $\widetilde{a}_h$, the
  first three terms on the right-hand side of
  \cref{eq:upperboundbreakdown4} become
  \begin{equation}
    \label{eq:upperboundbreakdown5}
    \begin{split}
      &
      \del[0]{f,(I-\Pi_h)(\varphi e_u^c)}_{\mathcal{T}_h}
      +
      \langle g,(I-\Pi_{h}^{\mathcal{F}})(\varphi e_u^c)\rangle_{\partial\mathcal{E}_N\setminus\Omega_T}
      \\
      &\quad
      -
      \del[0]{
        \varepsilon\overline{\nabla}u_h,
        \overline{\nabla}\del[0]{I-\Pi_h}\del[0]{\varphi e_u^c}
      }_{\mathcal{T}_h}
      -
      \langle{
        \varepsilon\alpha h_K^{-1}
        \sbr[0]{\boldsymbol{u}_h}
        ,
        \sbr[0]{
          \del{\boldsymbol{I}-\boldsymbol{\Pi}_h}
          \del{\varphi e_u^c,\varphi e_u^c}
        }
      }\rangle_{\mathcal{Q}_h}
      \\
      &\quad
      +
      \del[0]{
        {\beta}u_h, \nabla{\del[0]{I-\Pi_h}\del[0]{\varphi e_u^c}}
      }_{\mathcal{T}_h}
      -
      \langle
      {\beta\cdot n}\lambda_h+\beta_s\sbr[0]{\boldsymbol{u}_h}
      ,
      \sbr[0]{
        \del{\boldsymbol{I}-\boldsymbol{\Pi}_h}
        \del{\varphi e_u^c,\varphi e_u^c}
      }
      \rangle_{\partial\mathcal{T}_h}
      \\
      =
      &
      \del[0]{R_h^{\mathcal{K}},(I-\Pi_h)(\varphi e_u^c)}_{\mathcal{T}_h}
      +
      \langle R_h^N,(I-\Pi_{h}^{\mathcal{F}})(\varphi e_u^c)\rangle_{\partial\mathcal{E}_N\setminus\Omega_T}
      \\
      &\quad
      -
      \langle
      \varepsilon\overline{\nabla}_{\overline{n}}u_h,
      \del[0]{I-\Pi_h^{\mathcal{F}}}\del[0]{\varphi e_u^c}
      \rangle_{\mathcal{Q}_h^i}
      +
      \langle{
        \varepsilon\alpha h_K^{-1}
        \sbr[0]{\boldsymbol{u}_h}
        ,
        \del[0]{\Pi_h-\Pi_h^{\mathcal{F}}}
        \del[0]{\varphi e_u^c}
      }\rangle_{\mathcal{Q}_h}
      +
      \langle
      \varepsilon\overline{\nabla}_{\overline{n}}u_h,
      \del[0]{\Pi_h-\Pi_h^{\mathcal{F}}}\del[0]{\varphi e_u^c}
      \rangle_{\mathcal{Q}_h}
      \\
      &\quad
      +
      \langle
      \del[0]{1-\zeta^-}\beta\cdot nu_h,\del[0]{I-\Pi_h^{\mathcal{F}}}\del[0]{\varphi e_u^c}
      \rangle_{\partial\mathcal{E}_N\setminus\Omega_T}
      +
      \langle
      \beta\cdot nu_h,\del[0]{I-\Pi_h^{\mathcal{F}}}\del[0]{\varphi e_u^c}
      \rangle_{\partial\mathcal{T}_h^i\cup\Omega_T}
      \\
      &\quad
      +
      \langle
      {\beta\cdot n}\lambda_h+\beta_s\sbr[0]{\boldsymbol{u}_h}
      ,
      \del[0]{\Pi_h-\Pi_h^{\mathcal{F}}}
      \del[0]{\varphi e_u^c}
      \rangle_{\partial\mathcal{T}_h}
      -
      \langle
      \beta\cdot nu_h,\del[0]{\Pi_h-\Pi_h^{\mathcal{F}}}\del[0]{\varphi e_u^c}
      \rangle_{\partial\mathcal{T}_h}.
    \end{split}
  \end{equation}
  For the $k_h$, $b_h$ and $\mathfrak{R}_h$ terms on the right-hand side
  of \cref{eq:upperboundbreakdown4}, we have
  \begin{equation}
    \label{eq:upperboundbreakdown6}
    \begin{split}
      &k_h(\boldsymbol{u}_h,\boldsymbol{\Pi}_h\del[0]{\varphi e_u^c,\varphi e_u^c})
      =
      -\langle
      \varepsilon\sbr[0]{\boldsymbol{u}_h},
      \overline{\nabla}_{\overline{n}}
      \del[0]{\Pi_h\del[0]{\varphi e_u^c}}
      \rangle_{\mathcal{Q}_h}
      -\langle
      \varepsilon
      \overline{\nabla}_{\overline{n}}u_h,
      \del[0]{\Pi_h-\Pi_h^{\mathcal{F}}}\del[0]{\varphi e_u^c}
      \rangle_{\mathcal{Q}_h},
      \\
      &b_h(\lambda_h,\Pi_h^{\mathcal{F}}\del[0]{\varphi e_u^c})
      =
      \langle \zeta^+\beta\cdot n\lambda_h,
      \Pi_h^{\mathcal{F}}\del[0]{\varphi e_u^c}
      \rangle_{\partial\mathcal{E}_N},
      \\
      &\mathfrak{R}_h
      =
      -
      \del[0]{
        \varepsilon\overline{\nabla}
        \del[0]{I-\mathcal{I}_h^c}u_h,
        \overline{\nabla}\del[0]{\varphi e_u^c}
      }_{\mathcal{T}_h}
      +
      \del[0]{
        \beta\del[0]{I-\mathcal{I}_h^c}u_h
        ,\nabla\del[0]{\varphi e_u^c}
      }_{\mathcal{T}_h}.
    \end{split}
  \end{equation}
  Using
  \cref{eq:upperboundbreakdown4,eq:upperboundbreakdown5,eq:upperboundbreakdown6}
  and the definition of $-\mathfrak{B}_h$ we obtain
  \cref{eq:err_stronger_stab_full}.
\end{proof}

\begin{lemma}
  \label{lem:upperbndwitheuc}
  Let $\varphi$ be as in \cref{eq:weight_func} and assume that
  $\delta t_{\mathcal{K}} = \mathcal{O}(h_K^2)$. The following
  estimate holds:
  \begin{equation*}
    \begin{split}
      T
      \sum_{\mathcal{K}\in\mathcal{T}_h}
      \varepsilon
      \norm[0]{\overline{\nabla} e_u^c}_{\mathcal{K}}^2
      &+
      \tfrac{1}{2}
      \sum_{\mathcal{K}\in\mathcal{T}_h}
      \norm{e_u^c}_{\mathcal{K}}^2
      +
      T
      \sum_{F\in\partial\mathcal{E}_N}
      \norm[0]{\envert[0]{\beta\cdot n}^{1/2}e_u^c}_{F}^2
      \\
      \le &
      c\sum_{\mathcal{K}\in\mathcal{T}_h}
      \big(
      T^2\del[0]{\eta_R^{\mathcal{K}}}^2
      +
      T^2\del[0]{\eta_{J,1}^{\mathcal{K}}}^2
      +
      T^2\varepsilon^{-1}\del[0]{\eta_{J,2,1}^{\mathcal{K}}}^2
      +
      T^2\varepsilon^{-1}\del[0]{\eta_{J,2,2}^{\mathcal{K}}}^2
      \\
      &\qquad
      +
      T^2\varepsilon^{-1}\del[0]{\eta_{J,3,\mathcal{R}}^{\mathcal{K}}}^2
      +
      T^2\del[0]{\eta_{J,3}^{\mathcal{K}}}^2
      +
      T^2\del[0]{\eta_{BC,1}^{\mathcal{K}}}^2
      +
      T\del[0]{\eta_{BC,2}^{\mathcal{K}}}^2
      \big).
    \end{split}
  \end{equation*}
\end{lemma}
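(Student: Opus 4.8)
The plan is to bound each of the six terms $T_1,\dots,T_6$ on the right-hand side of \cref{lem:err_stronger_stab_full} and then to absorb all error-dependent factors into the left-hand side by Young's inequality, taking the constant $\chi$ proportional to $T$ (so that the left-hand side of \cref{lem:err_stronger_stab_full} with $\chi\sim T$ coincides, up to constants, with the left-hand side of the present statement). Three properties of the weight \cref{eq:weight_func} are used throughout: $\chi\le\varphi\le(e+1)\chi\le cT$, $\envert[0]{\partial_t\varphi}\le e$, and $\overline{\nabla}\varphi=0$. The last gives $\overline{\nabla}\del[0]{\varphi e_u^c}=\varphi\,\overline{\nabla}e_u^c$, hence $\varepsilon^{1/2}\norm[0]{\overline{\nabla}\del[0]{\varphi e_u^c}}_{\mathcal{K}}\le cT\varepsilon^{1/2}\norm[0]{\overline{\nabla}e_u^c}_{\mathcal{K}}$, $\norm[0]{\varphi e_u^c}_{\mathcal{K}}\le cT\norm[0]{e_u^c}_{\mathcal{K}}$, and similar bounds on facets; while $\partial_t\del[0]{\varphi e_u^c}=\del[0]{\partial_t\varphi}e_u^c+\varphi\,\partial_t e_u^c$, with the first summand controlled by $c\norm[0]{e_u^c}_{\mathcal{K}}$ and the second split as $\varphi\,\partial_t\del[0]{u-u_h}+\varphi\,\partial_t u_h^r$, where $u_h^r:=u_h-\mathcal{I}_h^cu_h$. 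Any $\varepsilon^{1/2}h_K$- or $\varepsilon^{1/2}$-weighted occurrence of $\partial_t\del[0]{u-u_h}$ produced through the quasi-interpolation estimates of \cref{lem:local_quasi_tnorm_st} is dominated by $c\,\tau_\varepsilon^{1/2}\norm[0]{\partial_t\del[0]{u-u_h}}_{\mathcal{K}}$, since $\delta t_{\mathcal{K}}=\mathcal{O}(h_K^2)$ together with $\delta t_{\mathcal{K}}\le\Delta t_{\mathcal{K}}$ and $\tilde{\varepsilon}\ge\varepsilon$ yields $h_K^2\varepsilon\le c\tau_\varepsilon$; the resulting $\tau_\varepsilon$-weighted time-derivative factor is bounded outright, not absorbed, by \cref{thm:st_time_apos}. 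Contributions of $\partial_t u_h^r$ and of $u_h^r$ itself are handled with the inverse inequalities \cref{eq:eg_inv_1,eq:eg_inv_2} and the Oswald-type estimate \cref{lem:oswald_local_st}, the DG jumps $\jump{u_h}$ it produces being rewritten through the HDG jumps $\sbr[0]{\boldsymbol{u}_h}$ on the two adjacent elements (by single-valuedness of $\lambda_h$).

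Each $T_i$ is then treated by Cauchy--Schwarz followed by the suitable estimate. For $T_1$ I use the volume quasi-interpolation bound \cref{eq:local_quasi_tnorm_st_step_ratio_1}, recognising $\lambda_{\mathcal{K}}\norm[0]{R_h^{\mathcal{K}}}_{\mathcal{K}}=\eta_R^{\mathcal{K}}$. For $T_2$ I use the $\mathcal{Q}$-facet bound \cref{eq:local_quasi_tnorm_st_step_ratio_3} on the lateral Neumann facets, which produces $\eta_{J,1}^{\mathcal{K}}$ from the first term and $\eta_{BC,1}^{\mathcal{K}}$ from the lateral part of the second, whereas on $\Omega_0$ --- where $\varphi$ is constant, so $\Pi_h^{\mathcal{F}}$ commutes with multiplication by $\varphi$ --- the second term is bounded directly by $\norm[0]{g-u_h}_{\partial\mathcal{K}\cap\Omega_0}\norm[0]{e_u^c}_{\partial\mathcal{K}\cap\Omega_0}=\eta_{BC,2}^{\mathcal{K}}\norm[0]{e_u^c}_{\partial\mathcal{K}\cap\Omega_0}$. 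For $T_3$ I use \cref{eq:otherusefulbnds_2} for the $\del[0]{\Pi_h-\Pi_h^{\mathcal{F}}}$ term and the trace and inverse inequalities \cref{eq:eg_inv_3,eq:eg_inv_2} for the remaining term, matching $\eta_{J,2,2}^{\mathcal{K}}$ and $\eta_{J,2,1}^{\mathcal{K}}$. For $T_4$ and the boundary term $T_6$ the factors $\beta\cdot n\,u_h$ and $\del[0]{\beta_s-\beta\cdot n}\sbr[0]{\boldsymbol{u}_h}$ are matched to $\eta_{J,2}^{\mathcal{K}}$, $\eta_{J,3}^{\mathcal{K}}$, $\eta_{J,3,\mathcal{Q}}^{\mathcal{K}}$ and $\eta_{BC,2}^{\mathcal{K}}$, using \cref{eq:betasinfmax} to pass between $\beta_s-\beta\cdot n$ and $\envert[0]{\beta_s-\tfrac12\beta\cdot n}$ exactly as in the proof of \cref{thm:st_time_apos}, and the mean-value argument \cref{eq:integralmvt,eq:betadotnlipschitz} for the outflow contributions on $\partial\mathcal{E}_N$. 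For $T_5$, which carries the consistency term $u_h^r$, I estimate $\norm[0]{u_h^r}_{\mathcal{K}}$, $\norm[0]{\overline{\nabla}u_h^r}_{\mathcal{K}}$ and $\norm[0]{\partial_t u_h^r}_{\mathcal{K}}$ through \cref{eq:eg_inv_1,eq:eg_inv_2} and \cref{lem:oswald_local_st}, writing $\overline{\nabla}\del[0]{\varphi e_u^c}=\varphi\,\overline{\nabla}e_u^c$, $\beta\cdot\nabla\del[0]{\varphi e_u^c}=\varphi\,\beta\cdot\nabla e_u^c+\del[0]{\partial_t\varphi}e_u^c$, and $\beta\cdot\nabla e_u^c=\partial_t e_u^c+\overline{\beta}\cdot\overline{\nabla}e_u^c$ so that the $\overline{\nabla}e_u^c$, $e_u^c$ and $\partial_t e_u^c$ pieces separate, and matching the $\mathcal{Q}$- and $\mathcal{R}$-facet jumps to $\eta_{J,2,1}^{\mathcal{K}}$, $\eta_{J,2,2}^{\mathcal{K}}$ and $\eta_{J,3,\mathcal{R}}^{\mathcal{K}}$, using that $\envert[0]{\beta_s-\tfrac12\beta\cdot n}$ is bounded above and below by positive constants on $\mathcal{R}$-facets.

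Summing these bounds and applying H\"older's inequality for sums, the left-hand side of \cref{lem:err_stronger_stab_full} is bounded by a finite collection of products of the form $c\del[1]{\sum_{\mathcal{K}}\del[0]{\eta_\bullet^{\mathcal{K}}}^2}^{1/2}$ (with the $\varepsilon$-powers recorded in the statement) times one of $T\del[1]{\sum_{\mathcal{K}}\varepsilon\norm[0]{\overline{\nabla}e_u^c}_{\mathcal{K}}^2}^{1/2}$, $T\del[1]{\sum_{\mathcal{K}}\norm[0]{e_u^c}_{\mathcal{K}}^2}^{1/2}$, $T\del[1]{\sum_{F\in\partial\mathcal{E}_N}\norm[0]{\envert[0]{\beta\cdot n}^{1/2}e_u^c}_{F}^2}^{1/2}$, $\del[1]{\sum_{\mathcal{K}}\norm[0]{e_u^c}_{\partial\mathcal{K}\cap\Omega_0}^2}^{1/2}$, or $\del[1]{\sum_{\mathcal{K}}\tau_\varepsilon\norm[0]{\partial_t\del[0]{u-u_h}}_{\mathcal{K}}^2}^{1/2}$. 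I conclude by Young's inequality: the first three error factors are absorbed, with a small constant parameter, into $\chi\sum_{\mathcal{K}}\varepsilon\norm[0]{\overline{\nabla}e_u^c}_{\mathcal{K}}^2$, $\tfrac12\sum_{\mathcal{K}}\norm[0]{e_u^c}_{\mathcal{K}}^2$ and $\tfrac12\chi\sum_{F\in\partial\mathcal{E}_N}\norm[0]{\envert[0]{\beta\cdot n}^{1/2}e_u^c}_{F}^2$ respectively (recall $\chi\sim T$), which produces $cT^2$ times the squared estimators; the $\Omega_0$ factor, which carries no extra power of $T$, is absorbed into the $\Omega_0$-part $\tfrac12\chi\sum_{\mathcal{K}}\norm[0]{e_u^c}_{\partial\mathcal{K}\cap\Omega_0}^2\sim\tfrac T2\sum_{\mathcal{K}}\norm[0]{e_u^c}_{\partial\mathcal{K}\cap\Omega_0}^2$ using a Young parameter of size $T$, which is precisely what leaves only the single power of $T$ on $\del[0]{\eta_{BC,2}^{\mathcal{K}}}^2$; and the $\tau_\varepsilon$-weighted time-derivative factor is bounded by \cref{thm:st_time_apos}. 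The main obstacle is this bookkeeping: each $T_i$ splits into several sub-terms --- interior versus lateral versus $\Omega_0$/$\Omega_T$ facets, and same- versus coarser-refinement neighbours inside the $\mathcal{Q}$-facet sums --- and the $\varepsilon$ and $T$ powers must be tracked carefully through the quasi-interpolation, projection, Oswald and trace estimates so that, after Young's inequality, the right-hand side collapses exactly to the displayed combination; the interplay in $T_5$ between the $\mathcal{I}_h^c$-consistency term and the time-derivative estimate \cref{thm:st_time_apos} is the most delicate point.
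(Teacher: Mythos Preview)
Your plan is correct and follows essentially the same route as the paper: bound each $T_i$ from \cref{lem:err_stronger_stab_full} via Cauchy--Schwarz together with the quasi-interpolation estimates of \cref{lem:local_quasi_tnorm_st}, the projection bounds \cref{eq:otherusefulbnds_2,eq:otherusefulbnds_3}, the Oswald-type estimate \cref{lem:oswald_local_st}, and the mean-value argument \cref{eq:integralmvt,eq:betadotnlipschitz}; then absorb via Young with $\chi=T$ and handle the $\tau_\varepsilon$-weighted time derivative by \cref{thm:st_time_apos}. Two small corrections to your bookkeeping: for $T_3$ both summands match $\eta_{J,2,1}^{\mathcal{K}}$ (not $\eta_{J,2,2}^{\mathcal{K}}$), since the $(\Pi_h-\Pi_h^{\mathcal{F}})$ factor via \cref{eq:otherusefulbnds_2} and the $\overline{\nabla}_{\bar n}\Pi_h$ factor via \cref{eq:eg_inv_3} together with \cref{eq:derivativeofprojections} both produce $\varepsilon^{1/2}\norm[0]{\overline{\nabla}e_u^c}_{\mathcal{K}}$ against $\varepsilon^{1/2}h_K^{-1/2}\norm[0]{\sbr{\boldsymbol{u}_h}}_{\mathcal{Q}_{\mathcal{K}}}$; and your explanation of the single power of $T$ on $\eta_{BC,2}^{\mathcal{K}}$ is slightly off --- the $\Omega_0$ contribution does carry a factor $(T+\chi)$ from $\varphi$, and the point is that the absorbing term $\tfrac12\chi\norm[0]{e_u^c}_{\Omega_0}^2$ on the left of \cref{lem:err_stronger_stab_full} also scales with $\chi\sim T$, so Young with a parameter proportional to $T+\chi$ leaves exactly one power of $T$ on $(\eta_{BC,2}^{\mathcal{K}})^2$.
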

\begin{proof}
  We start by bounding the $T_i$, $i=1,\hdots,6$, terms in
  \cref{lem:err_stronger_stab_full}.
  \\
  \textbf{Bound for $T_1$.} Using the Cauchy--Schwarz inequality, the
  local quasi-interpolation estimate
  \cref{eq:local_quasi_tnorm_st_step_ratio_1}, that
  $T+\chi\le \envert[0]{\varphi}\le eT+\chi$ and that
  $1\le \envert[0]{\partial_t\varphi}\le e$, Young's inequality, that
  $\delta t_{\mathcal{K}} = \mathcal{O}(h_K^2)$, and $T \ge 1$, we
  find that
  \begin{equation*}
    \begin{split}
      T_1
      &\leq
      c
      \sum_{\mathcal{K}\in\mathcal{T}_h}
      \eta_R^{\mathcal{K}}
      \del[0]{T+\chi}
      \del[1]{
        h_K\varepsilon^{1/2}\norm[0]{\partial_t{e_u^c}}_{\mathcal{K}}
        +
        \varepsilon^{1/2}
        \norm[0]{\overline{\nabla}{e_u^c}}_{\mathcal{K}}
        +
        \norm[0]{e_u^c}_{\mathcal{K}}
      }
      \\
      &\leq
      \tfrac{c}{\delta}
      \del[0]{T+\chi}^2
      \sum_{\mathcal{K}\in\mathcal{T}_h}
      \del[0]{\eta_R^{\mathcal{K}}}^2
      +
      \tfrac{c\delta}{2}
      \del[0]{T+\chi}
      \sum_{\mathcal{K}\in\mathcal{T}_h}
      \varepsilon
      \norm[0]{\overline{\nabla}{e_u^c}}_{\mathcal{K}}^2
      +
      \tfrac{c\delta}{2}
      \sum_{\mathcal{K}\in\mathcal{T}_h}
      \tau_{\varepsilon}\norm[0]{\partial_t{e_u^c}}_{\mathcal{K}}^2
      +
      \tfrac{c\delta}{2}
      \sum_{\mathcal{K}\in\mathcal{T}_h}
      \norm[0]{e_u^c}_{\mathcal{K}}^2.
    \end{split}
  \end{equation*}
  \textbf{Bound for $T_2$.} We write $T_2$ as
  \begin{equation*}
    T_2
    =
    \underbrace{-
    \langle
    \varepsilon\overline{\nabla}_{\overline{n}}u_h,
    \del[0]{I-\Pi_h^{\mathcal{F}}}\del[0]{\varphi e_u^c}
    \rangle_{\mathcal{Q}_h^i}}_{T_{21}}
    +
    \underbrace{\langle R_h^N,(I-\Pi_{h}^{\mathcal{F}})(\varphi
      e_u^c)\rangle_{\partial\mathcal{E}_N\setminus(\Omega_T\cup\Omega_0)}}_{T_{22}}
    +
    \underbrace{\langle R_h^N,(I-\Pi_{h}^{\mathcal{F}})(\varphi
    e_u^c)\rangle_{\Omega_0}}_{T_{23}}.
  \end{equation*}
  For $T_{21}$ we write element boundary integrals as facet integrals
  in which we use that
  $\del[0]{I-\Pi_h^{\mathcal{F}}}\del[0]{\varphi e_u^c}$ is continuous
  across a facet, use the Cauchy--Schwarz inequality, the triangle
  inequality, the quasi-interpolation estimate
  \cref{eq:local_quasi_tnorm_st_step_ratio_3}, the projection bound
  \cref{eq:otherusefulbnds_2}, that
  $\delta t_{\mathcal{K}} = \mathcal{O}(h_K^2)$, $T\ge 1$, and Young's
  inequality to find
  \begin{equation*}
    T_{21}
    \leq
    \tfrac{c}{\delta}
    \del[0]{T+\chi}^2
    \sum_{\mathcal{K}\in\mathcal{T}_{h}}
    \del[0]{\eta_{J,1}^{\mathcal{K}}}^2
    +
    \tfrac{c\delta}{2}
    \del[0]{T+\chi}
    \sum_{\mathcal{K}\in\mathcal{T}_h}
    \varepsilon
    \norm[0]{\overline{\nabla}{e_u^c}}_{\mathcal{K}}^2
    +
    \tfrac{c\delta}{2}
    \sum_{\mathcal{K}\in\mathcal{T}_h}
    \tau_\varepsilon
    \norm[0]{\partial_t{e_u^c}}_{\mathcal{K}}^2
    +
    \tfrac{c\delta}{2}
    \sum_{\mathcal{K}\in\mathcal{T}_h}
    \norm[0]{e_u^c}_{\mathcal{K}}^2.
  \end{equation*}
  Term $T_{22}$ can be bounded similarly:
  \begin{equation*}
    T_{22}
    \leq
      \tfrac{c}{\delta}
      \del[0]{T+\chi}^2
      \sum_{\mathcal{K}\in\mathcal{T}_{h}}
      \del[0]{\eta_{BC,1}^{\mathcal{K}}}^2
      +
      \tfrac{c\delta}{2}
      \del[0]{T+\chi}
      \sum_{\mathcal{K}\in\mathcal{T}_h}
      \varepsilon
      \norm[0]{\overline{\nabla}{e_u^c}}_{\mathcal{K}}^2
      +
      \tfrac{c\delta}{2}
      \sum_{\mathcal{K}\in\mathcal{T}_h}
      \tau_\varepsilon
      \norm[0]{\partial_t{e_u^c}}_{\mathcal{K}}^2
      +
      \tfrac{c\delta}{2}
      \sum_{\mathcal{K}\in\mathcal{T}_h}
      \norm[0]{e_u^c}_{\mathcal{K}}^2.
  \end{equation*}
  For $T_{23}$ we have, by the Cauchy--Schwarz inequality, boundedness
  of $\Pi_h^{\mathcal{F}}$, that $|\beta \cdot n|=1$ on $\Omega_0$,
  and Young's inequality:
  \begin{equation*}
    T_{23}
      \le
      \tfrac{c}{2\delta}
      \del[0]{T+\chi}
      \sum_{\mathcal{K}\in\mathcal{T}_{h}}
      \del[0]{\eta_{BC,2}^{\mathcal{K}}}^2
      +
      \tfrac{c\delta}{2}
      \del[0]{T+\chi}
      \sum_{F_{\mathcal{R}}\subset\Omega_0}
      \norm[0]{\envert[0]{\tfrac{1}{2}\beta\cdot n}^{1/2}e_u^c
      }_{F_{\mathcal{R}}}^2.
  \end{equation*}
  Combining the bounds for $T_{21}$, $T_{22}$, and $T_{23}$, we
  obtain:
  \begin{equation*}
    \begin{split}
      T_2
      \le
      &
      \tfrac{c}{\delta}
      \del[0]{T+\chi}^2
      \sum_{\mathcal{K}\in\mathcal{T}_{h}}
      \del[0]{\eta_{J,1}^{\mathcal{K}}}^2
      +
      \tfrac{c}{\delta}
      \del[0]{T+\chi}^2
      \sum_{\mathcal{K}\in\mathcal{T}_{h}}
      \del[0]{\eta_{BC,1}^{\mathcal{K}}}^2
      +
      \tfrac{c}{2\delta}
      \del[0]{T+\chi}
      \sum_{\mathcal{K}\in\mathcal{T}_{h}}
      \del[0]{\eta_{BC,2}^{\mathcal{K}}}^2
      \\
      &
      +
      c\delta
      \del[0]{T+\chi}
      \sum_{\mathcal{K}\in\mathcal{T}_h}
      \varepsilon
      \norm[0]{\overline{\nabla}{e_u^c}}_{\mathcal{K}}^2
      +
      c\delta
      \sum_{\mathcal{K}\in\mathcal{T}_h}
      \tau_\varepsilon
      \norm[0]{\partial_t{e_u^c}}_{\mathcal{K}}^2
      +
      c\delta
      \sum_{\mathcal{K}\in\mathcal{T}_h}
      \norm[0]{e_u^c}_{\mathcal{K}}^2
      \\
      &
      +
      \tfrac{c\delta}{2}
      \del[0]{T+\chi}
      \sum_{F_{\mathcal{R}}\subset\Omega_0}
      \norm[0]{\envert[0]{\tfrac{1}{2}\beta\cdot n}^{1/2}e_u^c
      }_{F_{\mathcal{R}}}^2.
    \end{split}
  \end{equation*}
  \textbf{Bound for $T_3$.} We find the following bound for $T_3$
  using the Cauchy--Schwarz inequality, the projection bound
  \cref{eq:otherusefulbnds_2}, the trace inequality
  \cref{eq:eg_inv_3}, the first bound in
  \cref{eq:derivativeofprojections}, and Young's inequality:
  \begin{equation*}
    T_3
    \le
    \tfrac{c}{\delta}
    \del[0]{T+\chi}
    \sum_{\mathcal{K}\in\mathcal{T}_{h}}
    \del[0]{\eta_{J,2,1}^{\mathcal{K}}}^2
    +
    c\delta
    \del[0]{T+\chi}
    \sum_{\mathcal{K}\in\mathcal{T}_{h}}
    \varepsilon
    \norm[0]{\overline{\nabla}e_u^c}_{\mathcal{K}}^2.
  \end{equation*}
  \textbf{Bound for $T_4$.} Using that
  $\langle \beta\cdot
  n\lambda_h,\del[0]{I-\Pi_h^{\mathcal{F}}}\del[0]{\varphi e_u^c}
  \rangle_{\partial\mathcal{T}_h^i}=0$ we start by writing $T_4$ as
  \begin{equation*}
      T_4
      =
      \langle
      \beta\cdot n\sbr[0]{\boldsymbol{u}_h},\del[0]{I-\Pi_h^{\mathcal{F}}}\del[0]{\varphi e_u^c}
      \rangle_{\partial\mathcal{T}_h^i}
      +
      \langle
      \del[0]{\beta_s-\beta\cdot n}\sbr[0]{\boldsymbol{u}_h}
      ,
      \del[0]{\Pi_h-\Pi_h^{\mathcal{F}}}
      \del[0]{\varphi e_u^c}
      \rangle_{\partial\mathcal{T}_h^i}.
  \end{equation*}
  Next, by a triangle inequality, using \cref{eq:betasinfmax}, that
  $|\beta_s - \tfrac{1}{2}\beta\cdot n|^{1/2} \le c$, and the
  Cauchy--Schwarz inequality,
  \begin{equation*}
    \begin{split}
      T_4
      \le
      &
      c
      \sum_{\mathcal{K}\in\mathcal{T}_h}
      \norm[0]{
        \envert[0]{\beta_s-\tfrac{1}{2}\beta\cdot n}^{1/2}
        \sbr[0]{\boldsymbol{u}_h}
      }_{\mathcal{Q}_{\mathcal{K}}}
      \del[1]{
        \norm[0]{
          \del[0]{I-\Pi_h}
          \del[0]{\varphi e_u^c}
        }_{\mathcal{Q}_{\mathcal{K}}}
        +
        \norm[0]{
          \del[0]{\Pi_h-\Pi_h^{\mathcal{F}}}
          \del[0]{\varphi e_u^c}
        }_{\mathcal{Q}_{\mathcal{K}}}
      }
      \\
      &\quad+
      c
      \sum_{\mathcal{K}\in\mathcal{T}_h}
      \norm[0]{
        \envert[0]{\beta_s-\tfrac{1}{2}\beta\cdot n}^{1/2}
        \sbr[0]{\boldsymbol{u}_h}
      }_{\mathcal{R}_{\mathcal{K}}}
      \del[1]{
        \norm[0]{
          \del[0]{I-\Pi_h}
          \del[0]{\varphi e_u^c}
        }_{\mathcal{R}_{\mathcal{K}}}
        +
        \norm[0]{
          \del[0]{\Pi_h-\Pi_h^{\mathcal{F}}}
          \del[0]{\varphi e_u^c}
        }_{\mathcal{R}_{\mathcal{K}}}
      }
      \\
      =:& T_{41} + T_{42}.
    \end{split}
  \end{equation*}
  For $T_{41}$ we use the quasi-interpolation estimate
  \cref{eq:local_quasi_tnorm_st_step_ratio_3}, the projection estimate
  \cref{eq:otherusefulbnds_2}, and Young's inequality to find:
  \begin{equation*}
      T_{41}
      \le
      \tfrac{c}{\delta}
      \del[0]{T+\chi}^2
      \sum_{\mathcal{K}\in\mathcal{T}_{h}}
      \del[0]{\eta_{J,2,2}^{\mathcal{K}}}^2
      +
      \tfrac{c\delta}{2}
      \del[0]{T+\chi}
      \sum_{\mathcal{K}\in\mathcal{T}_{h}}
      \varepsilon
      \norm[0]{\overline{\nabla}e_u^c}_{\mathcal{K}}^2
      +
      \tfrac{c\delta}{2}
      \sum_{\mathcal{K}\in\mathcal{T}_{h}}
      \tau_{\varepsilon}
      \norm[0]{\partial_te_u^c}_{\mathcal{K}}^2
      +
      \tfrac{c\delta}{2}
      \sum_{\mathcal{K}\in\mathcal{T}_{h}}
      \norm[0]{e_u^c}_{\mathcal{K}}^2.
  \end{equation*}
  For $T_{42}$ we use the quasi-interpolation estimate
  \cref{eq:local_quasi_tnorm_st_step_ratio_4}, the projection estimate
  \cref{eq:otherusefulbnds_3} using that
  $\delta t_{\mathcal{K}} = \mathcal{O}(h_K^2)$, and Young's
  inequality,
  \begin{equation*}
    T_{42}
      \le
      \tfrac{c}{\delta}
      \varepsilon^{-1}
      \del[0]{T+\chi}^2
      \sum_{\mathcal{K}\in\mathcal{T}_{h}}
      \del[0]{\eta_{J,3,\mathcal{R}}^{\mathcal{K}}}^2
      +
      \tfrac{c\delta}{2}
      \del[0]{T+\chi}
      \sum_{\mathcal{K}\in\mathcal{T}_{h}}
      \varepsilon
      \norm[0]{\overline{\nabla}e_u^c}_{\mathcal{K}}^2
      +
      \tfrac{c\delta}{2}
      \sum_{\mathcal{K}\in\mathcal{T}_{h}}
      \tau_{\varepsilon}
      \norm[0]{\partial_te_u^c}_{\mathcal{K}}^2
      +
      \tfrac{c\delta}{2}
      \sum_{\mathcal{K}\in\mathcal{T}_{h}}
      \norm[0]{e_u^c}_{\mathcal{K}}^2.
  \end{equation*}
  Combining the bounds for $T_{41}$ and $T_{42}$ we obtain:
  \begin{multline*}
      T_4
      \le
      \tfrac{c}{\delta}
      \del[0]{T+\chi}^2
      \sum_{\mathcal{K}\in\mathcal{T}_{h}}
      \del[0]{\eta_{J,2,2}^{\mathcal{K}}}^2
      +
      \tfrac{c}{\delta}
      \varepsilon^{-1}
      \del[0]{T+\chi}^2
      \sum_{\mathcal{K}\in\mathcal{T}_{h}}
      \del[0]{\eta_{J,3,\mathcal{R}}^{\mathcal{K}}}^2
      \\
      +
      c\delta
      \del[0]{T+\chi}
      \sum_{\mathcal{K}\in\mathcal{T}_{h}}
      \varepsilon
      \norm[0]{\overline{\nabla}e_u^c}_{\mathcal{K}}^2
      +
      c\delta
      \sum_{\mathcal{K}\in\mathcal{T}_{h}}
      \tau_{\varepsilon}
      \norm[0]{\partial_te_u^c}_{\mathcal{K}}^2
      +
      c\delta
      \sum_{\mathcal{K}\in\mathcal{T}_{h}}
      \norm[0]{e_u^c}_{\mathcal{K}}^2.
  \end{multline*}
  \textbf{Bound for $T_5$.} We write $T_5$ as
  \begin{equation*}
    T_5 =
    \underbrace{
      \del[0]{
        \varepsilon\overline{\nabla}
        \del[0]{I-\mathcal{I}_h^c}u_h,
        \overline{\nabla}\del[0]{\varphi e_u^c}
      }_{\mathcal{T}_h}
    }_{T_{51}}
    \underbrace{
      -
      \del[0]{
        \beta\del[0]{I-\mathcal{I}_h^c}u_h
        ,\nabla\del[0]{\varphi e_u^c}
      }_{\mathcal{T}_h}
    }_{T_{52}}.
  \end{equation*}
  For $T_{51}$ we use the Cauchy--Schwarz inequality, the inverse
  inequality \cref{eq:eg_inv_2}, the approximation estimate of the
  averaging operator \cref{eq:oswald_local_st}, that
  $\delta t_{\mathcal{K}} = \mathcal{O}(h_K^2)$, and Young's
  inequality to find
  \begin{equation*}
    T_{51}
      \le
      \tfrac{c}{2\delta}
      \del[0]{T+\chi}
      \sum_{\mathcal{K}\in\mathcal{T}_{h}}
      \del[1]{
        \del[0]{\eta_{J,2,1}^{\mathcal{K}}}^2
        +
        \del[0]{\eta_{J,3,\mathcal{R}}^{\mathcal{K}}}^2
      }
      +
      \tfrac{c\delta}{2}
      \del[0]{T+\chi}
      \sum_{\mathcal{K}\in\mathcal{T}_{h}}
      \varepsilon
      \norm[0]{\overline{\nabla}e_u^c}_{\mathcal{K}}^2.
  \end{equation*}
  For $T_{52}$ we first write
  \begin{equation*}
    T_{52}=
    \underbrace{-
    \del[0]{
      u_h-\mathcal{I}_h^cu_h
      ,
      \partial_t\del{\varphi{e_u^c}}
    }_{\mathcal{T}_h}
    }_{T_{521}}
    \underbrace{-
    \del[0]{
      \overline{\beta}{\del[0]{u_h-\mathcal{I}_h^cu_h}}
      ,
      \overline{\nabla}\del[0]{\varphi{e_u^c}}
    }_{\mathcal{T}_h}}_{T_{522}}.
  \end{equation*}
  We bound $T_{521}$ using the Cauchy--Schwarz inequality, the
  approximation estimate of the averaging operator
  \cref{eq:oswald_local_st}, and that
  $\delta t_{\mathcal{K}} = \mathcal{O}(h_K^2)$. We further note that
  on $\mathcal{T}_h^x$ we have that
  $\tilde{\varepsilon}^{-1/2} h_K^{-1/2} < h_K^{1/4} \varepsilon^{-1}$
  and on $\mathcal{T}_h^c$ we have that
  $\tilde{\varepsilon}^{-1/2}h_K^{-1/2} \le
  \varepsilon^{-1}h_K^{1/2}$. Therefore,
  \begin{equation*}
    \begin{split}
      T_{521}
      \le
      &c
      \sum_{\mathcal{K}\in\mathcal{T}_h}
      \del[1]{
        \sum_{F\subset\check{\mathcal{Q}}^i_{\mathcal{K}}}
        \sum_{\mathcal{K}'\subset\omega_F}
        \widetilde{\varepsilon}^{-1/2}
        h_K^{-1/2}
        \norm[0]{\sbr[0]{\boldsymbol{u}_h}}_{\mathcal{Q}_{\mathcal{K}'}}
      }
      \del[0]{
        \del[0]{T+\chi}
        \tau_{\varepsilon}^{1/2}
        \norm{
          \partial_t{{e_u^c}}
        }_\mathcal{K}
        +
        \norm{ {e_u^c} }_\mathcal{K}
      }
      \\
      &\quad
      +c
      \sum_{\mathcal{K}\in\mathcal{T}_h}
      \del[1]{
        \sum_{F\subset\check{\mathcal{R}}^i_{\mathcal{K}}}
        \sum_{\mathcal{K}'\subset\omega_F}
        \widetilde{\varepsilon}^{-1/2}
        \norm[0]{
          \envert[0]{\beta_s-\tfrac{1}{2}\beta\cdot n}^{1/2}
          \sbr[0]{\boldsymbol{u}_h}
        }_{\mathcal{R}_{\mathcal{K}'}}
      }
      \del[0]{
        \del[0]{T+\chi}
        \tau_{\varepsilon}^{1/2}
        \norm{
          \partial_t{{e_u^c}}
        }_\mathcal{K}
        +
        \norm{ {e_u^c} }_\mathcal{K}
      }
      \\
      \le&
      \tfrac{c}{\delta}
      \varepsilon^{-1}
      \del[0]{T+\chi}^2
      \sum_{\mathcal{K}\in\mathcal{T}_{h}}
      \del[0]{\eta_{J,2,1}^{\mathcal{K}}}^2
      +
      \tfrac{c}{\delta}
      \varepsilon^{-1}
      \del[0]{T+\chi}^2
      \sum_{\mathcal{K}\in\mathcal{T}_{h}}
      \del[0]{\eta_{J,2,2}^{\mathcal{K}}}^2
      \\
      &\quad
      +
      \tfrac{c}{\delta}
      \varepsilon^{-1}
      \del[0]{T+\chi}^2
      \sum_{\mathcal{K}\in\mathcal{T}_{h}}
      \del[0]{\eta_{J,3,\mathcal{R}}^{\mathcal{K}}}^2
      +
      2c\delta
      \sum_{\mathcal{K}\in\mathcal{T}_{h}}
      \tau_{\varepsilon}
      \norm[0]{\partial_te_u^c}_{\mathcal{K}}^2
      +
      2c\delta
      \sum_{\mathcal{K}\in\mathcal{T}_{h}}
      \norm[0]{e_u^c}_{\mathcal{K}}^2.
    \end{split}
  \end{equation*}
  For $T_{522}$, using the Cauchy--Schwarz inequality, the
  approximation estimate of the averaging operator
  \cref{eq:oswald_local_st}
  \begin{equation*}
      T_{522}
      \le
      \tfrac{c}{2\delta}
      \del[0]{T+\chi}
      \sum_{\mathcal{K}\in\mathcal{T}_{h}}
      \del[1]{
        \del[0]{\eta_{J,2,2}^{\mathcal{K}}}^2
        +
        \varepsilon^{-1}\del[0]{\eta_{J,3,\mathcal{R}}^{\mathcal{K}}}^2
      }
      +
      \tfrac{c\delta}{2}
      \del[0]{T+\chi}
      \sum_{\mathcal{K}\in\mathcal{T}_{h}}
      \varepsilon
      \norm[0]{\overline{\nabla}e_u^c}_{\mathcal{K}}^2.
  \end{equation*}
  Combining the bounds for $T_{521}$ and $T_{522}$ we find that
  \begin{equation*}
    \begin{split}
      T_{52} \le
      &
      \tfrac{c}{\delta}
      \varepsilon^{-1}
      \del[0]{T+\chi}^2
      \sum_{\mathcal{K}\in\mathcal{T}_{h}}
      \del[0]{\eta_{J,2,1}^{\mathcal{K}}}^2
      +
      \tfrac{c}{\delta}
      \del[0]{T+\chi}
      \sbr[2]{
        \varepsilon^{-1}
        \del[0]{T+\chi}
        +
        \tfrac{1}{2}
      }
      \sum_{\mathcal{K}\in\mathcal{T}_{h}}
      \del[0]{\eta_{J,2,2}^{\mathcal{K}}}^2
      \\
      &
      +
      \tfrac{c}{\delta}
      \varepsilon^{-1}
      \del[0]{T+\chi}
      \sbr[2]{
        \del[0]{T+\chi}
        +
        \tfrac{1}{2}
      }
      \sum_{\mathcal{K}\in\mathcal{T}_{h}}
      \del[0]{\eta_{J,3,\mathcal{R}}^{\mathcal{K}}}^2
      \\
      &
      +
      2c\delta
      \sum_{\mathcal{K}\in\mathcal{T}_{h}}
      \tau_{\varepsilon}
      \norm[0]{\partial_te_u^c}_{\mathcal{K}}^2
      +
      2c\delta
      \sum_{\mathcal{K}\in\mathcal{T}_{h}}
      \norm[0]{e_u^c}_{\mathcal{K}}^2
      +
      \tfrac{c\delta}{2}
      \del[0]{T+\chi}
      \sum_{\mathcal{K}\in\mathcal{T}_{h}}
      \varepsilon
      \norm[0]{\overline{\nabla}e_u^c}_{\mathcal{K}}^2.
    \end{split}
  \end{equation*}
  Combining the bounds for $T_{51}$ and $T_{52}$, we obtain:
  \begin{equation*}
    \begin{split}
      T_5 \le
      &
      \tfrac{c}{\delta}
      \del[0]{T+\chi}
      \sbr[2]{
        \varepsilon^{-1}
        \del[0]{T+\chi}
        +
        \tfrac{1}{2}
      }
      \sum_{\mathcal{K}\in\mathcal{T}_{h}}
      \del[0]{\eta_{J,2,1}^{\mathcal{K}}}^2
      +
      \tfrac{c}{\delta}
      \del[0]{T+\chi}
      \sbr[2]{
        \varepsilon^{-1}
        \del[0]{T+\chi}
        +
        \tfrac{1}{2}
      }
      \sum_{\mathcal{K}\in\mathcal{T}_{h}}
      \del[0]{\eta_{J,2,2}^{\mathcal{K}}}^2
      \\
      &
      +
      \tfrac{c}{\delta}
      \del[0]{T+\chi}
      \cbr[2]{
        \varepsilon^{-1}
        \sbr[1]{
          \del[0]{T+\chi}
          +
          \tfrac{1}{2}
        }
        +
        \tfrac{1}{2}
      }
      \sum_{\mathcal{K}\in\mathcal{T}_{h}}
      \del[0]{\eta_{J,3,\mathcal{R}}^{\mathcal{K}}}^2
      \\
      &
      +
      2c\delta
      \sum_{\mathcal{K}\in\mathcal{T}_{h}}
      \tau_{\varepsilon}
      \norm[0]{\partial_te_u^c}_{\mathcal{K}}^2
      +
      2c\delta
      \sum_{\mathcal{K}\in\mathcal{T}_{h}}
      \norm[0]{e_u^c}_{\mathcal{K}}^2
      +
      c\delta
      \del[0]{T+\chi}
      \sum_{\mathcal{K}\in\mathcal{T}_{h}}
      \varepsilon
      \norm[0]{\overline{\nabla}e_u^c}_{\mathcal{K}}^2.
    \end{split}
  \end{equation*}
  \textbf{Bound for $T_6$.} We write $T_6$ as follows:
  \begin{equation*}
    \begin{split}
      T_6
      =
      &
      \langle
      \del[0]{\beta_s-\beta\cdot n}\sbr[0]{\boldsymbol{u}_h}
      ,
      \del[0]{\Pi_h-\Pi_h^{\mathcal{F}}}
      \del[0]{\varphi e_u^c}
      \rangle_{\partial\mathcal{E}_D}
      \\
      &
      + \sbr[2]{
      \langle
      {\zeta^+}\beta\cdot n\del[0]{u_h-\mathcal{I}_h^cu_h},{\varphi e_u^c}
      \rangle_{\Omega_T}
      -
      \langle \zeta^+\beta\cdot n\sbr[0]{\boldsymbol{u}_h},
      \Pi_h^{\mathcal{F}}\del[0]{\varphi e_u^c}
      \rangle_{\Omega_T}}
      \\
      &
      + \big[ \langle
      {\zeta^+}\beta\cdot n\del[0]{u_h-\mathcal{I}_h^cu_h},{\varphi e_u^c}
      \rangle_{\partial\mathcal{E}_N\setminus\Omega_T}
      -
      \langle \zeta^+\beta\cdot n\sbr[0]{\boldsymbol{u}_h},
      \Pi_h^{\mathcal{F}}\del[0]{\varphi e_u^c}
      \rangle_{\partial\mathcal{E}_N\setminus\Omega_T}
      \\
      & \qquad +
      \langle
      \del[0]{\beta_s-\beta\cdot n}\sbr[0]{\boldsymbol{u}_h}
      ,
      \del[0]{\Pi_h-\Pi_h^{\mathcal{F}}}
      \del[0]{\varphi e_u^c}
      \rangle_{\partial\mathcal{E}_N \setminus (\Omega_T \cup \Omega_0)} \big]
      \\
      & +
      \langle
      \del[0]{\beta_s-\beta\cdot n}\sbr[0]{\boldsymbol{u}_h}
      ,
      \del[0]{\Pi_h-\Pi_h^{\mathcal{F}}}
      \del[0]{\varphi e_u^c}
      \rangle_{\Omega_0}
      \\
      =&: T_{61} + T_{62} + T_{63} + T_{64}.
    \end{split}
  \end{equation*}
  For $T_{61}$, we use the Cauchy--Schwarz inequality and the
  projection bound \cref{eq:otherusefulbnds_2}
  \begin{equation*}
    T_{61}
      \le
      \tfrac{c}{2\delta}
      \del[0]{T+\chi}
      \sum_{\mathcal{K}\in\mathcal{T}_{h}}
      \del[0]{\eta_{J,2,2}^{\mathcal{K}}}^2
      +
      \tfrac{c\delta}{2}
      \del[0]{T+\chi}
      \sum_{\mathcal{K}\in\mathcal{T}_{h}}
      \varepsilon
      \norm[0]{\overline{\nabla}e_u^c}_{\mathcal{K}}^2.
  \end{equation*}
  For $T_{62}$ we first write
  \begin{equation*}
    T_{62}
    =
    \underbrace{
      \langle
      {\zeta^+}\beta\cdot n
      \del[0]{u_h-\mathcal{I}_h^cu_h},{\varphi e_u^c}
      \rangle_{\Omega_T}
    }_{T_{621}}
    \underbrace{-
      \langle
      \zeta^+\beta\cdot n
      \sbr[0]{\boldsymbol{u}_h}, \Pi_h^{\mathcal{F}}
      \del[0]{\varphi e_u^c}
      \rangle_{\Omega_T}.
    }_{T_{622}}
  \end{equation*}
  We bound $T_{621}$, using the Cauchy--Schwarz inequality, the trace
  inequality \cref{eq:eg_inv_4}, the approximation estimate of the
  averaging operator \cref{eq:oswald_local_st}, that
  $\delta t_{\mathcal{K}}=\mathcal{O}(h_K^2)$, and Young's inequality:
  \begin{equation*}
    T_{621}
      \le
      \tfrac{c}{2\delta}
      \varepsilon^{-1}
      \del[0]{T+\chi}
      \sum_{\mathcal{K}\in\mathcal{T}_{h}}
      \del[0]{\eta_{J,2,1}^{\mathcal{K}}}^2
      +
      \tfrac{c}{2\delta}
      \del[0]{T+\chi}
      \sum_{\mathcal{K}\in\mathcal{T}_{h}}
      \del[0]{\eta_{J,3,\mathcal{R}}^{\mathcal{K}}}^2
      +
      c\delta
      \del[0]{T+\chi}
      \sum_{\mathcal{K} \in \mathcal{T}_h}
      \norm[0]{\envert[0]{\beta\cdot n}^{1/2}e_u^c}_{\mathcal{R}_{\mathcal{K}} \cap \Omega_T}^2.
  \end{equation*}
  Next, we bound $T_{622}$ using the Cauchy--Schwarz inequality, the
  boundedness of the projection operator $\Pi_h^{\mathcal{F}}$, and
  Young's inequality:
  \begin{equation*}
    T_{622}
      \le
      \tfrac{c}{2\delta}
      \del[0]{T+\chi}
      \sum_{\mathcal{K}\in\mathcal{T}_{h}}
      \del[0]{\eta_{J,3,\mathcal{R}}^{\mathcal{K}}}^2
      +
      \tfrac{c\delta}{2}
      \del[0]{T+\chi}
      \sum_{\mathcal{K}\in\mathcal{T}_{h}}
      \norm[0]{\envert[0]{\beta\cdot n}^{1/2}e_u^c}_{\mathcal{R}_{\mathcal{K}} \cap \Omega_T}^2.
  \end{equation*}
  Combining the bounds for $T_{621}$ and $T_{622}$ we find that
  \begin{equation*}
      T_{62} \le
      \tfrac{c}{2\delta}
      \varepsilon^{-1}
      \del[0]{T+\chi}
      \sum_{\mathcal{K}\in\mathcal{T}_{h}}
      \del[0]{\eta_{J,2,1}^{\mathcal{K}}}^2
      +
      \tfrac{c}{\delta}
      \del[0]{T+\chi}
      \sum_{\mathcal{K}\in\mathcal{T}_{h}}
      \del[0]{\eta_{J,3,\mathcal{R}}^{\mathcal{K}}}^2
      +
      \tfrac{3c\delta}{2}
      \del[0]{T+\chi}
      \sum_{F\subset\Omega_T}
      \norm[0]{\envert[0]{\beta\cdot n}^{1/2}e_u^c}_{F}^2.
  \end{equation*}
  For $T_{63}$ we write $T_{63} = T_{631} + T_{632} + T_{633}$ where
  \begin{align*}
      T_{631}
      &:=
      \langle
      {\zeta^+}\beta\cdot n\del[0]{u_h-\mathcal{I}_h^cu_h},{\varphi e_u^c}
      \rangle_{\partial\mathcal{E}_N\cap \mathcal{Q}_h},
      &
      T_{632}
      &:=
      -\langle \zeta^+\beta\cdot n\sbr[0]{\boldsymbol{u}_h},
      \Pi_h^{\mathcal{F}}\del[0]{\varphi e_u^c}
      \rangle_{\partial\mathcal{E}_N\cap \mathcal{Q}_h},
      \\
      T_{633}
      &:=
      \langle
      \del[0]{\beta_s-\beta\cdot n}\sbr[0]{\boldsymbol{u}_h}
      ,
      \del[0]{\Pi_h-\Pi_h^{\mathcal{F}}}
      \del[0]{\varphi e_u^c}
        \rangle_{\partial\mathcal{E}_N\cap \mathcal{Q}_h}.
        &&
  \end{align*}
  To bound $T_{631}$, we use the Cauchy--Schwarz inequality, the trace
  inequality \cref{eq:eg_inv_3}, the approximation estimate of the
  averaging operator \cref{eq:oswald_local_st}, and Young's
  inequality:
  \begin{equation*}
      T_{631}
      \le
      \tfrac{c}{2\delta}
      \varepsilon^{-1}
      \del[0]{T+\chi}
      \sum_{\mathcal{K}\in\mathcal{T}_{h}}
      \del[0]{\eta_{J,2,1}^{\mathcal{K}}}^2
      +
      \tfrac{c}{2\delta}
      \del[0]{T+\chi}
      \sum_{\mathcal{K}\in\mathcal{T}_{h}}
      \del[0]{\eta_{J,3,\mathcal{R}}^{\mathcal{K}}}^2
      +
      c\delta
      \del[0]{T+\chi}
      \sum_{F\in\partial\mathcal{E}_N}
      \norm[0]{\envert[0]{\beta\cdot n}^{1/2}e_u^c}_{F}^2.
  \end{equation*}
  For $T_{632}$, we first note that using similar steps as used to
  bound $M_{52}$ in the proof of \cref{thm:st_time_apos}, that
  $\beta_s \norm[0]{e_u^c }_{F}^2 \le \norm[0]{|\beta \cdot n|^{1/2}
    e_u^c }_{F}^2 + ch_K \norm[0]{e_u^c }_{F}^2$. Then, using the
  Cauchy--Schwarz inequality on $T_{632}$, \cref{eq:betasinfmax},
  boundedness of the projection operator $\Pi_h^{\mathcal{F}}$,
  \cref{eq:localtrace}, and Young's inequality we then find
  \begin{equation*}
    \begin{split}
      T_{632}
      \le&
      \tfrac{c}{\delta}
      \del[0]{T+\chi}^2
      \sum_{\mathcal{K}\in\mathcal{T}_{h}}
      \del[0]{\eta_{J,3,\mathcal{Q}}^{\mathcal{K}}}^2
      +
      \tfrac{c\delta}{2}
      \sum_{\mathcal{K}\in\mathcal{T}_h}
      \norm[0]{e_u^c}_{\mathcal{K}}^2
      +
      \tfrac{c\delta}{2}
      \del[0]{T+\chi}
      \sum_{F\subset\partial\mathcal{E}_N}
      \norm[0]{\envert[0]{\beta\cdot n}^{1/2}e_u^c}_{F}^2
      \\
      &
      +
      \tfrac{c}{2\delta}
      \del[0]{T+\chi}
      \sum_{\mathcal{K}\in\mathcal{T}_{h}}
      \del[0]{\eta_{J,2,2}^{\mathcal{K}}}^2
      +
      \tfrac{c\delta}{2}
      \del[0]{T+\chi}
      \sum_{\mathcal{K}\in\mathcal{T}_h}
      \varepsilon
      \norm[0]{\overline{\nabla}e_u^c}_{\mathcal{K}}^2.
    \end{split}
  \end{equation*}
  Next, we consider $T_{633}$. Using the Cauchy--Schwarz inequality,
  the projection estimate \cref{eq:otherusefulbnds_2}, and Young's
  inequality we find
  \begin{equation*}
    T_{633}
      \le
      \tfrac{c}{2\delta}
      \del[0]{T+\chi}
      \sum_{\mathcal{K}\in\mathcal{T}_{h}}
      \del[0]{\eta_{J,2,2}^{\mathcal{K}}}^2
      +
      \tfrac{c\delta}{2}
      \del[0]{T+\chi}
      \sum_{\mathcal{K}\in\mathcal{T}_{h}}
      \varepsilon
      \norm[0]{\overline{\nabla}e_u^c}_{\mathcal{K}}^2.
  \end{equation*}
  Combining the bounds for $T_{631}$, $T_{632}$, and $T_{633}$ we find
  that
  \begin{equation*}
    \begin{split}
      T_{63} \le
      &
      \tfrac{c}{2\delta}
      \varepsilon^{-1}
      \del[0]{T+\chi}
      \sum_{\mathcal{K}\in\mathcal{T}_{h}}
      \del[0]{\eta_{J,2,1}^{\mathcal{K}}}^2
      +
      \tfrac{c}{\delta}
      \del[0]{T+\chi}
      \sum_{\mathcal{K}\in\mathcal{T}_{h}}
      \del[0]{\eta_{J,2,2}^{\mathcal{K}}}^2
      +
      \tfrac{c}{\delta}
      \del[0]{T+\chi}^2
      \sum_{\mathcal{K}\in\mathcal{T}_{h}}
      \del[0]{\eta_{J,3}^{\mathcal{K}}}^2
      \\
      &
      +
      \tfrac{c\delta}{2}
      \sum_{\mathcal{K} \in \mathcal{T}_h}
      \norm[0]{e_u^c}_{\mathcal{K}}^2
      +
      c\delta
      \del[0]{T+\chi}
      \sum_{\mathcal{K}\in\mathcal{T}_h}
      \varepsilon
      \norm[0]{\overline{\nabla}e_u^c}_{\mathcal{K}}^2
      +
      \tfrac{3c\delta}{2}
      \del[0]{T+\chi}
      \sum_{F\subset\partial\mathcal{E}_N}
      \norm[0]{\envert[0]{\beta\cdot n}^{1/2}e_u^c}_{F}^2.
    \end{split}
  \end{equation*}
  For $T_{64}$, we use the Cauchy--Schwarz inequality, the projection
  estimate \cref{eq:otherusefulbnds_3}, and Young's inequality to find
  \begin{equation*}
    T_{64}
      \le
      \tfrac{c}{\delta}
      \varepsilon^{-1}
      \del[0]{T+\chi}^2
      \sum_{\mathcal{K}\in\mathcal{T}_{h}}
      \del[0]{\eta_{J,3,\mathcal{R}}^{\mathcal{K}}}^2
      +
      \tfrac{c\delta}{2}
      \sum_{\mathcal{K}\in\mathcal{T}_{h}}
      \tau_{\varepsilon}
      \norm[0]{\partial_te_u^c}_{\mathcal{K}}^2
      +
      \tfrac{c\delta}{2}
      \sum_{\mathcal{K}\in\mathcal{T}_{h}}
      \norm[0]{e_u^c}_{\mathcal{K}}^2.
  \end{equation*}
  Combining the bounds for $T_{61}$, $T_{62}$, $T_{63}$ and $T_{64}$,
  we obtain
  \begin{equation*}
    \begin{split}
      T_6 \le
      &
      \tfrac{c}{\delta} \varepsilon^{-1}
      \del[0]{T+\chi}
      \sum_{\mathcal{K}\in\mathcal{T}_{h}}
      \del[0]{\eta_{J,2,1}^{\mathcal{K}}}^2
      +
      \tfrac{3c}{2\delta}
      \del[0]{T+\chi}
      \sum_{\mathcal{K}\in\mathcal{T}_{h}}
      \del[0]{\eta_{J,2,2}^{\mathcal{K}}}^2
      +
      \tfrac{2c}{\delta}
      \varepsilon^{-1}
      \del[0]{T+\chi}^2
      \sum_{\mathcal{K}\in\mathcal{T}_{h}}
      \del[0]{\eta_{J,3,R}^{\mathcal{K}}}^2
      \\
      &+
      \tfrac{c}{\delta}
      \del[0]{T+\chi}^2
      \sum_{\mathcal{K}\in\mathcal{T}_{h}}
      \del[0]{\eta_{J,3}^{\mathcal{K}}}^2
      +
      \tfrac{c\delta}{2}
      \sum_{\mathcal{K}\in\mathcal{T}_{h}}
      \tau_{\varepsilon}
      \norm[0]{\partial_te_u^c}_{\mathcal{K}}^2
      +
      c\delta
      \sum_{\mathcal{K}\in\mathcal{T}_{h}}
      \norm[0]{e_u^c}_{\mathcal{K}}^2
      +
      \tfrac{3c\delta}{2}
      \del[0]{T+\chi}
      \sum_{\mathcal{K}\in\mathcal{T}_{h}}
      \varepsilon
      \norm[0]{\overline{\nabla}e_u^c}_{\mathcal{K}}^2
      \\
      &
      +
      3c\delta
      \del[0]{T+\chi}
      \sum_{F \subset \partial\mathcal{E}_N}
      \norm[0]{
        \envert[0]{\beta\cdot n}^{1/2}e_u^c
      }_{F}^2.
    \end{split}
  \end{equation*}
  With each of the terms $T_i$, $i=1,\hdots,6$ bounded, we now bound
  $\sum_{\mathcal{K} \in \mathcal{T}_h} \tau_{\varepsilon}
  \norm[0]{\partial_te_u^c}_{\mathcal{K}}^2$. By the triangle
  inequality and \cref{eq:timederivativeest},
  \begin{equation*}
      \sum_{\mathcal{K}\in\mathcal{T}_h}
      \tau_{\varepsilon}
      \norm[0]{\partial_te_u^c}_{\mathcal{K}}^2
      \le c
      T^2 \varepsilon^{-1} \eta^2
      +c
      \sum_{\mathcal{K}\in\mathcal{T}_h}
      \tau_{\varepsilon}
      \norm[0]{\partial_t(u_h-\mathcal{I}_h^cu_h)}_{\mathcal{K}}^2.
  \end{equation*}
  For the second term on the right hand side, using the inverse
  inequality \cref{eq:eg_inv_1}, the approximation estimate of the
  averaging operator \cref{eq:oswald_local_st}, H\"older's inequality
  for sums, and that $\delta t_{\mathcal{K}}=\mathcal{O}(h_K^2)$,
  \begin{equation}
    \label{eq:bound-for-oswald}
    \begin{split}
      &\sum_{\mathcal{K}\in\mathcal{T}_h}
      \tau_{\varepsilon}
      \norm[0]{\partial_t(u_h-\mathcal{I}_h^cu_h)}_{\mathcal{K}}^2
      \\
      &\qquad
      \le c
      \sum_{\mathcal{K}\in\mathcal{T}_h}
      \tilde{\varepsilon}
      \del[1]{
        \sum_{F\subset\check{\mathcal{Q}}^i_{\mathcal{K}}}
        h_K
        \delta t_{\mathcal{K}}^{-1}
        \norm[0]{\jump{u_h}}_F^2
        +
        \sum_{F\subset\check{\mathcal{R}}^i_{\mathcal{K}}}
        \norm[0]{\jump{u_h}}_F^2
      }
      \\
      &\qquad
      \le c
      \sum_{\mathcal{K}\in\mathcal{T}_h}
      \tilde{\varepsilon}
      \del[1]{
        \sum_{F\subset\check{\mathcal{Q}}^i_{\mathcal{K}}}
        \sum_{\mathcal{K}'\subset\omega_F}
        h_K^{-1}
        \norm[0]{\sbr[0]{\boldsymbol{u}_h}}_{\mathcal{Q}_{\mathcal{K}'}}^2
        +
        \sum_{F\subset\check{\mathcal{R}}^i_{\mathcal{K}}}
        \sum_{\mathcal{K}'\subset\omega_F}
        \norm[0]{
          \envert[0]{\beta_s-\tfrac{1}{2}\beta\cdot n}^{1/2}
          \sbr[0]{\boldsymbol{u}_h}
        }_{\partial\mathcal{K}'}
      }
      \\
      &\qquad
      \le
      c\varepsilon^{-1}
      \sum_{\mathcal{K}\in\mathcal{T}_h}
      \del[0]{\eta_{J,2,1}^{\mathcal{K}}}^2
      +
      c
      \sum_{\mathcal{K}\in\mathcal{T}_h}
      \del[0]{\eta_{J,3}^{\mathcal{K}}}^2.
    \end{split}
  \end{equation}
  Combining \cref{eq:err_stronger_stab_full,eq:bound-for-oswald} with
  the bounds for $T_1$ to $T_6$, we have
  \begin{equation*}
    \begin{split}
      &\chi
      \sum_{\mathcal{K}\in\mathcal{T}_h}
      \varepsilon
      \norm[0]{\overline{\nabla} e_u^c}_{\mathcal{K}}^2
      +
      \tfrac{1}{2}
      \sum_{\mathcal{K}\in\mathcal{T}_h}
      \norm{e_u^c}_{\mathcal{K}}^2
      +
      \tfrac{1}{2}\chi
      \sum_{F\subset\partial\mathcal{E}_N}
      \norm[0]{\envert[0]{\beta\cdot n}^{1/2}e_u^c}_{F}^2
      \\
      &\quad\le
      c\delta\del[0]{T+\chi}
      \sum_{\mathcal{K}\in\mathcal{T}_h}
      \varepsilon
      \norm[0]{\overline{\nabla} e_u^c}_{\mathcal{K}}^2
      +
      c\delta
      \sum_{\mathcal{K}\in\mathcal{T}_h}
      \norm{e_u^c}_{\mathcal{K}}^2
      +
      c\delta\del[0]{T+\chi}
      \sum_{F\subset\mathcal{E}_N}
      \norm[0]{\envert[0]{\beta\cdot n}^{1/2}e_u^c}_{F}^2
      \\
      &\qquad
      +
      \tfrac{c}{\delta}
      \del[0]{T+\chi}^2
      \sum_{\mathcal{K}\in\mathcal{T}_h}
      \del[0]{\eta_R^{\mathcal{K}}}^2
      +
      \tfrac{c}{\delta}
      \del[0]{T+\chi}^2
      \sum_{\mathcal{K}\in\mathcal{T}_h}
      \del[0]{\eta_{J,1}^{\mathcal{K}}}^2
      \\
      &\qquad
      +
      \tfrac{c}{\delta}
      \del[0]{T+\chi}^2
      \sum_{\mathcal{K}\in\mathcal{T}_h}
      \del[0]{\eta_{BC,1}^{\mathcal{K}}}^2
      +
      \tfrac{c}{\delta}
      \del[0]{T+\chi}
      \sum_{\mathcal{K}\in\mathcal{T}_h}
      \del[0]{\eta_{BC,2}^{\mathcal{K}}}^2
      \\
      &\qquad
      +
      c\del[2]{\tfrac{1}{\delta} + \delta}
      \del[0]{T+\chi}^2\varepsilon^{-1}
      \sum_{\mathcal{K}\in\mathcal{T}_h}
      \del[0]{\eta_{J,2,1}^{\mathcal{K}}}^2
      +
      \tfrac{c}{\delta}
      \del[0]{T+\chi}^2\varepsilon^{-1}
      \sum_{\mathcal{K}\in\mathcal{T}_h}
      \del[0]{\eta_{J,2,2}^{\mathcal{K}}}^2
      \\
      &\qquad
      +
      \tfrac{c}{\delta}
      \del[0]{T+\chi}^2\varepsilon^{-1}
      \sum_{\mathcal{K}\in\mathcal{T}_h}
      \del[0]{\eta_{J,3,\mathcal{R}}^{\mathcal{K}}}^2
      +
      c\del[2]{\tfrac{1}{\delta} + \delta}
      \del[0]{T+\chi}^2
      \sum_{\mathcal{K}\in\mathcal{T}_h}
      \del[0]{\eta_{J,3}^{\mathcal{K}}}^2.
    \end{split}
  \end{equation*}
  The result follows by choosing $\chi = T$ and $\delta = 1/(8c)$.
\end{proof}

We end this section by proving \cref{thm:reliability}.

\begin{proof}[Proof of \cref{thm:reliability}]
  Using the triangle inequality, Young's inequality, and
  \cref{eq:betasinfmax}, we have
  \begin{equation}
    \label{eq:proofofthm41_1}
    \begin{split}
      \tnorm{\boldsymbol{u}-\boldsymbol{u}_h}_{sT,h}^2
      \le & c
      \del[1]{
        T\sum_{\mathcal{K}\in\mathcal{T}_h}
        \varepsilon\norm[0]{\overline{\nabla}e_u^c}_\mathcal{K}^2
        +
        \sum_{\mathcal{K}\in\mathcal{T}_h}
        \norm[0]{e_u^c}_\mathcal{K}^2
        +
        T\sum_{F\subset\partial\mathcal{E}_N}
        \norm[0]{
          \envert[0]{
            \tfrac{1}{2}
            \beta\cdot{n}
          }^{1/2}
          e_u^c
        }_F^2
      }
      \\
      & +
      \sum_{\mathcal{K}\in\mathcal{T}_h}
      \tau_\varepsilon
      \norm[0]{\partial_t e_u}^2_\mathcal{K}
      +
      \sum_{\mathcal{K}\in\mathcal{T}_h}
      \del[1]{
        \del[0]{\eta_{J,2,1}^{\mathcal{K}}}^2
        +
        T\del[0]{\eta_{J,3}^{\mathcal{K}}}^2
      }
      +
      I_1 + I_2 + I_3,
    \end{split}
  \end{equation}
  where
  \begin{align*}
    I_1 &= cT\sum_{\mathcal{K}\in\mathcal{T}_h}
          \varepsilon\norm[0]{\overline{\nabla}(I-\mathcal{I}_h^c)u_h}_\mathcal{K}^2,
    &
      I_2 &= c\sum_{\mathcal{K}\in\mathcal{T}_h}
      \norm[0]{(I-\mathcal{I}_h^c)u_h}_\mathcal{K}^2,
    &
      I_3 &= cT\sum_{F\subset\partial\mathcal{E}_N}
            \norm[0]{
            \envert[0]{
            \tfrac{1}{2}
            \beta\cdot{n}
            }^{1/2}
            (I-\mathcal{I}_h^c)u_h
            }_F^2.
  \end{align*}
  Using the inverse inequality \cref{eq:eg_inv_2}, the approximation
  estimate of the averaging operator \cref{eq:oswald_local_st}, and
  that $\delta t_{\mathcal{K}} = \mathcal{O}(h_K^2)$, we bound $I_1$
  as follows:
  \begin{equation*}
    \begin{split}
      I_1
      & \le c T
      \sum_{\mathcal{K}\in\mathcal{T}_h}
      \varepsilon
      \del[1]{
        \sum_{F\subset\check{\mathcal{Q}}^i_{\mathcal{K}}}
        h_K^{-1}
        \norm[0]{\jump{u_h}}_F^2
        +
        \sum_{F\subset\check{\mathcal{R}}^i_{\mathcal{K}}}
        \norm[0]{\jump{u_h}}_F^2
      }
      \\
      & \le c T
      \sum_{\mathcal{K}\in\mathcal{T}_h}
      \del[1]{
        \sum_{F\subset\check{\mathcal{Q}}^i_{\mathcal{K}}}
        \sum_{\mathcal{K}'\subset\omega_F}
        {\varepsilon} h_K^{-1}
        \norm[0]{\sbr[0]{\boldsymbol{u}_h}}_{\mathcal{Q}_{\mathcal{K}'}}^2
        +
        \sum_{F\subset\check{\mathcal{R}}^i_{\mathcal{K}}}
        \sum_{\mathcal{K}'\subset\omega_F}
        \norm[0]{
          \envert[0]{\beta_s-\tfrac{1}{2}\beta\cdot n}^{1/2}
          \sbr[0]{\boldsymbol{u}_h}
        }_{\mathcal{R}_{\mathcal{K}'} }^2
      }
      \\
      & \le c T
      \sum_{\mathcal{K}\in\mathcal{T}_h}
      \del[1]{
        \del[0]{\eta_{J,2,1}^{\mathcal{K}}}^2
        +
        \del[0]{\eta_{J,3,\mathcal{R}}^{\mathcal{K}}}^2
      }.
    \end{split}
  \end{equation*}
  Using the approximation estimate of the averaging operator
  \cref{eq:oswald_local_st}, then similar to the bound of $I_1$ we
  have:
  \begin{equation*}
    I_2
      \le c
      \sum_{\mathcal{K}\in\mathcal{T}_h}
      \del[1]{
        \del[0]{\eta_{J,2,2}^{\mathcal{K}}}^2
        +
        \del[0]{\eta_{J,3,\mathcal{R}}^{\mathcal{K}}}^2
      }.
  \end{equation*}
  Finally, using the trace inequalities
  \cref{eq:eg_inv_3,eq:eg_inv_4}, the approximation estimate of the
  averaging operator \cref{eq:oswald_local_st}, and that
  $\delta t_{\mathcal{K}} = \mathcal{O}(h_K^2)$, we can bound $I_3$ as
  follows:
  \begin{equation*}
      I_3
      \le  c T
      \sum_{\mathcal{K}\in\mathcal{T}_h}
      \del[1]{
        \varepsilon^{-1}
        \del[0]{\eta_{J,2,1}^{\mathcal{K}}}^2
        +
        \del[0]{\eta_{J,3,\mathcal{R}}^{\mathcal{K}}}^2
      }.
  \end{equation*}
  Combining the bounds for $I_1$, $I_2$, and $I_3$ with
  \cref{eq:proofofthm41_1} we find that
  \begin{multline*}
    \tnorm{\boldsymbol{u}-\boldsymbol{u}_h}_{sT,h}^2
    \le c
    \del[1]{
      T\sum_{\mathcal{K}\in\mathcal{T}_h}
      \varepsilon\norm[0]{\overline{\nabla}e_u^c}_\mathcal{K}^2
      +
      \sum_{\mathcal{K}\in\mathcal{T}_h}
      \norm[0]{e_u^c}_\mathcal{K}^2
      +
      T\sum_{F\subset\partial\mathcal{E}_N}
      \norm[0]{
        \envert[0]{
          \tfrac{1}{2}
          \beta\cdot{n}
        }^{1/2}
        e_u^c
      }_F^2
    }
    \\
    +
    \sum_{\mathcal{K}\in\mathcal{T}_h}
    \tau_\varepsilon
    \norm[0]{\partial_t e_u}^2_\mathcal{K}
    + c
    \sum_{\mathcal{K}\in\mathcal{T}_h}
    \del[0]{\eta_{J,2,2}^{\mathcal{K}}}^2
    + c T
    \sum_{\mathcal{K}\in\mathcal{T}_h}
    \del[1]{
      \varepsilon^{-1}
      \del[0]{\eta_{J,2,1}^{\mathcal{K}}}^2
      +
      \del[0]{\eta_{J,3}^{\mathcal{K}}}^2
    }.
  \end{multline*}
  By \cref{lem:upperbndwitheuc} this is further bound as:
  \begin{equation*}
    \begin{split}
      \tnorm{\boldsymbol{u}-\boldsymbol{u}_h}_{sT,h}^2
      \le &
      c \sum_{\mathcal{K}\in\mathcal{T}_h}
      \bigg(
      T^2\del[0]{\eta_R^{\mathcal{K}}}^2
      +
      T^2\del[0]{\eta_{J,1}^{\mathcal{K}}}^2
      +
      T^2\varepsilon^{-1}\del[0]{\eta_{J,2,1}^{\mathcal{K}}}^2
      +
      T^2\varepsilon^{-1}\del[0]{\eta_{J,2,2}^{\mathcal{K}}}^2
      \\
      &\qquad +
      T^2\varepsilon^{-1}\del[0]{\eta_{J,3,\mathcal{R}}^{\mathcal{K}}}^2
      +
      T^2\del[0]{\eta_{J,3}^{\mathcal{K}}}^2
      +
      T^2\del[0]{\eta_{BC,1}^{\mathcal{K}}}^2
      +
      T\del[0]{\eta_{BC,2}^{\mathcal{K}}}^2
      \bigg)
      +
      \sum_{\mathcal{K}\in\mathcal{T}_h}
      \tau_\varepsilon
      \norm[0]{\partial_t e_u}^2_\mathcal{K}
      \\
      \le &
      c T^2\varepsilon^{-1} \eta^2
      + \sum_{\mathcal{K}\in\mathcal{T}_h}
      \tau_\varepsilon
      \norm[0]{\partial_t e_u}^2_\mathcal{K}.
    \end{split}
  \end{equation*}
  We conclude \cref{eq:reliability} using \cref{thm:st_time_apos}.
\end{proof}

\subsection{Local efficiency of the error estimator}
\label{ss:eff}

In this section we prove \cref{thm:efficiency}. Given any space-time
element $\mathcal{K}$, we introduce the element bubble function
$\psi_{\mathcal{K}}=c_{\theta}\Pi_{i=1}^{2^{d+1}}\theta_{\mathcal{K},i}$,
where $\theta_{\mathcal{K},i}$ denotes the linear Lagrangian basis
polynomial associated with the $i$-th vertex of $\mathcal{K}$, and the
constant factor $c_{\theta}$ is such that
$\norm[0]{\psi_{\mathcal{K}}}_{L^\infty(\mathcal{K})} = 1$. We observe
that $\del[0]{\psi_{\mathcal{K}}}|_{\partial\mathcal{K}} = 0$. Given
any $v\in V_h$, the element bubble function satisfies the following
estimates (see \cite[Lemma 3.3]{Verfurth:1998} and \cite[Lemma
3.6]{Verfurth:2005}):
\begin{equation}
  \label{eq:bubbleestelem}
  \norm[0]{\psi_{\mathcal{K}}v}_{\mathcal{K}}
  \le c\norm[0]{v}_{\mathcal{K}},
  \qquad
  c\norm[0]{v}_{\mathcal{K}}^2
  \le \del[0]{v,\psi_{\mathcal{K}}v}_{\mathcal{K}},
\end{equation}

We also need facet bubble functions. For an element ${\mathcal{K}}$
and one of its $\mathcal{Q}$-facets $F\in\mathcal{Q}_{\mathcal{K}}$, we
first transform to the reference domain and consider
$\widehat{\mathcal{K}}=\Phi_{\mathcal{K}}^{-1}(\mathcal{K})$ and
$\widehat{F}=\Phi_{\mathcal{K}}^{-1}(F)$. Without loss of generality,
we let $\hat{x}_i$ denote the spatial coordinate such that
$\hat{x}_i\equiv -1$ on $\widehat{F}$. Given any number
$\kappa\in(0,1]$, we denote by $\Psi_\kappa$ the mapping from
$(\hat{t},\hat{x}_1,\dots,\hat{x}_i,\dots,\hat{x}_d)$ to
$(\hat{t},\hat{x}_1,\dots,\kappa(\hat{x}_i+1)-1,\dots,\hat{x}_d)$ and
we let
$\widehat{\mathcal{K}}_\kappa:=\Psi_\kappa(\widehat{\mathcal{K}})$.
We introduce the following facet bubble function
\begin{equation*}
  \hat{\psi}_{\mathcal{K},F,\kappa}
  =
  \begin{cases}
    c_{\theta,F}
    \Pi_{i=1}^{2^d}\hat{\theta}_{\mathcal{K},F,i,\kappa}
    &\text{ on }
    \widehat{\mathcal{K}}_\kappa,
    \\
    0
    &\text{ on }
    \widehat{\mathcal{K}}\setminus\widehat{\mathcal{K}}_\kappa,
  \end{cases}
\end{equation*}
where $\hat{\theta}_{\mathcal{K},F,i,\kappa}$ denotes the linear
Lagrangian basis polynomial associated with the $i$-th vertex of
$\widehat{\mathcal{K}}_\kappa$ that is also on $\widehat{F}$.
Similarly, the constant factor $c_{\theta,F}$ is such that
$\norm[0]{\hat{\psi}_{\mathcal{K},F,\kappa}}_{L^\infty(\widehat{F})}
= 1$.

Furthermore, given any $\mu\in M_h$ and considering
$\hat{\mu}=\mu\circ\Phi_{\mathcal{K}}$, we have the following
estimates:
\begin{equation}
  \label{eq:bubbleestface-ref}
  \begin{aligned}
    &
    \norm[0]{
      \hat{\psi}_{\mathcal{K},F,\kappa}\hat{\mu}
    }_{\widehat{F}}
    \le c
    \norm[0]{\hat{\mu}}_{\widehat{F}},
    &
    &c\norm[0]{\hat{\mu}}_{\widehat{F}}^2
    \le
    \langle{\hat{\mu}
      ,\hat{\psi}_{\mathcal{K},F,\kappa}\hat{\mu}}
    \rangle_{\widehat{F}},
    \\
    &\norm[0]{
      \hat{\psi}_{\mathcal{K},F,\kappa}\hat{\mu}
    }_{\widehat{\mathcal{K}}}
    \le c \kappa^{1/2}
    \norm[0]{\hat{\mu}}_{\widehat{F}},
    &
    &\norm[0]{
      \widehat{\overline{\nabla}
      }\hat{\psi}_{\mathcal{K},F,\kappa}\hat{\mu}}_{\widehat{\mathcal{K}}}
    \le c
    \kappa^{-1/2}
    \norm[0]{\hat{\mu}}_{\widehat{F}},
  \end{aligned}
\end{equation}
where the first estimate is a result of
$\norm[0]{\hat{\psi}_{\mathcal{K},F,\kappa}}_{L^\infty(\widehat{F})} =
1$ and the remaining estimates are shown in \cite[Lemma
3.4]{Verfurth:1998b}.

We remark that the facet function $\mu$ in \cref{eq:bubbleestface-ref}
is continued to functions on elements using the continuation operator
defined in \cite{Verfurth:1998}. Furthermore,
\cref{eq:bubbleestelem,eq:bubbleestface-ref} are proven in
\cite{Verfurth:1998,Verfurth:1998b,Verfurth:2005} on $n$-simplices and
parallelepipeds, with $n \ge 2$. However, these inequalities also hold
for our mesh due to the assumptions on $\phi_{\mathcal{K}}$
\cref{eq:diffeom_jac} resulting in a Jacobian bounded independent of
$h_K$ and $\delta t_{\mathcal{K}}$.

To define the facet bubble function on $\omega_F$, we consider
three cases:
\begin{enumerate}[label={Case \arabic*}]
\item $\mathcal{K}_{nb}$, the neighboring element of $\mathcal{K}$
  across $F$, is at the same refinement element as
  $\mathcal{K}$. \label{cases-1}
  \vspace{-0.5em}
\item The $2^d$ neighboring elements of $\mathcal{K}$ across $F$,
  denoted by $\mathcal{K}_{nb,i}$ with $i=1,\dots,2^d$, are finer.
  \label{cases-2}
  \vspace{-0.5em}
\item The neighboring element of $\mathcal{K}$ with respect to
	facet
  $F$ is coarser, which is denoted by
  $\mathcal{K}_{nb,0}$. \label{cases-3}
\end{enumerate}
For \ref{cases-1}, we let
\begin{equation}
  \label{eq:facebubble}
  \psi_{F,\kappa}:=
  \begin{cases}
    \hat{\psi}_{\mathcal{K},F,\kappa}\circ \Phi_{\mathcal{K}}^{-1}
    &\text{ on }\mathcal{K},
    \\
    \hat{\psi}_{\mathcal{K}_{nb},F,\kappa}\circ
    \Phi_{\mathcal{K}_{nb}}^{-1}
    &\text{ on }\mathcal{K}_{nb}.
  \end{cases}
\end{equation}
For \ref{cases-2}, we consider the refinement of
$\mathcal{K}:=\cup_{i=1}^{2^d}\mathcal{K}_i$ such that $F$ is refined
to the set of $\cbr[0]{F_i}_{i=1}^{2^d}$ where
$F_i = \mathcal{Q}_{\mathcal{K}_i} \cap
\mathcal{Q}_{\mathcal{K}_{nb,i}}$. We further denote by $\omega_{F_i}$
the union of $\mathcal{K}_i$ and $\mathcal{K}_{nb,i}$ and define a
$\psi_{F,\kappa,i}$ on $\omega_{F_i}$ as in \cref{eq:facebubble} on
each $F_i$.

For \ref{cases-3}, we consider the coarsest refinement of
$\mathcal{K}_{nb,0}$ such that one of the refined elements
$\mathcal{K}_{nb}$ has the property that
$F = \mathcal{Q}_{\mathcal{K}} \cap
\mathcal{Q}_{\mathcal{K}_{nb}}$. We denote the union of
$\mathcal{K}$ and $\mathcal{K}_{nb}$ by $\omega_{F,*}$. Then,
$\psi_{F,\kappa}$ is defined on $\omega_{F,*}$ as in
\cref{eq:facebubble}.

Applying the scaling arguments \cref{eq:lb7sudir,eq:scalingequiv} to
\cref{eq:bubbleestface-ref}, using the definition of $\psi_{F,\kappa}$
described above, choosing
$\kappa = \tilde{\varepsilon}^{1/2}\varepsilon^{1/2}$, and dropping
the subscript $\kappa$ from $\psi_{F,\kappa}$, we obtain the following
estimates:
\begin{equation}
  \label{eq:bubbleestface}
  \begin{aligned}
    &
    \norm[0]{\psi_F\mu}_{F}
    \le c
	 \norm[0]{\mu}_{F},
	 &
	 &
	 c\norm[0]{\mu}_{F}^2
    \le
    \langle{\mu,\psi_{F}\mu}\rangle_{F},
    \\
    &
	 \norm[0]{\psi_{F}\mu}_{\omega_F}
    \le c h_K^{1/2}
	 \tilde{\varepsilon}^{1/4} \varepsilon^{1/4}
    \norm[0]{\mu}_{F},
	 &
	 &\norm[0]{\overline{\nabla}\psi_{F}\mu}_{\omega_F}
    \le c
	 h_K^{-1/2}
	 \tilde{\varepsilon}^{-1/4} \varepsilon^{-1/4}
    \norm[0]{\mu}_{F}.
  \end{aligned}
\end{equation}

With the above bubble functions defined, we proceed with proving
\cref{thm:efficiency}.

\begin{proof}[Proof of \cref{thm:efficiency}]
  Each term of $\eta^{\mathcal{K}}$ will be bound separately. However,
  let us first note that since $\eta_{J,2}^{\mathcal{K}}$ and
  $\eta_{J,3}^{\mathcal{K}}$ are part of
  $\tnorm{\boldsymbol{u} - \boldsymbol{u}_h}_{sT,h,\mathcal{K}}$,
  these terms are trivially bounded.

  \textbf{Bound for $\eta_R^{\mathcal{K}}$.} By the triangle
  inequality and Young's inequality,
  \begin{equation}
    \label{eq:RhKtriangleineq}
    \norm[0]{R_h^{\mathcal{K}}}_{\mathcal{K}}^2
    \le
    2\norm[0]{\Pi_h R_h^{\mathcal{K}}}_{\mathcal{K}}^2
    +
    2\norm[0]{(I-\Pi_h)R_h^{\mathcal{K}}}_{\mathcal{K}}^2.
  \end{equation}
  We bound the first term on the right hand side. Using estimate
  \cref{eq:bubbleestelem}, with $c_1$ and $c_2$ the constants in the
  first and second inequalities of \cref{eq:bubbleestelem},
  respectively, the Cauchy--Schwarz inequality, and Young's
  inequality with constant $c_1$, we note that
  \begin{equation}
    \label{eq:PihRhKbounds}
    \tfrac{c_1}{2}\norm[0]{\Pi_hR_h^{\mathcal{K}}}_{\mathcal{K}}^2
    \le
    \del[0]{R_h^{\mathcal{K}},\psi_{\mathcal{K}}\Pi_hR_h^{\mathcal{K}}}_{\mathcal{K}}
    +
    \tfrac{c_2^2}{2c_1}
    \norm[0]{(I-\Pi_h)R_h^{\mathcal{K}}}_{\mathcal{K}}^2.
  \end{equation}
  Combining \cref{eq:RhKtriangleineq,eq:PihRhKbounds}, and using the
  boundedness of the projection $\Pi_h$ so that
  $\norm[0]{(I-\Pi_h)R_h^{\mathcal{K}}}_{\mathcal{K}}^2 \le c
  \norm[0]{(I-\Pi_h)R_h^{\mathcal{K}}}_{\mathcal{K}}
  \norm[0]{R_h^{\mathcal{K}}}_{\mathcal{K}}$, we obtain
  \begin{equation}
    \label{eq:lowerbnd2}
    \lambda_{\mathcal{K}}\norm[0]{R_h^{\mathcal{K}}}_{\mathcal{K}}^2
    \le
    c \lambda_{\mathcal{K}} \del[0]{R_h^{\mathcal{K}},\psi_{\mathcal{K}}\Pi_hR_h^{\mathcal{K}}}_{\mathcal{K}}
    +
    c \lambda_{\mathcal{K}} \norm[0]{(I-\Pi_h)R_h^{\mathcal{K}}}_{\mathcal{K}}
    \norm[0]{R_h^{\mathcal{K}}}_{\mathcal{K}}.
  \end{equation}
  To bound the first term on the right hand side of
  \cref{eq:lowerbnd2}, we use the definition of $R_h^{\mathcal{K}}$,
  integrate by parts, and use that $\nabla \cdot \beta=0$, to find for
  any $z\in H_0^1(\mathcal{K})$,
  \begin{equation}
    \label{eq:lowerbnd1}
      \del[0]{R_h^{\mathcal{K}},z}_{\mathcal{K}}
      =
      \del[0]{\varepsilon^{1/2}\overline{\nabla}(u-u_h),\varepsilon^{1/2}\overline{\nabla}z}_{\mathcal{K}}
      +
      \del[0]{\overline{\beta}\cdot\overline{\nabla}(u-u_h),z}_{\mathcal{K}}
      +
      \del[0]{\partial_t(u-u_h),z}_{\mathcal{K}}.
  \end{equation}
  Choosing $z = \psi_{\mathcal{K}}\Pi_hR_h^{\mathcal{K}}$, we bound
  each term on the right hand side of \cref{eq:lowerbnd1}
  separately. Using the Cauchy--Schwarz inequality, the inequality
  \cref{eq:eg_inv_2}, estimate \cref{eq:bubbleestelem}, and
  boundedness
  of the projection $\Pi_h$, we obtain:
  \begin{subequations}
    \label{eq:septermsbndsRhK}
    \begin{align}
      \del[0]{\varepsilon^{1/2}\overline{\nabla}(u-u_h),
      \varepsilon^{1/2}\overline{\nabla}(\psi_{\mathcal{K}}\Pi_hR_h^{\mathcal{K}})}_{\mathcal{K}}
      \le&
           c
           \varepsilon^{1/2}\norm[0]{\overline{\nabla}(u-u_h)}_{\mathcal{K}}
           \varepsilon^{1/2}h_K^{-1}\norm[0]{R_h^{\mathcal{K}}}_{\mathcal{K}},
      \\
      \del[0]{\overline{\beta}\cdot\overline{\nabla}(u-u_h),\psi_{\mathcal{K}}\Pi_hR_h^{\mathcal{K}}}_{\mathcal{K}}
      \le & c
            \varepsilon^{1/2}\norm[0]{\overline{\nabla}(u-u_h)}_{\mathcal{K}}
            \varepsilon^{-1/2}\norm[0]{R_h^{\mathcal{K}}}_{\mathcal{K}},
      \\
      \del[0]{\partial_t(u-u_h),\psi_{\mathcal{K}}\Pi_hR_h^{\mathcal{K}}}_{\mathcal{K}}
      \le& c
           \tau_{\varepsilon}^{1/2}\norm[0]{\partial_t(u-u_h)}_{\mathcal{K}}
           \tau_{\varepsilon}^{-1/2}\norm[0]{R_h^{\mathcal{K}}}_{\mathcal{K}}.
    \end{align}
  \end{subequations}
  From \cref{eq:lowerbnd1} with
  $z = \psi_{\mathcal{K}}\Pi_hR_h^{\mathcal{K}}$ and
  \cref{eq:septermsbndsRhK} we therefore obtain:
  \begin{equation}
    \label{eq:RhboundtermszpiRh}
      \del[0]{R_h^{\mathcal{K}},\psi_{\mathcal{K}}\Pi_hR_h^{\mathcal{K}}}_{\mathcal{K}}
      \le
      c \bigg(
      (\varepsilon^{1/2}h_K^{-1} + \varepsilon^{-1/2}) \varepsilon^{1/2}\norm[0]{\overline{\nabla}(u-u_h)}_{\mathcal{K}}
      +
      \norm[0]{\partial_t(u-u_h)}_{\mathcal{K}}
      \bigg)\norm[0]{R_h^{\mathcal{K}}}_{\mathcal{K}}.
  \end{equation}
  Using that $\delta t_{\mathcal{K}} = \mathcal{O}(h_K^2)$ we note
  that
  $\lambda_{\mathcal{K}} (\varepsilon^{1/2}h_K^{-1} +
  \varepsilon^{-1/2}) < c \tilde{\varepsilon}^{-1/2}
  \varepsilon^{-1/2}$. Therefore, multiplying both sides of
  \cref{eq:RhboundtermszpiRh} by $\lambda_{\mathcal{K}}$, we find
  \begin{equation}
    \label{eq:RhboundtermszpiRhF}
    \lambda_{\mathcal{K}}\del[0]{R_h^{\mathcal{K}},\psi_{\mathcal{K}}\Pi_hR_h^{\mathcal{K}}}_{\mathcal{K}}
    \le
    c \varepsilon^{-1/2} \tilde{\varepsilon}^{-1/2}
    \del[1]{
      \varepsilon^{1/2}\norm[0]{\overline{\nabla}(u-u_h)}_{\mathcal{K}}
      +
      \tau_{\varepsilon}^{1/2}\norm[0]{\partial_t(u-u_h)}_{\mathcal{K}}
      }
      \norm[0]{R_h^{\mathcal{K}}}_{\mathcal{K}}.
  \end{equation}
  Combining \cref{eq:lowerbnd2,eq:RhboundtermszpiRhF}, and using the
  definitions of $\tnorm{\boldsymbol{u}-\boldsymbol{u}_h}_{sT,h,\mathcal{K}}$,
  $\eta_R^{\mathcal{K}}$ and $\mathrm{osc}_h^{\mathcal{K}}$,
  \begin{equation*}
    \eta_R^{\mathcal{K}}
    \le
    c \varepsilon^{-1/2} \tilde{\varepsilon}^{-1/2}
	 \tnorm{\boldsymbol{u}-\boldsymbol{u}_h}_{sT,h,\mathcal{K}}
    +
    c\, \mathrm{osc}_h^{\mathcal{K}}.
  \end{equation*}

  \textbf{Bound for $\eta_{J,1}^{\mathcal{K}}$.} Let $F$ be a facet
  such that
  $F\subset\mathcal{Q}_{\mathcal{K}} \setminus
  \partial\mathcal{E}$. To bound $\eta_{J,1}^{\mathcal{K}}$ we
  consider separately \ref{cases-1}, \ref{cases-2}, and \ref{cases-3}.

  \textbf{\ref{cases-1}.} For any $F\subset\mathcal{Q}_{\mathcal{K}}$ and
  $z\in H^1_0(\omega_{F})$, using integration by parts, we have
  \begin{equation*}
      \langle
      \varepsilon\jump{\overline{\nabla}_{\overline{n}}u_h},z
      \rangle_F
      =
      -\del[0]{\varepsilon^{1/2}\overline{\nabla}(u-u_h),\varepsilon^{1/2}\overline{\nabla}z}_{\omega_{F}}
      -
      \del[0]{\partial_t(u-u_h),z}_{\omega_{F}}
      -
      \del[0]{\overline{\beta}\cdot\overline{\nabla}(u-u_h),z}_{\omega_{F}}
      +
      \del[0]{R_h^{\mathcal{K}},z}_{\omega_{F}}.
  \end{equation*}
  Choosing
  $z = \psi_F \varepsilon \jump{ \overline{\nabla}_{\overline{n}} u_h
  }$, using \cref{eq:bubbleestface}, and the Cauchy--Schwarz
  inequality, we obtain
  \begin{multline}
    \label{eq:spatialgradjump1}
    h_K^{1/2}\varepsilon^{1/2}
    \norm[0]{\jump{\overline{\nabla}_{\overline{n}}u_h}}_{F}
    \le c
    \varepsilon^{-1/4}
    \tilde{\varepsilon}^{-1/4}
    \varepsilon^{1/2}
    \norm[0]{\overline{\nabla}(u-u_h)}_{\omega_F}
    \\
    +
    h_K\varepsilon^{-1/2}
    \varepsilon^{1/4}
    \tilde{\varepsilon}^{1/4}
    \del[1]{
      \tau_{\varepsilon}^{-1/2}
      \tau_{\varepsilon}^{1/2}
      \norm[0]{\partial_t(u-u_h)}_{\omega_F}
      +
      \varepsilon^{-1/2}
      \varepsilon^{1/2}
      \norm[0]{\overline{\nabla}(u-u_h)}_{\omega_F}
      +
      \lambda_{\mathcal{K}}^{-1}
      \lambda_{\mathcal{K}}
      \norm[0]{R_h^{\mathcal{K}}}_{\omega_F}
    }.
  \end{multline}
  Using $\delta t_{\mathcal{K}}=\mathcal{O}(h_K^2)$,
  $h_K\varepsilon^{-1/2}\tilde{\varepsilon}^{1/2}\le 1$ and
  $ \varepsilon^{-1/4} \tilde{\varepsilon}^{1/4}
  \max\cbr[0]{h_K,\varepsilon^{1/2}}\le 1$, we find
  \begin{equation*}
    h_K^{1/2}\varepsilon^{1/2}
    \norm[0]{\jump{\overline{\nabla}_{\overline{n}}u_h}}_{F}
    \le
    c\sum_{\mathcal{K}\subset\omega_F}
    \varepsilon^{-1/4}
    \tilde{\varepsilon}^{-1/4}
    \tnorm{\boldsymbol{u}-\boldsymbol{u}_h}_{sT,h,\mathcal{K}}
    +
    \lambda_{\mathcal{K}}\norm[0]{R_h^{\mathcal{K}}}_{\omega_F}.
  \end{equation*}

  \textbf{\ref{cases-2}.} Identical steps as in \ref{cases-1} gives
  \begin{equation*}
    h_K^{1/2}\varepsilon^{1/2}
    \norm[0]{\jump{\overline{\nabla}_{\overline{n}}u_h}}_{F_i}
    \le c
    \varepsilon^{-1/4}
    \tilde{\varepsilon}^{-1/4}
    \varepsilon^{1/2}
    \norm[0]{\overline{\nabla}(u-u_h)}_{\omega_{F_i}}
    +
    c
    \varepsilon^{1/4}
    \tilde{\varepsilon}^{1/4}
    \tau_{\varepsilon}^{1/2}
    \norm[0]{\partial_t(u-u_h)}_{\omega_{F_i}}
    +
    \lambda_{\mathcal{K}}
    \norm[0]{R_h^{\mathcal{K}}}_{\omega_{F_i}}.
  \end{equation*}
  Summing over all $F_i$'s,
  \begin{equation*}
    h_K^{1/2}\varepsilon^{1/2}
    \norm[0]{\jump{\overline{\nabla}_{\overline{n}}u_h}}_{F}
    \le
	 c\sum_{\mathcal{K}\subset\omega_{F}}
    \varepsilon^{-1/4}
    \tilde{\varepsilon}^{-1/4}
    \tnorm{\boldsymbol{u}-\boldsymbol{u}_h}_{sT,h,\mathcal{K}}
    +
	 \lambda_{\mathcal{K}}\norm[0]{R_h^{\mathcal{K}}}_{\omega_{F}}.
  \end{equation*}

  \textbf{\ref{cases-3}.} Identical steps as in \ref{cases-1} gives
  \begin{equation*}
    h_K^{1/2}\varepsilon^{1/2}
    \norm[0]{\jump{\overline{\nabla}_{\overline{n}}u_h}}_{F}
    \le c
    \varepsilon^{-1/4}
    \tilde{\varepsilon}^{-1/4}
    \varepsilon^{1/2}
    \norm[0]{\overline{\nabla}(u-u_h)}_{\omega_{F,*}}
    +
    c
    \varepsilon^{1/4}
    \tilde{\varepsilon}^{1/4}
    \tau_{\varepsilon}^{1/2}
    \norm[0]{\partial_t(u-u_h)}_{\omega_{F,*}}
    +
    \lambda_{\mathcal{K}}
    \norm[0]{R_h^{\mathcal{K}}}_{\omega_{F,*}}.
  \end{equation*}
  Since $\omega_{F,*} \subset \omega_F$, we then find
  \begin{equation*}
    h_K^{1/2}\varepsilon^{1/2}
    \norm[0]{\jump{\overline{\nabla}_{\overline{n}}u_h}}_{F}
    \le c
    \sum_{\mathcal{K} \subset \omega_F}
    \varepsilon^{-1/4}
    \tilde{\varepsilon}^{-1/4}
    \tnorm{\boldsymbol{u}-\boldsymbol{u}_h}_{sT,h,\mathcal{K}}
    +
    \lambda_{\mathcal{K}}
    \norm[0]{R_h^{\mathcal{K}}}_{\omega_{F}}.
  \end{equation*}

  For each of the three cases, summing over all facets
  $F \subset \mathcal{Q}_{\mathcal{K}} \setminus \partial\mathcal{E}$,
  and using the definitions of $\eta_{J,1}^{\mathcal{K}}$ and
  $\eta_R^{\mathcal{K}}$, we find
  \begin{equation*}
    \eta_{J,1}^{\mathcal{K}}
    \le
    \sum_{F \in \mathcal{Q}_{\mathcal{K}} \setminus \partial\mathcal{E}} \sum_{\mathcal{K}\subset\omega_F}
    \sbr[2]{
      c \varepsilon^{-1/4}
      \tilde{\varepsilon}^{-1/4}
      \tnorm{\boldsymbol{u}-\boldsymbol{u}_h}_{sT,h,\mathcal{K}}
      +
      \eta_R^{\mathcal{K}}
    }.
  \end{equation*}

  \textbf{Bound for $\eta_{BC,1}^{\mathcal{K}}$.}
  To bound $\eta_{BC,1}^{\mathcal{K}}$, let $F$ be a facet such
  that $F \subset \mathcal{Q}_{\mathcal{K}} \cap
  \partial\mathcal{E}_N$. By the triangle inequality and Young's
  inequality,
  \begin{equation}
    \label{eq:RhBC1triangleineq}
    \norm[0]{R_h^{N}}_{F}^2
    \le
    2\norm[0]{\Pi_h^{\mathcal{F}}R_h^{N}}_{F}^2
    +
    2\norm[0]{(I-\Pi_h^{\mathcal{F}})R_h^{N}}_{F}^2.
  \end{equation}
  We bound the first term on the right hand side. Using estimate
  \cref{eq:bubbleestface}, with $c_1$ and $c_2$ the constants in the
  first and second inequalities of \cref{eq:bubbleestface},
  respectively, the Cauchy--Schwarz inequality, and Young's inequality
  with constant $c_1$, we note that
  \begin{equation}
    \label{eq:PihRhBC1bounds}
    \tfrac{c_1}{2}
    \norm[0]{\Pi_h^{\mathcal{F}}R_h^{N}}_{F}^2
    \le
    \langle{R_h^{N},\psi_{F}\Pi_h^{\mathcal{F}}R_h^{N}}
    \rangle_{F}
    +
    \tfrac{c_2^2}{2c_1}
    \norm[0]{(I-\Pi_h^{\mathcal{F}})R_h^{N}}_F^2.
  \end{equation}
  Combining \cref{eq:RhBC1triangleineq,eq:PihRhBC1bounds}, and using
  the boundedness of the projection $\Pi_h^{\mathcal{F}}$ so that \\
  $\norm[0]{(I-\Pi_h^{\mathcal{F}})R_h^{N}}_{F}^2 \le c
  \norm[0]{(I-\Pi_h^{\mathcal{F}})R_h^{N}}_{F} \norm[0]{R_h^{N}}_{F}$,
  we obtain
  \begin{equation}
    \label{eq:lowerbnd3}
    \norm[0]{R_h^{N}}_{F}^2
    \le
    c\langle{R_h^{N},\psi_{F}\Pi_h^{\mathcal{F}}R_h^{N}}
    \rangle_{F}
    +
    c\norm[0]{(I-\Pi_h^{\mathcal{F}})R_h^{N}}_{F}
    \norm[0]{R_h^{N}}_{F}.
  \end{equation}
  Let $z\in H^1(\omega_F)$ be such that
  $z|_{\partial\omega_F\setminus F}=0$. Note that
  $\omega_F = \mathcal{K}$. Using integration by parts, we
  have:
  \begin{equation*}
    \del[0]{R_h^{\mathcal{K}},z}_{\mathcal{K}}
    =
    \del[0]{\varepsilon^{1/2}\overline{\nabla}(u-u_h),\varepsilon^{1/2}\overline{\nabla}z}_{\mathcal{K}}
    +
    \del[0]{\partial_t(u-u_h),z}_{\mathcal{K}}
    +
    \del[0]{\overline{\beta}\cdot\overline{\nabla}(u-u_h),z}_{\mathcal{K}}
    -
    \langle
    \varepsilon\overline{\nabla}_{\bar{n}} (u-u_h),z
    \rangle_F.
  \end{equation*}
  The last term on the right hand side can be rewritten using
  \cref{eq:st_adr_bcN} resulting in
  \begin{equation}
    \label{eq:neumannjump1}
    \begin{split}
      \langle R_h^N,z \rangle_F
      =&
      \del[0]{\varepsilon^{1/2}\overline{\nabla}(u-u_h),\varepsilon^{1/2}\overline{\nabla}z}_{\mathcal{K}}
      +
      \del[0]{\partial_t(u-u_h),z}_{\mathcal{K}}
      +
      \del[0]{\overline{\beta}\cdot\overline{\nabla}(u-u_h),z}_{\mathcal{K}}
      \\
      &
      -\del[0]{ R_h^{\mathcal{K}},z }_{\mathcal{K}}
      -\langle
      \zeta^-(u-\mu_h)\beta\cdot n ,z
      \rangle_F
      +\langle
      \zeta^-\sbr[0]{\boldsymbol{u}_h}\beta\cdot n ,z
      \rangle_F.
    \end{split}
  \end{equation}
  Choosing $z=\psi_F\Pi_h^{\mathcal{F}}R_h^N$ in
  \cref{eq:neumannjump1} and using
  \cref{eq:bubbleestface,eq:betasinfmax} and boundedness of
  $\Pi_h^{\mathcal{F}}$
  \begin{equation*}
    \begin{split}
      &ch_K^{1/2}\varepsilon^{-1/2}
      \langle R_h^N,\psi_F\Pi_h^{\mathcal{F}}R_h^N \rangle_F
      \le
      \bigg(\varepsilon^{-1/4}
      \tilde{\varepsilon}^{-1/4}
      \varepsilon^{1/2}
      \norm[0]{\overline{\nabla}(u-u_h)}_{\mathcal{K}}
      \\
      &\hspace{10em}
      +
      c h_K
      \varepsilon^{-1/2}
      \varepsilon^{1/4}
      \tilde{\varepsilon}^{1/4}
      \del[1]{
        \norm[0]{\partial_t(u-u_h)}_{\mathcal{K}}
        +
        \norm[0]{\overline{\nabla}(u-u_h)}_{\mathcal{K}}
        +
        \norm[0]{R_h^{\mathcal{K}}}_{\mathcal{K}}
      }
      \\
      &\hspace{10em}
      +
      c{h_K^{1/2}}\varepsilon^{-1/2}
      \del[1]{
        \norm[0]{\envert[0]{\tfrac{1}{2}\beta\cdot n}^{1/2}(u-\mu_h)}_{F}
        +
        \norm[0]{\envert[0]{\beta_s-\tfrac{1}{2}\beta\cdot
            n}^{1/2}\sbr[0]{\boldsymbol{u}_h}}_{F}
      }\bigg)\norm[0]{R_h^{\mathcal{F}}}_F.
    \end{split}
  \end{equation*}
  The first two terms on the right-hand side are identical to the
  right hand side in \cref{eq:spatialgradjump1} and so can be bounded
  similarly:
  \begin{multline}
    \label{eq:neumannjump2}
    ch_K^{1/2}\varepsilon^{-1/2}
    \langle R_h^N,\psi_F\Pi_h^{\mathcal{F}}R_h^N \rangle_F
    \le
    \bigg(
    \varepsilon^{-1/4}
    \tilde{\varepsilon}^{-1/4}
    \tnorm{\boldsymbol{u}-\boldsymbol{u}_h}_{sT,h,\mathcal{K}}
    +
    \lambda_{\mathcal{K}}\norm[0]{R_h^{\mathcal{K}}}_{\mathcal{K}}
    \\
    +
    c{h_K^{1/2}}\varepsilon^{-1/2}
    \del[1]{
      \norm[0]{\envert[0]{\tfrac{1}{2}\beta\cdot n}^{1/2}(u-\mu_h)}_{F}
      +
      \norm[0]{\envert[0]{\beta_s-\tfrac{1}{2}\beta\cdot
          n}^{1/2}\sbr[0]{\boldsymbol{u}_h}}_{F}
    }\bigg)\norm[0]{R_h^{\mathcal{F}}}_F.
  \end{multline}
  At this point, let us note that
  $h_K^{1/2} \varepsilon^{-1/2} \le \tilde{\varepsilon}^{-1/2}$ for
  $\delta t_{\mathcal{K}} = \mathcal{O}(h_K^2)$. Therefore, for the
  last term on the right hand side of \cref{eq:neumannjump2} we have
  \begin{equation}
    \label{eq:neumannjump3}
    c{h_K^{1/2}}\varepsilon^{-1/2}
    \del[1]{
      \norm[0]{\envert[0]{\tfrac{1}{2}\beta\cdot n}^{1/2}(u-\mu_h)}_{F}
      +
      \norm[0]{\envert[0]{\beta_s-\tfrac{1}{2}\beta\cdot
          n}^{1/2}\sbr[0]{\boldsymbol{u}_h}}_{F}
    }
    \le c\tilde{\varepsilon}^{-1/2}
    \tnorm{\boldsymbol{u}-\boldsymbol{u}_h}_{sT,h,\mathcal{K}}.
  \end{equation}
  Combining \cref{eq:lowerbnd3,eq:neumannjump2,eq:neumannjump3},
  summing over all
  $F \in \mathcal{Q}_{\mathcal{K}} \cap \partial\mathcal{E}_N$, using
  that
  $\tilde{\varepsilon}^{-1/2} \le
  \varepsilon^{-1/4}\tilde{\varepsilon}^{-1/4}$, and the definitions
  of $\eta_{BC,1}^{\mathcal{K}}$ and $\eta_R^{\mathcal{K}}$, we find
  that
  \begin{equation*}
    \eta_{BC,1}^{\mathcal{K}}
    \le
    c\varepsilon^{-1/4}\tilde{\varepsilon}^{-1/4}
    \tnorm{\boldsymbol{u}-\boldsymbol{u}_h}_{sT,h,\mathcal{K}}
    + c\eta_R^{\mathcal{K}}
    + c\,\mathrm{osc}_h^{N}.
  \end{equation*}

  \textbf{Bound for $\eta_{BC,2}^{\mathcal{K}}$.} Let $F$ be a facet
  such that $F \subset \mathcal{R}_{\mathcal{K}} \cap \Omega_0$. By
  \cref{eq:st_adr_bcN} we have that $g=-u\beta\cdot n=u$. Therefore,
  \begin{equation*}
      \eta_{BC,2}^{\mathcal{K}}
      =
      \norm[0]{u-u_h}_{F}
      \le
      \norm[0]{u-\mu_h}_{F}
      +
      \norm[0]{\sbr[0]{\boldsymbol{u}_h}}_{F}
      \le c \tnorm{\boldsymbol{u}-\boldsymbol{u}_h}_{sT,h,\mathcal{K}}.
  \end{equation*}

  \textbf{Bound for $\eta_{J,2,2}^{\mathcal{K}}$.}  Let $F$ be a facet
  such that
  $F \subset \mathcal{Q}_{\mathcal{K}} \setminus \partial\mathcal{E}$.
  Using again that
  $h_K^{1/2} \varepsilon^{-1/2} \le \tilde{\varepsilon}^{-1/2}$ for
  $\delta t_{\mathcal{K}} = \mathcal{O}(h_K^2)$, we have
  \begin{equation*}
      \eta_{J,2,2}^{\mathcal{K}}
      \le
      \varepsilon^{-1/4}\tilde{\varepsilon}^{-3/4}
      \tnorm{\boldsymbol{u}-\boldsymbol{u}_h}_{sT,h,\mathcal{K}}.
  \end{equation*}

  Combining the bounds for $\eta_R^{\mathcal{K}}$,
  $\eta_{J,1}^{\mathcal{K}}$, $\eta_{BC,1}^{\mathcal{K}}$,
  $\eta_{BC,2}^{\mathcal{K}}$ and $\eta_{J,2,2}^{\mathcal{K}}$, and
  since
  $\varepsilon^{-1/4}\tilde{\varepsilon}^{-1/4} \le \varepsilon^{-1/2}
  \tilde{\varepsilon}^{-1/2}$, we conclude \cref{eq:efficiency}.
\end{proof}

\section{Numerical Examples}
\label{s:numericalEx}

In this section, we solve the space-time HDG method
\cref{eq:st_hdg_adr_compact} with AMR using the a posteriori error
estimator $\eta^{\mathcal{K}}$ introduced in
\cref{eq:apos_st_hdg_ests_total}. The implementation uses the finite
element library deal.II \cite{dealII95,dealii2019design} on
unstructured hexahedral space-time meshes with p4est
\cite{Bangerth:2011} to obtain distributed mesh information.
Furthermore, in our implementation we choose the penalty parameter
$\alpha = 8 p_s^2$ (see, for example, \cite{Riviere:book}). The linear
system is solved all-at-once using the Multifrontal Massively Parallel
Solver (MUMPS) \cite{mumps:1,mumps:2}. In each refinement cycle, the
local error estimate $\eta^{\mathcal{K}}$ is computed for all
$\mathcal{K}\in\mathcal{T}_h$ and then ordered according to the
magnitude of $\eta^{\mathcal{K}}$. The top 25\% of elements are marked
for refinement and the bottom 10\% of elements are marked for
coarsening. The test cases in this section are implemented for both
$\delta t_{\mathcal{K}}=h_K$ and
$\delta t_{\mathcal{K}}=\mathcal{O}(h_K^2)$. In each example we will
also investigate the efficiency index, which is defined as
$\eta / \tnorm{\boldsymbol{u} - \boldsymbol{u}_h}_{sT,h}$.

\begin{remark}
  \label{rem:efficiencyindex}
  By \cref{thm:reliability}, \cref{thm:efficiency} and
  \cref{rem:efficiencyrefined-new} we expect the efficiency index to
  be bounded below by $\mathcal{O}(\varepsilon^{1/2})$ and above by
  $\mathcal{O}(\varepsilon^{-1})$ in the pre-asymptotic regime and
  above by $\mathcal{O}(\varepsilon^{-1/2})$ in the asymptotic regime.
\end{remark}

\subsection{A rotating Gaussian pulse test}
\label{ss:rot-pulse}

This test case involves a Gaussian pulse on the spatial domain
$\Omega = (-0.5,0.5)^2$ and we simulate its rotation in the time
interval $I=(0,1]$. We set $\beta = \del[0]{1,-4x_2,4x_1}^\intercal$
and $f=0$. Initial and boundary conditions are then chosen such that
the exact solution to the problem is given by
\begin{equation*}
  u(t,x_1,x_2)
  =
  \tfrac{\sigma^2}{\sigma^2+2\varepsilon t}
  \exp\del[1]{
    -\tfrac{\del[0]{\widetilde{x}_1-x_{1c}}^2+\del[0]{\widetilde{x}_2-x_{2c}}^2}{2\sigma^2+4\varepsilon t}
  },
\end{equation*}
where $\widetilde{x}_1 := x_1\cos(4t)+x_2\sin(4t)$ and
$\widetilde{x}_2 := -x_1\sin(4t)+x_2\cos(4t)$. We choose
$\sigma = 0.1$ and $(x_{1c},x_{2c})=(-0.2,0.1)$. To demonstrate the
motion of the pulse and the adaptive mesh refinement, we plot the
spatial meshes and the solutions at $t=0.2,0.5,0.8$ for
$\varepsilon=10^{-4}$ in \cref{fig:rot-pulse}.

We perform three convergence tests with $\varepsilon = 10^{-3}$,
$10^{-4}$, and $10^{-5}$. In \cref{fig:rot-pulse-conv}, for
$\delta t_{\mathcal{K}} = \mathcal{O}(h_K)$ and
$\delta t_{\mathcal{K}} = \mathcal{O}(h_K^2)$, we present the
convergence histories of the error estimator $\eta$, the true error
$\tnorm{\boldsymbol{u}-\boldsymbol{u}_h}_{sT,h}$ when using AMR, and
the true error $\tnorm{\boldsymbol{u}-\boldsymbol{u}_h}_{sT,h}$ when
using uniform refinement. Additionally, we compute the efficiency index
after each refinement cycle and plot its history. All tests are
implemented with $p_t=p_s=1$.

For both $\delta t_{\mathcal{K}} = \mathcal{O}(h_K)$ and
$\delta t_{\mathcal{K}} = \mathcal{O}(h_K^2)$ we observe
\cref{fig:rot-pulse-conv} that solutions on adaptively refined meshes
are slightly more accurate than their counterparts on uniformly
refined meshes although there is not too much advantage of using AMR
for this smooth test case. Both solutions exhibit convergence rate
$\mathcal{O}(N^{-1/2})$ which is optimal in the pre-asymptotic regime
(see \cite[Remark 1]{Wang:2023}). These results correspond to what we
expect from reliability and efficiency of the estimator proven in
\cref{thm:reliability} and \cref{thm:efficiency}. Nonrobustness of the
error estimator $\eta$ is observed with the efficiency index being of
order $\varepsilon^{-1/2}$. This lies within the interval commented on
in \cref{rem:efficiencyindex}.

\begin{figure}[tbp]
  \centering
  \begin{subfigure}{0.32\textwidth}
    \centering
    \includegraphics[width=\textwidth]{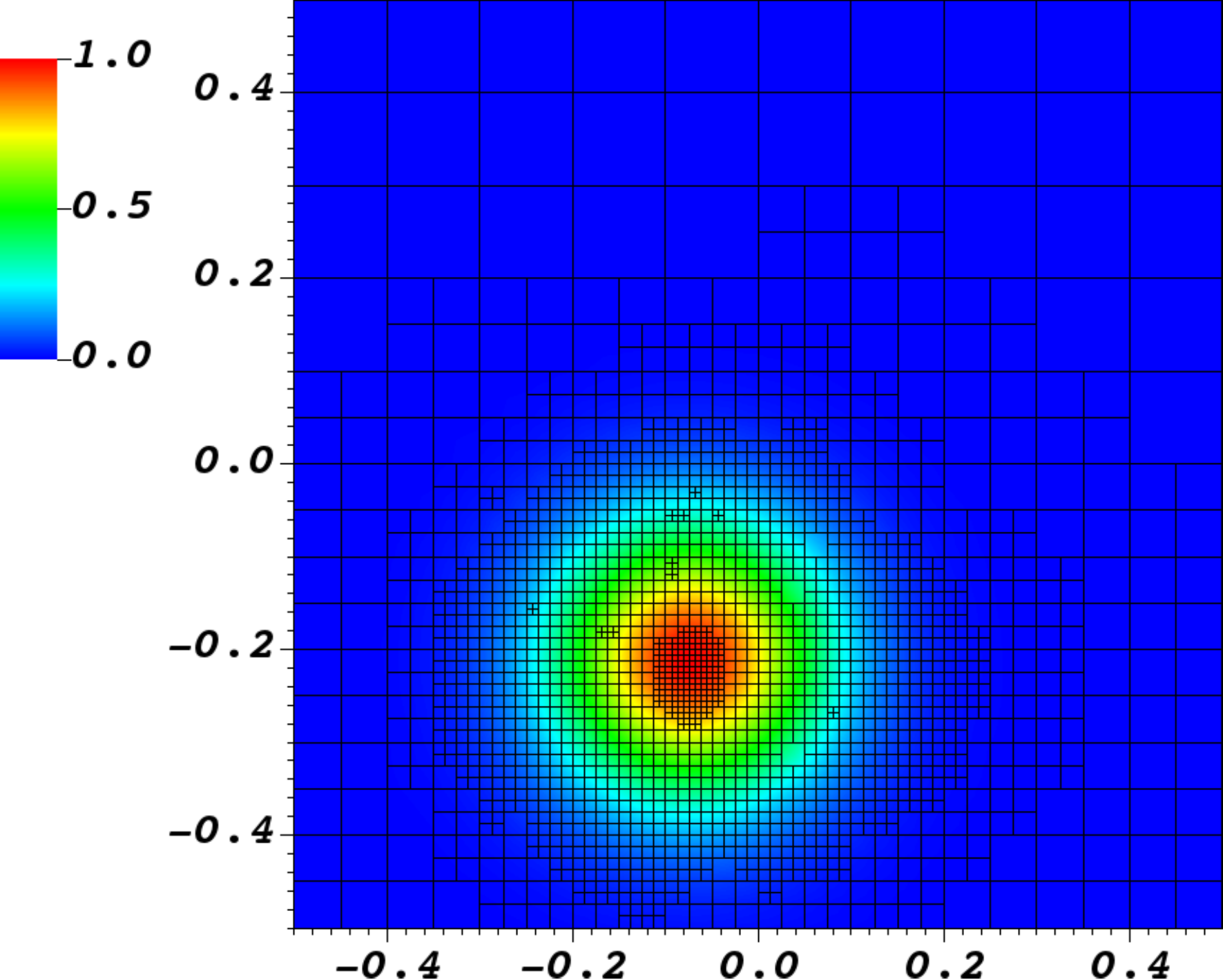}
  \end{subfigure}
  \begin{subfigure}{0.32\textwidth}
    \centering
    \includegraphics[width=\textwidth]{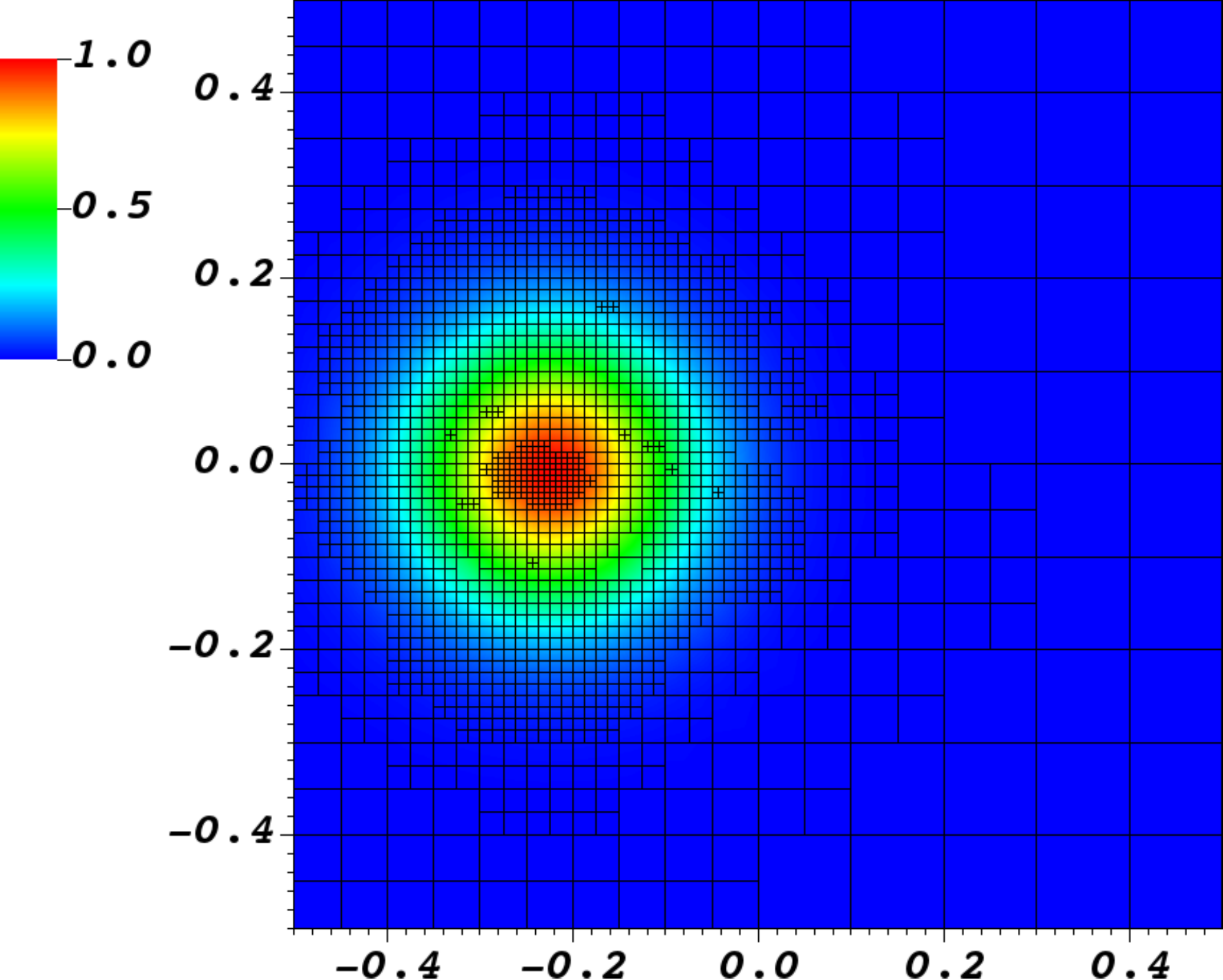}
  \end{subfigure}
  \begin{subfigure}{0.32\textwidth}
    \centering
    \includegraphics[width=\textwidth]{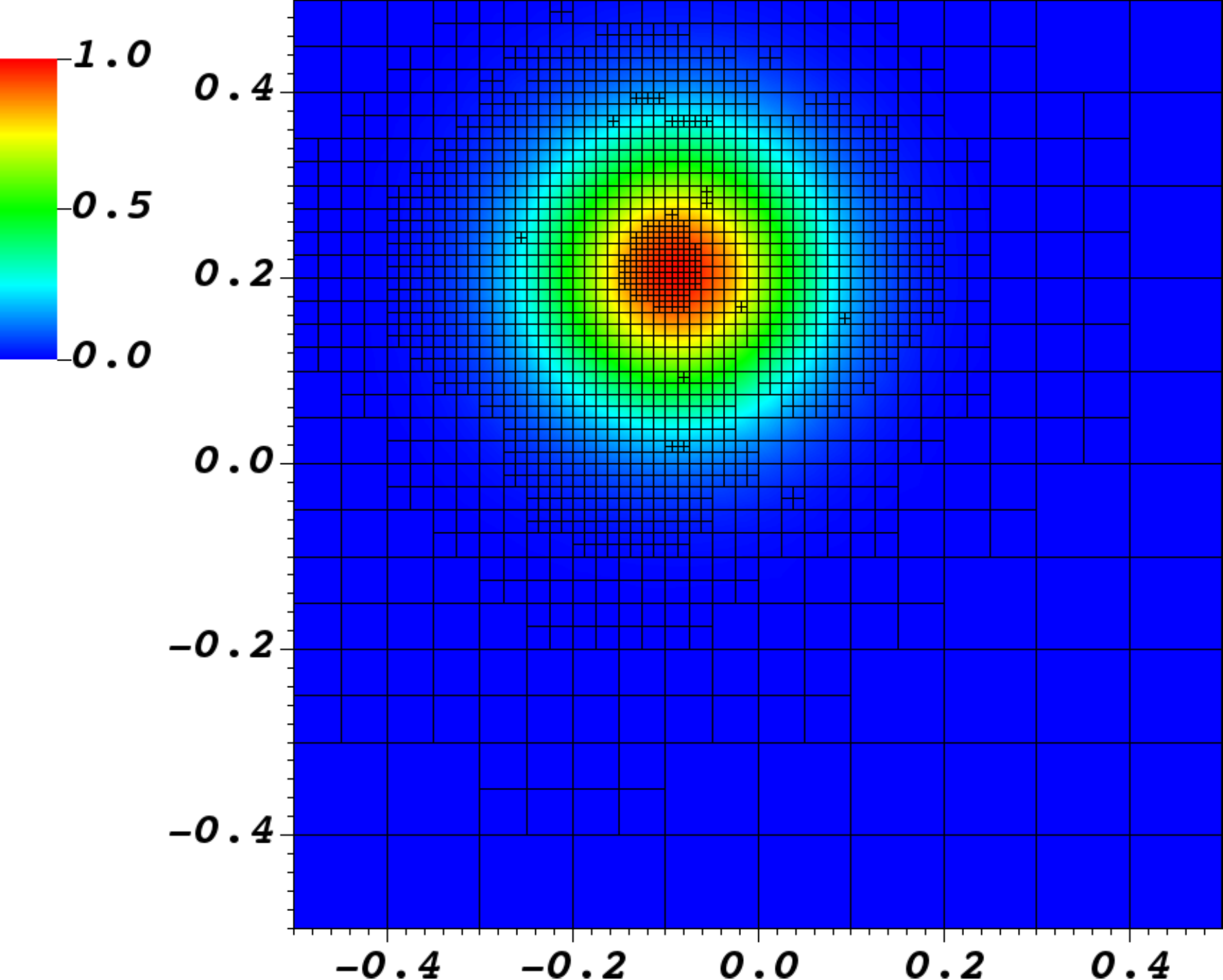}
  \end{subfigure}
  \caption{The spatial mesh and the rotating pulse. The solution is
    shown for $\varepsilon=10^{-4}$. Plots correspond to time levels
    $t = 0.2,0.5,0.8$ from left to right.}
  \label{fig:rot-pulse}
\end{figure}

\begin{figure}[tbp]
  \centering
  \includegraphics[width=\textwidth]{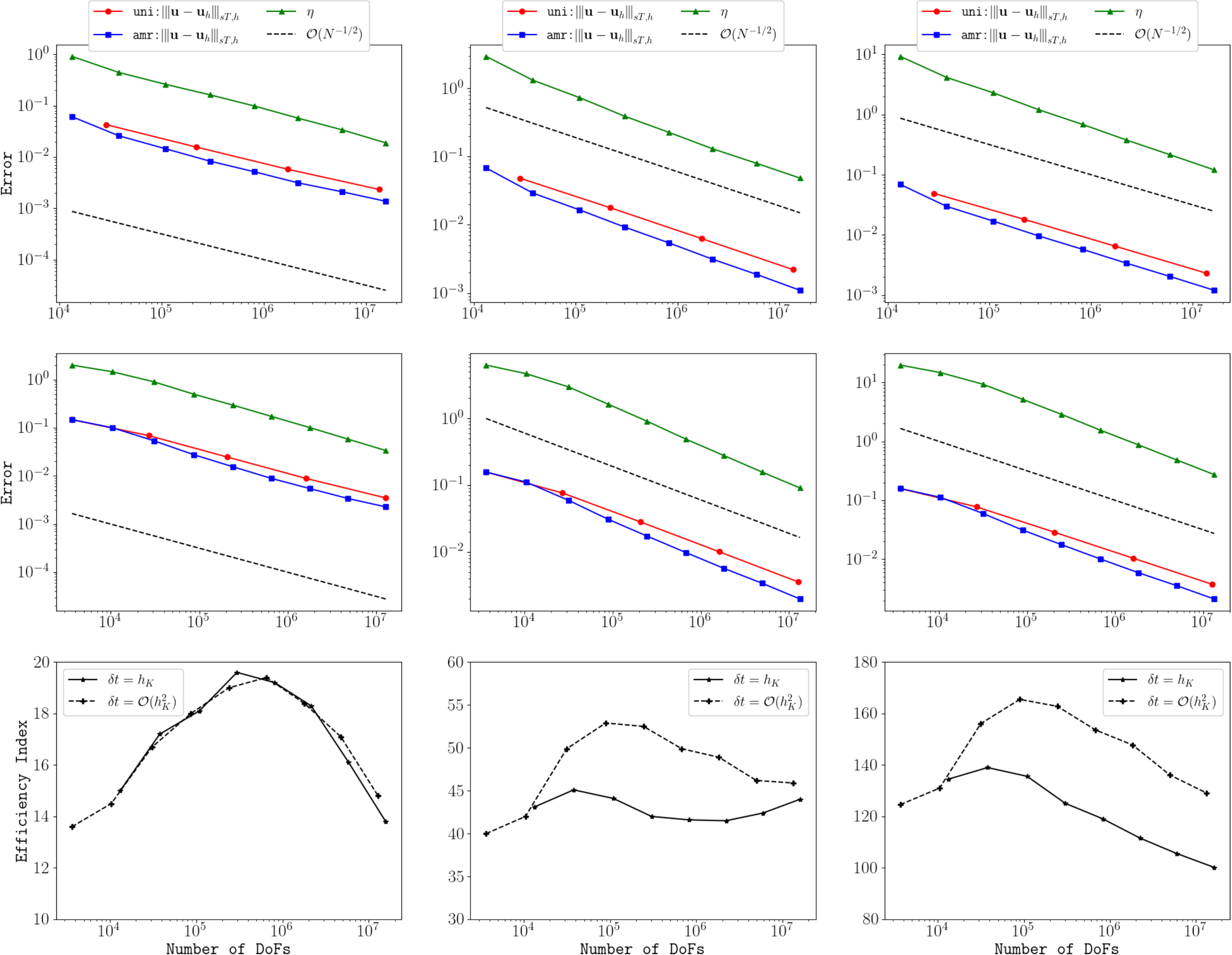}
  \caption{
	  Convergence histories of the rotating pulse test case. From
	  left to right: $\varepsilon=10^{-3}$, $\varepsilon=10^{-4}$
	  and $\varepsilon=10^{-5}$. Top row: $\delta
	  t_{\mathcal{K}}=h_K$; middle row: $\delta
	  t_{\mathcal{K}}=\mathcal{O}(h_K^2)$; bottom row: efficiency
	  index for both $\delta t_{\mathcal{K}}=h_K$ and $\delta
	  t_{\mathcal{K}}=\mathcal{O}(h_K^2)$.
  }
  \label{fig:rot-pulse-conv}
\end{figure}

\subsection{A boundary layer test}
\label{ss:bnd-layer}

We now consider problem \cref{eq:advdif} in which the solution
exhibits boundary layers. The problem is set up on the spatial domain
$\Omega = (0,1)^2$ and the time interval $I=(0,1]$ with
$\beta = \del[0]{1,1,1}^\intercal$. The initial and boundary
conditions and the source term are chosen such that the exact solution
is given by
\begin{equation*}
  u(t,x_1,x_2)
  =
  \del[0]{1-\exp(-t)}
  \del[1]{
    \tfrac{\exp((x_1-1)/\varepsilon)-1}{\exp(-1/\varepsilon)-1}+x_1-1
  }
  \del[1]{
    \tfrac{\exp((x_2-1)/\varepsilon)-1}{\exp(-1/\varepsilon)-1}+x_2-1
  }.
\end{equation*}
It is known that for small $\varepsilon$, the solution features
boundary layers of width $\mathcal{O}(\varepsilon)$ at the outflow
boundary of the spatial domain. See \cref{fig:bnd-layer} for an
example when $\varepsilon=10^{-3}$ and $\mathcal{T}_h$ has $20663$
elements.

\begin{figure}[tbp]
	\subfloat[Boundary layer test case.\label{fig:bnd-layer}]{
		\includegraphics[width=0.4\textwidth]{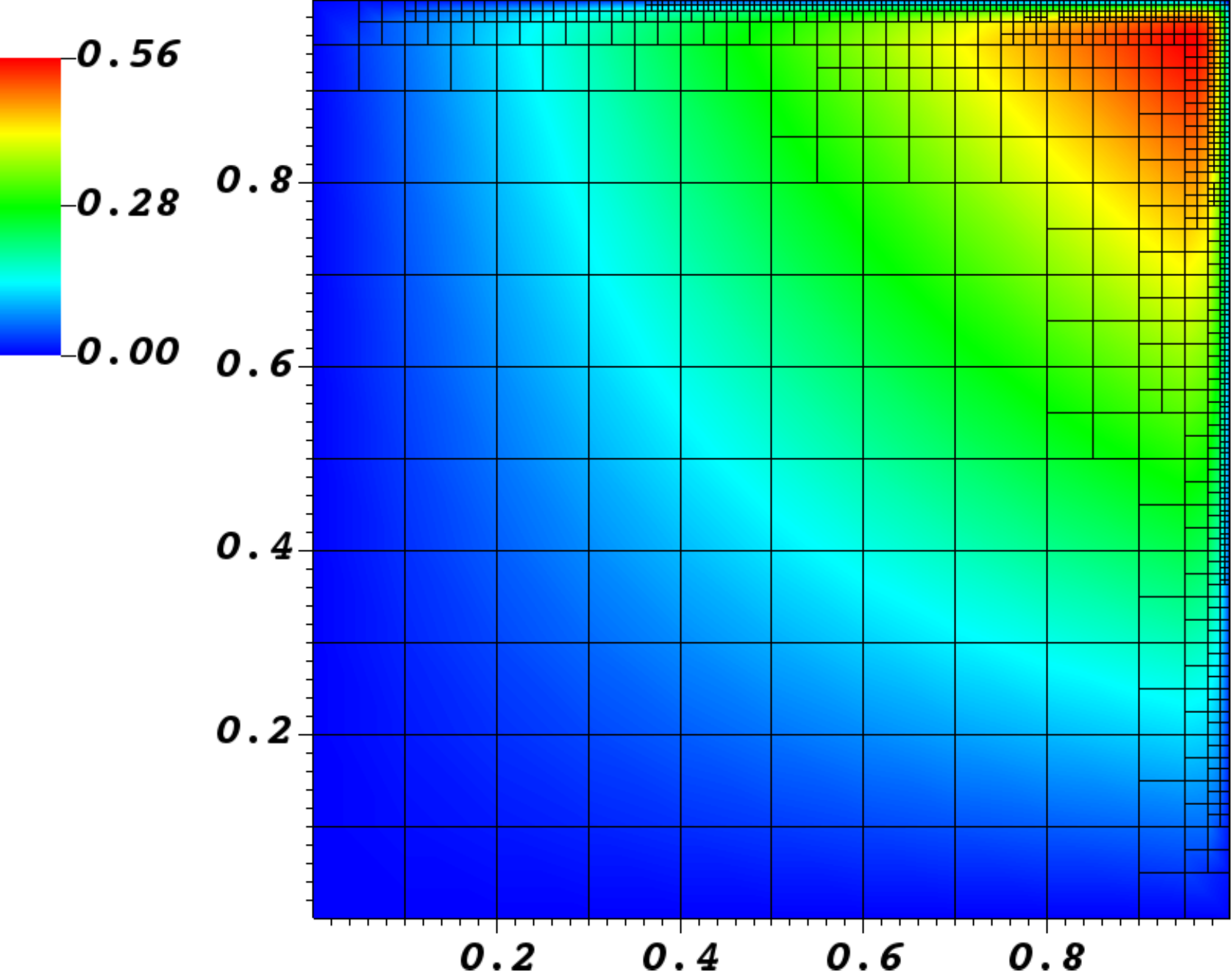}
	}
	\;
	\subfloat[Interior layer test case.\label{fig:int-layer}]{
		\includegraphics[width=0.4\textwidth]{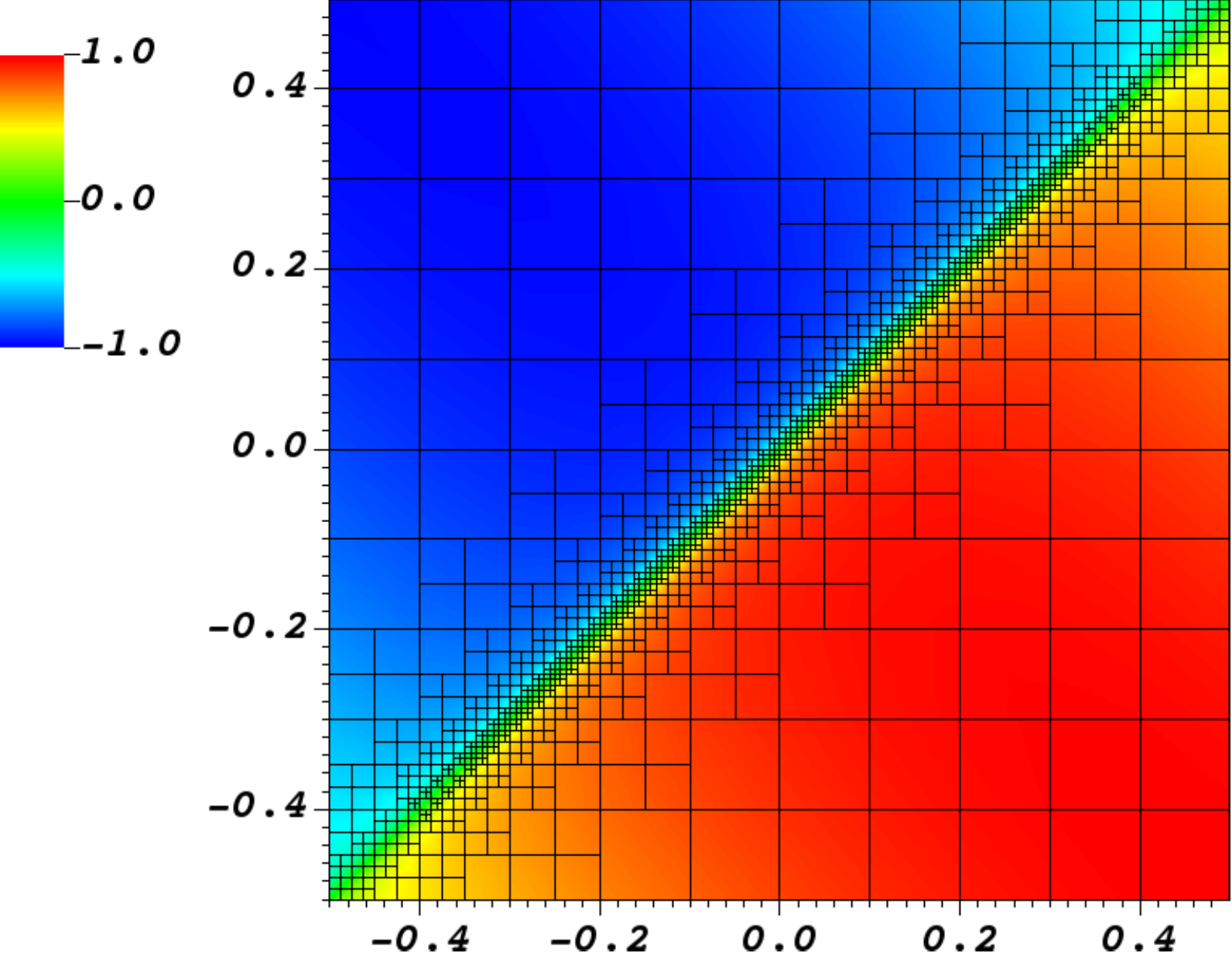}
	}
  \caption{The boundary and interior layer solutions at time
  $t=1.0$ for \cref{ss:bnd-layer,ss:int-layer} respectively. Both
	solutions are for $\varepsilon=10^{-3}$.}
  \label{fig:int-bnd-layers}
\end{figure}

Set $p_t=p_s=1$. We perform three convergence tests with
$\varepsilon=10^{-2}$, $10^{-3}$, and $10^{-4}$. For
$\delta t_{\mathcal{K}}=\mathcal{O}(h_K)$ and
$\delta t_{\mathcal{K}}=\mathcal{O}(h_K^2)$ we present in
\cref{fig:bnd-layer-conv} the convergence histories of
$\tnorm{\boldsymbol{u}-\boldsymbol{u}_h}_{sT,h}$, for both uniform and
adaptive mesh refinements, and of $\eta$ for adaptive mesh refinement.

For both $\delta t_{\mathcal{K}}=\mathcal{O}(h_K)$ and
$\delta t_{\mathcal{K}}=\mathcal{O}(h_K^2)$ we observe that for
$\varepsilon=10^{-2},10^{-3}$ and with AMR, the error
$\tnorm{\boldsymbol{u}-\boldsymbol{u}_h}_{sT,h}$ converges with
optimal rate $\mathcal{O}(N^{-1/3})$ in the asymptotic regime where
the layer has been sufficiently resolved. This is not the case for
$\varepsilon=10^{-4}$ where more refinement cycles are needed to
resolve the layer. However, solutions on adaptively refined meshes
show better accuracy than those on uniformly refined meshes. These
results verify reliability and efficiency of the estimator proven in
\cref{thm:reliability} and \cref{thm:efficiency}. Furthermore, the
efficiency indices depicted in \cref{fig:bnd-layer-conv} show
nonrobustness of order $\varepsilon^{-1/2}$ in the pre-asymptotic
regime and robustness in the asymptotic regime. These results once
again lie within the interval commented on in
\cref{rem:efficiencyindex}.

\begin{figure}[tbp]
  \centering
  \includegraphics[width=\textwidth]{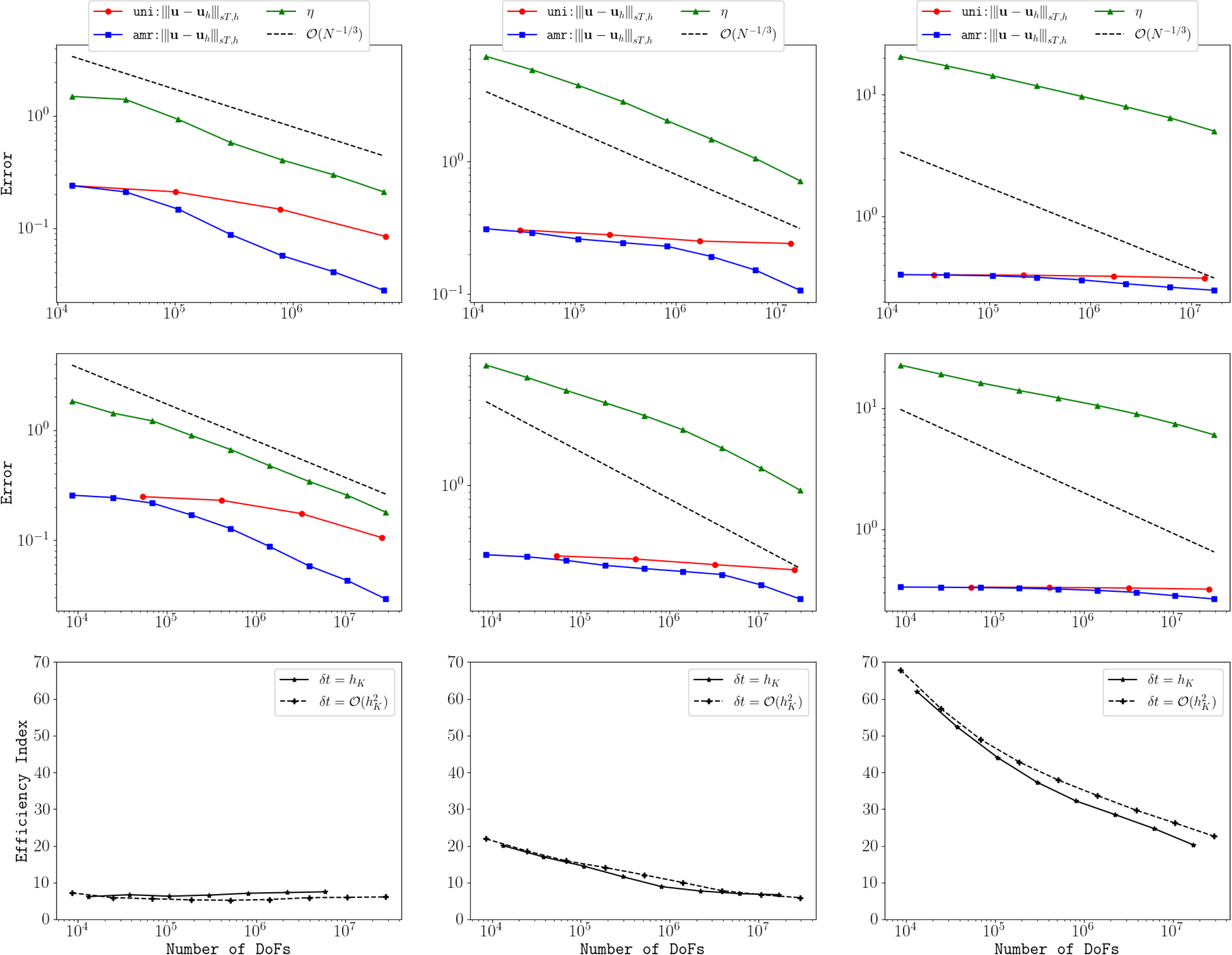}
  \caption{
	  Convergence histories of the boundary layer test case. From
	  left to right: $\varepsilon=10^{-2}$, $\varepsilon=10^{-3}$
	  and $\varepsilon=10^{-4}$. Top row: $\delta
	  t_{\mathcal{K}}=h_K$; middle row: $\delta
	  t_{\mathcal{K}}=\mathcal{O}(h_K^2)$; bottom row: efficiency
	  index for both $\delta t_{\mathcal{K}}=h_K$ and $\delta
	  t_{\mathcal{K}}=\mathcal{O}(h_K^2)$.
  }
  \label{fig:bnd-layer-conv}
\end{figure}

\subsection{An interior layer test}
\label{ss:int-layer}

In this test case, problem \cref{eq:advdif} is set up on the spatial
domain $\Omega = (-0.5,0.5)^2$ and the time interval $I=(0,1]$. We set
$\beta = \del[0]{1,1,1}^\intercal$ and set the initial condition,
boundary condition and the source term such that the exact solution is
given by
\begin{equation*}
  u(t,x_1,x_2)
  =
  \del[0]{1-\exp(-t)}
  \del[1]{
    \arctan({\tfrac{y-x}{\sqrt{2}\varepsilon}})
  }
  \del[1]{
    1-\tfrac{(x+y)^2}{2}
  }.
\end{equation*}
This solution has a diagonal interior layer on the spatial domain. See
\cref{fig:int-layer} for an example when $\varepsilon=10^{-3}$ and
when $\mathcal{T}_h$ has $23169$ elements.

As in \cref{ss:bnd-layer}, we perform three convergence tests with
$\varepsilon=10^{-2}$, $10^{-3}$, and $10^{-4}$. For
$\delta t_{\mathcal{K}}=\mathcal{O}(h_K)$ and
$\delta t_{\mathcal{K}}=\mathcal{O}(h_K^2)$, we present in
\cref{fig:int-layer-conv} the convergence histories of
$\tnorm{\boldsymbol{u}-\boldsymbol{u}_h}_{sT,h}$, for both uniform and
adaptive mesh refinements, and of $\eta$ for the adaptive mesh
refinement.

Both for $\delta t_{\mathcal{K}}=\mathcal{O}(h_K)$ and
$\delta t_{\mathcal{K}}=\mathcal{O}(h_K^2)$, when
$\varepsilon=10^{-2}$, solutions obtained on adaptively refined meshes
converge with the optimal rate $\mathcal{O}(N^{-1/3})$. On uniformly
refined meshes, solutions converge slightly slower than the optimal
rate. For $\varepsilon=10^{-3}$, adaptive meshes yield better
solutions which converge slightly faster than the optimal rate in the
asymptotic regime. Solutions on uniformly refined meshes converge with
a sub-optimal rate. For $\varepsilon=10^{-4}$, both solutions on
adaptively refined meshes and uniformly refined meshes converge
sub-optimally.  However, the former still performs better than the
latter.  Efficiency indices for all three cases are bounded above by
$10$, demonstrating robustness of the error estimator $\eta$ for this
test case.

The results from \cref{fig:int-layer-conv} verify reliability and
efficiency of the estimator proven in \cref{thm:reliability} and
\cref{thm:efficiency}. The robustness result of the error estimator
$\eta$ again lies within the interval commented on in
\cref{rem:efficiencyindex}.

\begin{figure}[tbp]
  \centering
  \includegraphics[width=\textwidth]{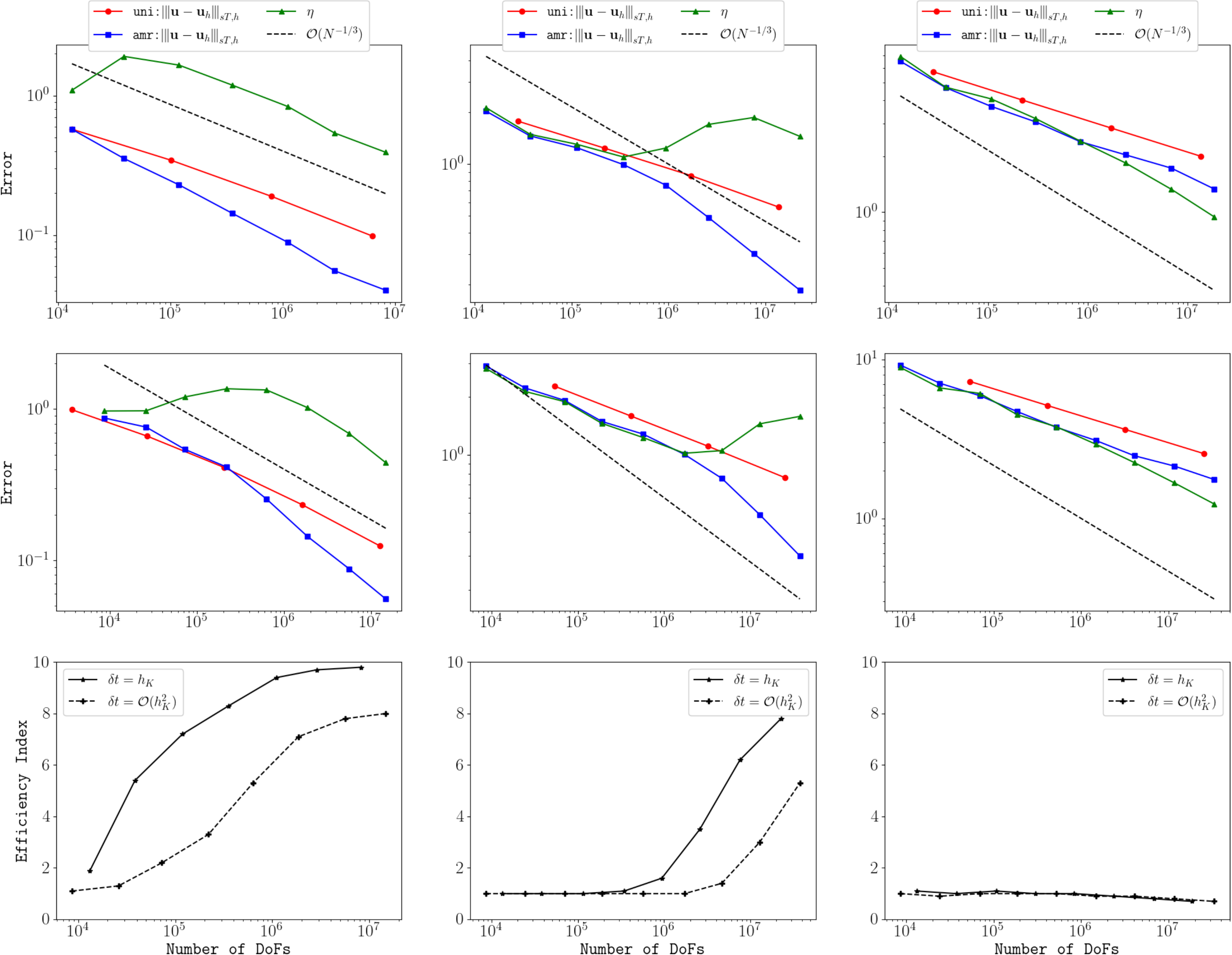}
  \caption{
	  Convergence histories of the interior layer test case. From
	  left to right: $\varepsilon=10^{-2}$, $\varepsilon=10^{-3}$
	  and $\varepsilon=10^{-4}$. Top row: $\delta
	  t_{\mathcal{K}}=h_K$; middle row: $\delta
	  t_{\mathcal{K}}=\mathcal{O}(h_K^2)$; bottom row: efficiency
	  index for both $\delta t_{\mathcal{K}}=h_K$ and $\delta
	  t_{\mathcal{K}}=\mathcal{O}(h_K^2)$.
  }
  \label{fig:int-layer-conv}
\end{figure}

\section{Conclusions}
\label{s:conclusions}

In this paper we presented and analyzed an a posteriori error
estimator for a space-time hybridizable discontinuous Galerkin
discretization of the time-dependent advection-diffusion problem with
adaptive mesh refinement. We proved, and verified numerically,
reliability and local efficiency of the error estimator with respect
to a locally computable norm. Numerical simulations showed, through
the AMR procedure, that the error estimator is able to produce meshes
on which solutions converge optimally. In particular, when sharp
layers are present, optimal convergence occurs in the asymptotic
regime. As expected from the reliability and local efficiency results
derived in \cref{thm:reliability} and \cref{thm:efficiency}, different
degrees of robustness and nonrobustness are observed in the numerical
simulations. Finally, we remark that the proofs of
\cref{thm:reliability} and \cref{thm:efficiency} assume
$\delta t_{\mathcal{K}}=\mathcal{O}(h_K^2)$. The numerical examples,
however, have shown that this assumption may be relaxed since similar
numerical results are obtained with
$\delta t_{\mathcal{K}}=\mathcal{O}(h_K)$.

\appendix
\section{Proofs of lemma's in \cref{ss:ineqapproxbounds}}
\label{s:moreinverseandtrace}

Let us first collect some useful results. Following \cite[Definition
2.9]{Georgoulis:thesis}, we define
\begin{equation*}
  H^1(\mathcal{K})
  :=
  \cbr[0]{
    u\in L^2(\mathcal{K}):\;
    (u\circ\phi_{\mathcal{K}})\in H^1(\widetilde{\mathcal{K}})
  }.
\end{equation*}
Consider an element $\widetilde{\mathcal{K}}$ and let
$\widetilde{F}_{\mathcal{Q}} \subset
\mathcal{Q}_{\widetilde{\mathcal{K}}}$ and
$\widetilde{F}_{\mathcal{R}} \subset
\mathcal{R}_{\widetilde{\mathcal{K}}}$. For
$\widetilde{u} \in H^1(\widetilde{\mathcal{K}})$, it follows from
\cite[Lemma B.7]{Sudirham:thesis} that
\begin{subequations}
  \label{eq:lb7sudir}
  \begin{align}
    \label{eq:lb7sudir-1}
    \norm[0]{\widehat{u}}_{\widehat{\mathcal{K}}}^2
    &=
      \del[1]{\tfrac{1}{2}}^{-d-1}
      \delta t_{\mathcal{K}}^{-1}
      h_K^{-d}
      \norm[0]{\widetilde{u}}_{\widetilde{\mathcal{K}}}^2,
    \\
    \label{eq:lb7sudir-2}
    \norm[0]{\widehat{u}}_{\widehat{F}_{\mathcal{Q}}}^2
    &= \del[1]{\tfrac{1}{2}}^{-d}
      \delta t_{\mathcal{K}}^{-1}h_K^{-d+1}
      \norm[0]{\widetilde{u}}_{\widetilde{F}_{\mathcal{Q}}}^2,
    \\
    \label{eq:lb7sudir-3}
    \norm[0]{\widehat{u}}_{\widehat{F}_{\mathcal{R}}}^2
    &= \del[1]{\tfrac{1}{2}}^{-d} h_K^{-d}
      \norm[0]{\widetilde{u}}_{\widetilde{F}_{\mathcal{R}}}^2,
    \\
    \label{eq:lb7sudir-4}
    \norm[0]{\widehat{\overline{\nabla}}\widehat{u}}_{\widehat{\mathcal{K}}}^2
    &= \del[1]{\tfrac{1}{2}}^{-d+1}
      \delta t_{\mathcal{K}}^{-1}h_K^{-d+2}
      \norm[0]{\widetilde{\overline{\nabla}}\widetilde{u}}_{\widetilde{\mathcal{K}}}^2.
  \end{align}
\end{subequations}
We also have,
\begin{subequations}
  \label{eq:scalingequiv}
  \begin{align}
    \label{eq:scalingequiv_1}
    c^{-1}\norm[0]{u}_{\mathcal{K}}
    \le
    &\norm[0]{\widetilde{u}}_{\widetilde{\mathcal{K}}}
      \le c
      \norm[0]{u}_{\mathcal{K}},
    \\
    \label{eq:scalingequiv_2}
    c^{-1}\norm[0]{u}_{{F}_{\mathcal{Q}}}
    \le
    &\norm[0]{\widetilde{u}}_{\widetilde{F}_{\mathcal{Q}}}
      \le c
      \norm[0]{u}_{{F}_{\mathcal{Q}}},
    \\
    \label{eq:scalingequiv_3}
    c^{-1}\norm[0]{u}_{{F}_{\mathcal{R}}}
    \le
    &\norm[0]{\widetilde{u}}_{\widetilde{F}_{\mathcal{R}}}
      \le c
      \norm[0]{u}_{{F}_{\mathcal{R}}},
    \\
    \label{eq:scalingequiv_4}
    \norm[0]{\widetilde{\overline{\nabla}}\widetilde{u}}_{\widetilde{\mathcal{K}}}
    \le c
    \norm[0]{\overline{\nabla}{u}}_{\mathcal{K}},
    &\quad
      \norm[0]{\partial_{\tilde{t}}\widetilde{u}}_{\widetilde{\mathcal{K}}}
      \le c
      \norm[0]{\partial_{{t}}{u}}_{\mathcal{K}}.
  \end{align}
\end{subequations}
Here, \cref{eq:scalingequiv_1} follows from a change of variables and
\cref{eq:diffeom_jac}, while
\cref{eq:scalingequiv_2,eq:scalingequiv_3} follow from a change of
variables and
\cref{eq:k_surface,eq:diffeom_regular_d}. \Cref{eq:scalingequiv_4}
follows from the chain rule, \cref{eq:diffeom_regular_special}, change
of variables and \cref{eq:diffeom_jac}.

When $\widetilde{u}\in Q^{(p_t,p_s)}(\widetilde{\mathcal{K}})$ we have
\begin{subequations}
  \label{eq:scalingphiextra}
  \begin{align}
    \label{eq:scalingphiextra-3}
    \norm[0]{\partial_{\hat{t}}\widehat{u}}_{\widehat{F}_{\mathcal{R}}}
     &=
       \del{\tfrac{1}{2}}^{1-d/2}
       \delta t_{\mathcal{K}}
       h_K^{-d/2}
       \norm[0]{\partial_{\tilde{t}}\widetilde{u}}_{\widetilde{F}_{\mathcal{R}}},
    \\
    \label{eq:scalingphiextra-2}
    \norm[0]{\partial_{\tilde{t}}\widetilde{u}}_{\widetilde{F}_{\mathcal{R}}}
     &\le c
       \norm[0]{\partial_tu}_{{F}_{\mathcal{R}}},
    \\
    \label{eq:scalingphiextra-1}
  c^{-1}
  \norm[0]{\widetilde{u}}_{\widetilde{E}_{\mathcal{K}}}
  &\le
  \norm[0]{u}_{E_{\mathcal{K}}}
  \le
  c \norm[0]{\widetilde{u}}_{\widetilde{E}_{\mathcal{K}}}.
  \end{align}
\end{subequations}
\Cref{eq:scalingphiextra-3} follows by extending \cite[Lemma
A.3]{Georgoulis:thesis} to $d+1$ dimensions in the space-time setting,
while \cref{eq:scalingphiextra-2} follows from the chain rule,
\cref{eq:diffeom_regular_special}, a change of variables, and
\cref{eq:k_surface,eq:diffeom_regular_d}. \Cref{eq:scalingphiextra-1}
holds on an edge $E_{\mathcal{K}}$ such that
$\phi_{\mathcal{K}}(\widetilde{E}_{\mathcal{K}})={E}_{\mathcal{K}}$. It
is analogous to \cref{eq:scalingequiv_2} in an integral domain with
one lower dimension and can be shown with similar steps.

When $u\in H^1(\mathcal{K})$, we have the standard anisotropic
projection estimate (see, for example, \cite[Lemma
3.13]{Georgoulis:thesis}) and the projection bounds shown in
\cite[eqs. (4.7a) and (4.7b)]{Wang:2023}:
\begin{subequations}
  \label{eq:projectionbounds}
  \begin{align}
    \label{eq:standardanisoprojection}
    \norm[0]{u-\Pi_hu}_{\mathcal{K}}
    &\le c
      \del[0]{
      \delta t_{\mathcal{K}}
      \norm[0]{\partial_tu}_{\mathcal{K}}
      +
      h_K\norm[0]{\overline{\nabla}u}_{\mathcal{K}}
      },
    \\
    \label{eq:derivativeofprojections}
    \norm[0]{\overline{\nabla}{(u-\Pi_hu)}}_{\mathcal{K}}
    &\le c
    \norm[0]{\overline{\nabla}u}_{\mathcal{K}},
      \quad
      \norm[0]{\partial_t{(u-\Pi_hu)}}_{\mathcal{K}}
      \le c
      \norm[0]{\partial_tu}_{\mathcal{K}}.
  \end{align}
\end{subequations}
Let us remark that the projection bounds \cite[eqs. (4.7a) and
(4.7b)]{Wang:2023} were proven on a moving mesh. On the fixed
mesh considered here, \cite[Eq. (4.7b)]{Wang:2023} reduces to the
second inequality of \cref{eq:derivativeofprojections}.

\subsection{Proof of \cref{lem:local_quasi_tnorm_st}}
\label{s:localquasitnormst}

Consider the following local trace inequality, which holds for all
$\mathcal{K}\in\mathcal{T}_h$,
$F_{\mathcal{Q}}\subset\mathcal{Q}_{\mathcal{K}}$ and
$v\in H^1(\mathcal{K})$ and it can be shown by combining \cite[Lemma
B.6]{Sudirham:thesis} and scaling arguments
\cref{eq:scalingequiv_2,eq:scalingequiv_3,eq:scalingequiv_4}
\begin{subequations}
  \label{eq:localtrace}
  \begin{align}
    \label{eq:localtrace_1}
    \norm[0]{v}_{F_{\mathcal{Q}}}^2
    &\le c
      \del[1]{
      h_K^{-1}
      \norm[0]{v}_{\mathcal{K}}^2
      +
      \norm[0]{v}_{\mathcal{K}}
      \norm[0]{\overline{\nabla}v}_{\mathcal{K}}
      },
    \\
    \label{eq:localtrace_2}
    \norm[0]{v}_{F_{\mathcal{R}}}^2
    &
      \le c
      \del[1]{
      \delta t_{\mathcal{K}}^{-1}
      \norm[0]{v}_{\mathcal{K}}^2
      +
      \norm[0]{v}_{\mathcal{K}}
      \norm[0]{\partial_tv}_{\mathcal{K}}
      }.
  \end{align}
\end{subequations}
Using \cref{eq:standardanisoprojection}, we have
\begin{equation}
  \label{eq:quasiprojproof1}
    \norm[0]{v-\Pi_hv}_{\mathcal{K}}
    \le c
    {h_{K}\varepsilon^{-{1}/{2}}}
    \del[0]{
      \varepsilon^{{1}/{2}}
      h_{K}^{-1}\delta t_{\mathcal{K}}
      \norm[0]{\partial_tv}_{\mathcal{K}}
      +
      \varepsilon^{{1}/{2}}
      \norm[0]{\overline{\nabla}v}_{\mathcal{K}}
      +
      \norm[0]{v}_{\mathcal{K}}
    },
\end{equation}
while boundedness of the projection operator $\Pi_h$ gives
\begin{equation}
  \label{eq:quasiprojproof2}
  \norm[0]{v-\Pi_hv}_{\mathcal{K}}
  \le c
  \norm[0]{v}_{\mathcal{K}}
  \le c
  \del[0]{
    \varepsilon^{{1}/{2}}
    h_{K}^{-1}\delta t_{\mathcal{K}}
    \norm[0]{\partial_tv}_{\mathcal{K}}
    +
    \varepsilon^{{1}/{2}}
    \norm[0]{\overline{\nabla}v}_{\mathcal{K}}
    +
    \norm[0]{v}_{\mathcal{K}}
  }.
\end{equation}
Combining \cref{eq:quasiprojproof1,eq:quasiprojproof2} yields
\begin{equation}
  \label{eq:local_quasi_tnorm_st_1}
  \norm[0]{v-\Pi_hv}_\mathcal{K}
  \le c
  \lambda_\mathcal{K}
  \del[1]{
    \varepsilon^{1/2}{h_{K}^{-1}}{\delta t_{\mathcal{K}}}
    \norm[0]{\partial_t v}_\mathcal{K}
    +
    \varepsilon^{1/2}
    \norm[0]{\overline{\nabla} v}_\mathcal{K}
    +
    \norm[0]{v}_\mathcal{K}
  }.
\end{equation}
Furthermore, combining the trace inequalities \cref{eq:localtrace}
with the projection bounds \cref{eq:projectionbounds}, we obtain:
\begin{subequations}
  \label{eq:local_quasi_tnorm_st_34}
  \begin{align}
    \label{eq:local_quasi_tnorm_st_3}
    \norm[0]{v-\Pi_hv}_{F_\mathcal{Q}}
    &
      \le c
      \varepsilon^{-1/2}
      \del[1]{
      {\delta t_{\mathcal{K}}}{h_{K}^{-1/2}}\varepsilon^{1/2}
      \norm[0]{\partial_t v}_\mathcal{K}
      +
      h_{K}^{1/2}
      \varepsilon^{1/2}
      \norm[0]{\overline{\nabla} v}_\mathcal{K}
      },
    \\
    \label{eq:local_quasi_tnorm_st_4}
    \norm[0]{v-\Pi_hv}_{F_\mathcal{R}}
    &
      \le c
      \varepsilon^{-1/2}
      \del[1]{
      \delta t_{\mathcal{K}}^{1/2}
      \varepsilon^{1/2}
      \norm[0]{\partial_t v}_\mathcal{K}
      +
      {h_K}{\delta t_{\mathcal{K}}^{-1/2}}
      \varepsilon^{1/2}
      \norm[0]{\overline{\nabla} v}_\mathcal{K}
      }.
  \end{align}
\end{subequations}
Lemma~\ref{lem:local_quasi_tnorm_st} is now an immediate consequence of
\cref{eq:local_quasi_tnorm_st_34}, \cref{eq:local_quasi_tnorm_st_1},
and using $\delta t_{\mathcal{K}} = \mathcal{O}(h_K^2)$.

\subsection{Proof of \cref{lem:oswald_local_st}}
\label{s:oswaldlocalst}

The proof below combines an estimate for the averaging operator on
conforming meshes (extended from \cite[Lemma 22.3]{Ern:book} to
space-time meshes), and an auxiliary mesh technique (see, for
example,
\cite{Houston:2008,Karakashian:2003,Schotzau:2011b,Schotzau:2011a}).

We start by proving \cref{eq:oswald_local_st} on a conforming
$\del{d+1}$-dimensional space-time mesh. Within this conforming mesh,
consider $\sigma_{\mathcal{K}}$ which consists of a space-time element
$\mathcal{K}$ and $\mathcal{K}_i$, $i=1,\hdots,3^{(d+1)}-1$. We map
$\sigma_{\mathcal{K}}$ to the reference domain while preserving
connectivity relations between the elements. This is achieved by
combining $\Phi_{\mathcal{K}}$ with $\Phi_{\mathcal{K}_i}$
($i=1,\hdots,3^{(d+1)}-1$), where $\Phi_{\mathcal{K}_i}$ are
$\Phi_{\mathcal{K}}$ with suitable linear translations.

Applying \cite[Lemma 22.3]{Ern:book} to $\sigma_{\mathcal{K}}$ in the
reference domain,
\begin{equation}
  \label{eq:st_oswald_local_ref}
  \norm[0]{\widehat{v}_h-\mathcal{I}^{c}_h\widehat{v}_h}_{\widehat{\mathcal{K}}}
  \le c
  \del[1]{
    \sum_{
	 \widehat{F}\subset\check{\mathcal{Q}}^i_{\widehat{\mathcal{K}}}}
    \norm[0]{\jump{\widehat{v}_h}}_{\widehat{F}}
    +
    \sum_{
	 \widehat{F}\subset\check{\mathcal{R}}^i_{\widehat{\mathcal{K}}}}
    \norm[0]{\jump{\widehat{v}_h}}_{\widehat{F}}
  }.
\end{equation}
We remark that in the proof of \cite[Lemma 22.3]{Ern:book}, the only
intermediate result that restricts the domain dimension to be lower
than or equal to three is \cite[Lemma 21.4]{Ern:book}. We argue that
\cite[Lemma 21.4]{Ern:book} can be extended to the space-time domain
$\mathcal{E}\subset\mathbb{R}^{d+1}$ due to it being Lipschitz. With
scaling arguments \cref{eq:lb7sudir,eq:scalingequiv},
\cref{eq:st_oswald_local_ref} is transformed back to the physical
domain:
\begin{equation}
  \label{eq:st_oswald_local}
  \norm[0]{{v}_h-\mathcal{I}^{c}_h{v}_h}_{{\mathcal{K}}}
  \le c
  \del[1]{
    \sum_{
	 F\subset\check{\mathcal{Q}}^i_{{\mathcal{K}}}}
    h_K^{{1}/{2}}
    \norm[0]{\jump{{v}_h}}_F
    +
    \sum_{
	 F\subset\check{\mathcal{R}}^i_{{\mathcal{K}}}}
    \delta t_{\mathcal{K}}^{1/2}
    \norm[0]{\jump{{v}_h}}_F
  }.
\end{equation}

We now consider the case of a 1-irregular mesh. Let
$\mathcal{K} \in \mathcal{T}_h$ and let $\mathcal{T}_h^c$ be the
coarsest refinement of $\mathcal{T}_h$. We consider two cases: (1)
$\mathcal{K}$ is not refined on $\mathcal{T}_h^c$; and (2)
$\mathcal{K}$ is refined on $\mathcal{T}_h^c$.

\textbf{Case 1.} If $\mathcal{K}$ is not refined on $\mathcal{T}_h^c$,
we denote by $\sigma_{\mathcal{K}}^c$ the local patch of elements
associated with $\mathcal{K}$ on $\mathcal{T}_h^c$. Applying
\cref{eq:st_oswald_local} on $\sigma_{\mathcal{K}}^c$ gives
\begin{equation}
  \label{eq:stnonconfref_3}
  \norm[0]{{v}_h-\mathcal{I}^{c}_h{v}_h}_{{\mathcal{K}}}
  \le c
  \del[1]{
    \sum_{
      F\subset
      \check{\mathcal{Q}}^{i,c}_{{\mathcal{K}}}
      \setminus
      \check{\mathfrak{Q}}^i_{{\mathcal{K}}}
    }
    h_K^{{1}/{2}}
    \norm[0]{\jump{{v}_h}}_F
    +
    \sum_{
      F\subset
      \check{\mathcal{R}}^{i,c}_{{\mathcal{K}}}
      \setminus
      \check{\mathfrak{R}}^i_{{\mathcal{K}}}
    }
    \delta t_{\mathcal{K}}^{{1}/{2}}
    \norm[0]{\jump{{v}_h}}_F
  },
\end{equation}
where $\check{\mathcal{Q}}^{i,c}_{{\mathcal{K}}}$ and
$\check{\mathcal{R}}^{i,c}_{{\mathcal{K}}}$ are defined similarly as
$\check{\mathcal{Q}}^{i}_{{\mathcal{K}}}$ and
$\check{\mathcal{R}}^{i}_{{\mathcal{K}}}$, but for $\mathcal{K}$ on
$\mathcal{T}_h^c$, and where $\check{\mathfrak{Q}}^i_{{\mathcal{K}}}$
and $\check{\mathfrak{R}}^i_{{\mathcal{K}}}$ are unions of newly
generated $\mathcal{Q}$-facets and $\mathcal{R}$-facets that divide an
element in $\mathcal{T}_h$ to create $\mathcal{T}_h^c$. Note that
$\jump{v_h}$ vanishes on $\check{\mathfrak{Q}}^i_{{\mathcal{K}}}$ and
$\check{\mathfrak{R}}^i_{{\mathcal{K}}}$, explaining why they are
excluded from the summation in \cref{eq:stnonconfref_3}.
\Cref{eq:oswald_local_st} then follows from \cref{eq:stnonconfref_3}
by noting that
\begin{equation*}
    \sum_{
      F\subset
      \check{\mathcal{Q}}^{i,c}_{{\mathcal{K}}}
      \setminus
      \check{\mathfrak{Q}}^i_{{\mathcal{K}}}
    }
    h_K^{{1}/{2}}
    \norm[0]{\jump{{v}_h}}_F
    \le c
    \sum_{
      F\subset
      \check{\mathcal{Q}}^{i}_{{\mathcal{K}}}
    }
    h_K^{{1}/{2}}
    \norm[0]{\jump{{v}_h}}_F,
    \quad
    \sum_{
      F\subset
      \check{\mathcal{R}}^{i,c}_{{\mathcal{K}}}
      \setminus
      \check{\mathfrak{R}}^i_{{\mathcal{K}}}
    }
    h_K^{{1}/{2}}
    \norm[0]{\jump{{v}_h}}_F
    \le c
    \sum_{
      F\subset
      \check{\mathcal{R}}^{i}_{{\mathcal{K}}}
    }
    h_K^{{1}/{2}}
    \norm[0]{\jump{{v}_h}}_F.
\end{equation*}

\textbf{Case 2.} When $\mathcal{K}$ is refined on $\mathcal{T}_h^c$
into $M_{\mathcal{K}}$ elements,
$\mathcal{K}=\cup_{j=1}^{M_{\mathcal{K}}}\mathcal{K}_{j}$, where
$M_{\mathcal{K}}\le 2^{d+1}$. We apply \cref{eq:st_oswald_local} on
each $\sigma_{\mathcal{K}_j}^c$ resulting in
\begin{equation}
  \label{eq:stnonconfref_1}
  \norm[0]{{v}_h-\mathcal{I}^{c}_h{v}_h}_{{\mathcal{K}_j}}
  \le c
  \del[1]{
    \sum_{
      F\subset
      \check{\mathcal{Q}}^{i,c}_{{\mathcal{K}_j}}
      \setminus
      \check{\mathfrak{Q}}^i_{{\mathcal{K}}_j}
    }
    h_K^{{1}/{2}}
    \norm[0]{\jump{{v}_h}}_F
    +
    \sum_{
      F\subset
      \check{\mathcal{R}}^{i,c}_{{\mathcal{K}_j}}
      \setminus
      \check{\mathfrak{R}}^i_{{\mathcal{K}}_j}
    }
    \delta t_{\mathcal{K}}^{{1}/{2}}
    \norm[0]{\jump{{v}_h}}_F
  }.
\end{equation}
Combining \cref{eq:stnonconfref_1} for all
$j=1,\dots, M_{\mathcal{K}}$ gives \cref{eq:oswald_local_st} by noting
that
\begin{equation*}
  \begin{split}
    \sum_{j=1}^{M_{\mathcal{K}}}
    \sum_{
      F\subset
      \check{\mathcal{Q}}^{i,c}_{{\mathcal{K}_j}}
      \setminus
      \check{\mathfrak{Q}}^i_{{\mathcal{K}}_j}
    }
    h_K^{{1}/{2}}
    \norm[0]{\jump{{v}_h}}_F
    &\le c
    \sum_{
      F\subset
      \check{\mathcal{Q}}^{i}_{{\mathcal{K}}}
    }
    h_K^{{1}/{2}}
    \norm[0]{\jump{{v}_h}}_F,
    \\
    \sum_{j=1}^{M_{\mathcal{K}}}
    \sum_{
      F\subset
      \check{\mathcal{R}}^{i,c}_{{\mathcal{K}_j}}
      \setminus
      \check{\mathfrak{R}}^i_{{\mathcal{K}}_j}
    }
    h_K^{{1}/{2}}
    \norm[0]{\jump{{v}_h}}_F
    &\le c
    \sum_{
      F\subset
      \check{\mathcal{R}}^{i}_{{\mathcal{K}}}
    }
    h_K^{{1}/{2}}
    \norm[0]{\jump{{v}_h}}_F.
  \end{split}
\end{equation*}

\subsection{Proof of \cref{lem:subgrid_proj_est}}
\label{s:subgridprojestproof}

We show \cref{eq:subgrid_proj_est} based on an idea in the proof
of \cite[Lemma 3.1]{Verfurth:1996}. On the reference element
$\widehat{\mathcal{K}}$, let $\widehat{v}_{\mathfrak{h}}$ be defined
as follows
\begin{equation*}
  \widehat{v}_\mathfrak{h}
  :=
  \begin{cases}
    \sum_{0\leq p_i\leq p_s,1\le i\le d}
    k_{p_1\dots p_d}^*
    \widehat{t}\widehat{x}_1^{p_1}\widehat{x}_2^{p_2}\cdots \widehat{x}_d^{p_d}
    +
    \sum_{0\leq p_i\leq p_s,1\le i\le d}
    b_{p_1\dots p_d}^*
    \widehat{x}_1^{p_1}\widehat{x}_2^{p_2}\cdots \widehat{x}_d^{p_d}
    &
    \text{ on }
    \widehat{\mathring{\mathcal{K}}}^*,
    \\
    \sum_{0\leq p_i\leq p_s,1\le i\le d}
    k_{p_1\dots p_d,*}
    \widehat{t}\widehat{x}_1^{p_1}\widehat{x}_2^{p_2}\cdots \widehat{x}_d^{p_d}
    +
    \sum_{0\leq p_i\leq p_s,1\le i\le d}
    b_{p_1\dots p_d,*}
    \widehat{x}_1^{p_1}\widehat{x}_2^{p_2}\cdots \widehat{x}_d^{p_d}
    &
    \text{ on }
    \widehat{\mathring{\mathcal{K}}}_*,
  \end{cases}
\end{equation*}
and let
\begin{multline*}
  \widehat{w}_h^\circ
  :=
  \sum_{0\leq p_i\leq p_s,1\le i\le d} \sbr[2]{
  \tfrac{1}{2}
  \del[0]{
    k_{p_1\dots p_d}^*
    +
    k_{p_1\dots p_d,*}
  }
  \widehat{t}\widehat{x}_1^{p_1}\widehat{x}_2^{p_2}\cdots \widehat{x}_d^{p_d}
  +
  \tfrac{1}{2}
  \del[0]{
    b_{p_1\dots p_d}^*
    +
    b_{p_1\dots p_d,*}
  }
  \widehat{x}_1^{p_1}\widehat{x}_2^{p_2}\cdots
  \widehat{x}_d^{p_d} }.
\end{multline*}
Then, by H\"older's inequality for sums and Fubini's theorem, we obtain
\begin{equation*}
  \begin{split}
    \norm[0]{
      \widehat{v}_{\mathfrak{h}}
      -\widehat{w}_h^\circ
    }_{\widehat{\mathcal{K}}}^2
    =&
    \int_{\widehat{\mathcal{K}}}
    \Big(
    \sum_{0\leq p_i\leq p_s}
    \tfrac{1}{2}
    \del[1]{
      k^*_{p_1\dots p_d}
      -
      k_{p_1\dots p_d,*}
    }
    \widehat{t}\widehat{x}_1^{p_1}\widehat{x}_2^{p_2}\cdots \widehat{x}_d^{p_d}
    \\
    &
    +
    \sum_{0\leq p_i\leq p_s}
    \tfrac{1}{2}
    \del[1]{
      b^*_{p_1\dots p_d}
      -
      b_{p_1\dots p_d,*}
    }
    \widehat{x}_1^{p_1}\widehat{x}_2^{p_2}\cdots \widehat{x}_d^{p_d}
    \Big)^2\dif \widehat{x} {\dif \widehat{t}}
    \le
    c \norm[0]{
      \jump{\widehat{v}_\mathfrak{h}}}_{\widehat{F}_{\mathring{\mathcal{R}}}}^2
    +
    \norm[0]{\jump{\partial_{\widehat{t}}\widehat{v}_\mathfrak{h}}}_{\widehat{F}_{\mathring{\mathcal{R}}}}^2.
  \end{split}
\end{equation*}
We conclude that
\begin{equation*}
  \inf_{\widehat{w}_h\in{Q}^{\del{1,p_s}}(\widehat{\mathcal{K}})}
  \norm[0]{\widehat{v}_\mathfrak{h}-\widehat{w}_h}_{\widehat{\mathcal{K}}}
  \le
  \norm[0]{\widehat{v}_\mathfrak{h}-\widehat{w}_h^\circ}_{\widehat{\mathcal{K}}}
  \le c
  \del[1]{
    \norm[0]{\jump{\widehat{v}_\mathfrak{h}}}_{\widehat{F}_{\mathring{\mathcal{R}}}} +
    \norm[0]{\jump{\partial_{\widehat{t}}\widehat{v}_\mathfrak{h}}}_{\widehat{F}_{\mathring{\mathcal{R}}}}
  }.
\end{equation*}
Since the $L^2$-projection is optimal \cite[eq.(18.32)]{Ern:book}, we
have shown \cref{eq:subgrid_proj_est_1} on the reference element,
i.e.,
\begin{equation*}
  \norm[0]{\del[0]{I-\widehat{i}^\mathcal{K}}\widehat{v}_\mathfrak{h}}_{\widehat{\mathcal{K}}}
  \le c
  \del[1]{
    \norm[0]{\jump{\widehat{v}_\mathfrak{h}}}_{\widehat{F}_{\mathring{\mathcal{R}}}}
    +
    \norm[0]{\jump{\partial_{\widehat{t}}\widehat{v}_\mathfrak{h}}}_{\widehat{F}_{\mathring{\mathcal{R}}}}
  }.
\end{equation*}
Scaling arguments
\cref{eq:lb7sudir-1,eq:lb7sudir-3,eq:scalingphiextra-3} now give us
\begin{equation}
  \label{eq:subgrid_proj_est_1_aff}
  \norm[0]{\del[0]{I-\widetilde{i}_h^\mathcal{K}}\widetilde{v}_\mathfrak{h}}_{\widetilde{\mathcal{K}}}
  \le c
  \del[0]{
    \delta t_{\mathcal{K}}^{{1}/{2}}\norm[0]{\jump{\widetilde{v}_\mathfrak{h}}}_{\widetilde{F}_{\mathring{\mathcal{R}}}}
    +
    \delta t_{\mathcal{K}}^{{3}/{2}}\norm[0]{\jump{\partial_{\widetilde{t}}\widetilde{v}_\mathfrak{h}}}_{\widetilde{F}_{\mathring{\mathcal{R}}}}
  }.
\end{equation}
Combining \cref{eq:subgrid_proj_est_1_aff},
\cref{eq:scalingequiv_1,eq:scalingequiv_3,eq:scalingphiextra-2}, we
conclude \cref{eq:subgrid_proj_est_1}. The proof of
\cref{eq:subgrid_proj_est_2} is similar, but in one lower spatial
dimension.

\subsection{Proof of \cref{lem:subgridhelperbnds}}
\label{s:subgridhelperbndsproof}

For \cref{eq:subgridhelperbnds_1}, we write the DG jump in terms of
HDG jumps by inserting the facet variable:
\begin{equation*}
  \begin{split}
    \del[0]{
      \jump{v_\mathfrak{h}}|_{F_{\mathring{\mathcal{R}}}}
    }^2
    &=
    \del[0]{
      \sbr[0]{\boldsymbol{v}_\mathfrak{h}}
      |_{\partial\mathring{\mathcal{K}}^*\cap F_{\mathring{\mathcal{R}}}}
      -
      \sbr[0]{\boldsymbol{v}_\mathfrak{h}}
      |_{\partial\mathring{\mathcal{K}}_*\cap F_{\mathring{\mathcal{R}}}}
    }^2
    \\
    &=
    \del[0]{
      \sqrt{2/3}\envert[0]{\beta_s-\tfrac{1}{2}\beta\cdot n}^{1/2}
      \sbr[0]{\boldsymbol{v}_\mathfrak{h}}|_{\partial\mathring{\mathcal{K}}^*\cap F_{\mathring{\mathcal{R}}}}
      -
      \sqrt{2}\envert[0]{\beta_s-\tfrac{1}{2}\beta\cdot n}^{1/2}
      \sbr[0]{\boldsymbol{v}_\mathfrak{h}}|_{\partial\mathring{\mathcal{K}}_*\cap F_{\mathring{\mathcal{R}}}}
    }^2,
  \end{split}
\end{equation*}
where we factor in
$\envert[0]{\beta_s-\tfrac{1}{2}\beta\cdot n}^{1/2}$ due to that
$\envert[0]{\beta_s-\tfrac{1}{2}\beta\cdot n}^{1/2} = \sqrt{3/2}$ on
an $\mathcal{R}$-facet if $n_t=-1$ and
$\envert[0]{\beta_s-\tfrac{1}{2}\beta\cdot n}^{1/2} = \sqrt{1/2}$ on
an $\mathcal{R}$-facet if $n_t=1$; \cref{eq:subgridhelperbnds_1} then
follows by the triangle inequality and the definition of
$\tnorm{\cdot}_{s,\mathfrak{h}}$.

For \cref{eq:subgridhelperbnds_2}, we expand the DG jump and apply the
triangle inequality; the trace inequality \cref{eq:eg_inv_4} then
concludes the bound.

To show \cref{eq:subgridhelperbnds_4} we require a more involved
splitting. For the edge $E_{\mathring{\mathcal{K}}}$ on a
$\mathcal{Q}$-facet $F_\mathcal{Q}$, we observe the following:
\begin{equation}
  \label{eq:edge_split}
    \ejump{\mu_\mathfrak{h}}|_{E_{\mathring{\mathcal{K}}}}
    =
    -\sbr[0]{\boldsymbol{v}_{\mathfrak{h}}^*}
    |_{\mathcal{Q}_{{\mathring{\mathcal{K}}}^*}\cap E_{\mathring{\mathcal{K}}}}
    +
    \sbr[0]{\boldsymbol{v}_{\mathfrak{h},*}}
    |_{\mathcal{Q}_{{\mathring{\mathcal{K}}}_*}\cap E_{\mathring{\mathcal{K}}}}
    +
    \ejump{v_{\mathfrak{h}}}|_{E_{\mathring{\mathcal{K}}}},
\end{equation}
where $\boldsymbol{v}_{\mathfrak{h}}^*$ and
$\boldsymbol{v}_{\mathfrak{h},*}$ denote the HDG solution pairs on
$\mathring{\mathcal{K}}^*$ and $\mathring{\mathcal{K}}_*$,
respectively. \Cref{eq:subgridhelperbnds_4} now follows by the
triangle inequality on
$\norm[0]{\ejump{\mu_\mathfrak{h}}}_{E_{\mathring{\mathcal{K}}}}$ and
using the trace inequalities \cref{eq:trace_ineq_subgrid_edge}.

To show \cref{eq:subgridhelperbnds_5} we again use the splitting
\cref{eq:edge_split}, followed by the triangle inequality, trace
inequalities \cref{eq:trace_ineq_subgrid_edge} and \cref{eq:eg_inv_4},
and inverse inequality \cref{eq:eg_inv_low_d_1_F}.

\subsection{Proof of \cref{lem:subgridprojdiff}}
\label{s:subgridprojdiffproof}

We verify the equivalence by showing that
${\widehat{i}_h^\mathcal{K}}\widehat{v}_\mathfrak{h} \equiv
{\widehat{i}_h^\mathcal{F}}
\widehat{v}_\mathfrak{h}$ on the reference domain.
On $\widehat{\mathcal{K}}$, let $\widehat{v}_{\mathfrak{h}}$
be defined as follows
\begin{equation*}
  \widehat{v}_\mathfrak{h}
  :=
  \begin{cases}
    \sum_{0\leq p_0\leq p_t,0\leq p_i\leq p_s,1\le i\le d}
    k_{p_0p_1\dots p_d}^*
    \widehat{t}^{p_0}\widehat{x}_1^{p_1}\widehat{x}_2^{p_2}\cdots \widehat{x}_d^{p_d}
    &
    \text{ on }
    \widehat{\mathring{\mathcal{K}}}^*,
    \\
    \sum_{0\le p_0\le p_t,0\leq p_i\leq p_s,1\le i\le d}
    k_{p_0p_1\dots p_d,*}
    \widehat{t}^{p_0}\widehat{x}_1^{p_1}\widehat{x}_2^{p_2}\cdots \widehat{x}_d^{p_d}
    &
    \text{ on }
    \widehat{\mathring{\mathcal{K}}}_*.
  \end{cases}
\end{equation*}
Suppose that
\begin{equation*}
  \widehat{i}_h^\mathcal{K}
  \widehat{v}_\mathfrak{h}
  =
  \sum_{0\le p_0\le p_t, 0\leq p_i\leq p_s,1\le i\le d}
  \widetilde{k}_{p_0p_1\dots p_d}
  \widehat{t}^{p_0}
  \widehat{x}_1^{p_1}
  \widehat{x}_2^{p_2}
  \cdots
  \widehat{x}_{d}^{p_{d}}.
\end{equation*}
By definition of the projection, for
any
$0\le q_0\le p_t$
and
$0\le q_i\le p_s$, $1\le i\le d$
\begin{equation*}
  \int_{\widehat{\mathcal{K}}}
  \del[0]{
    \widehat{v}_{\mathfrak{h}}
    -
    \widehat{i}_h^{\mathcal{K}}
    \widehat{v}_\mathfrak{h}
  }
  \del[0]{
    \widehat{t}^{q_0}
    \widehat{x}_1^{q_1}
    \widehat{x}_2^{q_2}
    \cdots
    \widehat{x}_d^{q_d}
  } \dif \widehat{x} \dif \widehat{t}
  =0.
\end{equation*}
Let us denote the $\mathcal{Q}$-facet on which $\widehat{x}_d=1$ by
$\widehat{F}_d$. Then, without loss of generality, using Fubini's
theorem,
\begin{align*}
  &
    \int_{\widehat{\mathring{\mathcal{K}}}^*}
    \sum_{0\le p_0\le p_t,0\leq p_i\leq p_s,1\le i\le d}
    \del[0]{
    k_{p_0\dots p_d}^*
    -
    \widetilde{k}_{p_0\dots p_d}
    }
    \widehat{t}^{p_0+q_0}
    \widehat{x}_1^{p_1+q_1}
    \cdots
    \widehat{x}_d^{p_d+q_d}
    \dif \widehat{x} \dif \widehat{t}
  \\
  &+
    \int_{\widehat{\mathring{\mathcal{K}}}_*}
    \sum_{0\le p_0\le p_t,0\leq p_i\leq p_s,1\le i\le d}
    \del[0]{
    k_{p_0\dots p_d,*}
    -
    \widetilde{k}_{p_0\dots p_d}
    }
    \widehat{t}^{p_0+q_0}
    \widehat{x}_1^{p_1+q_1}
    \cdots
    \widehat{x}_d^{p_d+q_d}
    \dif \widehat{x} \dif \widehat{t}
  \\
  =&
     \sum_{0\le p_0\le p_t,0\leq p_i\leq p_s,1\le i\le d}
     \int_{-1}^1
     \widehat{x}_d^{p_d+q_d}
     \dif\widehat{x}_d
     \Bigg(
     \int_{\widehat{\mathring{\mathcal{K}}}^*\cap\widehat{F}_d}
     \del[0]{
     k_{p_0\dots p_d}^*
     -
     \widetilde{k}_{p_0\dots p_d}
     }
     \widehat{t}^{p_0+q_0}
     \widehat{x}_1^{p_1+q_1}
     \cdots
     \widehat{x}_{d-1}^{p_{d-1}+q_{d-1}}
     \dif\widehat{s}
  \\
  &+
    \int_{\widehat{\mathring{\mathcal{K}}}_*\cap\widehat{F}_d}
    \del[0]{
    k_{p_0\dots p_d,*}
    -
    \widetilde{k}_{p_0\dots p_d}
    }
    \widehat{t}^{p_0+q_0}
    \widehat{x}_1^{p_1+q_1}
    \cdots
    \widehat{x}_{d-1}^{p_{d-1}+q_{d-1}}
    \dif\widehat{s}
    \Bigg).
\end{align*}
Note that $\int_{-1}^1 \widehat{x}_d^{p_d+q_d} \dif\widehat{x}_d = 0$
for $p_d + q_d$ odd. Then, for each $0\le q_d\le p_s$, leaving out
$p_d$'s such that $p_d+q_d$ is odd, using that
$\int_{-1}^1 \widehat{x}_d^{2k}\dif \widehat{x}_d = 2/(2k+1)$, and
introducing
\begin{multline*}
  z_{p_d}
  :=
  \sum_{0\le p_0\le p_t,0\leq p_i\leq p_s,1\le i\le d-1}
  \Big(
  \int_{\widehat{\mathring{\mathcal{K}}}^*\cap\widehat{F}_d}
  \del[0]{
    k_{p_0\dots p_d}^*
    -
    \widetilde{k}_{p_0\dots p_d}
  }
  \widehat{t}^{p_0+q_0}
  \widehat{x}_1^{p_1+q_1}
  \cdots
  \widehat{x}_{d-1}^{p_{d-1}+q_{d-1}}
  \dif\widehat{s}
  \\
  +
  \int_{\widehat{\mathring{\mathcal{K}}}_*\cap\widehat{F}_d}
  \del[0]{
    k_{p_0\dots p_d,*}
    -
    \widetilde{k}_{p_0\dots p_d}
  }
  \widehat{t}^{p_0+q_0}
  \widehat{x}_1^{p_1+q_1}
  \cdots
  \widehat{x}_{d-1}^{p_{d-1}+q_{d-1}}
  \dif\widehat{s}
  \Big),
\end{multline*}
we have
\begin{equation}
  \label{eq:2zpdequation}
  \sum_{p_d\text{ s.t. }p_d+q_d\text{ is even}}
  \frac{2z_{p_d}}{p_d+q_d+1}
  =0.
\end{equation}
It is possible to write \cref{eq:2zpdequation} as a linear system
where the system matrix is a $2\times 2$ block-diagonal matrix in
which each block is a Hankel matrix. Furthermore, each block can be
shown to be totally positive, see \cite[Example 0.1.8]{Fallat:book}.
We therefore conclude that the system matrix of \cref{eq:2zpdequation}
is nonsingular and so $z_i=0$ for $0\le i\le p_s$, i.e.,
\begin{equation}
  \label{eq:subgridfacetproj}
  \begin{split}
    &\sum_{0\le p_0\le p_t,0\leq p_i\leq p_s,1\le i\le d-1}
    \Big(
    \int_{\widehat{\mathring{\mathcal{K}}}^*\cap\widehat{F}_d}
    \del[0]{
      k_{p_0\dots p_d}^*
      -
      \widetilde{k}_{p_0\dots p_d}
    }
    \widehat{t}^{p_0+q_0}
    \widehat{x}_1^{p_1+q_1}
    \cdots
    \widehat{x}_{d-1}^{p_{d-1}+q_{d-1}}
    \dif\widehat{s}
    \\
    &\qquad\qquad\qquad+
    \int_{\widehat{\mathring{\mathcal{K}}}_*\cap\widehat{F}_d}
    \del[0]{
      k_{p_0\dots p_d,*}
      -
      \widetilde{k}_{p_0\dots p_d}
    }
    \widehat{t}^{p_0+q_0}
    \widehat{x}_1^{p_1+q_1}
    \cdots
    \widehat{x}_{d-1}^{p_{d-1}+q_{d-1}}
    \dif\widehat{s}
    \Big)=0,
  \end{split}
\end{equation}
for any $0\le q_0\le p_t$ and $0\le q_i\le p_s$, $1\le i\le
d-1$. Observing that
\begin{equation*}
  \begin{split}
    \widehat{\mu}_{\mathfrak{h},p_d}
    &:=
    \begin{cases}
      \sum_{0\leq p_0\leq p_t,0\leq p_i\leq p_s,1\le i\le d-1}
      k_{p_0\dots p_d}^*
      \widehat{t}^{p_0}\widehat{x}_1^{p_1}\widehat{x}_2^{p_2}\cdots
      \widehat{x}_{d-1}^{p_{d-1}}
      &
      \text{ on }
      \widehat{\mathring{\mathcal{K}}}^*\cap\widehat{F}_d,
      \\
      \sum_{0\le p_0\le p_t,0\leq p_i\leq p_s,1\le i\le d-1}
      k_{p_0\dots p_d,*}
      \widehat{t}^{p_0}\widehat{x}_1^{p_1}\widehat{x}_2^{p_2}\cdots
      \widehat{x}_{d-1}^{p_{d-1}}
      &
      \text{ on }
      \widehat{\mathring{\mathcal{K}}}_*\cap\widehat{F}_d,
    \end{cases}
    \\
    \widehat{\lambda}_{\mathfrak{h},p_d}
    &:=
    \sum_{0\le p_0\le p_t,0\leq p_i\leq p_s,1\le i\le d-1}
    \widetilde{k}_{p_0\dots p_d}
    \widehat{t}^{p_0}\widehat{x}_1^{p_1}\widehat{x}_2^{p_2}\cdots
    \widehat{x}_{d-1}^{p_{d-1}}
    \text{ on }
    \widehat{F}_d,
  \end{split}
\end{equation*}
we conclude from \cref{eq:subgridfacetproj} that
$\widehat{i}_h^{\mathcal{F}}\widehat{\mu}_{\mathfrak{h},p_d}=\widehat{\lambda}_{\mathfrak{h},p_d}$.
Further, observing that
$\widehat{v}_{\mathfrak{h}}|_{\widehat{F}_d} = \sum_{p_d}
\widehat{\mu}_{\mathfrak{h},p_d}$, that
$\del[0]{\widehat{i}_h^\mathcal{K}\widehat{v}_\mathfrak{h}}|_{\widehat{F}_d}=\sum_{p_d}\widehat{\lambda}_{\mathfrak{h},p_d}$,
and that projection is linear, we conclude that
$\del[0]{\widehat{i}_h^\mathcal{K}\widehat{v}_\mathfrak{h}}|_{\widehat{F}_d}=
\widehat{i}_h^\mathcal{F}\del[0]{\widehat{v}_\mathfrak{h}|_{\widehat{F}_d}}$.

\section*{Acknowledgments}

This research was enabled in part by support provided by Simon
Fraser University
(\url{https://www.sfu.ca/research/supercomputer-cedar}), Compute
Ontario (\url{https://www.computeontario.ca/}), Calcul Qu\'ebec
(\url{https://www.calculquebec.ca/})  and the Digital Research
Alliance of Canada (\url{https://alliancecan.ca}).

\bibliographystyle{abbrvnat}
\bibliography{references}

\begin{thebibliography}{47}
\providecommand{\natexlab}[1]{#1}
\providecommand{\url}[1]{\texttt{#1}}
\expandafter\ifx\csname urlstyle\endcsname\relax
  \providecommand{\doi}[1]{doi: #1}\else
  \providecommand{\doi}{doi: \begingroup \urlstyle{rm}\Url}\fi

\bibitem[Amestoy et~al.(2000)Amestoy, Duff, L'Excellent, and Koster]{mumps:1}
P.~Amestoy, I.~Duff, J.~L'Excellent, and J.~Koster.
\newblock Mumps: a general purpose distributed memory sparse solver.
\newblock \emph{International Workshop on Applied Parallel Computing}, pages
  121--130, 2000.
\newblock \doi{10.1007/3-540-70734-4_16}.

\bibitem[Amestoy et~al.(2006)Amestoy, Guermouche, L'Excellent, and
  Parlet]{mumps:2}
P.~Amestoy, A.~Guermouche, J.~L'Excellent, and S.~Parlet.
\newblock Hybrid scheduling for the parallel solution of linear systems.
\newblock \emph{Parallel Comput.}, 32:\penalty0 136--156, 2006.
\newblock \doi{10.1016/j.parco.2005.07.004}.

\bibitem[Apostol(1974)]{Apostol:book}
T.~M. Apostol.
\newblock \emph{{Mathematical Analysis}}.
\newblock Addison-Wesley, 1974.

\bibitem[Araya and Venegas(2014)]{Araya:2014}
R.~Araya and P.~Venegas.
\newblock {An a posteriori error estimator for an unsteady
  advection-diffusion-reaction problem}.
\newblock \emph{Comput. Math. Appl.}, 66\penalty0 (12):\penalty0 2456--2476,
  2014.
\newblock \doi{10.1016/j.camwa.2013.09.022}.

\bibitem[Araya et~al.(2019)Araya, Solano, and Vega]{Araya:2019}
R.~Araya, M.~Solano, and P.~Vega.
\newblock {{A posteriori error analysis of an HDG method for the Oseen
  problem}}.
\newblock \emph{Applied Numerical Mathematics}, 146:\penalty0 291--308, 2019.
\newblock \doi{10.1016/j.apnum.2019.07.017}.

\bibitem[Arndt et~al.(2021)Arndt, Bangerth, Davydov, Heister, Heltai,
  Kronbichler, Maier, Pelteret, Turcksin, and Wells]{dealii2019design}
D.~Arndt, W.~Bangerth, D.~Davydov, T.~Heister, L.~Heltai, M.~Kronbichler,
  M.~Maier, J.-P. Pelteret, B.~Turcksin, and D.~Wells.
\newblock The {deal.II} finite element library: Design, features, and insights.
\newblock \emph{Computers \& Mathematics with Applications}, 81:\penalty0
  407--422, 2021.
\newblock ISSN 0898-1221.
\newblock \doi{10.1016/j.camwa.2020.02.022}.
\newblock URL \url{https://arxiv.org/abs/1910.13247}.

\bibitem[Arndt et~al.(2023)Arndt, Bangerth, Bergbauer, Feder, Fehling, Heinz,
  Heister, Heltai, Kronbichler, Maier, Munch, Pelteret, Turcksin, Wells, and
  Zampini]{dealII95}
D.~Arndt, W.~Bangerth, M.~Bergbauer, M.~Feder, M.~Fehling, J.~Heinz,
  T.~Heister, L.~Heltai, M.~Kronbichler, M.~Maier, P.~Munch, J.-P. Pelteret,
  B.~Turcksin, D.~Wells, and S.~Zampini.
\newblock The \texttt{deal.II} library, version 9.5.
\newblock \emph{J. of Numer. Math.}, 31\penalty0 (3):\penalty0 231--246, 2023.
\newblock \doi{10.1515/jnma-2023-0089}.
\newblock URL \url{https://dealii.org/deal95-preprint.pdf}.

\bibitem[Ayuso and Marini(2009)]{Ayuso:2009}
B.~Ayuso and L.~D. Marini.
\newblock Discontinuous {G}alerkin methods for advection-diffusion-reaction
  problems.
\newblock \emph{SIAM J. Numer. Anal.}, 47\penalty0 (2):\penalty0 1391--1420,
  2009.
\newblock \doi{10.1137/080719583}.

\bibitem[Bangerth et~al.(2011)Bangerth, Burstedde, Heister, and
  Kronbichler]{Bangerth:2011}
W.~Bangerth, C.~Burstedde, T.~Heister, and M.~Kronbichler.
\newblock Algorithms and data structures for massively parallel generic
  adaptive finite element codes.
\newblock \emph{ACM Trans. Math. Software}, 38\penalty0 (2):\penalty0
  14:1--14:28, 2011.
\newblock \doi{10.1145/2049673.2049678}.

\bibitem[Berrone and Canuto(2004)]{Berrone:2004}
S.~Berrone and C.~Canuto.
\newblock {Multilevel a posteriori error analysis for
  reaction-convection-diffusion problems}.
\newblock \emph{Appl. Numer. Math.}, 50:\penalty0 371--394, 2004.
\newblock \doi{10.1016/j.apnum.2004.05.002}.

\bibitem[Braess and Verf\"urth(1996)]{Verfurth:1996}
D.~Braess and R.~Verf\"urth.
\newblock {A posteriori error estimators for the Raviart--Thomas element}.
\newblock \emph{SIAM J. Numer. Anal.}, 33\penalty0 (6):\penalty0 2431--2444,
  1996.
\newblock \doi{10.1137/S0036142994264079}.

\bibitem[Burman(2009)]{Burman:2009}
E.~Burman.
\newblock {A posteriori error estimation for interior penalty finite element
  approximations of the advection-reaction equation}.
\newblock \emph{SIAM J. Numer. Anal.}, 47\penalty0 (5):\penalty0 3584--3607,
  2009.
\newblock \doi{10.1137/080733899}.

\bibitem[Cangiani et~al.(2014)Cangiani, Georgoulis, and
  Metcalfe]{Cangiani:2014}
A.~Cangiani, E.~H. Georgoulis, and S.~Metcalfe.
\newblock {Adaptive discontinuous Galerkin methods for nonstationary
  convection-diffusion problems}.
\newblock \emph{IMA Journal of Numerical Analysis}, 34:\penalty0 1578--1597,
  2014.
\newblock \doi{10.1093/imanum/drt052}.

\bibitem[Cangiani et~al.(2019)Cangiani, Georgoulis, Giani, and
  Metcalfe]{Cangiani:2019}
A.~Cangiani, E.~H. Georgoulis, S.~Giani, and S.~Metcalfe.
\newblock {$hp$-adaptive discontinuous Galerkin methods for nonstationary
  convection-diffusion problems}.
\newblock \emph{Comput. Math. Appl.}, 78:\penalty0 3090--3104, 2019.
\newblock \doi{10.1016/j.camwa.2019.04.002}.

\bibitem[Chen et~al.(2016)Chen, Li, and Qiu]{Chen:2016}
H.~Chen, J.~Li, and W.~Qiu.
\newblock {Robust a posteriori error estimates for HDG method for
  convection-diffusion equations}.
\newblock \emph{IMA Journal of Numerical Analysis}, 36:\penalty0 437--462,
  2016.
\newblock \doi{10.1093/imanum/drv009}.

\bibitem[Ern and Guermond(2021)]{Ern:book}
A.~Ern and J.~Guermond.
\newblock \emph{{Finite Elements I}}, volume~72 of \emph{Texts in Applied
  Mathematics}.
\newblock Springer Cham, 1 edition, 2021.
\newblock \doi{10.1007/978-3-030-56341-7}.

\bibitem[Ern and Proft(2005)]{Ern:2005}
A.~Ern and J.~Proft.
\newblock {A posteriori discontinuous Galerkin error estimates for transient
  convection-diffusion equations}.
\newblock \emph{Appl. Math. Lett.}, 18\penalty0 (7):\penalty0 833--841, 2005.
\newblock \doi{10.1016/j.aml.2004.05.019}.

\bibitem[Ern and Stephansen(2008)]{Ern:2008}
A.~Ern and A.~Stephansen.
\newblock {A posteriori energy-norm error estimates for advection-diffusion
  equations approximated by weighted interior penalty methods}.
\newblock \emph{Journal of Computational Mathematics}, 26\penalty0
  (4):\penalty0 488--510, 2008.

\bibitem[Ern et~al.(2010)Ern, Stephansen, and Vohral\'ik]{Ern:2010}
A.~Ern, A.~Stephansen, and M.~Vohral\'ik.
\newblock {Guaranteed and robust discontinuous Galerkin a posteriori error
  estimates for convection-diffusion-reaction problems}.
\newblock \emph{Journal of Computational and Applied Mathematics},
  234:\penalty0 114--130, 2010.
\newblock \doi{10.1016/j.cam.2009.12.009}.

\bibitem[Fallat and Johnson(2011)]{Fallat:book}
S.~M. Fallat and C.~R. Johnson.
\newblock \emph{{Totally nonnegative matrices}}.
\newblock Princeton University Press, 2011.

\bibitem[Georgoulis(2003)]{Georgoulis:thesis}
E.~H. Georgoulis.
\newblock \emph{{Discontinuous Galerkin Methods on Shape-Regular and
  Anisotropic Meshes}}.
\newblock PhD thesis, University of Oxford, 2003.

\bibitem[Georgoulis et~al.(2011)Georgoulis, Lakkis, and
  Virtanen]{Georgoulis:2011}
E.~H. Georgoulis, O.~Lakkis, and J.~M. Virtanen.
\newblock {A posteriori error control for discontinuous Galerkin methods for
  parabolic problems}.
\newblock \emph{SIAM J. Numer. Anal.}, 49\penalty0 (2):\penalty0 427--458,
  2011.
\newblock \doi{10.1137/080722461}.

\bibitem[Giani et~al.(2014)Giani, Sch\"otzau, and Zhu]{Schotzau:2014}
S.~Giani, D.~Sch\"otzau, and L.~Zhu.
\newblock {An a-posteriori error estimate for hp-adaptive DG methods for
  convection-diffusion problems on anisotropically refined meshes}.
\newblock \emph{Computers \& Mathematics with Applications}, 67\penalty0
  (4):\penalty0 869--887, 2014.
\newblock \doi{10.1016/j.camwa.2012.10.015}.

\bibitem[Houston et~al.(2007)Houston, Sch\"otzau, and Wihler]{Houston:2007}
P.~Houston, D.~Sch\"otzau, and T.~P. Wihler.
\newblock {Energy norm a posteriori error estimation of hp-adaptive
  discontinuous Galerkin methods for elliptic problems}.
\newblock \emph{Mathematical Models and Methods in Applied Sciences},
  17\penalty0 (1):\penalty0 33--62, 2007.
\newblock \doi{10.1142/S0218202507001826}.

\bibitem[Houston et~al.(2008)Houston, S\"uli, and Wihler]{Houston:2008}
P.~Houston, E.~S\"uli, and T.~P. Wihler.
\newblock {A posteriori error analysis of $hp$-version discontinuous Galerkin
  finite-element methods for second-order quasi-linear elliptic PDEs}.
\newblock \emph{IMA Journal of Numerical Analysis}, 28:\penalty0 245--273,
  2008.
\newblock \doi{10.1093/imanum/drm009}.

\bibitem[Karakashian and Pascal(2003)]{Karakashian:2003}
O.~A. Karakashian and F.~Pascal.
\newblock {A posteriori error estimates for a discontinuous Galerkin
  approximation of second-order elliptic problems}.
\newblock \emph{SIAM J. Numer. Anal.}, 41\penalty0 (6):\penalty0 2374--2399,
  2003.
\newblock \doi{10.1137/S0036142902405217}.

\bibitem[Kirk et~al.(2019)Kirk, Horvath, Cesmelioglu, and Rhebergen]{Kirk:2019}
K.~L.~A. Kirk, T.~L. Horvath, A.~Cesmelioglu, and S.~Rhebergen.
\newblock Analysis of a space-time hybridizable discontinuous {G}alerkin method
  for the advection-diffusion problem on time-dependent domains.
\newblock \emph{SIAM J. Numer. Anal.}, 57\penalty0 (4):\penalty0 1677--1696,
  2019.
\newblock \doi{10.1137/18M1202049}.

\bibitem[Kunert(2003)]{Kunert:2003}
G.~Kunert.
\newblock {A posteriori error estimation for convection dominated problems on
  anisotropic meshes}.
\newblock \emph{Mathematical methods in the applied sciences}, 26\penalty0
  (7):\penalty0 589--617, 2003.
\newblock \doi{10.1002/mma.368}.

\bibitem[Lakkis(2006)]{Lakkis:2006}
O.~Lakkis.
\newblock {Elliptic reconstruction and a posteriori error estimates for fully
  discrete linear parabolic problems}.
\newblock \emph{Math. Comp.}, 75\penalty0 (256):\penalty0 1627--1658, 2006.
\newblock \doi{10.1090/S0025-5718-06-01858-8}.

\bibitem[Makridakis and Nochetto(2003)]{Makridakis:2003}
C.~Makridakis and R.~H. Nochetto.
\newblock {Elliptic reconstruction and a posteriori error estimates for
  parabolic problems}.
\newblock \emph{SIAM J. Numer. Anal.}, 41\penalty0 (4):\penalty0 1585--1594,
  2003.
\newblock \doi{10.1137/S0036142902406314}.

\bibitem[Munkres(1991)]{Munkres:book}
J.~R. Munkres.
\newblock \emph{{Analysis on Manifolds}}.
\newblock Addison-Wesley, Redwood City, CA, 1991.

\bibitem[Rhebergen and Cockburn(2013)]{Rhebergen:2013}
S.~Rhebergen and B.~Cockburn.
\newblock Space-time hybridizable discontinuous {G}alerkin method for the
  advection-diffusion equation on moving and deforming meshes.
\newblock In C.~A. de~Moura and C.~S. Kubrusly, editors, \emph{The
  {C}ourant--{F}riedrichs--{L}ewy ({CFL}) condition, 80 years after its
  discovery}, pages 45--63. Birkh{\"{a}}user Science, 2013.
\newblock \doi{10.1007/978-0-8176-8394-8_4}.

\bibitem[Rivi\`ere(2008)]{Riviere:book}
B.~Rivi\`ere.
\newblock \emph{Discontinuous {G}alerkin Methods for Solving Elliptic and
  Parabolic Equations}, volume~35 of \emph{Frontiers in Applied Mathematics}.
\newblock Society for Industrial and Applied Mathematics, Philadelphia, 2008.
\newblock \doi{10.1137/1.9780898717440}.

\bibitem[Roos et~al.(2008)Roos, Stynes, and Tobiska]{Roos:book}
H.~Roos, M.~Stynes, and L.~Tobiska.
\newblock \emph{Robust Numerical Methods for Singularly Perturbed Differential
  Equations}.
\newblock Springer Series in Computational Mathematics. Springer Berlin,
  Heidelberg, 2008.
\newblock \doi{10.1007/978-3-540-34467-4}.

\bibitem[Sangalli(2008)]{Sangalli:2008}
G.~Sangalli.
\newblock {Robust a-posteriori estimator for advection-diffusion-reaction
  problems}.
\newblock \emph{Math. Comp.}, 77:\penalty0 41--70, 2008.
\newblock \doi{10.1090/S0025-5718-07-02018-2}.

\bibitem[Sch\"otzau and Zhu(2009)]{Schotzau:2009}
D.~Sch\"otzau and L.~Zhu.
\newblock {A robust a-posteriori error estimator for discontinuous Galerkin
  methods for convection-diffusion equations}.
\newblock \emph{Appl. Numer. Math.}, 59:\penalty0 2236--2255, 2009.
\newblock \doi{10.1016/j.apnum.2008.12.014}.

\bibitem[Sharma(2021)]{Natasha:2021}
N.~Sharma.
\newblock {{Robust a-posteriori error estimates for weak Galerkin method for
  the convection-diffusion problem}}.
\newblock \emph{Applied Numerical Mathematics}, 170:\penalty0 384--397, 2021.
\newblock \doi{10.1016/j.apnum.2021.08.007}.

\bibitem[Sudirham(2005)]{Sudirham:thesis}
J.~J. Sudirham.
\newblock \emph{Space-time discontinuous {G}alerkin methods for
  convection-diffusion problems: {A}pplication to wet-chemical etching}.
\newblock PhD thesis, University of Twente, 2005.

\bibitem[Sudirham et~al.(2006)Sudirham, van~der Vegt, and van
  Damme]{Sudirham:2006}
J.~J. Sudirham, J.~J.~W. van~der Vegt, and R.~M.~J. van Damme.
\newblock Space-time discontinuous {G}alerkin method for advection-diffusion
  problems on time-dependent domains.
\newblock \emph{Appl. Numer. Math.}, 56\penalty0 (12):\penalty0 1491--1518,
  2006.
\newblock \doi{10.1016/j.apnum.2005.11.003}.

\bibitem[Verf\"urth(1998{\natexlab{a}})]{Verfurth:1998}
R.~Verf\"urth.
\newblock {A posteriori error estimators for convection diffusion equations}.
\newblock \emph{Numer. Math.}, 80:\penalty0 641--663, 1998{\natexlab{a}}.
\newblock \doi{10.1007/s002110050381}.

\bibitem[Verf\"urth(1998{\natexlab{b}})]{Verfurth:1998b}
R.~Verf\"urth.
\newblock {Robust a posteriori error estimators for a singularly perturbed
  reaction-diffusion equation}.
\newblock \emph{Numer. Math.}, 78:\penalty0 479--493, 1998{\natexlab{b}}.
\newblock \doi{10.1007/s002110050322}.

\bibitem[Verf\"urth(2005{\natexlab{a}})]{Verfurth:2005}
R.~Verf\"urth.
\newblock {Robust a posteriori error estimates for stationary
  convection-diffusion equations}.
\newblock \emph{SIAM J. Numer. Anal.}, 43\penalty0 (4):\penalty0 1766--1782,
  2005{\natexlab{a}}.
\newblock \doi{10.1137/040604261}.

\bibitem[Verf\"urth(2005{\natexlab{b}})]{Verfurth:2005b}
R.~Verf\"urth.
\newblock {Robust a posteriori error estimates for nonstationary
  convection-diffusion equations}.
\newblock \emph{SIAM J. Numer. Anal.}, 43\penalty0 (4):\penalty0 1783--1802,
  2005{\natexlab{b}}.
\newblock \doi{10.1137/040604273}.

\bibitem[Wang and Rhebergen(2023)]{Wang:2023}
Y.~Wang and S.~Rhebergen.
\newblock Space-time hybridizable discontinuous galerkin method for
  advection-diffusion on deforming domains: The advection-dominated regime.
\newblock Submitted. arXiv:2308.12130, 2023.
\newblock URL \url{https://arxiv.org/abs/2308.12130}.

\bibitem[Zhu(2010)]{Zhu:thesis}
L.~Zhu.
\newblock \emph{{Robust A Posteriori Error Estimation for Discontinuous
  Galerkin Methods for Convection Diffusion Problems}}.
\newblock PhD thesis, University of British Columbia, 2010.

\bibitem[Zhu and Sch\"otzau(2011)]{Schotzau:2011a}
L.~Zhu and D.~Sch\"otzau.
\newblock {A robust a posteriori error estimate for hp-adaptive DG methods for
  convection-diffusion equations}.
\newblock \emph{IMA Journal of Numerical Analysis}, 31:\penalty0 971--1005,
  2011.
\newblock \doi{10.1093/imanum/drp038}.

\bibitem[Zhu et~al.(2011)Zhu, Giani, Houston, and Sch\"otzau]{Schotzau:2011b}
L.~Zhu, S.~Giani, P.~Houston, and D.~Sch\"otzau.
\newblock {Energy norm a posteriori error estimation for hp-adaptive
  discontinuous Galerkin methods for elliptic problems in three dimensions}.
\newblock \emph{Mathematical Models and Methods in Applied Sciences},
  21:\penalty0 267--306, 2011.
\newblock \doi{10.1142/s0218202511005052}.

\end{thebibliography}
\end{document}